\documentclass[a4paper,11pt]{amsart}
 
 \usepackage[OT2,T1]{fontenc}
 \usepackage{eucal,fullpage,times,amsmath,amsthm,amssymb,mathrsfs,stmaryrd,color,enumerate,accents}
 \usepackage[all]{xy}
 \usepackage{url}
 \usepackage[leqno]{amsmath}
 \usepackage{amsthm}
 \usepackage{amsfonts}
 \usepackage{amssymb}
 \usepackage{eucal}
 \usepackage[all]{xy}
 \CompileMatrices
 
 \usepackage{mathrsfs}

 \usepackage{amsmath,amssymb,mathrsfs,xy,amsthm,bm,url}
 \input{xy}
 \xyoption{all}
 
 \usepackage{xspace}
  \usepackage[ansinew]{inputenc}
  \usepackage[T1]{fontenc}
  \usepackage{hyperref}
 
 \usepackage{hyperref}

 \theoremstyle{definition}
   \newtheorem{definition}[subsection]{Definition}
   \newtheorem{notation}[subsection]{Notation}
   \newtheorem{definition-proposition}[subsection]{Definition-Proposition}
   \newtheorem{example}[subsection]{Example}
   \newtheorem{remark}[subsection]{Remark}

    \newtheorem{assumption}[subsection]{Assumption}

   \newtheorem{convention}[subsection]{Convention}

 \theoremstyle{theorem}
   \newtheorem{theorem}[subsection]{Theorem}
   \newtheorem{proposition}[subsection]{Proposition}
   \newtheorem*{proposition*}{Proposition}
   \newtheorem{lemma}[subsection]{Lemma}
   \newtheorem{corollary}[subsection]{Corollary}
   \newtheorem{conjecture}[subsection]{Conjecture}


 \newcommand{\maxx}{{\mathrm{max}}}

 \newcommand{\Abb}{\mathbb{A}}
 \newcommand{\Cbb}{\mathbb{C}}
 
 \newcommand{\Gbb}{\mathbb{G}}
 \newcommand{\Nbb}{\mathbb{N}}
 \newcommand{\Qbb}{\mathbb{Q}}
 \newcommand{\Rbb}{\mathbb{R}}
 \newcommand{\Sbb}{\mathbb{S}}
 \newcommand{\Zbb}{\mathbb{Z}}

 \newcommand{\Cbf}{\mathbf{C}}
 \newcommand{\Hbf}{\mathbf{H}}
 \newcommand{\Gbf}{\mathbf{G}}
 \newcommand{\ibf}{\mathbf{i}}
 \newcommand{\Lbf}{\mathbf{L}}
 
 \newcommand{\Nbf}{\mathbf{N}}
 \newcommand{\Pbf}{\mathbf{P}}
 \newcommand{\Qbf}{\mathbf{Q}}
 \newcommand{\Tbf}{\mathbf{T}}
 \newcommand{\Ubf}{\mathbf{U}}
 \newcommand{\Vbf}{\mathbf{V}}
 \newcommand{\Wbf}{\mathbf{W}}
 \newcommand{\Xbf}{\mathbf{X}}
 \newcommand{\Zbf}{\mathbf{Z}}
 \newcommand{\GLbf}{\mathbf{GL}}

 \newcommand{\gfrak}{\mathfrak{g}}

 \newcommand{\pfrak}{\mathfrak{p}}
 \newcommand{\Xfrak}{\mathfrak{X}}
 \newcommand{\Yfrak}{\mathfrak{Y}}

 \newcommand{\arm}{\mathrm{a}}
 \newcommand{\mrm}{\mathrm{m}}

 \newcommand{\Ascr}{{\mathscr{A}}}
 \newcommand{\Mscr}{{\mathscr{M}}}
 \newcommand{\Yscr}{{\mathscr{Y}}}
 \newcommand{\Hscr}{\mathscr{H}}
 
 \newcommand{\Pscr}{\mathscr{P}}
 \newcommand{\Sscr}{\mathscr{S}}
 \newcommand{\Lscr}{\mathscr{L}}

 \newcommand{\Rcal}{\mathcal{R}}

 
 \newcommand{\ad}{\mathrm{ad}}
 \newcommand{\Ad}{\mathrm{Ad}}
 \newcommand{\ab}{\mathrm{ab}}
 \newcommand{\der}{\mathrm{der}}
 
 \newcommand{\ra}{\rightarrow}
 \newcommand{\lra}{\longrightarrow}

 \newcommand{\mono}{\hookrightarrow}
 \newcommand{\epim}{{\twoheadrightarrow}}	
 \newcommand{\isom}{\cong}
 \newcommand{\limproj}{\varprojlim}
 
 \newcommand{\Hom}{\mathrm{Hom}}
 
 \newcommand{\GSp}{\mathrm{GSp}}
 \newcommand{\Sp}{\mathrm{Sp}}
 \newcommand{\Ker}{\mathrm{Ker}}
 \newcommand{\Lie}{\mathrm{Lie}}
 \newcommand{\Nm}{\mathrm{Nm}}
 \newcommand{\Res}{\mathrm{Res}}
 
 \newcommand{\Int}{\mathrm{Int}}
 
 \newcommand{\GL}{{\mathbf{GL}}}
 
 \newcommand{\bsh}{\backslash}
 \newcommand{\inv}{{-1}}
 \newcommand{\ot}{\overset}

 \newcommand{\wrt}{{with\ respect\ to}\xspace}
 \newcommand{\ifof}{{if\ and\ only\ if}\xspace}
 \newcommand{\cosg}{{compact\ open\ subgroup}\xspace}
 \newcommand{\cosgs}{{compact\ open\ subgroups}\xspace}
 
 \newcommand{\Qac}{\bar{\mathbb{Q}}}
 \newcommand{\Gal}{\mathrm{Gal}}
 \newcommand{\Gr}{\mathrm{Gr}}

 \newcommand{\ubar}{{\bar{u}}}

 \newcommand{\supp}{\mathrm{supp}}
 \newcommand{\pibar}{\overline{\pi}_\circ}
 
 \newcommand{\mult}{{\mathbb{G}_\mathrm{m}}}
 \newcommand{\adele}{{\hat{\mathbb{Q}}}}

 \newcommand{\Qbbp}{{\mathbb{Q}_p}}
 \newcommand{\Zbbp}{{\mathbb{Z}_p}}
 \newcommand{\Zbbptimes}{{\mathbb{Z}^\times_p}}

 \newcommand{\TW}{{{(\mathbf{T},w)}}}

 \newcommand{\ord}{\mathrm{ord}}
 
 \newcommand{\rec}{{\mathrm{rec}}}
 \newcommand{\Aut}{{\mathrm{Aut}}}

 \newcommand{\Gaa}{{\mathbb{G}_\mathrm{a}}}
 
 \newcommand{\Uscr}{\mathscr{U}}
 \newcommand{\Vscr}{\mathscr{V}}

 \newcommand{\MT}{\mathrm{MT}}
 \newcommand{\Vbb}{{\mathbb{V}}}

 \title{bounded equidistribution of special subvarieties \\ in mixed Shimura varieties}
 
 \author{Ke Chen}
 \address{Key Laboratory of Wu Wen-Tsun Mathematics, Chinese Academy of Sciences, and School of Mathematical Sciences, University of Science and Technology of China\\ No. 96 Jinzhai Road, Hefei, Anhui Province, 230026, P. R. China}
 
 \email{kechen@ustc.edu.cn}
 
 \subjclass{Primary 14G35(11G18), Secondary 14K05}
 
 \keywords{Shimura varieties, mixed Shimura varieties, Andr\'e-Oort conjecture, special subvarieties, Diophantine approximation, equidistribution}
 
 \begin{document}

 \begin{abstract}
 In this paper, we study the equidistribution of certain families of special subvarieties in general mixed Shimura varieties. We introduce the notion of bounded sequences of special subvarieties, and we prove that the Andr\'e-Oort conjecture holds for such sequences. The proof follows the equidistribution approach used by Clozel, Ullmo, and Yafaev in the pure case. We then propose the notion of test invariant of a special subvariety, which is adapted from the lower bound formula of degrees of special subvarieties in the pure case studied by Ullmo and Yafaev, and we show that sequences of special subvarieties with bounded test invariants are bounded, hence the Andr\'e-Oort conjecture holds in this case.
 \end{abstract}
 \maketitle
 \tableofcontents

 \section{Introduction to the main results}

 In this paper, we study the equidistribution of certain families of special subvarieties in a general mixed Shimura variety, and  the Andr\'e-Oort conjecture for these varieties, as a generalization of some results in \cite{clozel ullmo} and in \cite{ullmo yafaev}.
 
 The theory of mixed Shimura varieties, including their canonical models and toroidal compactifications, is developed in \cite{pink thesis}. They serve as moduli spaces of mixed Hodge structures, and often arise as boundary components in the toroidal compactifications of pure Shimura varieties. Among mixed Shimura varieties there are Kuga varieties, cf. \cite{chen kuga}, which are certain ``universal'' abelian schemes over Shimura varieties, and in general, a mixed Shimura variety can be realized as a torus bundle over a Kuga variety (namely a torsor whose structure group is a torus). Similar to the pure case, we have the notion of special subvarieties in mixed Shimura varieties.
 
 Y. Andr\'e and F. Oort conjectured that the Zariski closure of a sequence of special subvarieties in a given pure Shimura variety remains a finite union of special subvarieties, cf. \cite{andre note} and \cite{oort conjecture}. R. Pink has proposed a generalization of this conjecture for mixed Shimura varieties by combining it with the Manin-Mumford conjecture and the Mordell-Lang conjecture for abelian varieties, a principal case of which is the following
 
 \begin{conjecture}[Andr\'e-Oort conjecture for mixed Shimura varieties, \cite{pink conjecture}]
 
 Let $M$ be a mixed Shimura variety, and let $(M_n)_n$ be a sequence of special subvarieties. Then the Zariski closure of $\bigcup_nM_n$ is a finite union of special subvarieties.
 
 \end{conjecture}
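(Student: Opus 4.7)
The plan is to adapt the measure-theoretic equidistribution approach of Clozel, Ullmo, and Yafaev from the pure to the mixed case. To each special subvariety $M_n$ one attaches its canonical probability measure $\mu_n$ on $\Mbf$, defined as the push-forward of the normalized Haar measure on the orbit of the derived subgroup of its generic Mumford-Tate group $\Pbf_n$, viewed in an adelic double coset space $\Pbf(\Qbb)\bsh\Pbf(\adele)/K$. A weak limit $\mu$ of a subsequence of $(\mu_n)_n$ should then be identified, via Galois orbit considerations in the spirit of Ullmo-Yafaev, as a finite sum of canonical measures attached to special subvarieties containing infinitely many of the $M_n$; its support then contains the Zariski closure of $\bigcup_n M_n$ and is itself a finite union of special subvarieties, giving the conjecture.

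First I would fix a Pink datum $(\Pbf,X)$ for $\Mbf$ and realize each $M_n$ by a Shimura sub-datum $(\Pbf_n, X_n^+)$ together with an element $g_n\in\Pbf(\adele)$, so that $\mu_n$ is unambiguously defined. Second I would invoke a Mozes-Shah / Ratner type equidistribution theorem for the algebraic groups that arise here: in the pure case one reduces to Ratner on semisimple groups, but in the mixed case one must also handle the unipotent dynamics on the abelian and torus fibers coming from the Levi decomposition of $\Pbf$, which is where the generalization requires genuine new input. Third I would run a rigidity argument identifying the closed orbit carrying the limit $\mu$ as a special subvariety, using that only subgroups of Hodge type can stabilize an algebraic Hecke-equivariant probability measure, and that $\Pbf_n$ can be reconstructed from $\mu_n$ in a functorial way.

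The main obstacle, and the only one preventing this scheme from delivering an unconditional proof, is to rule out escape of mass along the two available directions: the unipotent fibers of $\Mbf$ over its pure part, where mass can drift toward the toroidal boundary, and the fundamental domain of the base pure Shimura variety, where mass can escape to infinity along Siegel sets. Without uniform control of the complexity of the data $(\Pbf_n, X_n^+, g_n)$, tightness of $(\mu_n)_n$ can fail and a putative limit need not be a probability measure on $\Mbf$. This is exactly why one restricts to bounded sequences, equivalently sequences whose test invariants remain bounded; an unconditional proof would require supplementing the equidistribution input with Galois lower bounds and Pila-Wilkie counting adapted to the mixed setting, in the spirit of the Pila-Zannier method, which goes beyond the approach taken here.
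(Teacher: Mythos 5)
The statement you were asked about is a \emph{conjecture} (Pink's mixed André--Oort conjecture), and the paper does not prove it; it only establishes the special case of $B$-bounded sequences (Corollary \ref{bounded Andr\'e-Oort}) and of sequences with bounded test invariants under GRH for CM fields. Your proposal is likewise not a proof: you concede in your final paragraph that without uniform control of the data $(\Pbf_n,X_n^+,g_n)$ tightness can fail and that an unconditional argument would need Galois lower bounds and Pila--Wilkie input beyond the equidistribution scheme. That concession is exactly the gap — the second half of the ergodic--Galois alternative, which produces larger special subvarieties from unbounded Galois orbits, is not supplied here or in the paper. So the proposal cannot be accepted as a proof of the conjecture, and indeed no complete proof exists in this text.

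As a description of the strategy for the partial results the paper does prove, your sketch is close but misstates two points. First, the canonical measures are not constructed on an adelic double coset space nor on the mixed Shimura variety itself: in the non-Kuga case they live on the S-space $\Mscr=\Gamma\bsh\Yscr^+$, the quotient of the \emph{real part} of $Y^+$, which is only Zariski dense in $M$; the Andr\'e--Oort conclusion is then deduced because analytic closure of the S-subspaces refines Zariski closure. The compactness input is Mozes--Shah applied to lattice spaces $\Gamma^\dagger\bsh\Pbf^\der(\Rbb)^+$, not a new Ratner-type theorem for the mixed setting — the unipotent radical causes no extra dynamical difficulty because $\Pbf^\der=\Wbf\rtimes\Gbf^\der$ is already of type $\Hscr$. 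Second, escape of mass is not ruled out by a general tightness argument but by fiat: one fixes a pair $(\Tbf,w)$ (torus in the Levi, unipotent translation) and shows every $(\Tbf,w)$-special S-subspace meets a fixed compact set $K(\Tbf,w)$; the delicate point in the mixed case, which your sketch does not address, is that without specifying $w$ (or imposing $\rho$-rigidity as in the Kuga case) infinitely many subdata can share the same derived group, so the limit measure need not be special.
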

 
 Remarkable progress has been made for the Andr\'e-Oort conjecture in the pure case, including  the ergodic-Galois approach, cf.  \cite{clozel ullmo}, \cite{klingler yafaev}, \cite{ullmo yafaev}, \cite{yafaev bordeaux}, the $p$-adic approach cf. \cite{moonen compositio}, \cite{yafaev compositio}, and the model-theoretic approach, cf. \cite{pila annals},  \cite{scanlon bourbaki}. For the case of mixed Shimura varieties, Pila's work in \cite{pila annals} has already included  products of universal families of elliptic curves over modular curves as well as some torus bundles on them, and Scanlon has also considered in \cite{scanlon inventiones} some cases in the universal families of abelian varieties over the Siegel modular variety. Recently, Z. Gao has proved the conjecture for mixed Shimura varieties fibred over Siegel modular varieties. 
 
 The  strategy of Klingler-Ullmo-Yafaev is  summarized in \cite{yafaev bordeaux}. It assumes the GRH (Generalized Riemann Hypothesis) for CM fields, and does not involve model-theoretic tools. The main ingredients of the strategy in the pure case can be expressed as the following ``ergodic-Galois alternative'':
 
 \begin{itemize}
 
 \item equidistribution of special subvarieties with bounded Galois orbits (using ergodic theory), cf. \cite{clozel ullmo} and \cite{ullmo yafaev};
 
 \item for a sequence of special subvarieties $(M_n)$ of unbounded Galois orbits, one can construct a new sequence of special subvarieties  $(M_n')$ such that $\bigcup_nM_n$ has the same Zariski closure as $\bigcup_nM_n'$, and that $\dim M_n<\dim M_n'$ for $n$ large enough.
 
 \end{itemize} Note that both ingredients involve estimations that rely on the GRH  for CM fields.  
 
 In this paper, we study the equidistribution part of the ergodic-Galois alternative for mixed Shimura varieties. Our main result is the following:
 
 \begin{theorem} Let $M$ be a mixed Shimura variety of the form $M_K(\Pbf,Y)$ with $(\Pbf,Y)=\Wbf\rtimes(\Pbf,Y)$ and $K=K_\Wbf\rtimes K_\Gbf$ a \cosg of $\Pbf(\adele)$, and let $\pi:M\ra S=M_{K_\Gbf}(\Gbf,X)$ be the fibration over the pure Shimura variety $S$. Let $M_n$ be a sequence of special subvarieties in $M$. Write $E$ for the field of definition of $M$. Assume that the test invariants of $(M_n)$ are bounded, i.e. $$\tau_M(M_n)\leq C,\ \forall n$$ for some constant $C\in\Rbb_{>0}$. Then the Zariski closure of $\bigcup_n M_n$ is a finite union of special subvarieties.
 \end{theorem}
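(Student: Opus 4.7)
The plan is to reduce to the equidistribution theorem for bounded sequences of special subvarieties established earlier in the paper: once $(M_n)$ is known to be a bounded sequence in the technical sense introduced there, the Andr\'e-Oort conclusion follows from the mixed-Shimura adaptation of the approach of Clozel-Ullmo and Ullmo-Yafaev. Thus the entire task reduces to proving the implication
\[
\tau_M(M_n)\leq C\ \forall n\ \Longrightarrow\ (M_n)\ \text{is a bounded sequence}.
\]

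I would analyse $\tau_M$ along the semidirect decomposition $(\Pbf,Y)=\Wbf\rtimes(\Gbf,X)$ and the fibration $\pi\colon M\to S$. Each $M_n$ is cut out by a connected special subdatum $(\Pbf_n,Y_n)\subseteq(\Pbf,Y)$ together with a Hecke translate; writing $\Pbf_n=\Wbf_n\rtimes\Gbf_n$, the image $S_n=\pi(M_n)$ is the special subvariety of $S$ attached to $(\Gbf_n,X_n)$, while the fibre data is encoded by $\Wbf_n\subseteq\Wbf$ and a $\Wbf$-translate. The defining feature of the test invariant, as designed earlier in the paper, should be that the bound $\tau_M(M_n)\leq C$ yields simultaneously: (i) a bound $\tau_S(S_n)\leq C'$ on the induced pure test invariants of the images; and (ii) a bound forcing the fibre positions of $M_n$ to lie in a fixed compact subset of $\Wbf(\adele)/K_\Wbf$.

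From (i), together with the lower-bound formula for degrees of special subvarieties in the pure case due to Ullmo-Yafaev, only finitely many $\Gbf(\Qbb)$-conjugacy classes of Mumford-Tate groups $\Gbf_n$ occur and $(S_n)$ is already a bounded sequence in $S$ in the pure sense. Combined with (ii), which pins down the unipotent data, one then restricts each $\Wbf_n$ to finitely many $\Gbf$-stable subgroups of $\Wbf$ and confines the fibre translates to a compact-open subset of $\Wbf(\adele)$; the resulting compactness statement inside $\Pbf(\adele)/K$ is precisely the definition of $(M_n)$ being a bounded sequence. The equidistribution theorem then delivers the Zariski closure conclusion.

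The step I expect to be hardest is (ii). A naive test invariant built only from degrees in the pure direction cannot detect non-compact $\Wbf$-translations, since such translations produce new special subvarieties with isomorphic underlying subdata but whose positions drift to infinity and whose degrees depend on the choice of polarisation on $M$. My plan is to express $\tau_M$ as a product of local contributions indexed by the finite places, each of which explicitly measures the $\Wbf$-position of $M_n$ in a local model of the torus bundle over the Kuga variety, and then to repackage the uniform local bounds as the required compactness in $\Pbf(\adele)/K$. Once this is in place, the reduction to the already-established equidistribution theorem closes the argument.
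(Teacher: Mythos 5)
Your overall architecture coincides with the paper's: the theorem is proved by showing that $\tau_M(M_n)\leq C$ forces $(M_n)$ to be $B$-bounded for a finite bounding set $B$ of pairs $(\Tbf,w)$, and then invoking the bounded equidistribution results (\ref{equidistribution of TW-special subspaces}, \ref{bounded equidistribution}, \ref{bounded Andr\'e-Oort}). Your step (i) is also exactly what the paper does: projecting to $S$ and applying the Ullmo--Yafaev criterion \ref{bounded galois orbits} shows that only finitely many tori $\Tbf_n$ occur up to conjugacy and that $\log D(\Tbf_n)$ takes finitely many values; note that no GRH is needed for this direction, since one works with the numerical invariant rather than with actual Galois orbits.

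The genuine gap is in your step (ii), which you yourself flag as the hard one but then only describe as a design goal rather than prove. The test invariant is not built from local terms that ``explicitly measure the $\Wbf$-position''; it is $\log D(\Tbf)\cdot\min_{w'}\prod_p\max\{1,I(\Tbf,K_\Gbf(w')_p)\}$, where $I(\Tbf,K_\Gbf(w)_p)=b\,[K^{\maxx}_{\Tbf,p}:K_\Tbf(w)_p]$ and $K_\Gbf(w)=\{g\in K_\Gbf: wgw^\inv g^\inv\in K_\Wbf\}$. The entire content of the implication ``bounded test invariant $\Rightarrow$ finitely many classes of $w_n$ modulo $K_\Wbf$'' is the lower bound $I(\Tbf,K_\Gbf(w)_p)\geq c\cdot\ord_p(w,K_\Wbf)$ together with a uniform bound $w^N\in K_{\Wbf,p}$ at the primes outside $\Delta(\Tbf,K_\Gbf(w))$ (\ref{torsion order in the product case}, \ref{torsion order in the embedded case}). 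This bound is false for an arbitrary torus $\Tbf$: if $\Tbf$ commutes with $w$ modulo $K_\Wbf$ (for instance if the relevant part of $\rho_\Ubf,\rho_\Vbf$ restricted to $\Tbf$ is trivial), then $K_\Tbf(w)=K_\Tbf$ and the index is $1$ no matter how large $\ord_p(w,K_\Wbf)$ is, so the test invariant would not see the unipotent translation at all. The paper makes the bound work only after reducing, via \ref{insensitivity of isogeny}, \ref{reduction lemma} and \ref{reduction to subdata of a good product}, to data embedded in a ``good product'' $(\Gbf_0,X_0)\times(\Lbf,Y_\Lbf)$ (Assumption \ref{CM splitting fields}); there every strictly irreducible $\TW$-special subdatum has $\Tbf$ containing a split central $\Cbf\cong\mult$ acting on $\Vbf$ by scaling and on $\Ubf$ by its square, and an explicit computation with $(1-t)v$ and $(1-t^2)u$ yields the required bounds (with $N=12$). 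Remark \ref{torsion order in general} shows that without this reduction one only gets $\geq\frac{c}{p}\ord_p(w,K_\Wbf)$, which is insufficient. Your proposal needs this reduction and this computation to close.

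A smaller point: your claim that one must confine each $\Wbf_n$ to finitely many $\Gbf$-stable subgroups is neither needed nor attempted in the paper. The bounding set $B$ records only the pairs $(\Tbf,w)$; the unipotent radicals $\Wbf_n$ are allowed to vary freely, and the limit is controlled by the Mozes--Shah argument of Section 3, where the limit group is generated by $\bigcup_n\Pbf_n^\der$ and the specification of $w$ alone suffices to reassemble a $\TW$-special subdatum. Likewise, ``bounded sequence'' is a finiteness condition on the classes $[\Tbf,w]$, not a compactness condition in $\Pbf(\adele)/K$, though for $w\in\Wbf(\Qbb)$ of uniformly bounded torsion order the two amount to the same thing.
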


 Here the notion of test invariants is an analogue of the degree of Galois orbits against the automorphic line bundle in the pure case. The main theorem is deduced from a theorem of bounded equidistribution in certain spaces associated to mixed Shimura varieties, cf.\ref{equidistribution of TW-special subspaces}, \ref{bounded equidistribution}, \ref{bounded Andr\'e-Oort} and a characterization of special subvarieties with bounded Galois orbits using test invariants, cf. \ref{pure special subvarieties of bounded Galois orbits}, \ref{special subvarieties of bounded test invariants}.   
 
 We briefly explain the main idea of the paper. A mixed Shimura datum in the sense of \cite{pink thesis} is of the form $(\Pbf,\Ubf,Y)$ with $\Pbf$ a connected linear $\Qbb$-group, with a Levi decomposition $\Pbf=\Wbf\rtimes\Gbf$, $\Ubf$ a normal unipotent $\Qbb$-subgroup of $\Pbf$ central in $\Wbf$, and $Y$ a complex manifold homogeneous under $\Ubf(\Cbb)\Pbf(\Rbb)$ subject to some algebraic constraints. The notion of special subvarieties in mixed Shimura varieties is defined in a similar way as in the pure case. We often express it as an extension $(\Pbf,Y)=\Wbf\rtimes(\Gbf,X)$ with $(\Gbf,X)$ some pure Shimura datum, and $\Wbf$ the unipotent radical of $\Pbf$, in which $\Ubf$ is central. When $\Ubf$ is trivial, we get Kuga data and Kuga varieties, cf. \cite{chen kuga}.
 
 Parallel to the pure case studied in \cite{clozel ullmo} and \cite{ullmo yafaev}, we first consider the Andr\'e-Oort conjecture for sequences of $\TW$-special subvarieties in a mixed Shimura variety $M$ defined by $(\Pbf,Y)=\Wbf\rtimes(\Gbf,X)$. Here $\Tbf$ is a $\Qbb$-torus in $\Gbf$ and $w$ an element of $\Wbf(\Qbb)$. Using the language of \cite{chen kuga} 2.10 etc, in a Kuga variety $M=\Gamma_\Vbf\rtimes\Gamma_\Gbf\bsh Y^+$ defined by $(\Pbf,Y)=\Vbf\rtimes(\Gbf,X)$,  $\TW$-special subvarieties are defined by subdata of the form $\Vbf'\rtimes(w\Gbf'w^\inv,wX')$, and they are obtained  from diagrams of the form $$\xymatrix{ M'\ar[r]^\subset & M_{S'}\ar[r]^\subset \ar[d]^\pi & M \ar[d]^\pi\\ & S'\ar[r]^\subset & S}$$
 where \begin{itemize}
   \item $S=\Gamma_\Gbf\bsh X^+$ is a pure Shimura variety and $\pi:M\ra S$ is an abelian $S$-scheme defined by the natural projection $(\Pbf,Y)\ra(\Gbf,X)$;
 
   \item $S'$ is a (pure) special subvariety of $S$ defined by $(\Gbf',X')$ with $\Tbf$ equal to the connected center of $\Gbf'$,
 
   \item $M_{S'}$ is the abelian $S'$-scheme pulled-back from $M\ra S$, and $\Vbf'$ is a subrepresentation in $\Vbf$ of $\Gbf'$, corresponding to an abelian subscheme $A'$ of $M_{S'}\ra S'$;
 
   \item $M'$ is a translation of $A'$ by a torsion section of $M_{S'}\ra S'$ given by $w$.
 \end{itemize} In particular, this notion is more restrictive than $\Tbf$-special subvarieties studied in \cite{chen kuga} as we specify $w$.

 The case in general mixed Shimura varieties is similar. We show that certain spaces of probability measures on $M$ associated to $\TW$-special subvarieties are compact for the weak topology, from which we deduce the equidistribution of the supports of such measures, as well as the Andr\'e-Oort conjecture for such sequences of $\TW$-special subvarieties.
 
 We formulate the notion of $B$-bounded sequences of special subvarieties, which means special subvarieties that are $\TW$-special with $\TW$ coming from some prescribed finite set $B$ of pairs $\TW$ as above. The main result of \cite{ullmo yafaev} shows that  in the pure case a sequence with bounded Galois orbits is $B$-bounded for some $B$, where sequences with bounded Galois orbits are sequences of special subvarieties whose Galois orbits are of bounded degree against the automorphic line bundle.  In the mixed case, we propose the notion of test invariants for special subvarieties, and we prove a similar characterization of bounded sequences using test invariants.

 \bigskip

 The paper is organized as follows:
 
 In Section 1, we recall the basic notions of mixed Shimura data, their subdata, mixed Shimura varieties, their special subvarieties, as well as their connected components. We emphasize the fibration of a mixed Shimura variety over a pure Shimura variety, whose fibers are torus bundles over abelian varieties. We also include some results about irreducible subdata and a few reductions for the Andr\'e-Oort conjecture.

 Section 2 and 3 are concerned with ergodic-theoretic results in the equidistribution of special subvarieties. In Section 2, we  introduce some measure-theoretic objects, such as  lattice (sub)spaces,   S-(sub)spaces, and canonical probability measures associated to special subvarieties in mixed Shimura varieties. The lattice (sub)spaces are similar to the cases treated in \cite{clozel ullmo} and \cite{chen kuga}. For a Kuga variety, the associated S-space is the variety itself; for a general mixed Shimura variety, the S-space is a subspace of the variety, which is a torsor over the corresponding Kuga variety by some compact tori. In particular, they support canonically defined probability measures and they are Zariski dense in the ambient mixed Shimura varieties.  We also introduce the notion of a $B$-bounded sequence of special subvarieties, where $B$ is a finite set of pairs of the form $(\Tbf,w)$ as is explained above.
  
 In Section 3, we prove the equidistribution of bounded sequences of special lattice subspaces and special S-spaces. The proof is reduced to the case when the bound $B$ consists of a single element $\TW$, and the arguments are completely parallel to the pure $\Tbf$-special case in \cite{clozel ullmo} and \cite{ullmo yafaev}. The equidistribution of $B$-bounded S-subspaces implies the Andr\'e-Oort conjecture for a $B$-bounded sequence  of special subvarieties in a mixed Shimura variety.
 
 In the remaining sections we investigate the relation between bounded sequences and lower bounds of degrees of Galois orbits. In Section 4, we give a lower bound of the degrees of Galois orbits of a pure special subvariety $M'$ in a given mixed Shimura variety $M$ against the pull-back of the automorphic line bundle. The estimation is essentially reduced to the case studied in \cite{ullmo yafaev}. If the pure special subvariety under consideration is $\TW$-special, then the lower bound relies on the GRH for the splitting field $F_\Tbf$ of $\Tbf$, and it involves the discriminant of $F_\Tbf$ and the position of $w$ relative to the level structure of the ambient mixed Shimura variety. We show that the study of the Andr\'e-Oort conjecture can be reduced to ambient mixed Shimura data that are embedded in a ``good product'', in which the splitting fields $F_\Tbf$ of irreducible $\TW$-special subdata are CM fields. In this latter setting we estimate the contribution of unipotent translation in the lower bound. 
 
 In Section 5, we consider a general special subvariety which is not pure.  We did not prove an explicit lower bound formula in this case, instead we introduce the notion of test invariant as a substitute, and we show that a sequence with bounded test invariants is $B$-bounded for some finite $B$. We also show that in this case the Galois orbits are essentially minorated by the test invariants. The results in this section are formulated for ambient mixed Shimura embedded in ``good products'' as in Section 4.\bigskip

Recently we have been informed by GAO Ziyang on the work \cite{gao mixed} where he has proved the Andr\'e-Oort conjecture for mixed Shimura varieties whose pure parts are subvarieties of Siegel modular varieties assuming the GRH for CM fields. His approach is model-theoretic, generalizing the works of \cite{pila annals} etc. He has obtained independently some results related to the Galois orbits of special points. Our treatment works for special subvarieties of higher dimension, but relies on the GRH. We hope that the results presented are still useful as a step towards the ergodic-Galois alternative for mixed Shimura varieties. 
 
 \subsection*{Acknowledgement} The author thanks Prof.Emmanuel Ullmo heartily for suggesting to him the equidistribution approach towards the Andr\'e-Oort conjecture for mixed Shimura varieties, without whose guidance this work would not have been possible. He thanks Mr. GAO Ziyang for discussion on mixed Shimura varieties and his work \cite{gao mixed}. He also thanks Mr. Cyril D\'emarche and Mr. LIANG Yongqi for discussion on strong approximation of semi-simple groups. Finally, he thanks the anonymous referee sincerely for a very careful reading of the manuscript and many useful suggestions.
 
 The author is partially supported by the following grants: National Key Basic Research Program of China, No. 2013CB834202, Chinese Universities Scientific Fund Project WK0010000029, and National Natural Science Foundation of China,  Grant No. 11301495.

 \section*{Notations and conventions}\label{notations and conventions}
 
 Over a base ring $k$, a linear $k$-group $\Hbf$ is a smooth affine algebraic $k$-group scheme, and $\Tbf_\Hbf$ is the connected center of $\Hbf$, namely the neutral component of the center of $\Hbf$. For $\Vbf$ a free $k$-module of finite type, we have the general linear $k$-group $\GL_\Vbf$, and we also view $\Vbf$ as a vectorial $k$-group, i.e. isomorphic to $\Gbb_\arm^r$ with $r$ the rank of $\Vbf$.
 
 We write $\Sbb$ for the Deligne torus $\Res_{\Cbb/\Rbb}\mult_\Cbb$. The ring of finite adeles is denoted by $\adele$. $\ibf$ is a fixed square root of -1 in $\Cbb$.
 
 For a real or complex analytic space (not necessarily smooth), its analytic topology is the one locally deduced from the archimedean metric on $\Rbb^n$ or $\Cbb^m$.
 
 A linear $\Qbb$-group is compact if its set of $\Rbb$-points form a compact Lie group. For $\Pbf$ a linear $\Qbb$-group with maximal reductive quotient $\Pbf\epim\Gbf$, we write $\Pbf(\Rbb)^+$ resp. $\Pbf(\Rbb)_+$ for the preimage of $\Gbf(\Rbb)^+$ resp. of $\Gbf(\Rbb)_+$, in the sense of \cite{deligne pspm}. $\Pbf(\Rbb)^+$ is just the neutral component of the Lie group $\Pbf(\Rbb)$ because the fiber $\Wbf(\Rbb)$ of the projection $\Pbf(\Rbb)\ra\Gbf(\Rbb)$, namely the unipotent radical of $\Pbf(\Rbb)$, is a connected Lie group.
 
 For $\Hbf$ a linear $\Qbb$-group and $L$ a number field, we write $\Hbf^L$ for the $\Qbb$-group $\Res_{L/\Qbb}\Hbf_L$.

 For $\Hbf$ a linear $\Qbb$-group, we write $\Xfrak(\Hbf)$ for the set of $\Rbb$-group homomorphisms $\Sbb\ra\Hbf_\Rbb$, and $\Yfrak(\Hbf)$ for the set of $\Cbb$-group homomorphisms $\Sbb_\Cbb\ra\Hbf_\Cbb$. We have the natural action of $\Hbf(\Rbb)$ on $\Xfrak(\Hbf)$ by conjugation, and similarly the action of $\Hbf(\Cbb)$ on $\Yfrak(\Hbf)$. In particular, we have an inclusion $\Xfrak(\Hbf)\mono\Yfrak(\Hbf)$, equivariant \wrt the inclusion $\Hbf(\Rbb)\mono\Hbf(\Cbb)$.

 \section{Preliminaries on  mixed Shimura varieties}\label{Preliminaries on mixed Shimura varieties}
 
 We start with the definition of mixed Shimura data, which is ``essentially'' the same as  \cite{pink thesis}2.1, cf. \cite{chen kuga}2.1 :
 
 \begin{definition}[mixed Shimura data]\label{mixed Shimura data} (1) A \emph{mixed Shimura datum} is a triple $(\Pbf,\Ubf,Y)$ consisting of \begin{itemize}
 \item a connected linear $\Qbb$-group $\Pbf$, with a Levi decomposition $\Pbf=\Wbf\rtimes\Gbf$;
 \item a unipotent normal $\Qbb$-subgroup $\Ubf$ (necessarily contained in $\Wbf$);
 \item a $\Pbf(\Rbb)\Ubf(\Cbb)$-orbit $Y\subset\Yfrak(\Pbf)$;
 \end{itemize}
 
 such that by putting $\pi_\Ubf:\Pbf\ra\Pbf/\Ubf$ and $\pi_\Wbf:\Pbf\ra\Pbf/\Wbf=\Gbf$ for the quotient maps, the following properties hold for any $y\in Y$:\begin{enumerate}
 \item[(i)] the composition $\pi_\Ubf\circ y:\Sbb_\Cbb\ra\Pbf_\Cbb\ra(\Pbf/\Ubf)_\Cbb$ is defined over $\Rbb$;
 \item[(ii)] the composition $\pi_\Wbf\circ y\circ w:\Gbb_{\mrm \Rbb}\mono\Sbb\ra\Gbf_\Rbb$ is a central cocharacter of $\Gbf_\Rbb$, where $w:\Gbb_{\mrm \Rbb}\ra\Sbb$ is induced by $\Rbb^\times\mono\Cbb^\times$;
 \item[(iii)] the composition $\Ad_\Pbf\circ y:\Sbb_\Cbb\ra\Pbf_\Cbb\ra\GL_{\pfrak,\Cbb}$ induces on $\pfrak=\Lie\Pbf$ a rational mixed Hodge structure of type $\{(-1,-1),(-1,0),(0,-1),(-1,1),(0,0),(1,-1)\}$, with rational weight filtration $W_{-2}=\Lie\Ubf$, $W_{-1}=\Lie\Wbf$, and $W_0=\Lie\Pbf$; 
 \item[(iv)] the conjugation by $y(\ibf)$ induces on $\Gbf^\ad_\Rbb$ a Cartan involution, and $\Gbf^\ad$ admits no compact $\Qbb$-factors;
 \item[(v)] it is also required that the center of $\Gbf$ acts on $\Wbf$ through some $\Qbb$-torus isogeneous to the product of a compact $\Qbb$-torus with a split $\Qbb$-torus.
 \end{enumerate}
 
 $Y$ is actually a complex manifold on which $\Pbf(\Rbb)\Ubf(\Cbb)$ acts transitively preserving the complex structure.
 
 (2) A \emph{pure Shimura datum} (in the sense of Deligne \cite{deligne pspm}) is the same as a mixed Shimura datum $(\Gbf,X)$ where the unipotent radical $\Wbf$ is trivial. A \emph{Kuga datum} (cf.\cite{chen kuga}) is just a mixed Shimura datum $(\Pbf,Y)$ with $\Ubf=1$. 
 
 (3) For $S$ a subset of $Y$, the Mumford-Tate group of $S$, written as $\MT(S)$, is the smallest $\Qbb$-subgroup $\Pbf'$ of $\Pbf$ such that $y(\Sbb_\Cbb)\subset\Pbf'_\Cbb$ for all $y\in\Sbb$. A mixed Shimura datum $(\Pbf,\Ubf,Y)$ is \emph{irreducible} if $\Pbf$ equals $\MT(Y)$. 
 
 \end{definition}
 
 \begin{remark}[Deligne vs. Pink]\label{Deligne vs. Pink} In the original definition \cite{pink thesis}2.1, the space $Y$ is not a subset of $\Yfrak(\Pbf)$; Pink uses a  complex manifold $Y$ homogeneous under the Lie group $\Pbf(\Rbb)\Ubf(\Cbb)$, together with a $\Pbf(\Rbb)\Ubf(\Cbb)$-equivariant map $h:Y\ra \Yfrak(\Pbf)$ of finite fibers, such that the Hodge-theoretic conditions (i)-(iv) in \ref{mixed Shimura data} hold for points in $h(Y)$. One can show, cf. \cite{pink thesis}2.12, that the connected components of the space $Y$ in the sense of Pink are the same as the connected components of the space $Y$ in the sense of \ref{mixed Shimura data}. The main results of this paper focus on connected mixed Shimura varieties, and we prefer to use the simpler definition given above.
 
 \end{remark}
 
 \begin{definition}[morphisms of mixed Shimura data]\label{morphisms of mixed Shimura data}
 A \emph{morphism} between mixed Shimura data is of the form $(f,f_*):(\Pbf,\Ubf,Y)\ra(\Pbf',\Ubf',Y')$ where $f:\Pbf\ra\Pbf'$ is a $\Qbb$-group homomorphism sending $\Ubf$ into $\Ubf'$, and the push-forward $f_*:\Yfrak(\Pbf)\ra\Yfrak(\Pbf'),\ h\mapsto f\circ h$ sends $Y$ into $Y'$, such that $f_*:Y\ra Y'$ is  equivariant \wrt $f:\Pbf(\Rbb)\Ubf(\Cbb)\ra\Pbf'(\Rbb)\Ubf'(\Cbb)$; one can show  that $f_*:Y\ra Y'$ is a smooth map between complex manifolds, cf.\cite{pink thesis}2.3 and 2.4. We further single out the following cases:
 
 (1) $(\Pbf,\Ubf,Y)$ is said to be a \emph{mixed Shimura subdatum} of $(\Pbf',\Ubf',Y')$ if $f$ and $f_*$ are both injective.
 
 (2) For $\Nbf\subset\Pbf$ a normal $\Qbb$-subgroup, the \emph{quotient} of $(\Pbf,\Ubf,Y)$ by $\Nbf$ is the mixed Shimura datum $(\Pbf',\Ubf',Y')$ where $\Pbf'$ is the quotient $\Qbb$-group $\Pbf/\Nbf$, $\Ubf'$ is the image of $\Ubf$ in $\Pbf'$, and $Y'$ is the $\Pbf'(\Rbb)\Ubf'(\Cbb)$-orbit of the composition $\pi_\Nbf\circ y:\Sbb_\Cbb\ra\Pbf_\Cbb\ra\Pbf'_\Cbb$ for any $y\in Y$, where $\pi_\Nbf:\Pbf\ra \Pbf'=\Pbf/\Nbf$ is the natural projection, cf.\cite[2.9]{pink thesis}. We thus write $(\Pbf',\Ubf',Y')=(\Pbf/\Nbf,\Ubf/(\Ubf\cap\Nbf),Y/\Nbf)$ with $Y/\Nbf:=Y'$.
 
 It should be mentioned that in the quotient construction the map $Y\ra Y'$ is not  surjective in general. For example, if $(\Gbf,X)$ is a pure Shimura datum and $\Zbf$ is the center of $\Gbf$, then the quotient $(\Gbf^\ad,X^\ad)$ of $(\Gbf,X)$ by $\Zbf$ is a pure Shimura datum. The connected components of $X^\ad$ and $X$ are all isomorphic to the Hermitian symmetric domain defined by the connected Lie group $\Gbf^\ad(\Rbb)^+$ as the center of $\Gbf(\Rbb)^+$ acts on the domain trivially. However more connected components could appear in $X^\ad$ than in $X$, simply because $\Gbf^\ad(\Rbb)$ could have more connected components. This can be also seen from the exactness of $1\ra\Zbf(\Rbb)\ra\Gbf(\Rbb)\ra\Gbf^\ad(\Rbb)$ where the last arrow is not surjective in general, which is deduced from the exact sequence of linear $\Qbb$-groups  $1\ra\Zbf\ra\Gbf\ra\Gbf^\ad\ra 1$ with $\Zbf$ the center of $\Gbf$.
 
 When $\Nbf$ is unipotent, the map $Y\ra Y/\Nbf$ is surjective, whose fibers are isomorphic to $\Nbf(\Rbb)\Ubf_\Nbf(\Cbb)$ with $\Ubf_\Nbf=\Ubf\cap\Nbf$, and in this case we often use the more precise notation $Y/\Nbf(\Rbb)\Ubf_\Nbf(\Cbb)$  in place of the vague expression $Y/\Nbf$, cf.\cite{pink thesis} 2.18. 
 
 In particular, taking $\Nbf=\Ubf$ and $\Wbf$ successively, we see that a mixed Shimura datum fits into a sequence $$(\Pbf,\Ubf,Y)\ra(\Pbf/\Ubf,Y/\Ubf(\Cbb))\ra(\Pbf/\Wbf,Y/\Wbf(\Rbb)\Ubf(\Cbb))$$ where $(\Pbf/\Ubf,Y/\Ubf(\Cbb))$ is a Kuga datum and $(\Pbf/\Wbf,Y/\Wbf(\Rbb)\Ubf(\Cbb))$ is a pure Shimura datum.
 
 (3) As a natural combination of (1) and (2), when a morphism between mixed Shimura data $(f,f_*):(\Pbf,\Ubf,Y)\ra(\Pbf',\Ubf',Y')$ is given, its \emph{image} is the triple $(f(\Pbf),f(\Ubf),f_*(Y))$. One verifies directly from the definition that the image is a subdatum of $(\Pbf',\Ubf',Y')$ and equals the quotient of $(\Pbf,\Ubf,Y)$ by $\Nbf:=\Ker(f:\Pbf\ra\Pbf')$.

 (4) A \emph{pure section} of $(\Pbf,\Ubf,Y)$ associated to the Levi decomposition $\Pbf=\Wbf\rtimes\Gbf$ is a pure Shimura datum $(\Gbf,X)$ which is a subdatum of $(\Pbf,\Ubf,Y)$ such that the $\Qbb$-group homomorphism $\Gbf\mono\Pbf$ is given by the Levi decomposition and  the composition $(\Gbf,X)\mono(\Pbf,\Ubf,Y)\ra(\Pbf/\Wbf,Y/\Wbf(\Rbb)\Ubf(\Cbb))$ is an isomorphism.
 
 (5) For $(\Pbf_i,\Ubf_u,Y_i)$ two mixed Shimura data ($i=1,2$), we have the \emph{product} $(\Pbf_1\times\Pbf_2,\Ubf_1\times\Ubf_2,Y_1\times Y_2)$ which is a mixed Shimura datum in an evident way, cf.\cite{pink thesis}2.5.
 \end{definition}
 \begin{proposition}[unipotent radical and Levi decomposition]\label{unipotent radical and Levi decomposition} 
 Let $(\Pbf,\Ubf,Y)$ be a mixed Shimura datum, with $\Pbf=\Wbf\rtimes\Gbf$ a Levi decomposition. Write $\Vbf=\Wbf/\Ubf$. Then:
 
 (1) $\Ubf$ and $\Vbf$ are commutative, and $\Wbf$ is a central extension of $\Vbf$ by $\Ubf$ i.e. $1\ra\Ubf\ra\Wbf\ra\Vbf\ra 1$. Writing the group laws on $\Ubf$ and on $\Vbf$ additively and fixing an isomorphism of $\Qbb$-schemes $\Wbf\isom\Ubf\times\Vbf$, the group law on $\Wbf$ writes as $$(u,v)\cdot(u',v')=(u+u'+\frac{1}{2}\psi(v,v'),v+v')$$ where the commutator map $\Wbf\times\Wbf\ra\Wbf$ has image in $\Ubf$ and factors through a unique alternating bilinear map $\psi:\Vbf\times\Vbf\ra\Ubf$.

 (2) For any $y\in Y$, the action of $\Sbb_\Cbb$ on $\Lie\Pbf_\Cbb$ induces on $\Ubf$ resp. on $\Vbf$ (both viewed as finite-dimensional $\Qbb$-vector spaces) a Hodge structure of type $(-1,-1)$ resp. of type $\{(-1,0),(0,-1)\}$. 
 
 (3) For any $y\in Y$, write $x:\Sbb_\Cbb\ra\Gbf_\Cbb$ for the composition $\pi_\Wbf\circ y$. Then $x$ is defined over $\Rbb$ and $x\in Y$. Outting $X=\Gbf(\Rbb)x$ the orbit of $x$ in $Y$ under $\Gbf(\Rbb)$ we obtain a pure Shimura subdatum $(\Gbf,X)$ of $(\Pbf,\Ubf,Y)$, and the composition of the inclusion with the reduction modulo $\Wbf$ is an isomorphism: $(\Gbf,X)\mono(\Pbf,\Ubf,Y)\epim(\Pbf/\Wbf,Y/\Wbf)$. Moreover the Hodge types of $\rho_\Ubf\circ x$ resp. of $\rho_\Vbf\circ x$ $(-1,-1)$ resp. $\{(-1,0),(0,-1)\}$, where $\rho_\Ubf$ resp. $\rho_\Vbf$ are the action of $\Gbf$ on $\Ubf$ resp. on $\Wbf/\Ubf=\Vbf$ by conjugation, and $\psi:\Vbf\times\Vbf\ra\Ubf$ is $\Gbf$-equivariant. The representation $\rho_\Ubf$ factors through a split $\Qbb$-torus.
 
 In particular, $(\Pbf,\Ubf,Y)=\Wbf\rtimes(\Gbf,X)$ is a split unipotent extension in the sense of \cite{pink thesis} 2.21.
 
 
 
 (4) $\Pbf^\der$ equals $\Wbf\rtimes\Gbf^\der$ and it admits no non-trivial compact quotient $\Qbb$-groups.
 \end{proposition}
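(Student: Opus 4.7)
The plan is to read each assertion of the proposition as an immediate consequence of the Hodge-theoretic axioms (i)--(v) of \ref{mixed Shimura data} against the weight filtration $W_{-2}\pfrak=\Lie\Ubf\subset W_{-1}\pfrak=\Lie\Wbf\subset W_{0}\pfrak=\pfrak$ provided by axiom (iii).

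For (1), Lie brackets respect the weight filtration, so $[\Lie\Wbf,\Lie\Wbf]\subseteq W_{-2}\pfrak=\Lie\Ubf$ and $[\Lie\Ubf,\Lie\Wbf]\subseteq W_{-3}\pfrak=0$. Exponentiating inside the nilpotent $\Qbb$-group $\Wbf$, this gives at once that $\Ubf$ is central in $\Wbf$, that $\Vbf=\Wbf/\Ubf$ is commutative, and that the commutator $\Wbf\times\Wbf\to\Wbf$ factors through a unique alternating $\Qbb$-bilinear $\psi:\Vbf\times\Vbf\to\Ubf$. Choosing an exponential identification $\Wbf\isom\Ubf\times\Vbf$, the Baker--Campbell--Hausdorff formula collapses to a single bracket term because triple commutators vanish, yielding the explicit group law. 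Part (2) is then the restriction of axiom (iii) to the weight-graded pieces: among the six prescribed Hodge types only $(-1,-1)$ has weight $-2$ and only $(-1,0),(0,-1)$ have weight $-1$, which determines the Hodge types on $\Lie\Ubf=\mathrm{gr}^W_{-2}\pfrak$ and $\Lie\Vbf=\mathrm{gr}^W_{-1}\pfrak$.

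For (3), set $x=\pi_\Wbf\circ y:\Sbb_\Cbb\to\Gbf_\Cbb$. The key step is to show that $x$ is defined over $\Rbb$. Since $\mathrm{gr}^W_0\pfrak=\Lie\Gbf$ carries a pure rational Hodge structure of weight $0$ with types $\{(-1,1),(0,0),(1,-1)\}$, complex conjugation exchanges the components $(\Lie\Gbf)^{p,q}$ with $(\Lie\Gbf)^{q,p}$; combined with axiom (ii) (centrality of the weight cocharacter on $\Gbf$) and axiom (iv) (Cartan involution at $y(\ibf)$), this upgrades from a real structure on $\Lie\Gbf$ to one on $x$ itself. Viewing $\Gbf\subseteq\Pbf$ via the Levi decomposition, I then verify directly that the orbit $X=\Gbf(\Rbb)x\subset Y$ makes $(\Gbf,X)$ satisfy Deligne's axioms, which are precisely (i)--(v) restricted to the reductive setting. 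The composition $(\Gbf,X)\mono(\Pbf,\Ubf,Y)\epim(\Pbf/\Wbf,Y/\Wbf(\Rbb)\Ubf(\Cbb))$ is an isomorphism at the group level (Levi splitting), and at the parameter level because any two pure liftings of a given $\pi_\Wbf$-image differ by an element of $\Pbf(\Rbb)\Ubf(\Cbb)$ that must actually lie in $\Gbf(\Rbb)$. The Hodge-type statements for $\rho_\Ubf\circ x$ and $\rho_\Vbf\circ x$ are then (2) restricted to $\Gbf$; the $\Gbf$-equivariance of $\psi$ follows from its origin as the intrinsic commutator of $\Pbf$; and the claim that $\rho_\Ubf$ factors through a split $\Qbb$-torus combines axiom (v) with the observation that the purity of Hodge type $(-1,-1)$ forces the induced cocharacter to land in the scalar subgroup via the norm $z\mapsto z\bar z$, hence in the split part of the torus of (v).

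For (4), the weight cocharacter $w$ acts on $\Lie\Wbf$ with weights $-1$ and $-2$ only, so the $\Gbf$-action on $\Wbf$ admits no trivial subrepresentation, which gives $\Wbf=[\Gbf,\Wbf]\subseteq\Pbf^\der$; then $\Pbf^\der/\Wbf=(\Pbf/\Wbf)^\der=\Gbf^\der$ yields $\Pbf^\der=\Wbf\rtimes\Gbf^\der$. Any compact quotient of $\Pbf^\der$ factors through its reductive quotient $\Gbf^\der$, which has a central isogeny onto $\Gbf^\ad$, and the latter admits no compact $\Qbb$-factors by axiom (iv). The hardest step will undoubtedly be the reality of $x=\pi_\Wbf\circ y$ in (3): a priori $y$ is only a $\Cbb$-morphism $\Sbb_\Cbb\to\Pbf_\Cbb$, and one must carefully combine the pure weight-zero Hodge structure on $\Lie\Gbf$ with axioms (ii) and (iv) to upgrade a real Hodge structure on $\Lie\Gbf$ to a morphism of $\Rbb$-groups $x:\Sbb\to\Gbf_\Rbb$; once this is in place, everything else in the proposition is a routine weight-graded reading of axiom (iii).
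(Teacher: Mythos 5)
Your treatments of (1), (2) and (4) are essentially sound: the paper cites Pink 2.15--2.16 for (1) and (2), and your weight-filtration/Baker--Campbell--Hausdorff reconstruction is the standard argument behind those references; for (4), your observation that the central weight cocharacter acts on $\Lie\Wbf$ with nonzero weights only, forcing $\Wbf=[\Gbf,\Wbf]\subset\Pbf^\der$, is a legitimate substitute for the paper's appeal to Pink 2.10, and the rest of (4) matches the paper.

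The gap is in (3), where you have misidentified the difficulty. The reality of $x=\pi_\Wbf\circ y$ is not the hard step: axiom (i) says $\pi_\Ubf\circ y$ is defined over $\Rbb$, and $\pi_\Wbf$ is $\pi_\Ubf$ followed by a further quotient defined over $\Qbb$, so $\pi_\Wbf\circ y$ is real for free. Your proposed route via the mixed Hodge structure on $\pfrak$ would moreover not suffice on its own, since the adjoint representation kills the center of $\Gbf$: a real structure on $\Ad\circ x$ says nothing about $x$ on the connected center, and axiom (ii) only controls the restriction of $x$ to $\Gbb_{\mrm,\Rbb}\subset\Sbb$. The genuinely non-trivial claim of (3) --- the one the paper's proof is almost entirely devoted to --- is that $x\in Y$, i.e.\ that $x$, regarded via the Levi section $\Gbf\mono\Pbf$ as an element of $\Yfrak(\Pbf)$, lies in the same $\Pbf(\Rbb)\Ubf(\Cbb)$-orbit as $y$. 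You simply assert ``$X=\Gbf(\Rbb)x\subset Y$'', yet without $x\in Y$ the pair $(\Gbf,X)$ is not a subdatum of $(\Pbf,\Ubf,Y)$ at all. The paper proves this by a two-step Levi argument: since $\pi_\Ubf\circ y$ is real, its image is an $\Rbb$-torus of $(\Vbf\rtimes\Gbf)_\Rbb$, hence contained in a real Levi subgroup $v\Gbf_\Rbb v^\inv$ with $v\in\Vbf(\Rbb)$; pulling back to $\Pbf_\Cbb$, the torus $y(\Sbb_\Cbb)$ lies in a Levi subgroup $w'\Gbf_\Cbb w'^\inv$ with $w'\in\Ubf(\Cbb)w\subset\Ubf(\Cbb)\Wbf(\Rbb)$, so that conjugating $y$ by $w'$ produces $x$ as a $\Pbf(\Rbb)\Ubf(\Cbb)$-conjugate of $y$, hence an element of $Y$. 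The whole point of that bookkeeping is that the conjugating element lands in $\Ubf(\Cbb)\Wbf(\Rbb)$ rather than merely in $\Wbf(\Cbb)$, and this step is absent from your sketch; your remark that ``two pure liftings differ by an element of $\Pbf(\Rbb)\Ubf(\Cbb)$ that must lie in $\Gbf(\Rbb)$'' addresses a different point (the isomorphism onto the pure quotient) and is not justified either.
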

 
 \begin{proof}
 (1) and (2) are found in \cite{pink thesis} 2.15, 2.16.
 

 
 (3) Since $\pi_\Ubf\circ y$  is already defined over $\Rbb$,   the homomorphism $\pi_\Ubf\circ y:\Sbb\ra(\Vbf\rtimes\Gbf)_\Rbb$, whose image is an $\Rbb$-torus, factors through some Levi $\Rbb$-subgroup of the form $v\Gbf_\Rbb v^\inv$ for some $v\in\Vbf(\Rbb)$. Thus $\pi_\Ubf\circ y:\Sbb_\Cbb\ra\Pbf_\Cbb/\Ubf_\Cbb$ factors through $v\Gbf_\Cbb v^\inv$. The pre-image of $v\Gbf_\Cbb v^\inv$ in $\Pbf_\Cbb$ is $\Ubf_\Cbb\rtimes(w\Gbf_\Cbb w^\inv)$ for some $w\in\Wbf(\Cbb)$ lifting $v$. In $\Ubf_\Cbb\rtimes w\Gbf_\Cbb w^\inv$ the maximal reductive $\Cbb$-subgroups are Levi $\Cbb$-subgroups of the form $w'\Gbf_\Cbb w'^\inv$ with $w'\in\Ubf(\Cbb)w\subset\Ubf(\Cbb)\Wbf(\Rbb)$. In particular, conjugate $y$ by $w'$ we get $x\in Y$ such that $x(\Sbb_\Cbb)\subset\Gbf_\Cbb\subset\Pbf_\Cbb$. Since the composition $\Gbf\mono\Pbf\epim\Gbf$ is the identity, we factorize $x:\Sbb_\Cbb\ra\Pbf_\Cbb$ into the composition $\Sbb_\Cbb\ra\Pbf_\Cbb\epim\Gbf_\Cbb\mono\Pbf_\Cbb$. The composition $\pi_\Wbf\circ x:\Sbb_\Cbb\ra\Pbf_\Cbb\epim\Gbf_\Cbb$ is defined over $\Rbb$, and the projection $\Pbf_\Cbb\ra\Gbf_\Cbb$ is defined over $\Rbb$ as $\Pbf\ra\Gbf$ is already defined over $\Qbb$. Hence $x$ is defined over $\Rbb$. 
 
 
 
 Since $x\in Y$, and $\Lie\Gbf=\Lie\Pbf/\Lie\Wbf$, we see that the Hodge structure given by $x:\Sbb\ra\Gbf_\Rbb$ on $\Lie\Gbf$ is of type $\{(-1,1),(0,0),(1,-1)\}$, and the conjugation by $x(\ibf)$ induces a Cartan involution on $\Gbf_\Rbb^\ad$.  The $\Gbf(\Rbb)$-orbit $X$ of $x$ inside $Y\subset\Yfrak(\Pbf)$ clearly lies in $\Yfrak(\Gbf)$ (and actually lies in the real part $\Xfrak(\Gbf))$, hence the pair $(\Gbf,X)$ is a pure Shimura datum, and the inclusion $(\Gbf,X)\mono(\Pbf,\Ubf,Y)$ makes it a pure Shimura subdatum. 
 
 The claims on Hodge types and the pairing $\psi$ are immediate from (1) and (2). The claim on the action of $\Gbf$ on $\Ubf$ is clear because $\Pbf$ acts on $\Ubf$ through a split $\Qbb$-torus by \cite{pink thesis} 2.14 and $\Gbf$ acts through $\Gbf\mono\Pbf$.
 
 
 
 (4) From \cite{pink thesis} 2.10 we know that $\Pbf^\der$ contains $\Wbf$. It clearly contains $\Gbf^\der$, hence $\Pbf^\der\supset\Wbf\rtimes\Gbf^\der$. The quotient $\Pbf/(\Wbf\rtimes\Gbf^\der)$ is already commutative, which gives the inclusion $\Pbf^\der\subset\Wbf\rtimes\Gbf^\der$ in the other direction. From \ref{mixed Shimura data} (iv) we know that $\Pbf^\der$ admits no compact quotient $\Qbb$-groups other than the trivial one.
 \end{proof}

  \begin{notation}[group law]\label{group law}
  Aside from the group law $(u,v)\cdot(u',v')=(u+u'+\frac{1}{2}\psi(v,v'),v+v')$ and the evident equality $(u,v)^n=(nu,nv)$, the following identities will be useful for elements $(u,v,g)$ in $\Pbf\isom\Ubf\times\Vbf\times\Gbf$, in which the neutral element is $(0,0,1)$:\begin{itemize}
     \item multiplication $(u,v,g)(u',v',g')=(u+g(u')+\psi(v,g(v')),v+g(v'),gg')$;
     \item inverse $(u,v,g)^\inv=(-g^\inv(u) , -g^\inv(v), g^\inv)$, namely $(w,g)^\inv=(g^\inv(w^\inv),g^\inv)$ for $w=(u,v)$
     \item and the commutator between $\Wbf$ and $\Gbf$ is $$(u,v,1)(0,0,g)(-u,-v,1)(0,0,g^\inv)=(u-g(u),v-g(v),1)$$
   \end{itemize}
   where we write $g(u)=gug^\inv=\rho_\Ubf(g)(u)$ and similarly for $g(v)$, $g(w)$.
  \end{notation}

 We thus prefer treating a general mixed Shimura datum as a split unipotent  extension of a pure Shimura datum by two unipotent $\Qbb$-groups subject to certain conditions, and we often reformulate this as the following:
 
 \begin{definition-proposition}[fibred mixed Shimura data]\label{fibred mixed Shimura data new}
 (1) Let $(\Gbf,X)$ be a pure Shimura datum, and let $\rho_\Ubf:\Gbf\ra\GLbf_\Ubf$ and $\rho_\Vbf:\Gbf\ra\GLbf_\Vbf$ be two finite-dimensional algebraic representation, together with an alternating $\Gbf$-equivariant bilinear map $\psi:\Vbf\times\Vbf\ra\Ubf$, giving rise to a central extension of unipotent $\Qbb$-groups $1\ra\Ubf\ra\Wbf\ra\Vbf\ra 1$. Assume that \begin{itemize}
 \item for any $x\in X$, the composition $\rho_\Ubf\circ x$ is a rational Hodge structure of type $(-1,-1)$, and $\rho_\Vbf\circ x$ is a rational Hodge structure of type $\{(-1,0),(0,-1)\}$;
 
 \item the connected center of $\Gbf$ acts on $\Ubf$ and on $\Vbf$ through $\Qbb$-tori subject to the condition (iv) in \ref{mixed Shimura data}.
 \end{itemize}  Then by putting $\Pbf=\Wbf\rtimes\Gbf$ and $Y$ the $\Pbf(\Rbb)\Ubf(\Cbb)$-orbit of any $x:\Sbb\ra\Gbf_\Rbb\ra\Pbf_\Rbb$, the triple $(\Pbf,\Ubf,Y)$ thus obtained is a mixed Shimura datum. The canonical projection $(\Pbf,\Ubf,Y)\ra(\Gbf,X)$ is the quotient by $\Wbf$, and $(\Gbf,X)$ is naturally a pure section by the evident inclusions $\Gbf\mono\Pbf$ and $X\mono Y$. In particular, $Y$ can be viewed as a holomorphic vector bundle over $X$, whose fibers are isomorphic to the Lie algebra of $\Wbf(\Rbb)\Ubf(\Cbb)$.
  
 We call $(\Pbf,\Ubf,Y)$ the mixed Shimura datum \emph{fibred} over $(\Gbf,X)$ by the representations $\rho_\Ubf$ and $\rho_\Vbf$ and the alternating map $\psi$. We write $(\Pbf,\Ubf,Y)=\Wbf\rtimes(\Gbf,X)=(\Ubf,\Vbf)\rtimes(\Gbf,X)$ and $Y=\Wbf(\Rbb)\Ubf(\Cbb)\rtimes X$ to emphasize that the role of the Levi decomposition.
 
 (2) A \emph{morphism} between fibred mixed Shimura data is a commutative diagram of the form $$\xymatrix{(\Pbf,\Ubf,Y)\ar[d]^{\pi_\Wbf} \ar[r]^{(f,f_*)}&(\Pbf',\Ubf',Y')\ar[d]^{\pi_{\Wbf'}}\\ (\Gbf,X)\ar[r]^{(f,f_*)} &(\Gbf',X')}$$ where the vertical arrows are reductions modulo the unipotent radicals, inducing the bottom horizontal morphism of pure Shimura data from the upper one. Note that the commutative diagram give rise to homomorphisms   $\alpha:\Vbf\ra\Vbf'$, $\beta:\Ubf\ra\Ubf'$ and $\Wbf\ra\Wbf'$ with $\beta(\psi(v,v'))=\psi'(\alpha(v),\alpha(v'))$. 
 
 Identify $(\Gbf,X)$ resp. $(\Gbf',X')$ as a pure subdatum of $(\Pbf,\Ubf,Y)$ resp. of $(\Pbf',\Ubf',Y')$ via split unipotent extension as in (1). If the morphism $(f,f_*):(\Pbf,\Ubf,Y)\ra(\Pbf',\Ubf',Y')$ sends the pure subdatum $(\Gbf,X)$ into $(\Gbf',X')$, then the morphisms $\alpha$ and $\beta$ are equivariant \wrt $f:\Gbf\ra\Gbf'$, and $\Pbf\ra\Pbf'$ can be recovered as $(u,v,g)\mapsto(\beta(u),\alpha(v),f(g))$ when we use the isomorphisms of $\Qbb$-schemes $\Pbf=\Ubf\times\Vbf\times\Gbf$ and $\Pbf'=\Ubf'\times\Vbf'\times\Gbf'$.

 Conversely, assume that fibred mixed Shimura data $(\Pbf,\Ubf,Y)=(\Ubf,\Vbf)\rtimes(\Gbf,X)$ and $(\Pbf',\Ubf',Y')=(\Ubf',\Vbf')\rtimes(\Gbf',X')$ are given. If $(f,f_*):(\Gbf,X)\ra(\Gbf',X')$ is  a morphism of pure Shimura data, together with $f$-equivariant homomorphisms $\alpha:\Vbf\ra\Vbf'$ $\beta:\Ubf\ra\Ubf'$ and $\beta(\psi(v,v'))=\psi'(\alpha(v),\alpha(v'))$. Then $(f,f_*):(\Gbf,X)\ra(\Gbf',X')$ extends   to a morphism of mixed Shimura data $(f,f_*):(\Pbf,\Ubf,Y)\ra(\Pbf',\Ubf',Y')$, with the $\Qbb$-group homomorphism being $(u,v,g)\mapsto(\beta(u),\alpha(v),f(g))$ under the isomorphism of $\Qbb$-schemes $\Pbf\isom\Ubf\times\Vbf\times\Gbf$ and $\Pbf'=\Ubf'\times\Vbf'\times\Gbf'$.
  
 \end{definition-proposition}
 
 \begin{proof}
 (1)  is clear from \ref{unipotent radical and Levi decomposition} and \cite{pink thesis} 2.16, 2.17, 2.21. See \cite{pink thesis} 2.18 and 2.19 for the proof for $Y$ being a holomorphic vector bundle over $X$. 
 
 (2) When $(f,f_*):(\Pbf,\Ubf,Y)\ra(\Pbf',\Ubf',Y')$ is a morphism of mixed Shimura data, the Lie algebra map $\Lie f:\Lie\Pbf\ra\Lie\Pbf'$ respects the rational weight filtration and the central extension structures on the unipotent radicals. Hence $f(\Ubf)\subset\Ubf'$, $f(\Wbf)\subset\Wbf'$, with $\beta(\psi(v,v'))=\psi'(\alpha(v),\alpha(v'))$ for $v,v'\in\Vbf$, where $\alpha$ and $\beta$ are induced by $f$. Reduce modulo the unipotent radicals of $(f,f_*)$ gives $(\Gbf,X)\ra(\Gbf',X')$ together with a commutative diagram of the mentioned form.
 
 If moreover $(f,f_*):(\Pbf,\Ubf,Y)\ra(\Pbf',\Ubf',Y')$ sends $(\Gbf,X)$ into $(\Gbf',X')$, then the homomorphism $f:\Pbf\ra\Pbf'$ sends the Levi $\Qbb$-subgroup $\Gbf$ into $\Gbf'$. The homomorphisms between normal unipotent $\Qbb$-groups $\Ubf\ra\Ubf'$ and $\Wbf\ra\Wbf'$ are equivariant \wrt $\Pbf\ra\Pbf'$, hence we get $\alpha:\Vbf\ra\Vbf'$ and $\beta:\Ubf\ra\Ubf'$ equivariant \wrt $f:\Gbf\ra\Gbf'$. The recovery of $f:\Pbf\ra\Pbf'$ by $\alpha$, $\beta$ and $f:\Gbf\ra\Gbf'$ is immediate.
 
 Conversely, if we are given fibred mixed Shimura data $(\Pbf,\Ubf,Y)=(\Ubf,\Vbf)\rtimes(\Gbf,X)$ and $(\Pbf',\Ubf',Y')=(\Ubf',\Vbf')\rtimes(\Gbf',X')$, a $\Qbb$-group homomorphism $f:\Gbf\ra\Gbf'$ with $f$-equivariant maps $\alpha$ and $\beta$ naturally gives rise to a unique $\Qbb$-group homomorphism $f:\Pbf\ra\Pbf'$ subject to the formula $(u,v,g)\mapsto(\beta(u),\alpha(v),f(g))$, and $f_*:\Yfrak(\Pbf)\ra\Yfrak(\Pbf')$ sends $Y$ the $\Pbf(\Rbb)\Ubf(\Cbb)$-orbit of $X$ into $Y'$ the $\Pbf'(\Rbb)\Ubf'(\Cbb)$-orbit of $X'$. It is easy to verify that for any $y\in Y$, the mixed Hodge structure on $\Lie\Pbf$ satisfies the constraints in \ref{mixed Shimura data} and that $(f,f_*):(\Pbf,\Ubf,Y)\ra(\Pbf',\Ubf',Y')$ is a morphism of mixed Shimura data, using the Hodge type conditions on $\Ubf$, $\Vbf$ and $\Ubf'$, $\Vbf'$ given in (1).\end{proof}

  In particular we have the following corollaries on pure sections and subdata:
  \begin{corollary}[pure section]\label{pure section} If $(\Pbf,\Ubf,Y)=\Wbf\rtimes(\Gbf,X)$ is a fibred mixed Shimura datum, then the pure sections of $(\Pbf,\Ubf,Y)\epim(\Gbf,X)$ are exactly subdata of the form $(w\Gbf w^\inv,wX)$ with $w$ running through $\Wbf(\Qbb)$, and they are the same as maximal pure subdata of $(\Pbf,\Ubf,Y)$.  
  \end{corollary}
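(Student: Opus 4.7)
My plan is to identify the pure sections of $(\Pbf,\Ubf,Y)\epim(\Gbf,X)$ with the $\Wbf(\Qbb)$-conjugates of the prescribed Levi decomposition, invoking Mostow's theorem on the $\Qbb$-conjugacy of Levi subgroups of a connected linear $\Qbb$-group in characteristic zero. For the ``easy'' direction, I would first observe that since $w\in\Wbf(\Qbb)\subset\Pbf(\Qbb)$, inner conjugation by $w$ is a $\Qbb$-automorphism of $(\Pbf,\Ubf,Y)$ as a mixed Shimura datum, so it carries the pure subdatum $(\Gbf,X)$ to the subdatum $(w\Gbf w^\inv,wX)$, which is therefore a pure Shimura subdatum of $(\Pbf,\Ubf,Y)$. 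The composition $w\Gbf w^\inv\mono\Pbf\epim\Gbf$ sends $wgw^\inv\mapsto g$ (using $\pi_\Wbf(w)=1$) and is a $\Qbb$-isomorphism; likewise $\pi_\Wbf$ identifies $wxw^\inv\in wX$ with $x\in X$ bijectively. Hence the composition $(w\Gbf w^\inv,wX)\mono(\Pbf,\Ubf,Y)\epim(\Gbf,X)$ is an isomorphism of Shimura data, and $(w\Gbf w^\inv,wX)$ is a pure section.

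For the converse, I would take an arbitrary pure section $(\Gbf',X')$. The isomorphism requirement forces $\Gbf'$ to be a connected reductive $\Qbb$-subgroup with $\Gbf'\cap\Wbf=1$ and $\pi_\Wbf\colon\Gbf'\isom\Gbf$, which means precisely that $\Gbf'$ is a Levi $\Qbb$-subgroup of $\Pbf$. Mostow's theorem then furnishes $w\in\Wbf(\Qbb)$ with $\Gbf'=w\Gbf w^\inv$. For any $x'\in X'$, the point $x'$ factors through $\Gbf'_\Rbb=w\Gbf_\Rbb w^\inv$, so $x'=wxw^\inv$ for a unique $x\colon\Sbb\to\Gbf_\Rbb$; applying $\pi_\Wbf$ yields $x=\pi_\Wbf\circ x'\in X$, so $x'\in wX$. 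Since $X'$ and $wX$ are both $\Gbf'(\Rbb)$-orbits sharing $x'$, they coincide, giving $(\Gbf',X')=(w\Gbf w^\inv,wX)$.

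For the maximality claim, let $(\Hbf,X'')$ be any pure Shimura subdatum of $(\Pbf,\Ubf,Y)$. Then $\Hbf$ is reductive, so $\Hbf\cap\Wbf$ is simultaneously reductive and unipotent in characteristic zero, hence trivial; by another application of Mostow, $\Hbf$ is contained in some Levi $\Qbb$-subgroup $w\Gbf w^\inv$ of $\Pbf$, and the factorization argument of the previous paragraph then shows $X''\subset wX$, so $(\Hbf,X'')\subset(w\Gbf w^\inv,wX)$. Conversely a pure section already has reductive part of the maximal dimension $\dim\Gbf$, and its $X'$ is a full $\Gbf'(\Rbb)$-orbit, so no pure subdatum can strictly contain it. The only substantive input is the classical $\Qbb$-rational conjugacy of Levi subgroups; the remaining verifications are a routine unwinding of the Hodge-theoretic axioms under inner conjugation by a $\Qbb$-rational element of $\Pbf$, so I anticipate no serious obstacle.
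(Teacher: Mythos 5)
Your proposal is correct and follows essentially the same route as the paper: both directions rest on the $\Wbf(\Qbb)$-conjugacy of Levi $\Qbb$-subgroups of $\Pbf$, followed by the observation that the composition $\Gbf\mono\Pbf\epim\Gbf$ being the identity forces $X'=wX$, and the maximality claim is handled the same way. You merely spell out details (the unwinding of $\pi_\Wbf\circ x'$ and the containment of an arbitrary pure subdatum in a pure section) that the paper declares clear.
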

  
  \begin{proof}
 It is clear that each $(w\Gbf w^\inv,wX)$ is a pure section for any given $w\in\Wbf(\Qbb)$. Conversely, if $(\Gbf',X')$ is a pure section in the sense of \ref{morphisms of mixed Shimura data}(3), then $\Gbf'=w\Gbf w^\inv$ is conjugate to $\Gbf$ by some $w\in\Wbf(\Qbb)$, hence $(w^\inv\Gbf'w,w^\inv X')=(\Gbf,w^\inv X')$ is a pure subdatum of $(\Pbf,\Ubf,Y)$. Since $\Pbf=\Wbf\rtimes\Gbf$, the composition of the evident maps between $\Yfrak(\Gbf)\ra\Yfrak(\Pbf)\ra\Yfrak(\Gbf)$ induced by $\Gbf\mono\Pbf\epim\Gbf$ is the identity. Apply the composition to the subset $w^\inv X'\subset\Xfrak(\Gbf)\subset\Yfrak(\Gbf)$, we see that its image in $\Yfrak(\Gbf)$ must be $X$ because $(\Gbf,w^\inv X')$ is a pure section, hence $w^\inv X'=X$.
 
 The maximality is clear because maximal reductive $\Qbb$-subgroups of $\Pbf$ are exactly the Levi $\Qbb$-subgroups, hence conjugate to $\Gbf$ by $\Wbf(\Qbb)$.
  \end{proof}
 
 \begin{corollary}[structure of subdata]\label{structure of subdata} Let $(\Pbf,\Ubf,Y)$ be a mixed Shimura datum fibred as $(\Ubf,\Vbf)\rtimes(\Gbf,X)$, and let $(\Pbf',\Ubf',Y')$ be a mixed Shimura subdatum. Then there exists a pure Shimura subdatum $(\Gbf',X')$ of $(\Gbf,X)$, an element $w\in\Wbf(\Qbb)$, and a unipotent $\Qbb$-subgroup $\Wbf'$ of $\Wbf$, such that $\Pbf'=\Wbf'\rtimes w\Gbf' w^\inv$ and $(\Pbf',\Ubf',Y')\isom(\Ubf',\Vbf')\rtimes(w\Gbf'w^\inv,wX')$ as a fibred mixed Shimura datum. Here $\Vbf'=\Wbf'/\Ubf'$ resp. $\Ubf'=\Ubf\cap\Wbf'$ is a $\Qbb$-subspace of $\Vbf$ resp. of $\Ubf$ stabilized under $w\Gbf'w^\inv$, and $\Vbf\times\Vbf\ra\Ubf$ restricts to $\Vbf'\times\Vbf'\ra\Ubf'$ and is equivariant under $w\Gbf'w^\inv$. The pure subdatum $(w\Gbf'w^\inv,wX')$ in $(\Pbf,\Ubf,Y)$ is a pure section of $(\Pbf',\Ubf',Y')$.
 \end{corollary}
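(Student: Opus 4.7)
The plan is to extract the pure section of the subdatum first, conjugate it into the ambient Levi $\Gbf$ by a rational element of $\Wbf$, and then read off the unipotent data by strictness of the weight filtration on mixed Hodge structures.

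First, I apply \ref{unipotent radical and Levi decomposition}(3) to the subdatum $(\Pbf',\Ubf',Y')$ itself: this yields a Levi decomposition $\Pbf'=\Wbf'\rtimes\Gbf''$ with $\Wbf'$ the unipotent radical of $\Pbf'$, together with a pure Shimura subdatum $(\Gbf'',X'')\mono(\Pbf',\Ubf',Y')$ that is a pure section of its projection onto the pure quotient. Composing the inclusion with $(\Pbf',\Ubf',Y')\mono(\Pbf,\Ubf,Y)$ realizes $(\Gbf'',X'')$ as a pure Shimura subdatum of the ambient datum.

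Next, I conjugate $\Gbf''$ into $\Gbf$. Since $\Gbf''$ is reductive over $\Qbb$, it lies in a Levi $\Qbb$-subgroup of $\Pbf$, and any two Levi $\Qbb$-subgroups are conjugate by an element of $\Wbf(\Qbb)$ (Mostow's theorem in characteristic zero, already invoked in the proof of \ref{pure section}). Hence there exists $w\in\Wbf(\Qbb)$ with $\Gbf''\subset w\Gbf w^\inv$. Set $\Gbf':=w^\inv\Gbf''w\subset\Gbf$ and $X':=w^\inv X''$. Since $w\in\Pbf(\Qbb)\subset\Pbf(\Rbb)$, conjugation by $w^\inv$ preserves the orbit $Y\subset\Yfrak(\Pbf)$; moreover each element of $X'$ lies in $\Yfrak(\Gbf)$, and the same argument as in \ref{unipotent radical and Levi decomposition}(3) shows $X'\subset X$, giving a pure Shimura subdatum $(\Gbf',X')$ of $(\Gbf,X)$ with $(\Gbf'',X'')=(w\Gbf'w^\inv,wX')$.

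Third, I identify the unipotent pieces via weight filtrations. By \ref{mixed Shimura data}(iii) the rational weight filtration on $\Lie\Pbf$ satisfies $W_{-2}=\Lie\Ubf$ and $W_{-1}=\Lie\Wbf$. Strictness of morphisms of mixed Hodge structures applied to $\Lie\Pbf'\mono\Lie\Pbf$ gives $\Lie\Ubf'=\Lie\Ubf\cap\Lie\Pbf'$ and $\Lie\Wbf'=\Lie\Wbf\cap\Lie\Pbf'$, whence $\Ubf'=\Ubf\cap\Wbf'$ and $\Wbf'\subset\Wbf$. Setting $\Vbf':=\Wbf'/\Ubf'$, we obtain a $\Qbb$-subspace of $\Vbf=\Wbf/\Ubf$; normality of $\Wbf'$ and $\Ubf'$ in $\Pbf'$ makes them stable under the Levi $w\Gbf'w^\inv$, the commutator pairing $\psi$ of \ref{unipotent radical and Levi decomposition}(1) restricts to $\psi':\Vbf'\times\Vbf'\ra\Ubf'$, and $\psi'$ remains $w\Gbf'w^\inv$-equivariant. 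By \ref{fibred mixed Shimura data new}(2) these data reassemble into the canonical isomorphism of fibred mixed Shimura data $(\Pbf',\Ubf',Y')\isom(\Ubf',\Vbf')\rtimes(w\Gbf'w^\inv,wX')$, and $(w\Gbf'w^\inv,wX')$ is a pure section of $(\Pbf',\Ubf',Y')$ by construction.

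The only nontrivial point is the $\Qbb$-rational conjugacy in the second step: one needs $w$ in $\Wbf(\Qbb)$ rather than merely in $\Wbf(\Rbb)$ or $\Wbf(\bar\Qbb)$, and the resulting $X'$ must land in the prescribed orbit $X$ rather than in some other $\Gbf(\Rbb)$-orbit in $\Xfrak(\Gbf)$. The first is Mostow's theorem on $\Qbb$-rational Levi decompositions; the second follows because $w\in\Pbf(\Qbb)\subset\Pbf(\Rbb)$ preserves $Y$ while each $w^\inv x''$ already lies in $\Xfrak(\Gbf)$, whose intersection with $Y$ equals $X$ by \ref{unipotent radical and Levi decomposition}(3). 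All other verifications are routine consequences of Hodge-theoretic strictness and the universal property in \ref{fibred mixed Shimura data new}.
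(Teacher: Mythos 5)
Your proof is correct and follows essentially the same route as the paper's: extract a maximal pure subdatum (pure section) of $(\Pbf',\Ubf',Y')$, conjugate it into the fixed Levi $(\Gbf,X)$ by an element of $\Wbf(\Qbb)$ via the rational conjugacy of Levi subgroups, and recover $\Ubf'$, $\Wbf'$, $\Vbf'$ and the restricted pairing from the weight filtration. The only cosmetic difference is order of operations — the paper first projects $(\Pbf_0,Y_0)$ to $(\Gbf',X')\subset(\Gbf,X)$ and then invokes \ref{pure section} inside $\Wbf\rtimes(\Gbf',X')$, whereas you conjugate first and then check $w^\inv X''\subset X$ — and you correctly identify and justify that last point, which is the one genuinely delicate step.
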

 
 \begin{proof}
 The idea is the same as \cite{chen kuga} 2.10. Let $(\Pbf_0,Y_0)$ be a maximal pure Shimura subdatum of $(\Pbf',\Ubf',Y')$, then its image $(\Gbf',X')$ in $(\Gbf,X)$ is a pure Shimura subdatum. Note that $(\Pbf',\Ubf',Y')$ is a subdatum of $\Wbf\rtimes(\Gbf',X')$ containing a maximal pure subdatum $(\Pbf_0,Y_0)=(w\Gbf'w^\inv,w')$ for some $w\in\Wbf(\Qbb)$. We are thus reduced to the case when $(\Gbf',X')=(\Gbf,X)$. 
 
 In this case $(\Pbf_0,Y_0)=(w\Gbf w^\inv,wX)$, and the unipotent radical $\Wbf'$ of $\Pbf'$ is naturally a $\Qbb$-subgroup of $\Wbf$ stabilized by $w\Gbf w^\inv$-conjugation. Using $(w\Gbf w^\inv,wX)$ as a pure section corresponding to the Levi decomposition $\Pbf'=\Wbf'\rtimes(w\Gbf w^\inv)$, we see that the intersection $\Ubf':=\Ubf\cap\Wbf'$ is the weight -2 part due to the rational weight filtration given by any $y\in wX$, and $\Vbf'=\Wbf'/\Ubf'$ equals the image of $\Wbf'$ in $\Vbf$, which is clearly stable under $w\Gbf w^\inv$. The bilinear map $\psi:\Vbf\times\Vbf\ra\Ubf$ clearly restricts to $\psi':\Vbf'\times\Vbf'\ra\Ubf'$ and is $w\Gbf w^\inv$-equivariant, as immediate consequences of the Lie bracket structure on $\Lie\Wbf'$ and the Hodge types. 
 \end{proof}
  
 The following lemma is the mixed analogue of \cite{ullmo yafaev} Lemma 3.7.  
  
 \begin{lemma}[common Mumford-Tate group]\label{common Mumford-Tate group}. Let $(\Pbf,Y)=\Wbf\rtimes(\Gbf,X)$ be a mixed Shimura datum.   If $\Pbf'\subset\Pbf$ is a $\Qbb$-subgroup coming from some subdatum $(\Pbf',Y')$, then there are only finitely many subdata of the form $(\Pbf',Y'')$ in $(\Pbf,Y)$. 
 \end{lemma}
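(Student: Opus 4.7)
The plan is to reduce to the pure case of \cite{ullmo yafaev} Lemma 3.7 by means of the structure theorem \ref{structure of subdata}: any subdatum $(\Pbf',Y'')$ is determined, up to finite ambiguity, by a pure section inside $(\Pbf,Y)$, and such pure sections are in bijection with pure Shimura subdata of $(\Gbf,X)$ whose underlying group is fixed once $\Pbf'$ is fixed.

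Fix a Levi $\Qbb$-subgroup $\Lbf_0$ of $\Pbf'$, and write $\Wbf'=\Pbf'\cap\Wbf$ so that $\Pbf'=\Wbf'\rtimes\Lbf_0$. Since $\Lbf_0$ is reductive, the restriction $\pi_\Wbf|_{\Lbf_0}:\Lbf_0\ra\Gbf$ is a closed embedding and identifies $\Lbf_0$ with a $\Qbb$-subgroup $\Gbf'\subset\Gbf$; pick $w_0\in\Wbf(\Qbb)$ with $\Lbf_0=w_0\Gbf'w_0^{\inv}$. Given any subdatum $(\Pbf',Y'')$, Corollary \ref{structure of subdata} supplies a pure section of it, i.e.\ a pure Shimura subdatum $(\Lbf,X_\Lbf)$ of $(\Pbf,Y)$ with $\Lbf$ some Levi of $\Pbf'$. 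All Levis of $\Pbf'$ are $\Wbf'(\Qbb)$-conjugate, so after conjugating $(\Lbf,X_\Lbf)$ by an element of $\Wbf'(\Qbb)\subset\Pbf'(\Rbb)$ we may assume $\Lbf=\Lbf_0$; this replacement does not affect $Y''=\Pbf'(\Rbb)\Ubf'(\Cbb)\cdot X_\Lbf$. Denoting the resulting pure Shimura structure on $\Lbf_0$ by $\tilde X_0$, we have $Y''=\Pbf'(\Rbb)\Ubf'(\Cbb)\cdot\tilde X_0$, so distinct $Y''$ yield distinct $\tilde X_0$, and it suffices to bound the number of pure Shimura structures $\tilde X_0$ on $\Lbf_0$ contained in $Y$.

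Applying $\pi_\Wbf$ to the pure subdatum $(\Lbf_0,\tilde X_0)\subset(\Pbf,Y)$ produces a pure subdatum $(\Gbf',X')$ of $(\Gbf,X)$, where $X':=\pi_\Wbf(\tilde X_0)\subset\pi_\Wbf(Y)=X$. The isomorphism $\pi_\Wbf|_{\Lbf_0}:\Lbf_0\ra\Gbf'$ induces a bijection $\Xfrak(\Lbf_0)\ra\Xfrak(\Gbf')$ sending $\tilde X_0$ to $X'$; conversely, $X'$ lifts back to $w_0\cdot X'\subset\Xfrak(\Lbf_0)$, which lies in $Y$ because $w_0\in\Pbf(\Rbb)$ preserves the orbit $Y$. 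Thus the pure Shimura structures on $\Lbf_0$ lying in $Y$ are in bijection with the pure Shimura subdata of $(\Gbf,X)$ with underlying group $\Gbf'$. The existence of the given $(\Pbf',Y')$ guarantees that $\Gbf'$ really arises from such a subdatum, so \cite{ullmo yafaev} Lemma 3.7 applies and gives finitely many such $X'$; we conclude that there are finitely many $\tilde X_0$, hence finitely many $Y''$. The main obstacle lies in this Levi bookkeeping: one must use $\Wbf'(\Qbb)$-conjugacy of Levis to align the pure section with the chosen $\Lbf_0$, and then observe that $\pi_\Wbf|_{\Lbf_0}$ agrees with conjugation by $w_0^{\inv}$ as an isomorphism $\Lbf_0\ra\Gbf'$, so that finiteness transfers faithfully from the pure datum $(\Gbf,X)$.
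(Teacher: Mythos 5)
Your proof is correct and follows essentially the same route as the paper's: fix a Levi decomposition $\Pbf'=\Wbf'\rtimes\Lbf_0$ with $\Lbf_0=w_0\Gbf'w_0^{\inv}$, observe via \ref{structure of subdata} that every subdatum $(\Pbf',Y'')$ is determined by a pure section on $\Lbf_0$, and reduce the count to the finiteness of pure subdata of $(\Gbf,X)$ with group $\Gbf'$, which is \cite{ullmo yafaev} Lemma 3.7. The only difference is cosmetic — you transport the pure sections back to $(\Gbf,X)$ by $w_0^{\inv}$-conjugation and spell out the Levi alignment, while the paper applies the pure case directly to $(w\Gbf w^{\inv},wX)$.
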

 
 \begin{proof}
 When $(\Pbf,\Ubf,Y)$ is pure, this is proved in \cite{ullmo yafaev} Lemma 3.7. For a general $\Qbb$-subgroup $\Pbf'$, if there exists  a subdatum of the form $(\Pbf',\Ubf',Y')$, then $\Ubf'=\Ubf\cap\Pbf'$ and the unipotent radical $\Wbf'=\Wbf\cap\Pbf'$ in $\Pbf'$ are independent of $Y'$ by the constraints of Hodge types. Choose a Levi decomposition $\Pbf'=\Wbf'\rtimes w\Gbf' w^\inv$, we have $(\Pbf',\Ubf',Y')=\Wbf'\rtimes(w\Gbf' w^\inv,wX')$ for some pure Shimura subdatum $(w\Gbf'w^\inv,wX')\subset(w\Gbf w^\inv,wX)$, namely the $(\Pbf',\Ubf',Y')$ is constructed out of $(w\Gbf' w^\inv,wX')$ by some unipotent $\Qbb$-subgroup $\Wbf'$. There are at most finitely many pure Shimura subdatum of the form $(w\Gbf'w^\inv,wX')$ in $(w\Gbf w^\inv,wX)$, hence the finiteness of mixed Shimura subdata associated to $\Pbf'$ follows. \end{proof}

 We also include the following result that allow us to  generate subdata by ``taking orbits'':
 
 \begin{lemma}[generating subdata]\label{generating subdata}
 
 Let $(\Pbf,\Ubf,Y)=\Wbf\rtimes(\Gbf,X)$ be a mixed Shimura datum. Let $\Pbf'$ be a $\Qbb$-subgroup of $\Pbf$ admitting no compact semi-simple quotient $\Qbb$-group, and $\Pbf'_\Cbb\supset y(\Sbb_\Cbb)$ for some $y\in Y$. Then 
 
 (1) $\Pbf'=\Wbf'\rtimes w\Gbf'w^\inv$ for some reductive $\Qbb$-subgroup $\Gbf'$ of $\Gbf$ and some $w\in\Wbf(\Qbb)$ and unipotent $\Qbb$-subgroup $\Wbf'\subset\Wbf$. 
 
 (2) if moreover the connected center of $w\Gbf'w^\inv$ acts on $\Wbf'$ through a $\Qbb$-torus subject to \ref{mixed Shimura data}(v), then the triple $(\Pbf',\Ubf',Y')$ with $\Ubf'=\Ubf\cap\Wbf'$ and $Y'=\Pbf'(\Rbb)\Ubf'(\Cbb)y$ is a mixed Shimura subdatum of $(\Pbf,\Ubf,Y)$.
 
 (3) In particular, if $\Pbf'=\MT(Y^+)$ is the generic Mumford-Tate group of a connected component of $Y$, then  (1) and (2) holds for $\Pbf'$, with $\Pbf'^\der=\Pbf^\der$.
 \end{lemma}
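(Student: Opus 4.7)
The plan is organized by the three parts of the claim. Part (1) is a structural result for linear $\Qbb$-groups that reduces to Mostow's Levi decomposition theorem. Part (2) builds the Hodge-theoretic structure on the subdatum by exhibiting a pure section and invoking \ref{fibred mixed Shimura data new}(1). Part (3) follows from (1)--(2) together with the standard fact that Mumford-Tate groups of polarizable mixed Hodge structures satisfy the Cartan involution condition, and Deligne's result on the generic Mumford-Tate group of a pure Shimura datum.

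For (1), the first step is to choose a Levi decomposition $\Pbf' = \mathrm{Rad}_u(\Pbf') \rtimes \Gbf''$ over $\Qbb$ via Mostow's theorem in characteristic zero. Because $\Gbf''$ is reductive while $\Wbf$ is unipotent, the restriction $\pi_\Wbf|_{\Gbf''}$ is injective, so $\Gbf' := \pi_\Wbf(\Gbf'') \subset \Gbf$ is reductive and $\Gbf''$ sits inside the preimage $\pi_\Wbf^{-1}(\Gbf') = \Wbf \rtimes \Gbf'$ as a Levi subgroup. The canonical Levi $\Gbf'$ of the same preimage is then $\Wbf(\Qbb)$-conjugate to $\Gbf''$ by Mostow, yielding $w \in \Wbf(\Qbb)$ with $\Gbf'' = w \Gbf' w^{-1}$. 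The Hodge-theoretic hypothesis $y(\Sbb_\Cbb) \subset \Pbf'_\Cbb$, combined with the rational weight filtration on $\Lie \Pbf$ where $W_{-1} = \Lie \Wbf$, constrains $\mathrm{Rad}_u(\Pbf')$ to respect this filtration, forcing $\mathrm{Rad}_u(\Pbf') \subset \Wbf$. Taking $\Wbf' := \mathrm{Rad}_u(\Pbf')$ completes (1).

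For (2), I would exhibit a pure section of $(\Pbf', \Ubf', Y')$ inside $(\Pbf, \Ubf, Y)$ following the recipe of \ref{unipotent radical and Levi decomposition}(3). Since $\pi_\Ubf \circ y$ is defined over $\Rbb$ with image an $\Rbb$-torus, one can conjugate $y$ by an element of $\Ubf(\Cbb)\Wbf(\Rbb)$ to arrange the image inside a Levi $\Cbb$-subgroup; the hypothesis $y(\Sbb_\Cbb) \subset \Pbf'_\Cbb$ then yields $x \in Y$ with $x(\Sbb_\Cbb) \subset (w\Gbf' w^{-1})_\Cbb$. The orbit $wX' := (w\Gbf' w^{-1})(\Rbb) \cdot x$ then defines a pure Shimura subdatum of $(\Pbf, \Ubf, Y)$, with condition (iv) of \ref{mixed Shimura data} guaranteed by the hypothesis that $\Pbf'$ has no compact semi-simple quotient (so its maximal reductive quotient, identified with $w\Gbf'w^{-1}$, has no compact adjoint factors). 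Then \ref{fibred mixed Shimura data new}(1) applied to this pure section and to the subrepresentations $\Ubf' := \Ubf \cap \Wbf'$, $\Vbf' := \Wbf'/\Ubf'$ of $\Wbf$ (with $\psi$ restricted accordingly, and the central-torus condition supplied by the hypothesis), produces a fibred mixed Shimura datum with total space $(\Pbf', \Ubf', \Pbf'(\Rbb)\Ubf'(\Cbb) \cdot y) = (\Pbf', \Ubf', Y')$, clearly realized as a subdatum of $(\Pbf, \Ubf, Y)$.

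For (3), I would first remark that $\Pbf' = \MT(Y^+)$ contains $y(\Sbb_\Cbb)$ for every $y \in Y^+$ by definition, and that the polarization on the graded pieces of the weight filtration (built into the mixed Shimura datum axioms) gives a Cartan involution on the reductive quotient of $\Pbf'$, hence $\Pbf'$ has no compact semi-simple quotient; thus (1) and (2) apply. For the equality $\Pbf'^{\der} = \Pbf^{\der}$, combine the pure-case analogue due to Deligne (the generic Mumford-Tate group of a connected component $X^+$ of the pure quotient has derived group $\Gbf^{\der}$) with the observation that the $\Pbf(\Rbb)^+\Ubf(\Cbb)$-homogeneity of $Y^+$ together with the Hodge-type constraints on $\Ubf$ and $\Vbf$ forces $\Wbf \subset \Pbf'$; then \ref{unipotent radical and Levi decomposition}(4) gives $\Pbf'^{\der} = \Wbf \rtimes \Gbf^{\der} = \Pbf^{\der}$. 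The most delicate step is the containment $\mathrm{Rad}_u(\Pbf') \subset \Wbf$ in (1) and its counterpart $\Wbf \subset \Pbf'$ in (3), both of which require careful use of the weight filtration as a rational, $\Pbf'$-invariant structure on $\Lie \Pbf$.
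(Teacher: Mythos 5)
Your overall architecture matches the paper's (Levi decomposition for (1), sub-mixed-Hodge-structure plus a pure section for (2), reduction to the pure case plus forcing $\Wbf\subset\Pbf'$ for (3)), but the pivotal step of (1) is not actually proved. You assert that the rational weight filtration on $\Lie\Pbf$ ``forces'' $\mathrm{Rad}_u(\Pbf')\subset\Wbf$. It does not: $\Lie\,\mathrm{Rad}_u(\Pbf')$ is indeed an $\Ad\circ y$-stable rational subspace, hence a sub-mixed-Hodge-structure of $\Lie\Pbf$, but nothing about the filtration prevents its weight-zero graded piece from being non-zero. The containment $\mathrm{Rad}_u(\Pbf')\subset\Wbf$ is exactly equivalent to the reductivity of the image $\pi_\Wbf(\Pbf')\subset\Gbf$, and that is the one non-formal input of part (1). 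The paper obtains it from axiom (iv) of \ref{mixed Shimura data}: the projection $x=\pi_{\Wbf*}y$ is defined over $\Rbb$, $x(\Sbb)\subset\pi_\Wbf(\Pbf')_\Rbb$, and the centralizer of $x(\Sbb)$ in $\Gbf(\Rbb)$ is compact, so \cite{eskin mozes shah} Lemma 5.1 --- equivalently, the fact that a connected subgroup stable under the Cartan involution $\Int(x(\ibf))$ is reductive --- gives reductivity of the image. Your part (1) never invokes the Cartan involution, so the essential point is missing; once it is supplied, the rest of your (1) (Mostow's conjugacy of Levi subgroups under $\Wbf(\Qbb)$) is exactly what the paper uses.

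Two smaller imprecisions. In (3) you deduce ``no compact semi-simple quotient'' for $\MT(Y^+)$ from the existence of a Cartan involution on its reductive quotient; a Cartan involution does not exclude compact $\Qbb$-factors (for a $\Qbb$-group with compact real points the Cartan involution is trivial). The correct route, which the paper takes, is to show that the image of $\MT(Y^+)$ in $\Gbf$ equals $\MT(X^+)$ with derived group $\Gbf^\der$, and then import ``no compact $\Qbb$-factors'' from axiom (iv) for $(\Gbf,X)$. Likewise your appeal to ``homogeneity plus Hodge-type constraints'' for $\Wbf\subset\MT(Y^+)$ needs to be made precise; the paper does this via \ref{common Mumford-Tate group}: only finitely many subdata $(\Pbf',\Ubf',Y'_i)$ arise, each $Y_i^+$ is a vector bundle over $X^+$ with fibre $\Wbf'(\Rbb)\Ubf'(\Cbb)$, and a finite union of these cannot contain $Y^+$ unless $\Wbf'=\Wbf$, by a dimension count.
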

 
 \begin{proof}
 
 (1) The image of $\Pbf'$ along $\pi=\pi_\Wbf:\Pbf\ra\Gbf$ is a $\Qbb$-subgroup $\Gbf'$ of $\Gbf$ such that $\Gbf'_\Cbb\supset x(\Sbb_\Cbb)$ for $x=\pi_*y$. Since $x$ is already defined over $\Rbb$ by \ref{mixed Shimura data}(i), we have $x(\Sbb)\subset\Gbf'_\Rbb\subset\Gbf_\Rbb$. Since the centralizer of $x(\Sbb)$ in $\Gbf_\Rbb$ is compact, by \cite{eskin mozes shah} Lemma 5.1 we see that $\Gbf'$ is reductive. The kernel $\Wbf':=\Ker(\Pbf'\ra\Gbf')$ is contained in $\Wbf$, hence unipotent. Thus $\Pbf'$ admits a Levi decomposition of the form $\Wbf'\rtimes\Hbf'$, where $\Hbf'$ is a maximal reductive $\Qbb$-subgroup of $\Pbf'$. $\Hbf$ extends to a maximal reductive $\Qbb$-subgroup in $\Pbf$ of the form $w\Gbf w^\inv$, hence $w^\inv\Hbf w$ is a reductive $\Qbb$-subgroup of $\Gbf$, and it coincides with the image of $\Pbf'$ modulo $\Wbf'$, which gives $\Hbf=w\Gbf'w^\inv$.
 
 (2) Note that $w\Gbf'w^\inv$ admits no compact semi-simple quotient $\Qbb$-group as this is already true for $\Pbf'$. Since the homomorphism $y:\Sbb_\Cbb\ra\Pbf_\Cbb$ factors through $\Pbf'_\Cbb$, we see that the Lie algebra $\pfrak'=\Lie\Pbf'$ is a rational mixed Hodge substructure of $\pfrak=\Lie\Pbf$, where the weight filtration is induced from the one on $\pfrak$ by restriction, and the Hodge types do not exceed the set $$\{(-1,-1),(-1,0),(0,-1),(-1,1),(0,0),(1,-1)\}.$$ Thus $\Ubf'=\Ubf\cap\Pbf'$ is the weight -2 part and $\Wbf'$ is the part of weight at most -1. The involution induced by $y(\ibf)$ in $\Gbf_\Rbb$ stabilizes $\Gbf'_\Rbb$, hence it induces further a Cartan involution on $\Gbf'^\ad_\Rbb$ because $\Gbf'^\ad$ admits no compact $\Qbb$-factors. The remaining conditions in \ref{mixed Shimura data} are automatic, hence $(\Pbf',\Ubf',Y')$ is a mixed Shimura datum, and it is clearly a subdatum of $(\Pbf,\Ubf,Y)$. 
 
 (3) When $\Pbf'=\MT(Y^+)$, the image of $Y^+$ in $X$ is a connected component $X^+$ of $X$, and the image $\Gbf'$ of $\Pbf'$ is a reductive $\Qbb$-subgroup of $\Gbf$ such that $x(\Sbb)\subset\Gbf'_\Rbb$ for all $x\in X^+$. If $\MT(X^+)\subsetneq \Gbf'$, then the pre-image $\Pbf''$ of $\MT(X^+)$ in $\Pbf'$ is a proper $\Qbb$-subgroup and $\Pbf''_\Cbb\supset y(\Sbb_\Cbb)$ for all $y\in Y^+$, which is absurd, and we get $\Gbf'=\MT(X^+)$. When $x$ runs through $X^+$, using \ref{common Mumford-Tate group} we only get finitely many pure subdata of the form $(\Gbf',X'_{i}=\Gbf'(\Rbb)x)$, $i=1,\cdots,m$. Each $X'_i$ is an complex submanifold of $X$, whose connected components are Hermitian symmetric subdomains of connected components of $X$, and the finite union $\bigcup_iX'_i$ contains $X^+$. It turns out that at least one of them, written as $X'$, is of dimension equal to $\dim X^+$, and we must have $X'^+=X^+$ for some connected component of $X'$. Since $X'^+$ resp. $X^+$ is homogeneous under $\Gbf'^\der(\Rbb)^+$ resp. under $\Gbf^\der(\Rbb)^+$, the inclusion of connected semi-simple Lie groups  $\Gbf'^\der(\Rbb)^+\subset\Gbf^\der(\Rbb)^+$ has to be an equality, and we get $\Gbf'^\der=\Gbf^\der$, and only one subdatum of the form $(\Gbf',X')$ is produced this way: $X'=\Gbf'(\Rbb)X^+$. 
 
 In particular, the center of $\Gbf'$ is a $\Qbb$-subtorus of $\Gbf$, and its action on $\Wbf$ satisfies the condition (v) in \ref{mixed Shimura data}.
 
 The kernel of $\Pbf'\ra\Gbf'$ is unipotent, hence $\Pbf'=\Wbf'\rtimes\Gbf'$ for some unipotent $\Qbb$-subgroup $\Wbf'\subset\Wbf$, which is the extension of $\Vbf'=\Wbf'/\Ubf'$ by $\Ubf':=\Ubf\cap\Pbf'$. When $y$ runs through $Y^+$, again by \ref{common Mumford-Tate group} we only get finitely many subdata of the form $(\Pbf',\Ubf',Y'_i)$ with $Y'_i=\Pbf'(\Rbb)\Ubf'(\Cbb)y_i$ for some $y_i\in Y^+$, and each connected component of $Y'_i$ is a complex submanifold of $Y$ homogeneous under $\Pbf'(\Rbb)^+\Ubf'(\Cbb)$. We thus have a finite union $\bigcup_iY_i$ containing $Y^+$. By \cite{pink thesis} 2.19, each $Y_i^+$ is a complex vector bundle over an Hermitian symmetric domain isomorphic to $X^+$, and the fibers are isomorphic to $\Wbf'(\Rbb)\Ubf'(\Cbb)$. If $\Wbf'\subsetneq\Wbf$, then the finite union $\bigcup_iY_i$ cannot contain $Y^+$ by dimension arguments because $Y^+$ is isomorphic to a complex vector bundle over $X^+$ with fibers isomorphic to $\Wbf(\Rbb)\Ubf(\Cbb)$. Hence we must have $\Wbf'=\Wbf$, and thus $\Pbf'^\der=\Wbf\rtimes\Gbf^\der=\Pbf^\der$. 
 
 \end{proof}
 
 \begin{corollary}[pure irreducibility]\label{pure irreducibility} Let $(\Pbf,Y)=\Wbf\rtimes(\Gbf,X)$ be a mixed Shimura datum fibred over a pure one $(\Gbf,X)$. Then $(\Pbf,Y)$ is irreducible \ifof $(\Gbf,X)$ is irreducible.
 
 \end{corollary}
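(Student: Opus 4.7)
The plan is to exploit the quotient morphism $\pi:(\Pbf,Y)\ra(\Gbf,X)$ (reduction modulo $\Wbf$, which is surjective on the analytic spaces since $\Wbf$ is unipotent) together with the functoriality of Mumford-Tate groups. Directly from the definition, for any morphism $(f,f_*)$ between mixed Shimura data one has $f(\MT(S))=\MT(f_*S)$ for $S$ a subset of the source: indeed $f(\MT(S))$ is a $\Qbb$-subgroup of the target containing each $(f\circ y)(\Sbb_\Cbb)$, while any $\Qbb$-subgroup $\Hbf'$ of the target containing all such $(f\circ y)(\Sbb_\Cbb)$ pulls back to a $\Qbb$-subgroup $f^{-1}(\Hbf')$ of the source containing $\MT(S)$, whose image under $f$ is contained in $\Hbf'$. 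Applied to $\pi$, this yields $\pi(\MT(Y))=\MT(X)$.

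The forward implication is then immediate: if $\MT(Y)=\Pbf$, then $\MT(X)=\pi(\MT(Y))=\pi(\Pbf)=\Gbf$, so $(\Gbf,X)$ is irreducible.

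For the converse, I would assume $(\Gbf,X)$ is irreducible and fix a connected component $Y^+$ of $Y$. Set $\Pbf'=\MT(Y^+)$. By \ref{generating subdata}(3) we know $\Pbf'^{\der}=\Pbf^{\der}=\Wbf\rtimes\Gbf^{\der}$, so in particular $\Wbf\subset\Pbf'\subset\MT(Y)$. Combining this with the identity $\pi(\MT(Y))=\MT(X)=\Gbf$ (using the irreducibility hypothesis), we see that $\MT(Y)$ contains $\Ker\pi=\Wbf$ and surjects under $\pi$ onto $\Gbf$; hence $\MT(Y)=\Pbf$, as required.

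The only non-routine ingredient is the structural result \ref{generating subdata}(3): it supplies the crucial inclusion $\Wbf\subset\MT(Y^+)$, which is precisely what prevents $\MT(Y)$ from collapsing to a proper Levi-type subgroup. Once that lemma is granted, the remainder of the argument is a short combination with the evident functoriality of Mumford-Tate groups along the reduction modulo $\Wbf$, and no further delicate point arises.
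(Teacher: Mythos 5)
Your proof is correct and follows essentially the same route as the paper: the forward direction is the functoriality of Mumford--Tate groups under the projection modulo $\Wbf$ (which the paper cites from its earlier work), and the converse rests, exactly as in the paper, on \ref{generating subdata}(3) to obtain $\Wbf\subset\MT(Y^+)$ and then on surjectivity onto $\Gbf$ to conclude $\MT(Y)=\Pbf$. Your phrasing via ``contains the kernel and surjects onto the quotient'' is a slightly tidier way to finish than the paper's appeal to the Levi decomposition from \ref{generating subdata}(1), but it is not a different argument.
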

 
 \begin{proof}
 If $(\Pbf,Y)$ is irreducible, then $(\Gbf,X)$ is irreducible by \cite{chen kuga} 2.4(2).
 
 Conversely, assume that $(\Gbf,X)$ is irreducible, and $\Pbf'$ is a $\Qbb$-subgroup of $\Pbf$ such that $y(\Sbb_\Cbb)\subset\Pbf'_\Cbb$. Write $\Gbf'$ for the image of $\Pbf'$ in $\Gbf$, then $x(\Sbb)\subset\Gbf'_\Rbb$ for all $x\in X$, which gives $\Gbf'=\Gbf$, and $\Pbf'=\Wbf'\rtimes w\Gbf w^\inv$ for some $w\in\Wbf(\Qbb)$ by \ref{generating subdata}(1). Since $Y\supset Y^+$ for any connected component $Y^+$ of $Y$, we have $\Pbf'\supset\MT(Y^+)$, and thus $\Pbf'\supset\Wbf$ by the arguments in \ref{generating subdata}(3), which gives $\Pbf'=\Wbf\rtimes w\Gbf w^\inv=\Wbf\rtimes\Gbf=\Pbf$.
 \end{proof}

 \begin{convention}\label{abbreviation of notations} In a mixed Shimura datum $(\Pbf,\Ubf,Y)$, the $\Qbb$-group $\Ubf$, the unipotent radical $\Wbf$, and thus the quotient $\Vbf=\Wbf/\Ubf$ as well, are uniquely determined by any $y\in Y$, because the weight filtration of the mixed Hodge structure given by $\Ad_\Pbf\circ y:\Sbb_\Cbb\ra\Pbf_\Cbb\ra\GLbf_{\pfrak \Cbb}$ on $\pfrak=\Lie\Pbf$ determines $\Lie\Ubf$ and $\Lie\Wbf$, which in turn determine the connected unipotent $\Qbb$-groups $\Ubf$ and $\Wbf$. We will call $\Ubf$ the unipotent part of weight -2 of $\Pbf$ or of the datum. In the sequel we simply write $(\Pbf,Y)$ for the datum, and $\Ubf$ always denotes the unipotent part of weight -2 if no further use of the notation is specified. As for morphism between mixed Shimura data, we often write $f$ instead of a pair $(f,f_*)$.
 \end{convention}

 \begin{definition}[mixed Shimura varieties, cf. \cite{pink thesis} 3.1]\label{mixed shimura varieties}
 
 Let $(\Pbf,Y)=(\Ubf,\Vbf)\rtimes(\Gbf,X)$ be a mixed Shimura datum, and let $K\subset\Pbf(\adele)$ be a \cosg. The (complex) \emph{mixed Shimura variety} associated to $(\Pbf,Y)$ at level $K$ is the quotient space $$M_K(\Pbf,Y)(\Cbb)=\Pbf(\Qbb)\bsh[ Y\times\Pbf(\adele)/K]\isom\Pbf(\Qbb)_+\bsh[ Y^+\times\Pbf(\adele)/K]$$ where the last equality makes sense for any connected component $Y^+$ of $Y$ because $\Pbf(\Qbb)_+$ equals the stabilizer of $Y^+$ in $\Pbf(\Qbb)$.
 
 Using the finiteness of class numbers  in \cite{platonov rapinchuk} 8.1, we see that the double quotient $\Pbf(\Qbb)_+\bsh\Pbf(\adele)/K$ is finite. Writing $\Rcal$ for a set of  representatives, we then have $$M_K(\Pbf,Y)(\Cbb)=\coprod_{a\in\Rcal}\Gamma_K(a)\bsh Y^+$$ with $\Gamma_K(a)=\Pbf(\Qbb)_+\cap aKa^\inv$ a congruence subgroup of $\Pbf(\Rbb)_+$.
 
 Pink has shown in \cite{pink thesis} that such double quotients are normal quasi-projective varieties over $\Cbb$, generalizing  a theorem of Baily and Borel cf.\cite{baily borel}. He has further shown that mixed Shimura varieties $M_K(\Pbf,Y)$ admit canonical models over their reflex fields $E(\Pbf,Y)$, which are certain number fields embedded in $\Cbb$. In this paper, we treat mixed Shimura varieties as algebraic varieties over $\Qac$, and we denote them as $M_K(\Pbf,Y)$, equipped with the Galois action using the canonical model. In Section 2 and 3 we only use complex mixed Shimura varieties, and in Section 4 and 5 we will use some elementary properties of canonical models. 
 
 \emph{Kuga varieties} resp. \emph{pure Shimura varieties} are mixed Shimura varieties associated to Kuga data resp. pure Shimura data.
 
 \end{definition} 
 
 If $(\Pbf,Y)\ra(\Pbf',Y')$ is a morphism of mixed Shimura data, then we have the inclusion of reflex fields $E(\Pbf',Y')\subset E(\Pbf,Y)$, cf. \cite{pink thesis} 11.2. For the moment it suffices to know that the following morphisms are functorially defined with respect to the canonical models:
 
 \begin{definition}[morphisms of mixed Shimura varieties and Hecke translates, cf. \cite{pink thesis} 3.4]\label{morphisms of mixed Shimura varieties and Hecke translates}
 
  (1) Let $f:(\Pbf,Y)\ra(\Pbf',Y')$ be a morphism of mixed Shimura data, with \cosgs $K\subset\Pbf(\adele)$ and $K'\subset\Pbf'(\adele)$ such that $f(K)\subset K'$, then there exists a unique morphism $M_K(\Pbf,Y)\ra M_{K'}(\Pbf',Y')$ of mixed Shimura varieties  whose evaluation over $\Cbb$-points is simply $[x,aK]\mapsto[f_*(x), f(a)K']$. It is actually defined over $E(\Pbf,Y)$. 

 For $(\Pbf,X)=(\Ubf,\Vbf)\rtimes(\Gbf,X)$, the natural projection $\pi:(\Pbf,Y)\ra(\Gbf,X)$ gives the natural projection onto the pure Shimura variety $$\pi:M_K(\Pbf,Y)\ra M_{\pi(K)}(\Gbf,X).$$ We can refine this projection into $$M_K(\Pbf,Y)\ot{\pi_\Ubf}\lra M_{\pi_\Ubf(K)}(\Pbf/\Ubf,Y/\Ubf(\Cbb))\ot{\pi_\Vbf} \lra M_{\pi(K)}(\Gbf,X)$$ as $\pi=\pi_\Wbf=\pi_\Vbf\circ\pi_\Ubf$, and the sequence means that a general mixed Shimura variety is fibred over some Kuga variety.

 (2) Let $(\Pbf,Y)$ be a mixed Shimura datum, and $g\in\Pbf(\adele)$. For $K\subset\Pbf(\adele)$ a \cosg, there exists a unique isomorphism of mixed Shimura varieties $\tau_g:M_{gKg^\inv}(\Pbf,Y)\ra M_K(\Pbf,Y)$ whose evaluation on $\Cbb$-points is $$\tau_g:[x,agKg^\inv]\mapsto[x,agK].$$  It is actually defined over $E(\Pbf,Y)$, and we call it the \emph{Hecke translation} associated to $g$. 
 
 Later in Section 4 we will use Hecke translation by $w\in\Wbf(\Qbb)$ to obtain isomorphisms between different pure sections of $M_K(\Pbf,Y)$ under suitable constraints on the level structures.
 \end{definition}
 
 We will show later in \ref{insensitivity of levels} that the Andr\'e-Oort conjecture is insensitive to the change of $K$ by smaller \cosgs, hence in this paper we will mainly work with levels $K$ that are neat, see \cite{pink thesis} Introduction (page 5). Mixed Shimura varieties at neat levels are smooth.
 
 We also introduce an auxiliary condition of the \cosg $K$:
 
 \begin{definition}[levels of product type]\label{levels of product type}
 
 Let $(\Pbf,Y)=(\Ubf,\Vbf)\rtimes(\Gbf,X)$ be a fibred mixed Shimura datum.
 
 (1) A \cosg $K$ of $\Pbf(\adele)$ is said to be of \emph{product type}, if it is of the form $K=K_\Wbf\rtimes K_\Gbf$ for \cosgs $K_\Wbf\subset\Wbf(\adele)$, $K_\Gbf\subset\Gbf(\adele)$, with $K_\Wbf$ the central extension of a \cosg $K_\Vbf\subset\Vbf(\adele)$ by a \cosg $K_\Ubf\subset\Ubf(\adele)$ through the restriction of $\psi$; $K_\Ubf$ and $K_\Vbf$ are required to be stabilized by $K_\Gbf$.

 (2) A  \cosg $K$ in $\Pbf(\adele)$ is said to be of \emph{fine product type} if \begin{itemize}
 
 \item (2-a)  it is of product type and $K=\prod_p K_p$ for \cosgs $K_p\subset\Pbf(\Qbbp)$ for any rational prime $p$, such that for some $\wp$ prime, $K_\wp$ is neat (hence $K$ is neat, and $K_{\Gbf,p}$ is neat); 
 
 \item (2-b) we also require that $K_\Gbf=K_{\Gbf^\der}K_\Cbf$ where $\Cbf$ is the connected center of $\Gbf$, with \cosgs $K_{\Gbf^\der}\subset\Gbf^\der(\adele)$ and $K_\Cbf\subset\Cbf(\adele)$ both of fine product type in the sense of (a).
 
 \item (2-c) there exist $\Zbb$-lattices $\Gamma_\Ubf\subset\Ubf(\Qbb)$ and $\Gamma_\Vbf\subset\Vbf(\Qbb)$ such that \begin{itemize}
 \item $\psi(\Gamma_\Vbf\times\Gamma_\Vbf)\subset\Gamma_\Ubf$, hence they generate a congruence subgroup $\Gamma_\Wbf$ in $\Wbf(\Qbb)$;
 
 \item $K_\Ubf$ resp. $K_\Vbf$ is the profinite completion of $\Gamma_\Ubf$ resp. of $\Gamma_\Vbf$, hence the same for $K_\Wbf$ \wrt $\Gamma_\Wbf$.
 
 \end{itemize}
 \end{itemize}
 In this case we also write $K_p=K_{\Wbf,p}\rtimes K_{\Gbf,p}$
 and $K_?=\prod_p K_{?,p}$ for $?\in\{\Ubf,\Vbf,\Wbf,\Gbf,\Pbf\}$.
 \end{definition} 
 
 \begin{remark}[two-step fibration, cf. \cite{pink thesis} Chapter 10]\label{two step fibration}
 If $K=K_\Wbf\rtimes K_\Gbf$, then $\pi(K)=K_\Gbf$ and we have an evident morphism $\iota(0): M_{K_\Gbf}(\Gbf,X)\mono M_K(\Pbf,Y)$, which we called the zero section of the (fibred) mixed Shimura variety defined by $(\Pbf,Y)=(\Ubf,\Vbf)\rtimes(\Gbf,X)$. The natural projection can be refined into $$ M_K(\Pbf,Y)\ot{\pi_\Ubf}\lra M_{K_\Vbf\rtimes K_\Gbf}(\Vbf\rtimes(\Gbf,X))\ot{\pi_\Vbf}\lra M_{K_\Gbf}(\Gbf,X)$$ where $\pi_\Vbf$ is an abelian scheme with zero section $\pi_\Ubf\circ \iota(0)$, and $\pi_\Ubf$ is a torsor under $\Gamma_\Ubf\bsh\Ubf(\Cbb)$. Since $\Ubf$ is commutative, $\Gamma_\Ubf$ is a $\Zbb$-lattice in the $\Qbb$-vector space $\Ubf(\Qbb)$, and $\Gamma_\Ubf\bsh\Ubf(\Cbb)\isom(\Cbb/\Zbb)^{\dim\Ubf}$ is an algebraic torus, whose character group is naturally identified with $\Gamma_\Ubf$.  
 
 \end{remark}

 \begin{example}[Data of Siegel type]\label{Siegel data}
 
 Let $\Vbf$ be a finite-dimensional $\Qbb$-vector space, equipped with a symplectic form $\psi:\Vbf\times\Vbf\ra\Ubf$ where $\Ubf=\Gaa$ is the one-dimensional rational Hodge structure of type $(-1,-1)$. 
 (1) We have the following data:\begin{enumerate}
 \item[(1-1)] pure Shimura datum of Siegel type: From the  $\Qbb$-group $\GSp_\Vbf=\GSp(\Vbf,\psi)$ of symplectic similitude, we obtain the pure Shimura datum $(\GSp_\Vbf,\Hscr_\Vbf)$, with $\Hscr_\Vbf=\Hscr_\Vbf^+\coprod\Hscr_\Vbf^-$ the Siegel double space associated to $(\Vbf,\psi)$. The pure Shimura varieties it defines are Siegel modular varieties (with suitable level structures). When $(\Vbf,\psi)$ is the standard symplectic structure on $\Qbb^{2g}$, it is often written as $(\GSp_{2g},\Hscr_g)$.
 
 \item[(1-2)] Kuga datum of Siegel type: For any $x\in\Hscr_\Vbf$, the standard representation $\rho_\Vbf:\GSp_\Vbf\ra\GL_\Vbf$ defines a rational Hodge structure $(\Vbf,\rho_\Vbf\circ x)$ of type $\{(-1,0),(0,-1)\}$, hence we get the Kuga datum $\Vbf\rtimes(\GSp_\Vbf,\Hscr_\Vbf)$, which we denote as $(\Qbf_\Vbf,\Vscr_\Vbf)$.
 
 \item[(1-3)] mixed Shimura datum of Siegel type: The symplectic form defines a central extension $\Wbf$ of $\Vbf$ by $\Ubf$, and it is easy to verify that $(\Pbf_\Vbf,\Uscr_\Vbf):=(\Ubf,\Vbf)\rtimes(\GSp_\Vbf,\Hscr_\Vbf)$ is a mixed Shimura datum fibred over the Siegel datum. Note that when $g=0$, the mixed Shimura datum is of the form $\Gbb_\arm\rtimes(\mult,\Hscr_0)$ with   and $\Hscr_0$ is a single point.
 \end{enumerate}
 
 Sometimes we also call Kuga data of Siegel type as mixed Shimura data of Siegel type.
 
 (2) The three classes of mixed Shimura data given above are all irreducible:\begin{itemize}
 
 \item[(2-1)] For $(\GSp_\Vbf,\Hscr_\Vbf)$, $\Hscr_\Vbf^+$ is the simple Hermitian symmetric domain corresponding to the connected simple Lie group $\Sp_\Vbf(\Rbb)$. If there is a pure subdatum $(\Hbf,X_\Hbf)$ with $X_\Hbf$ containing $\Hscr_\Vbf^+$, then $X_\Hbf$ is either $X_\Hbf^+:=\Hscr_\Vbf^+$ or $\Hscr_\Vbf$, and $X_\Hbf^+$ is a homogeneous space under $\Hbf^\der(\Rbb)^+$. Since $\Hbf^\der\subset\Sp_\Vbf$, the classification of simple Hermitian symmetric domains forces the equality $\Hbf^\der=\Sp_\Vbf$ because they both give rise to $\Hscr_\Vbf^+$. Note that the image of $\mult_\Rbb\subset\Sbb$ (corresponding to $\Rbb^\times\Cbb^\times$) under any $x\in\Hscr_\Vbf^+$ is the center $\mult_\Rbb$ of $\GL_{\Vbf,\Rbb}$, namely acting on $\Vbf_\Rbb$ by the central scaling. Hence $\Hbf\supset\mult$, i.e. $\Hbf$ contains the center $\mult$ of $\GSp_\Vbf$, which implies $\Hbf=\GSp_\Vbf$.
 
 \item[(2-2)] For $(\Qbf,\Vscr)=\Vbf\rtimes(\GSp_\Vbf,\Hscr_\Vbf)$, $\Vscr$ is the $\Vbf(\Rbb)$-orbit of $\Hscr_\Vbf$ in $\Xfrak(\Qbf)$. If $\Qbf'\subset\Qbf$ is a $\Qbb$-subgroup such that $y(\Sbb)\subset\Qbf'_\Rbb$ for all $y\in\Vscr$, then restricting to $x\in\Hscr_\Vbf\subset\Vscr$ we get $\Qbf'\supset\GSp_\Vbf$. The image of $\Qbf'$ in $\GSp_{\Vbf}$ is clearly equal to $\GSp_\Vbf$, and  the unipotent radical $\Vbf'$ of $\Qbf'$ is necessarily a $\Qbb$-subgroup of $\Vbf$. Thus a Levi decomposition of $\Qbf'$ over $\Qbb$ is of the form $\Vbf'\rtimes  \GSp_\Vbf$. Since the $\Vbf$ is irreducible as a representation of $\GSp_\Vbf$, we must have $\Vbf'=\Vbf$, which gives $\Qbf'=\Qbf$.
 
 Note that similar arguments show that $\Ubf\rtimes(\GSp_\Vbf,\Hscr)$ is an irreducible mixed Shimura datum, using the action of $\GSp_\Vbf$ on $\Ubf=\Gbb_\arm$ by the square of the central character.
 
 \item[(2-3)] For $(\Pbf,\Uscr)=(\Ubf,\Vbf)\rtimes(\GSp_\Vbf,\Hscr_\Vbf)$, $\Wbf$ is the extension of $\Vbf$ by $\Ubf=\Gbb_\arm$ using the symplectic form $\psi$. If $\Pbf'\subset\Pbf$ is a $\Qbb$-subgroup such that $y(\Sbb_\Cbb)\subset\Pbf'_\Rbb$, then when $y$ runs through points in $\Ubf(\Cbb)\rtimes\Hscr_\Vbf$, the irreducible subdatum $\Ubf\rtimes(\GSp_\Vbf,\Hscr_\Vbf)$ forces the inclusion $\Ubf\rtimes\GSp_\Vbf\subset\Pbf'$, and in particular $\GSp_\Vbf\subset\Pbf'$. Hence $\Pbf'$ admits a Levi decomposition over $\Qbb$ of the form $\Wbf'\rtimes\GSp_\Vbf$, where $\Wbf'$ is a unipotent $\Qbb$-subgroup of $\Wbf$ containing $\Ubf$. Thus $\Wbf'$ is an extension of $\Vbf'=\Wbf'/\Ubf$ by $\Ubf$ using the restriction of $\psi$. $\Wbf'$ and $\Ubf'$ being both stable under $\GSp_\Vbf$, we see that $\Vbf'$ is a $\GSp_\Vbf$-stable $\Qbb$-vector subspace of $\Vbf$, hence the equalities $\Vbf'=\Vbf$, $\Wbf'=\Wbf$, and $\Pbf'=\Pbf$, because $\Vbf$ is an irreducible representation of $\GSp_\Vbf$.
  
 \end{itemize}
 
 (3) Note that in a product of the form $(\Gbf,X)=(\GSp_{\Vbf_1},\Hscr_{\Vbf_1})\times\cdots\times(\GSp_{\Vbf_n},\Hscr_{\Vbf,n})$, 
 we can construct irreducible subdata of the form $(\Gbf',X')$ where $\Gbf'$ is the $\Qbb$-subgroup generated by $\Gbf^\der=\prod_j\Sp_{\Vbf_j}$ plus a split $\Qbb$-torus $\mult$ that acts on each $\Vbf_j$ by the central scaling, and $X'$ is the $\Gbf'(\Rbb)$-orbit of some $x=(x_1,\cdots,x_n)\in X=\prod_j\Hscr_{\Vbf_j}$. In fact $x$ sends $\Sbb^1$ into $x_1(\Sbb^1)\times\cdots\times x_n(\Sbb^1)\in\prod_j\Sp_{\Vbf_j,\Rbb}$, and sends $\mult_\Rbb$ to the center of $\GL_{\oplus_j\Vbf_j}$. Hence by \ref{generating subdata}, $(\Gbf',\Gbf'(\Rbb)x)$ is a subdatum of $(\Gbf,X)$, which is clearly irreducible. However $\Gbf(\Rbb)$ has only two connected components, as its center $\mult(\Rbb)$ has only two connected components and $\Gbf'^\der(\Rbb)=\prod_j\Sp_{\Vbf_j}(\Rbb)$ is connected, and thus $\Gbf'(\Rbb)x$ has only two connected components, while $X=\prod_j\Hscr_{\Vbf_j}$ has $2^n$ connected components. 
 
 For a point $x\in X$, we have its signature vector $s_x\in(\pm)^n$, describing whether it is positive or negative definite on $\Vbf_j$, $j=1,\cdots,n$. Two points $x=(x_j)$ and $x'=(x'_j)$ in $X$ fall in the same connected component \ifof they have the same signature  on each $\Vbf_j$. Hence $x'\in\Gbf'(\Rbb)x$ \ifof $s_{x'}=\pm s_x$. When $x$ runs through $X$, we get finitely many irreducible subdata of the form $(\Gbf',\Gbf'(\Rbb)x)$, which follows from \ref{common Mumford-Tate group}. It is also clear that the generic Mumford-Tate group of each connected component of $X$ is $\Gbf'$.  Although we mainly work with Shimura data in the sense of Deligne, we mention that the pair $(\Gbf',X=\prod_j\Hscr_{\Vbf_j})$ is a pure Shimura datum in the sense of Pink \cite{pink thesis} 2.1, as each $x\in X$ gives a homomorphism $\Sbb\ra\Gbf'_\Rbb$, and $X\ra\Xfrak(\Gbf')$ is a $\Gbf'(\Rbb)$-equivariant map with finite fibers.

 Similarly, in the Kuga case, $(\Pbf,Y)=\prod_j(\Qbf_j\Vscr_j)=(\oplus_j\Vbf_j)\rtimes(\Gbf,X)$ admits irreducible subdata of the form $(\Pbf',Y')=(\oplus_j\Vbf_j)\rtimes(\Gbf',X')$. The general  mixed Shimura case of Siegel type is parallel. 

These irreducible subdata are actually strictly irreducible in the sense of \ref{special subvarieties}(3). The fact that the connected center is simply a split $\Qbb$-torus $\mult$ will be useful in the estimations \ref{torsion order in the product case} and \ref{torsion order in the embedded case}.
 
 (4) Assume that for some $\Zbb$-lattice $\Gamma_\Vbf$ of $\Vbf$, the restriction $\psi:\Gamma_\Vbf\times\Gamma_\Vbf\ra\Qbb(-1)$ has value in $\Zbb(-1)=\Zbb(2\pi\ibf)^\inv$ and is of discriminant $\pm1$. The profinite completions of lattices $\Gamma_\Vbf\subset\Vbf$ and $\Zbb(-1)\subset\Qbb(-1)$ are \cosgs $K_\Vbf$ and $K_\Ubf$ respectively. Take a \cosg $K_\Gbf\subset\GSp_\Vbf(\adele)$ small enough and stabilizing both  $K_\Vbf$ and $K_\Ubf$, we get the mixed Shimura variety $M_K(\Pbf_\Vbf,\Uscr_\Vbf)$ for  $K=K_\Wbf\rtimes K_\Gbf$, $K_\Wbf$ being the \cosg generated by $K_\Ubf$ and $K_\Vbf$. We also have the universal abelian scheme over the Siegel moduli space of level $K_\Gbf$, namely $$\Ascr=M_{K_\Vbf\rtimes K_\Gbf}(\Qbf_\Vbf,\Vscr_\Vbf)\ra \Sscr=M_{K_\Gbf}(\GSp_\Vbf,\Hscr_\Vbf)$$ and $M_K(\Pbf_\Vbf,\Uscr_\Vbf)$ is a $\mult$-torsor over $\Ascr$.
 
 The \cosgs thus obtained are  levels of fine product type when $K_\Gbf$ is of fine product type.
 
 \end{example}

 \begin{definition}[special subvarieties]\label{special subvarieties}
 
 Let $(\Pbf,Y)$ be a mixed Shimura datum, with $M=M_K(\Pbf,Y)$ a mixed Shimura variety associated with it.
 
 (1) The map $\wp_\Pbf:Y\times\Pbf(\adele)/K\ra M(\Cbb),\ (y,aK)\mapsto [y,aK]$ is called the (complex) \emph{uniformization map} of $M$. It is clear that the source is not connected in general, but its connected components are simply connected complex manifolds isomorphic to each other.
 
 A \emph{special subvariety} of $M_K(\Pbf,Y)$ is a priori a subset of $M(\Cbb)$ of the form $\wp_\Pbf( Y'^+\times aK)$ with $a\in\Pbf(\adele)$ and $Y'^+$  a connected component of some mixed Shimura subdatum $(\Pbf',Y')\subset(\Pbf,Y)$, where $K'\subset\Pbf'(\adele)$ is the \cosg $aKa^\inv\cap\Pbf'(\adele)$.
 
 A special subvariety is actually a closed algebraic subvariety of $M_K(\Pbf,Y)$ over $\Qac$: it is a connected component of the image of the morphism $M_{K'}(\Pbf',Y')\ra M_{aKa^\inv}(\Pbf,Y)$ under the Hecke translate $M_{aKa^\inv}(\Pbf,Y)\isom M_K(\Pbf,Y)$. 
 
 (2) In Section 2 and 3, we will often work with connected mixed Shimura varieties defined as follows:
 
 \begin{itemize}
 
 \item a \emph{connected mixed Shimura datum} is of the form $(\Pbf,Y;Y^+)$ where $(\Pbf,Y)$ is a mixed Shimura datum and $Y^+$ a connected component of $Y$; a \emph{morphism} between connected mixed Shimura data is $f:(\Pbf_1,Y_1;Y_1^+)\ra(\Pbf_2,Y_2;Y_2^+)$ with $f$ a morphism of mixed Shimura data $(\Pbf_1,Y_1)\ra(\Pbf_2,Y_2)$ sending $Y_1^+$ into $Y_2^+$; in particular, a \emph{connected mixed Shimura subdatum} is of the form $(\Pbf',Y';Y'^+)\subset(\Pbf,Y;Y^+)$ with $(\Pbf',Y')$ a subdatum of $(\Pbf,Y)$ and $Y'^+$ a connected component of $Y'$ contained in $Y^+$; 
 
 \item a \emph{connected mixed Shimura variety} is a quotient space of the form $M^+=\Gamma\bsh Y^+$ where $\Gamma\subset\Pbf(\Qbb)_+$ is a congruence subgroup; such quotients are normal quasi-projective algebraic varieties defined over a finite extension of the reflex field of $(\Pbf,Y)$, and we treat them as varieties over $\Qac$;
 
 \item for a connected mixed Shimura variety $M^+$ as above we have the (complex) uniformization map $\wp_\Gamma: Y^+\ra M^+$ $y\mapsto\Gamma y$, and a special subvariety of $M^+$ is a subset of the form $\wp_\Gamma(Y'^+)$ given by some connected mixed Shimura subdatum $(\Pbf',Y';Y'^+)$; special subvarieties are closed irreducible algebraic subvarieties defined over $\Qac$, with  canonical models defined over some number fields.
 
 \end{itemize}

 For example, in the Kuga case $(\Pbf,Y)=\Vbf\rtimes(\Gbf,X)$,  we have explained in   Introduction that special subvarieties are certain torsion subschemes of abelian schemes over some pure special subvariety $S'\subset S$.  
 
 (3) We also introduce a variant of irreducible data in the connected setting. A connected mixed Shimura data $(\Pbf,Y;Y^+)$ is said to be strictly irreducible if $\Pbf=\MT(Y^+)$. Note that in this case $Y=\Pbf(\Rbb)\Ubf(\Cbb)Y^+$ is determined by $\Pbf$ and $Y^+$, and $(\Pbf,Y)$ is necessarily irreducible. The pair $(\Pbf,Y)$ thus obtained is also said to be strictly irreducible.
 
 For example, the pure and mixed Shimura data of Siegel type associated to a symplectic $\Qbb$-space $(\Vbf,\psi)$ gives rise to strictly irreducible connected mixed Shimura data $(\Pbf,Y;Y^+)$, and the data $(\Gbf',X')$ constructed in \ref{Siegel data}(3) gives rise to strictly irreducible ones $(\Gbf',X';X'^+)$.
 
 \end{definition}
 
 \begin{lemma}[strict irreducibility]\label{strict irreducibility}(1) Let $f:(\Pbf,Y;Y^+)\ra(\Pbf',Y';Y'^+)$ be a morphism of connected mixed Shimura data, such that $f:\Pbf\ra\Pbf'$ is surjective. Assume that $(\Pbf_1,Y_1;Y_1^+)$ is a strictly irreducible connected subdatum of $(\Pbf,Y;Y^+)$. Then its image $(\Pbf_1',Y_1';Y_1'^+)$ in $(\Pbf',Y';Y'^+)$ remains strictly irreducible.
 
 (2) Let $(\Pbf,Y;Y^+)=\Wbf\rtimes(\Gbf,X;X^+)$ be a connected mixed Shimura datum fibred over a connected pure Shimura datum $(\Gbf,X;X^+)$. Then $(\Pbf,Y;Y^+)$ is strictly irreducible \ifof so it is with $(\Gbf,X;X^+)$.
 \end{lemma}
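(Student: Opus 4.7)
For part (1), the strategy is the standard fact that the Mumford-Tate group is functorial under surjective morphisms, namely $\MT(f_*(Y_1^+))=f(\MT(Y_1^+))$. Concretely, I would let $\Qbf'\subset\Pbf_1'$ be any $\Qbb$-subgroup such that $y'(\Sbb_\Cbb)\subset\Qbf'_\Cbb$ for every $y'\in Y_1'^+$, and set $\Qbf:=(f|_{\Pbf_1})^\inv(\Qbf')\subset\Pbf_1$. For any $y\in Y_1^+$ its image $f_*y\in Y_1'^+$ factors through $\Qbf'_\Cbb$, so $y$ itself factors through $\Qbf_\Cbb=f^\inv(\Qbf'_\Cbb)$. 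Strict irreducibility of $(\Pbf_1,Y_1;Y_1^+)$ then forces $\Qbf\supset\MT(Y_1^+)=\Pbf_1$, and applying $f$ gives $\Qbf'\supset f(\Pbf_1)=\Pbf_1'$. Hence $\Pbf_1'=\MT(Y_1'^+)$, as required.

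For part (2), the implication ``$\Rightarrow$'' is an immediate application of (1): the reduction $\pi_\Wbf:(\Pbf,Y;Y^+)\ra(\Gbf,X;X^+)$ is a surjection of $\Qbb$-groups with $\pi_\Wbf(Y^+)=X^+$ (the fiber $\Wbf(\Rbb)\Ubf(\Cbb)$ being connected), so $\MT(X^+)=\pi_\Wbf(\MT(Y^+))=\pi_\Wbf(\Pbf)=\Gbf$.

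For the converse ``$\Leftarrow$'', assume $\MT(X^+)=\Gbf$ and set $\Pbf_1:=\MT(Y^+)$. The key input is Lemma \ref{generating subdata}(3), which yields $\Pbf_1^\der=\Pbf^\der=\Wbf\rtimes\Gbf^\der$; in particular $\Wbf\subset\Pbf_1$, so $\Pbf_1$ contains the kernel of $\pi_\Wbf$. On the other hand, applying part (1) to the subdatum $(\Pbf_1,Y_1;Y^+)$ of $(\Pbf,Y;Y^+)$ under the surjective morphism $\pi_\Wbf$ shows that the image $\pi_\Wbf(\Pbf_1)\subset\Gbf$ is the Mumford-Tate group of $\pi_\Wbf(Y^+)=X^+$, hence equals $\Gbf$ by the hypothesis of strict irreducibility of $(\Gbf,X;X^+)$. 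Combining $\Wbf\subset\Pbf_1$ with $\pi_\Wbf(\Pbf_1)=\Gbf$ yields $\Pbf_1=\pi_\Wbf^\inv(\Gbf)=\Pbf$, completing the proof.

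The only genuine point requiring care is the bookkeeping in part (1): one must verify that $\MT$ commutes with a surjective morphism at the level of connected components, which is why the preimage construction $\Qbf=(f|_{\Pbf_1})^\inv(\Qbf')$ is the natural move. Once (1) is in place, part (2) reduces to combining it with Lemma \ref{generating subdata}(3), which already provides the nontrivial input that the Mumford-Tate group of $Y^+$ captures all of the unipotent radical $\Wbf$.
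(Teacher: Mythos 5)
Your proposal is correct and follows essentially the same route as the paper: for (1) both arguments pull back a candidate Mumford--Tate group of $Y_1'^+$ along the surjection $f|_{\Pbf_1}$ and invoke strict irreducibility of $(\Pbf_1,Y_1;Y_1^+)$ (the paper phrases this as a proof by contradiction, you phrase it directly, which is a cosmetic difference), and for (2) both rely on Lemma \ref{generating subdata}(3) to get $\Wbf\subset\MT(Y^+)$ and then compare images modulo $\Wbf$. No gaps.
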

 
 \begin{proof}
 (1) Since $f:\Pbf\ra\Pbf'$ is surjective, the map $f_*:Y^+\ra Y'^+$ is also surjective. Assume that $(\Pbf_1',Y_1';Y_1'^+)$ is not strictly irreducible. Then there exists $\Pbf_2'\subsetneq\Pbf_1'$ such that $\Pbf_2'=\MT(Y_1'^+)$, and putting $Y_2'^+=Y_1'^+$ plus $Y_2'=\Pbf_2'(\Rbb)\Ubf_2'(\Cbb)Y_2'^+$, we get a strictly irreducible subdatum $(\Pbf_2',Y_2';Y_2'^+)$. We have $\Pbf_2:=f^\inv(\Pbf_2)\subsetneq\Pbf_1$ by the epimorphism $f:\Pbf_1\epim\Pbf_1'$, and clearly the inclusion $\Pbf_{2,\Cbb}\supset y(\Sbb_\Cbb)$ holds  for all $y\in Y_1^+$ because $f(\Pbf_{2,\Cbb})\supset f(y)(\Sbb_\Cbb)$, contradicting the strict irreducibility of $(\Pbf_1,Y_1;Y_1^+)$.
 
 (2) The reduction modulo $\Wbf$ gives $\pi:(\Pbf,Y;Y^+)\ra(\Gbf,X;X^+)$ with $\pi:\Pbf\ra\Gbf$ surjective. If $(\Pbf,Y;Y^+)$ is strictly irreducible, then so it is with its image $(\Gbf,X;X^+)$ by (1).

 Conversely, assume $(\Gbf,X;X^+)$ is strictly irreducible, then $(\Gbf,X)$ is irreducible, and we get $(\Pbf,Y)$ irreducible by \ref{pure irreducibility}. Let $\Pbf'\subset\Pbf$ be the generic Mumford-Tate group of $Y^+$, then $\Pbf'^\der=\Pbf^\der=\Wbf\rtimes\Gbf^\der$ by \ref{unipotent radical and Levi decomposition}(4) and \ref{generating subdata}(3). The reduction of $\Pbf'$ modulo $\Wbf$ is a $\Qbb$-subgroup $\Gbf'$ of $\Gbf$ such that $\Gbf'_\Rbb\supset x(\Sbb)$ for all $x\in X^+$, hence $\Gbf'=\Gbf$ and $\Pbf'=\Pbf$.\end{proof}

 The Andr\'e-Oort conjecture can be reduced to the case of special subvarieties within a connected mixed Shimura variety, and it suffices to prove it for some level structure sufficiently small, due to the following elementary lemma:
 
 \begin{lemma}[insensitivity of levels]\label{insensitivity of levels} Let $(\Pbf,Y;Y^+)$ be a connected mixed Shimura data, giving rise to a morphism of connected mixed Shimura varieties  $\pi:M=\Gamma\bsh Y^+\ra M'=\Gamma'\bsh Y^+$ via an inclusion of congruence subgroups  $\Gamma\subset\Gamma'$ in $\Pbf(\Rbb)_+$. 
 
 (1) If $S\subset M$ is a special subvariety, then its image $\pi(S)$ in $M'$ is special; conversely, if
 $S'\subset M'$ is a special subvariety, then the pre-image $\pi^\inv(S')$ in $M$ is a finite union of special subvarieties.
 
 (2) The Andr\'e-Oort conjecture holds for $M$ \ifof it holds for $M'$.
 \end{lemma}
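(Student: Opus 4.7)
The plan is to use the uniformizations $\wp_\Gamma\colon Y^+\to M$ and $\wp_{\Gamma'}\colon Y^+\to M'$ together with the compatibility $\wp_{\Gamma'}=\pi\circ\wp_\Gamma$ and the finiteness of $\pi$ (of degree $[\Gamma':\Gamma]$) to establish (1) by a direct coset computation, then to deduce (2) from (1) by a Zariski-closure and irreducible-component analysis.

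For (1), a special subvariety $S\subset M$ is of the form $\wp_\Gamma(Y'^+)$ for some connected subdatum $(\Pbf',Y';Y'^+)$. The compatibility gives $\pi(S)=\wp_{\Gamma'}(Y'^+)$, which is special for the same subdatum. For the preimage, I would fix representatives $\gamma_1,\dots,\gamma_d$ of $\Gamma'/\Gamma$, so that $\pi^\inv\bigl(\wp_{\Gamma'}(Y'^+)\bigr)=\wp_\Gamma(\Gamma'\cdot Y'^+)=\bigcup_{i=1}^d\wp_\Gamma(\gamma_iY'^+)$; each $\gamma_iY'^+$ is a connected component of the conjugated subdatum $(\gamma_i\Pbf'\gamma_i^\inv,\gamma_iY';\gamma_iY'^+)$, so every term is special and the union is finite.

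For the forward direction of (2), suppose the Andr\'e-Oort conjecture holds on $M$ and let $(M'_n)$ be special subvarieties of $M'$. Part (1) lifts each to a finite union $\pi^\inv(M'_n)=\bigcup_i\tilde M_{n,i}$ of special subvarieties of $M$. Applying AO to $\{\tilde M_{n,i}\}$ shows that $\tilde Z:=\overline{\bigcup_{n,i}\tilde M_{n,i}}$ is a finite union of special subvarieties. Since $\pi$ is finite, hence closed, $\pi(\tilde Z)=\overline{\bigcup_nM'_n}$, and by (1) the image of a finite union of special subvarieties is again a finite union of special subvarieties in $M'$.

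The converse is the main obstacle. Let $(M_n)$ be special in $M$ and set $Z=\overline{\bigcup_nM_n}$. It suffices to show each irreducible component of $Z$ is special, and one checks that each such component equals the closure of the subfamily of $M_n$'s that it contains; so we may reduce to the case $Z$ irreducible. Then $\pi(Z)=\overline{\bigcup_n\pi(M_n)}$ is irreducible, and by (1) combined with AO for $M'$ it is a finite union of special subvarieties; irreducibility forces $\pi(Z)$ to be itself special. Applying (1) again, $\pi^\inv(\pi(Z))$ is a finite union of special subvarieties of $M$, one of whose irreducible components $\tilde Z$ contains $Z$. Because $\pi$ is finite, $\dim Z=\dim\pi(Z)=\dim\tilde Z$, so the inclusion $Z\subset\tilde Z$ of irreducible closed subvarieties of equal dimension is an equality, and $Z$ is special. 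The delicate point is precisely that one cannot simply pull the AO conclusion on $M'$, obtained from the raw family $(\pi(M_n))$, back to $M$, because the preimage of its closure is generally larger than $Z$; the fix is to apply AO after restricting to each irreducible component, and then to use the dimension equality granted by the finiteness of $\pi$ to pin $Z$ down.
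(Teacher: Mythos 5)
Your proposal is correct and follows essentially the same route as the paper: the same coset decomposition $\Gamma'=\coprod\Gamma a_j$ for part (1), and for the converse of (2) the same reduction to an irreducible $Z$, which is then identified as an irreducible component of $\pi^\inv(\pi(Z))$ (a finite union of special subvarieties) via the finiteness of $\pi$. The only difference is that you spell out the dimension count that makes $Z$ a component, which the paper leaves implicit.
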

 
 \begin{proof}
 (1) If $S=\wp_\Gamma(Y_1^+)$ is a special subvariety defined by some conncted subdatum $(\Pbf_1,Y_1;Y_1^+)$, then $\pi(S)=\wp_{\Gamma'}(Y_1^+)$ is special.
 
 Conversely, if $\Gamma'=\coprod\Gamma a_j$ is a decomposition into finitely many cosets, and $S'=\wp_{\Gamma'}(Y_1^+)$ is special given by a connected mixed Shimura subdatum $(\Pbf_1,Y_1;Y_1^+)$, then $\pi^\inv(S')$ is the finite union of special subvarieties $S_j=\wp_{\Gamma}(a_jY_1^+)$ defined by $(a_j\Pbf_1a_j^\inv,a_jY_1;a_jY_1^+)$.
 
 (2) If the Andr\'e-Oort conjecture holds for $M$, and $(S_n')$ is a sequence of special subvarieties in $M'$, then the pre-images $\pi^\inv(S_n')$ form a sequence of special subvarieties, whose Zariski closure is a finite union $\bigcup_jZ_j$ of special subvarieties in $M$. The finite map $\pi:M\ra M'$ is closed, hence it sends $\bigcup_jZ_j$ to the Zariski closure of $\bigcup_nS_n'$ and it is a finite union of special subvarieties $\bigcup_j\pi(Z_j)$.
 
 Conversely, if a geometrically irreducible subvariety $Z\subset M$ is the Zariski closure of a sequence of special subvarieties $\bigcup_nS_n$, then $Z$ is a geometrically irreducible component of $\pi^\inv(\pi(Z))$, hence special because $\pi(Z)$ is the Zariski closure of a sequence of special subvarieties $(\pi(S_n))$ in $M'$, and $\pi^\inv(\pi(Z))$ is a finite union of special subvarieties by (1).\end{proof}

 \begin{remark}[arithmetic quotients]\label{congruence subgroups} Although we have used congruence subgroups in $\Pbf(\Qbb)^+$ to define connected mixed Shimura varieties of the form $\Gamma\bsh Y^+$ and their special subvarieties, it makes no harm to use arithmetic subgroups of $\Pbf^\der(\Qbb)^+$, as long as we only treat them as complex algebraic varieties. In this setting we can also define special subvarieties and formulate the Andr\'e-Oort conjecture (see (3) below). We assume for simplicity that the arithmetic subgroups involved are torsion-free, since this suffices for the study of Andr\'e-Oort type conjectures following the idea of \ref{insensitivity of levels}. 
 
 By \cite{baily borel} and \cite{pink thesis}, quotients of the form $\Gamma\bsh Y^+$ with $\Gamma$ torsion-free arithmetic subgroups of $\Pbf^\der(\Qbb)^+$ are normal quasi-projective algebraic varieties over $\Cbb$, which are also smooth.   In fact:
 
 (1) In the pure case $(\Pbf,Y;Y^+)=(\Gbf,X;X^+)$, the Lie group $\Gbf(\Rbb)_+$ acts on $X^+$ through $\Gbf^\ad(\Rbb)^+$. If $\Gamma\subset\Gbf^\der(\Qbb)^+$ is an arithmetic subgroup, then using the quotient map $\Gbf^\der\ra\Gbf^\ad$, its image $\Gamma^\ad\subset\Gbf^\ad(\Qbb)^+$ is arithmetic by \cite{borel arithmetic} 8.9 and 8.11, and the quotient $\Gamma\bsh X^+=\Gamma^\ad\bsh X^+$ is an algebraic variety. 
 
 If $\Gamma_\Gbf$ is a torsion-free arithmetic subgroup in $\Gbf(\Qbb)_+$, then $\Gamma_\Gbf^\dag:=\Gamma_\Gbf\cap\Gbf^\der(\Qbb)^+$ is a torsion-free arithmetic subgroup of $\Gbf^\der(\Qbb)^+$, and $\Gamma_\Gbf$ acts on $X^+$ through its image $\Gamma_\Gbf^\ad\subset\Gbf^\ad(\Qbb)^+$, which is an arithmetic subgroup of $\Gbf^\ad(\Qbb)^+$ again by \cite{borel arithmetic} using $\Gbf\epim\Gbf^\ad$. Again because we only focus on Andr\'e-Oort type conjectures, following the idea of \ref{insensitivity of levels}  we only consider the case when $\Gamma_\Gbf^\ad$ is torsion-free. The group $\Gbf^\der(\Rbb)^+$ also acts on $X^+$ through $\Gbf^\ad(\Rbb)^+$, and the evident map $\Gamma_\Gbf^\dag\bsh X^+\ra \Gamma_\Gbf\bsh X^+$ is the same as $\Gamma'\bsh X^+\ra \Gamma_\Gbf^\ad\bsh X^+$, where $\Gamma'$ is the image of $\Gamma_\Gbf^\dag$ in $\Gbf^\ad(\Qbb)^+$. Clearly $\Gamma'$ is also an arithmetic subgroup, contained in $\Gamma_\Gbf^\ad$ as a subgroup of finite index, and it is torsion-free. Hence $\Gamma_\Gbf^\dag\bsh X^+\ra \Gamma_\Gbf\bsh X^+$ is a finite morphism between algebraic varieties: since these quotients are given by torsion-free arithmetic subgroups, by \cite{borel metric properties} 3.10 we deduce that $\Gamma'\bsh X^+\ra \Gamma_\Gbf^\ad\bsh X^+$ is algebraic, and in fact it is a finite \'etale covering using Riemann existence theorem, cf. \cite{gille szamuely} 5.7.4.
 

 (2) In the mixed case, the connected mixed Shimura varieties of interest in our study are often given in a product form, i.e. given as $\Gamma\bsh Y^+$ using connected data $(\Pbf,Y;Y^+)=\Wbf\rtimes(\Gbf,X;X^+)$ with   $\Gamma=\Gamma_\Wbf\rtimes \Gamma_\Gbf$ for arithmetic subgroups $\Gamma_\Wbf\subset\Wbf(\Qbb)$ and $\Gamma_\Gbf\subset\Gbf(\Qbb)_+$. Write $\Gamma_\Gbf^\dag=\Gamma_\Gbf\cap\Gbf^\der(\Qbb)^+$ and $\Gamma^\dag=\Gamma_\Wbf\rtimes\Gamma_\Gbf^\dag$, we have $\Gamma^\dag=\Gamma\cap\Pbf^\der(\Qbb)^+$ as an arithmetic subgroup of $\Pbf^\der(\Qbb)^+$ using $\Pbf^\der=\Wbf\rtimes\Gbf^\der$. Consider the following diagrams: $$\xymatrix{\Gamma^\dag \ar[r]\ar[d] & \Gamma \ar[d] & & \Gamma^\dag\bsh Y^+\ar[r]\ar[d] & \Gamma\bsh Y^+\ar[d]    \\ \Gamma_\Gbf^\dag\ar[r] & \Gamma_\Gbf & & \Gamma_\Gbf^\dag\bsh X^+ \ar[r] & \Gamma_\Gbf\bsh X^+}$$   where the left one is Cartesian whose arrows are inclusions, and the right one is Cartesian in the category of complex analytic spaces using the group actions from the left one on $Y^+\ra X^+$. Hence $\Gamma^\dag \bsh Y^+\ra \Gamma\bsh Y^+$ is a finite morphism between algebraic varieties as it is pulled back from the finite morphism on the bottom. One may also talk about $\Gamma\bsh Y^+$ for general (torsion-free) arithmetic subgroups of $\Pbf^\der(\Qbb)^+$, because such subgroups contain normal subgroups of finite index of the product form above, and we may argue by finite group quotients and Riemann existence theorem.
 
 Note that we cannot define mixed Shimura data of the form $(\Pbf,Y)=\Wbf\rtimes(\Gbf,X)$ with $\Gbf$ of adjoint type, because in this case, the Hodge structure given by $x\in X$ on any algebraic representation $\Vbf$ of $\Gbf$ is necessarily of weight zero as the center of $\Gbf$ is trivial. Hence in the diagram above it would not make sense to write groups like $\Gamma_\Wbf\rtimes\Gamma_\Gbf^\ad$.
 
 (3) The Andr\'e-Oort conjecture for special subvarieties still make sense in this setting  which only involves complex algebraic varieties. Namely we start with an arithmetic subgroup $\Gamma\subset\Pbf^\der(\Qbb)^+$, form the uniformization map $\wp_\Gamma:Y^+\ra\Gamma\bsh Y^+$, and define special subvarieties to be of the form $\wp_\Gamma(Y'^+)$ using connected mixed Shimura subdata $(\Pbf',Y';Y'^+)\subset(\Pbf,Y;Y^+)$. If $\Gamma\subset\Pbf(\Qbb)_+$ is a torsion-free congruence subgroup, and $\Gamma'\subset\Pbf^\der(\Qbb)^+$ is an arithmetic subgroup contained in $\Gamma$, then using the arguments through the universal coverings as in \ref{insensitivity of levels} and the finite morphism $\Gamma'\bsh Y^+\ra \Gamma\bsh Y^+$, we see that the Andr\'e-Oort conjecture for $\Gamma\bsh Y^+$ is equivalent to the one for $\Gamma'\bsh Y^+$.
 
 
  When we need finer information about the canonical models, we will always work with classical mixed Shimura data with  level structures  given by compact open subgroups of $\Pbf(\adele)$.

 \end{remark}

 The remark above draws our attention to the derived groups. Analogue to modifying congruence subgroups, we can also modify the derived groups:

 \begin{lemma}[insensitivity of isogeny]\label{insensitivity of isogeny}
 Let $f:M\ra M'$ be a morphism between connected mixed Shimura varieties given by some morphism of connected mixed Shimura data $(f,f_*):(\Pbf,Y;Y^+)\ra(\Pbf',Y';Y'^+)$ together with congruence subgroups $\Gamma\subset\Pbf(\Qbb)_+$ and $\Gamma'\subset\Pbf'(\Qbb)_+$ satisfying $f(\Gamma)\subset\Gamma'$. Assume that the $\Qbb$-group homomorphism $f:\Pbf^\der\ra\Pbf'^\der$ is an isogeny, i.e. surjective of finite kernel. Then
 
 (1) $f_*:Y^+\ra Y'^+$ is an isomorphism and $f:M\ra M'$ is finite;
 
 (2)  the Andr\'e-Oort conjecture holds for $M$ \ifof it holds for $M'$.
 \end{lemma}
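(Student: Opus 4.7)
The plan for (1) is to reduce it to showing that $f_*:Y^+\to Y'^+$ is a biholomorphism, from which the finiteness of $\pi:M\to M'$ will follow from a straightforward index computation. First I would argue that the isogeny $f:\Pbf^\der\to\Pbf'^\der$ restricts to an isomorphism on unipotent radicals: since $\Pbf^\der=\Wbf\rtimes\Gbf^\der$ by \ref{unipotent radical and Levi decomposition}(4) and a connected unipotent $\Qbb$-group has no nontrivial finite subgroup, the kernel of $f|_\Wbf$ is trivial, and surjectivity gives $\Wbf\isom\Wbf'$; restricting to the weight $-2$ part yields $\Ubf\isom\Ubf'$, and the quotient gives an isogeny $\Gbf^\der\to\Gbf'^\der$. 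By \ref{fibred mixed Shimura data new}, $Y^+$ is a holomorphic vector bundle over the bounded symmetric domain $X^+$ with fibers $\Wbf(\Rbb)\Ubf(\Cbb)$, hence contractible and \emph{a fortiori} simply connected, and similarly for $Y'^+$. The Lie group $\Pbf^\der(\Rbb)^+\Ubf(\Cbb)$ acts transitively on $Y^+$, and $f_*$ is equivariant with respect to the surjection $\Pbf^\der(\Rbb)^+\Ubf(\Cbb)\to\Pbf'^\der(\Rbb)^+\Ubf'(\Cbb)$ with finite kernel. So $f_*$ is a holomorphic covering map between complex manifolds of equal dimension, and simple connectedness of $Y'^+$ upgrades it to a biholomorphism. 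Under this identification $\pi:M\to M'$ becomes the quotient $\Gamma\bsh Y^+\to\Gamma'\bsh Y^+$ induced by $f(\Gamma)\subset\Gamma'$; since $\Gamma$ and $\Gamma'$ act as lattices on the same homogeneous space (up to the finite kernel of $f$), $[\Gamma':f(\Gamma)]$ is finite and so is $\pi$.

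For (2), given (1), I would parallel the proof of \ref{insensitivity of levels} and establish a two-way correspondence of special subvarieties. If $S=\wp_\Gamma(Y_1^+)$ is special in $M$ defined by a connected subdatum $(\Pbf_1,Y_1;Y_1^+)$, the image subdatum $(f(\Pbf_1),f_*(Y_1);f_*(Y_1^+))$ makes $\pi(S)=\wp_{\Gamma'}(f_*(Y_1^+))$ special in $M'$. Conversely, for $S'=\wp_{\Gamma'}(Y_1'^+)$ special in $M'$, I use the biholomorphism $f_*$ to identify $Y^+$ with $Y'^+$, set $Y_1^+:=f_*^{-1}(Y_1'^+)$, let $\Pbf_1$ be its generic Mumford-Tate group inside $\Pbf$, and invoke \ref{generating subdata} to assemble a connected subdatum $(\Pbf_1,Y_1;Y_1^+)\subset(\Pbf,Y;Y^+)$. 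Writing $\Gamma'=\coprod_j f(\Gamma)a_j$, the preimage $\pi^{-1}(S')=\bigcup_j\wp_\Gamma(a_jY_1^+)$ is a finite union of special subvarieties. With this correspondence, and using that $\pi$ is finite, closed and surjective, the equivalence in (2) follows verbatim from the argument in \ref{insensitivity of levels}(2).

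The main obstacle I anticipate is in (1): carefully setting up the covering-map plus simply-connected-target argument to conclude that $f_*$ is a biholomorphism, which is where the isogeny hypothesis intervenes most essentially. The subsequent derivation of (2) is largely a template match with \ref{insensitivity of levels}, the only extra step being the pullback construction of the subdatum in the preimage assertion, which relies on the isogeny on derived groups to guarantee that the Mumford-Tate group of $f_*^{-1}(Y_1'^+)$ satisfies the hypotheses required for \ref{generating subdata} to apply.
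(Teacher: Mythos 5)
Your proposal is correct, and part (1) takes a genuinely different route from the paper. The paper proves $f_*:Y^+\ra Y'^+$ is an isomorphism piece by piece: it deduces $\Ubf\isom\Ubf'$ and $\Vbf\isom\Vbf'$ from the isomorphism of mixed Hodge structures on $\Lie\Wbf\isom\Lie\Wbf'$, and $X^+\isom X'^+$ from $\Gbf^\ad\isom\Gbf'^\ad$, then assembles $Y^+=\Wbf(\Rbb)\Ubf(\Cbb)\rtimes X^+$. Your covering-space argument (equivariant surjective local diffeomorphism with discrete fibers onto a simply connected target) reaches the same conclusion more uniformly; it is sound, provided you do record, as you did, that $\Wbf\isom\Wbf'$ and $\Gbf^\ad\isom\Gbf'^\ad$ so that the two spaces have equal dimension, and note that $\Wbf\isom\Wbf'$ is also needed separately for the finiteness of $M\ra M'$ (the paper compares $\Gamma\cap\Wbf(\Qbb)$ with $\Gamma'\cap\Wbf'(\Qbb)$ and the images of $\Gamma_\Gbf$, $\Gamma_{\Gbf'}$ in the common adjoint group; your "lattices, hence finite index" shortcut amounts to the same thing). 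In part (2) the only substantive divergence is that you take $\Pbf_1=\MT(f_*^{-1}(Y_1'^+))$ directly, whereas the paper takes $\Pbf_1$ to be the neutral component of $f^{-1}(\Pbf_1')$. The paper's choice is the safer one: to know that the special subvariety produced by \ref{generating subdata} is exactly $\wp_\Gamma(f_*^{-1}(Y_1'^+))$, one must check that the orbit $\Pbf_1(\Rbb)^+\Ubf_1(\Cbb)y$ coincides with $f_*^{-1}(Y_1'^+)$, and this is most easily seen by first forming the subdatum attached to $f^{-1}(\Pbf_1')^\circ$ and transporting its orbit component through the isogeny $f:\Pbf_1^\der\ra\Pbf_1'^\der$ via the argument of (1); if you start from the Mumford--Tate group you should add this verification (e.g.\ by \ref{generating subdata}(3) applied inside the subdatum attached to $f^{-1}(\Pbf_1')^\circ$). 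This is a small gap in exposition rather than in substance, and the rest of (2) matches the paper's template from \ref{insensitivity of levels}.
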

 
 Note that (1) implies that a connected mixed Shimura variety could be realized by different connected mixed Shimura data, and (2) affirms that one might choose any mixed Shimura datum to study the Andr\'e-Oort conjecture. 
 For example, in the pure case, the evident morphism between pure Shimura data $(\Gbf,X)\ra(\Gbf^\ad,X^\ad)=(\Gbf,X)/\Zbf$ with $\Zbf$ the center of $\Gbf$ satisfies the lemma: $\Gbf^\der\ra\Gbf^\ad$ is an isogeny. Hence the Andr\'e-Oort conjecture for pure Shimura varieties is reduced to the case where the ambient Shimura variety is defined by a $\Qbb$-group of adjoint type, which has been used in \cite{klingler yafaev} \cite{ullmo yafaev} etc.
 \begin{proof}
 (1) By \ref{fibred mixed Shimura data new}, we may assume that $(\Pbf,Y)=(\Ubf,\Vbf)\rtimes(\Gbf,X)$, $(\Pbf',Y')=(\Ubf',\Vbf')\rtimes(\Gbf',X')$, and $(f,f_*)$ is given by $(f,f_*):(\Gbf,X)\ra(\Gbf',X')$ a morphism of pure Shimura data together with equivariant maps between $\Qbb$-vector spaces $f:\Ubf\ra\Ubf'$ and $f:\Vbf\ra\Vbf'$ compatible with the alternating bilinear maps $\psi$ and $\psi'$. When we regard $\Gbf$ as a $\Qbb$-subgroup of $\Pbf$, $f(\Gbf)\subset\Pbf'$ is a reductive $\Qbb$-subgroup, which extends to some maximal $\Qbb$-subgroup $w'\Gbf'w'^\inv$ for some $w'\in\Wbf'(\Qbb)$, and we assume for simplicity that $w'=1$, hence the morphism $(\Gbf,X)\ra(\Gbf',X')$ extends to $(\Pbf,Y)\ra(\Pbf',Y')$. 
 
 Since $f:\Pbf^\der\ra\Pbf'^\der$ is an isogeny, $f$ induces an isomorphism of Lie algebras $\Lie\Pbf^\der\ra\Lie\Pbf'^\der$, and thus $\Lie\Wbf\isom\Lie\Wbf'$ under $f$. Taking $y\in Y$ mapped to $y'=f(y)\in Y'$, we see that the mixed Hodge structures   on $\Lie\Wbf$ and on $\Lie\Wbf'$ are isomorphic. This forces the $\Qbb$-group homomorphism $\Ubf\ra\Ubf'$ to be an isomorphism because it underlies an isomorphism of rational Hodge structures of type $(-1,-1)$, and so it is with the quotient $\Vbf\ra\Vbf'$, hence $f:\Wbf\ra\Wbf'$ is also an isomorphism. As for the pure part, the isogeny $\Pbf^\der\ra\Pbf'^\der$   gives an isogeny   $\Gbf^\der\ra\Gbf'^\der$ which gives further an isomorphism $\Gbf^\ad\ra\Gbf'^\ad$,  hence   the
 map $f_*:X\ra X'$ is an isomorphism on each connected components in $X$. Therefore $f_*:Y^+=\Wbf(\Rbb)\Ubf(\Cbb)\rtimes X^+\ra Y'^+=\Wbf'(\Rbb)\Ubf'(\Cbb)\rtimes X'^+$ is an isomorphism. 
 
 Assume that $f:\Gamma\bsh Y^+\ra\Gamma'\bsh Y^+$ is associated to congruence subgroups $\Gamma$ and $\Gamma'$ respectively with $f(\Gamma)\subset\Gamma'$, then the corresponding map for the pure quotient $f:\Gamma_\Gbf\bsh X^+\ra\Gamma_{\Gbf'}\bsh X'^+$ is finite. In fact, the image $\Gamma_\Gbf$ of $\Gamma$ in $\Gbf(\Qbb)_+$ resp. $\Gamma_{\Gbf'}$ of $\Gamma'$ in $\Gbf'(\Qbb)_+$ acts on $X^+$ through $\Gbf^\ad(\Qbb)^+$ resp. on $X'^+$ through $\Gbf'^\ad(\Qbb)^+$, and from the isomorphism $\Gbf^\ad\isom\Gbf'^\ad$ we know that the image of $\Gamma_\Gbf$ in $\Gbf^\ad(\Qbb)^+$ is n arithmetic subgroup of finite index in the image of $\Gamma_{\Gbf'}$ in $\Gbf'^\ad(\Qbb)^+$. Since $\Wbf\isom\Wbf'$, we also get $\Gamma\cap\Wbf(\Qbb)$ of finite index in $\Gamma'\cap\Wbf'(\Qbb)$. Hence  $f:\Gamma\bsh Y^+\ra\Gamma'\bsh Y'^+$ is finite.
 
 (2) If $S\subset \Gamma\bsh Y^+$ is a special subvariety defined by $(\Pbf_1,Y_1;Y_1^+)$, then its image in $\Gamma'\bsh Y'^+$ is a special subvariety defined by the image subdatum $(f(\Pbf_1),f_*(Y_1);f_*(Y_1^+))$. Conversely, if $S'$ is a special subvariety of $\Gamma'\bsh Y'^+$ defined by $(\Pbf_1',Y_1';Y_1'^+)$, then we claim that its preimage in $\Gamma\bsh Y^+$ is a finite union of special subvarieties which are Hecke translates of a connected subdatum of the form $(\Pbf_1,Y_1;Y_1^+)$ where \begin{itemize}
 \item $\Pbf_1$ is the neutral component of $f^\inv(\Pbf_1')$;
 \item $Y_1^+=f_*^\inv(Y_1'^+)$ under the isomorphism $f_*:Y^+\ra Y'^+$;
 \item $Y_1$ is the $\Pbf_1(\Rbb)\Ubf_1(\Cbb)$-orbit of $Y_1^+$ with $\Ubf_1=\Ubf\cap\Pbf_1$. 
 \end{itemize}

 In fact, any $y'\in Y_1'^+$ has a unique preimage $y\in Y^+$ under the isomorphism $f_*:Y^+\ra Y'^+$, and the inclusion $y'(\Sbb_\Cbb)\subset\Pbf'_{1,\Cbb}$ gives $y(\Sbb_\Cbb)\subset \Pbf_{1,\Cbb}$ because $y(\Sbb_\Cbb)$ is necessarily connected.  We verify that the $\Qbb$-group $\Pbf_1$ satisfies the condition in \ref{generating subdata}: $\Pbf_1$ is the neutral component of $f^\inv(\Pbf_1')$, hence its image $\Gbf_1$ modulo $\Wbf$ in $\Gbf$ is the neutral component of $f^\inv(\Gbf_1')$ with $\Gbf_1'$ the reduction of $\Pbf_1'$ modulo $\Wbf'$ in $\Gbf'$. Since the image of $(\Pbf_1',Y_1')$ in $(\Gbf',X')$ is a pure Shimura subdatum of the form $(\Gbf_1',X_1')$, we see that $\Gbf_1'^\ad$ admits no compact $\Qbb$-factors, hence so it is with $\Gbf_1$.
 
 By \ref{generating subdata}, $\Pbf_1$ does give rise to a mixed Shimura subdatum $(\Pbf_1,\Ubf_1,Y_1)$ of $(\Pbf,\Ubf,Y)$ using the element $y$, with $\Ubf_1=\Ubf\cap\Pbf_1$ and $Y_1$ being the $\Pbf_1(\Rbb)\Ubf_1(\Cbb)$-orbit of $y$. The image of $(\Pbf_1,\Ubf_1,Y_1)$ in $(\Pbf,\Ubf',Y')$ is clearly contained in $(\Pbf_1',\Ubf_1',Y_1')$. We have $\Pbf_1^\der\subset\Pbf^\der$, and the evident homomorphism $f:\Pbf_1^\der\ra\Pbf_1'^\der$ is an isogeny because it is restricted from $f:\Pbf^\der\ra\Pbf'^\der$. By the arguments in (1) we have isomorphisms $f:\Ubf_1\ra\Ubf_1'$, $f:\Wbf_1\ra\Wbf_1'$, and $f_*:Y_1^+\ra Y_1'^+$. 
 
 Using \ref{congruence subgroups}(3), the Andr\'e-Oort conjecture for $\Gamma\bsh Y^+$ resp. for $\Gamma\bsh Y'^+$ is equivalent to the conjecture for $\Gamma^\dagger\bsh Y^+$ resp. for $\Gamma'^\dagger\bsh Y'^+$. The isogeny $f:\Pbf^\der\ra\Pbf'^\der$ induces an isomorphism $Y^+\ra Y'^+$ and it sends $\Gamma^\dagger$ into an arithmetic subgroup of $\Gamma'^\dagger$ with finite kernel, hence the arguments in (1) show that the Andr\'e-Oort conjecture is the same for $\Gamma^\dagger\bsh Y^+$ and $\Gamma'^\dagger\bsh Y'^+$, hence the same for $\Gamma\bsh Y^+$ and $\Gamma'\bsh Y'^+$.\end{proof}

  In the rest of this section, we explain how a general mixed Shimura datum could be embedded, up to isogeny, into the product of a pure Shimura datum with Kuga data and mixed Shimura data defined in \ref{Siegel data}. This phenomenon is similar to the reduction lemma in \cite{pink thesis} 2.26, but the proof is easier as we only require embedding up to isogeny. 
  
  \begin{lemma}[reduction lemma]\label{reduction lemma}
  
  Let $(\Pbf,Y)$ be an irreducible mixed Shimura datum. Then there exists a morphism of mixed Shimura data $$f:(\Pbf,Y)\ra(\Gbf_0,X_0)\times\prod_{i=1,\cdots,r}(\Pbf_i,\Uscr_i)\times (\Qbf_0,\Vscr_0)$$ where $(\Gbf_0,X_0)$ is a pure Shimura datum, $(\Pbf_i,\Uscr_i)=(\Pbf_{\Vbf_i},\Uscr_{\Vbf_i})$, $(\Qbf_0,\Vscr_0)=(\Qbf_{\Vbf_0},\Vscr_{\Vbf_0})$ defined as in \ref{Siegel data} for some symplectic spaces $\Vbf_i$ ($i=0,1,\cdots,r$), and that the $\Qbb$-group homomorphism $\Pbf\ra\Gbf\times\prod_i\Pbf_i\times \Qbf_0$ is of finite kernel.

  If moreover $(\Pbf,Y)$ is a Kuga datum, then $f$ can be taken of the form $$(\Pbf,Y)=\Vbf\rtimes(\Gbf,X)\ra(\Gbf_0,X_0)\times(\Qbf_0,\Vscr_0)=(\Gbf_0,X_0)\times (\Vbf\rtimes(\GSp_\Vbf, \Hscr_\Vbf)).$$
  \end{lemma}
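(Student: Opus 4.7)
My plan is to build $f$ as a product of three types of morphisms using the fibered decomposition $(\Pbf,Y)=(\Ubf,\Vbf)\rtimes(\Gbf,X)$ from \ref{unipotent radical and Levi decomposition}. For the pure factor I take $(\Gbf_0,X_0):=(\Gbf,X)$, and let $f_{\mathrm{pure}}:(\Pbf,Y)\to(\Gbf_0,X_0)$ be the quotient modulo $\Wbf$. For the Kuga factor I take $\Vbf_0:=\Vbf$ together with a $\Gbf$-equivariant polarization $\psi_0:\Vbf\times\Vbf\to\Qbb$ of the Hodge structure on $\Vbf$ whose multiplier is the Hodge character; this yields a homomorphism $\Gbf\to\GSp_{\Vbf_0}$, and composing the natural projection $(\Pbf,Y)\to(\Pbf/\Ubf,Y/\Ubf(\Cbb))\cong\Vbf\rtimes(\Gbf,X)$ with the extension $\Vbf\rtimes\Gbf\to\Vbf_0\rtimes\GSp_{\Vbf_0}$ produces $f_{\mathrm{Kuga}}:(\Pbf,Y)\to(\Qbf_0,\Vscr_0)$. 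The remaining mixed Siegel factors will capture $\Ubf$ piecewise.

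For the mixed Siegel factors I invoke \ref{unipotent radical and Levi decomposition}(3): since $\rho_\Ubf$ factors through a split $\Qbb$-torus, I decompose $\Ubf=\bigoplus_{i=1}^{r}\Ubf_i$ into one-dimensional $\Gbf$-stable lines with $\Gbf$ acting on $\Ubf_i$ by a character $\chi_i$. Let $\psi_i:\Vbf\times\Vbf\to\Ubf_i$ be the $i$-th projection of the commutator pairing $\psi$, and set $\Vbf_i:=\Vbf/\mathrm{rad}(\psi_i)$. The induced form $\bar{\psi}_i$ is a nondegenerate $\Gbf$-equivariant alternating form on $\Vbf_i$ with multiplier $\chi_i$; the Hodge structure on $\Vbf_i$ inherited from $\Vbf$ remains of type $\{(-1,0),(0,-1)\}$, and $\bar{\psi}_i$ polarizes it (up to sign) via the Cartan involution axiom \ref{mixed Shimura data}(iv). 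This yields a morphism of pure Shimura data $(\Gbf,X)\to(\GSp_{\Vbf_i},\Hscr_{\Vbf_i})$, which combined with $\Vbf\to\Vbf_i$ and $\Ubf\to\Ubf_i$ assembles via \ref{fibred mixed Shimura data new}(2) into a morphism $f_i:(\Pbf,Y)\to(\Pbf_i,\Uscr_i)$.

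Setting $f:=(f_{\mathrm{pure}},f_1,\ldots,f_r,f_{\mathrm{Kuga}})$, I check that the underlying $\Qbb$-group homomorphism has trivial kernel: it is injective on the Levi subgroup $\Gbf$ via $f_{\mathrm{pure}}$, on $\Ubf$ because the $f_i$'s jointly realize the identity $\Ubf=\bigoplus_i\Ubf_i$, and on $\Vbf_0=\Vbf$ via $f_{\mathrm{Kuga}}$. Since $\Pbf=\Wbf\rtimes\Gbf$, any element $p\in\Ker f$ factors as $wg$ with $g\in\Gbf$, $w\in\Wbf$, and $f_{\mathrm{pure}}(p)=1$ forces $g=1$, while the remaining components then force $w=1$; hence $\Ker f=1$, which is trivial and in particular finite. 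The Kuga-datum case of the lemma is immediate: when $\Ubf=0$ the factors $\Pbf_i$ drop out, leaving the desired map $(\Pbf,Y)\to(\Gbf_0,X_0)\times(\Qbf_0,\Vscr_0)$.

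The main obstacle is the existence claim for the Kuga factor, namely constructing a single $\Gbf$-equivariant polarization $\psi_0$ on $\Vbf$ (and the parallel verification that each $\bar{\psi}_i$ polarizes the induced Hodge structure on $\Vbf_i$). Each point $x\in X$ yields polarizations pointwise, but one must exhibit one invariant under all of $\Gbf$ up to the Hodge character. I expect to achieve this by decomposing $\Vbf$ into $\Gbf$-isotypic components, each polarizable by its own Mumford--Tate group, which lies in $\Gbf$, and then assembling the isotypic polarizations using that their multipliers all agree with the common Hodge character thanks to the central-character constraint \ref{mixed Shimura data}(v). The overall argument is notably easier than Pink's reduction lemma \cite{pink thesis} 2.26 because we only demand finite kernel rather than an embedding.
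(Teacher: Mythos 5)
Your construction follows the same overall route as the paper: split off the pure part, polarize $\Vbf$ to obtain a Siegel--Kuga factor, and decompose $\Ubf$ into $\Gbf$-stable lines to obtain the mixed Siegel factors. Two remarks. First, your choice $(\Gbf_0,X_0)=(\Gbf,X)$ is a genuine (and legitimate) simplification: the paper instead maps $\Gbf$ onto its image $\Gbf'\subset\GSp_\Vbf$, lifts $\Lie\Gbf'$ to a normal reductive subgroup $\Hbf'\subset\Gbf$, and takes $(\Gbf_0,X_0)=(\Gbf,X)/\Hbf'$, so that $\Gbf\ra\Gbf_0\times\Gbf'$ has finite kernel by a Lie-algebra computation. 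Since the statement only asks for \emph{some} pure factor and finite kernel, keeping the whole of $(\Gbf,X)$ as you do makes the kernel computation trivial (indeed you get an injection), and nothing is lost.

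Second, the step you flag as the main obstacle is the one genuine gap, and your proposed fix is not yet a proof. The existence of a single $\Gbf$-equivariant symplectic form $\Psi:\Vbf\otimes\Vbf\ra\Qbb(1)$ that polarizes $\rho_\Vbf\circ x$ for every $x\in X$ simultaneously is exactly what the paper imports from \cite{pink thesis}~1.12 (polarizability of the variation of Hodge structure attached to $\rho_\Vbf$). Your sketch via $\Gbf$-isotypic components does not immediately close this: isotypic pieces can carry multiplicity, so the space of equivariant forms can be large and one still has to exhibit a \emph{positive} one; and condition \ref{mixed Shimura data}(iv) concerns the Cartan involution on $\Gbf^\ad_\Rbb$ acting on $\Lie\Gbf$, so it does not by itself deliver positivity of $\Psi(\cdot,x(\ibf)\cdot)$ on $\Vbf$ --- one needs the standard Deligne-type argument that representations on which the weight cocharacter acts by a scalar are polarizable when $\mathrm{Int}(x(\ibf))$ is a Cartan involution, which is precisely what Pink's 1.12 packages. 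The same caveat applies to your claim that each $\bar{\psi}_i$ polarizes the induced Hodge structure on $\Vbf_i$ (needed for $X\ra\Hscr_{\Vbf_i}$ to land in the Siegel double space); the paper handles this case by noting that $\Gbf$ acts on the one-dimensional $\Ubf_i$ by a scalar, so $\psi_i$ is preserved up to similitude, with positivity again coming from the admissibility of the mixed Hodge structures in Pink's framework. With these points replaced by the appropriate citation, your argument is complete.
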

  
  \begin{proof}
  Write $(\Pbf,Y)=\Wbf\rtimes(\Gbf,X)=(\Ubf,\Vbf)\rtimes(\Gbf,X)$ with $\Wbf$ given by an alternating bilinear map $\psi:\Vbf\times\Vbf\ra\Ubf$ equivariant under $\Gbf$. 
  
  (1) We first consider the Kuga case, i.e. $\psi=0$ and $\Ubf=0$. We claim in this case $\Gbf$ preserves a symplectic form $\Psi$ on $\Vbf$ up to similitude, and it gives rise to a morphism of Kuga data $(\Pbf,Y)\ra\Vbf\rtimes(\GSp_\Vbf,\Hscr_\Vbf)$.
  
  Using \cite{pink thesis} 1.4, we get a variation of rational Hodge structures $\Vbb$ on $X$ associated to the representation $\rho_\Vbf$, which is pure of weight 1 and type $\{(-1,0),(0,-1)\}$, and the Hodge structure on $\Vbb_x$ is given by $\rho_\Vbf\circ x$; using further \cite{pink thesis} 1.12, this variation is polarized by some $\Psi:\Vbb\otimes_{\Qbb_X}\Vbb\ra\Qbb(1)_X$, coming from some $\Gbf$-equivariant non-degenerate bilinear map $\Psi:\Vbf\otimes_\Qbb\Vbf\ra\Gbb_\arm(\isom\Qbb(1))$, which is a symplectic form due to the weight and the Hodge type of $\rho_\Vbf\circ x$ ($x\in X$). Hence we obtain a map $f_*:X\ra \Hscr_\Vbf$ sending $x\in X$ to the polarized rational Hodge structure $(\rho_\Vbf\circ x,\Psi)$, and $f_*$ is equivariant \wrt a $\Qbb$-group homomorphism $f:\Gbf\ra\GSp_\Vbf$. Using \ref{fibred mixed Shimura data new}(2), we get a morphism of Kuga data $f:\Vbf\rtimes(\Gbf,X)\ra\Vbf\rtimes(\GSp_\Vbf,\Hscr_\Vbf)=:(\Qbf_0,\Vscr_0)$, and the restriction of the $\Qbb$-group homomorphism $f$ to the unipotent radical $\Vbf$ is identity.
  
  Let $\Gbf'$ be the image of $\Gbf$ in $\GSp_\Vbf$. Since $\gfrak=\Lie\Gbf$ is reductive, the epimorphism $\Lie f:\gfrak=\Lie\Gbf\ra\gfrak'=\Lie\Gbf'$ gives rise to a decomposition $\gfrak=\gfrak'\oplus\gfrak''$ of ideals of $\gfrak$. Now that $\gfrak$ contains a Lie subalgebra $\gfrak'$, $\Gbf$ contains a connected normal reductive $\Qbb$-subgroup $\Hbf'$ whose Lie algebra is $\gfrak'$. Let $(\Gbf_0,X_0)$ be the quotient of $(\Gbf,X)$ by $\Hbf'$. We have $\Gbf_0=\Gbf/\Hbf'$, and thus the homomorphism $\Gbf\ra\Gbf_0\times\Gbf'\mono\Gbf_0\times\GSp_\Vbf$ is of finite kernel because the Lie algebra homomorphism $\gfrak\ra(\gfrak/\gfrak')\oplus\gfrak'$ is an isomorphism. From the morphism $(\Gbf,X)\ra(\Gbf_0,X_0)\times(\GSp_\Vbf,\Hscr_\Vbf)$ we get $$(\Pbf,Y)=\Vbf\rtimes(\Gbf,X)\ra(\Gbf_0,X_0)\times(\Vbf\rtimes(\GSp_\Vbf,\Hscr_\Vbf))=(\Gbf_0,X_0)\times(\Qbf_0,\Vscr_0)$$ and $\Pbf\ra\Gbf_0\times\Qbf_0$ is of finite kernel.
  
  (2) When $\Ubf\neq 0$, by \cite{pink thesis} 2.14, $\Gbf$ acts on $\Ubf$ through a split $\Qbb$-torus, and $\Ubf$ splits into a direct sum of one-dimensional subrepresentations $\Ubf=\oplus_\alpha\Ubf_\alpha$.  We thus get $\psi=\oplus_\alpha\psi_\alpha$ with $\psi_\alpha$ the composition $\Vbf\times\Vbf\ra\Ubf\ra\Gbb_\arm$ using the $\alpha$-th projection. Write $\Wbf_\alpha$ for the extension of $\Vbf$ by $\Gbb_\arm$ using $\psi_\alpha$, we have an inclusion $\Wbf\mono\prod_{\alpha}\Wbf_\alpha$ which is the identify when restricted to $\Ubf$, and it becomes the diagonal embedding $\Vbf\mono\prod_\alpha\Vbf$ when reduced modulo $\Ubf$. It extends to an inclusion $\Pbf=\Wbf\rtimes\Gbf\mono\prod_\alpha\Pbf_\alpha$ with $\Pbf_\alpha=\Wbf_\alpha\rtimes\Gbf$, and we get an embedding $$(\Pbf,Y)=(\Ubf,\Vbf)\rtimes(\Gbf,X)\mono\prod_{\alpha}(\Pbf_\alpha,Y_\alpha)=\prod_\alpha\Wbf_\alpha\rtimes(\Gbf,X)=\prod_{\alpha}(\Gbb_\arm,\Vbf)\rtimes(\Gbf,X)$$ and it remains to prove the lemma for each $(\Pbf_\alpha,Y_\alpha)$.
  
We thus assume that $\Ubf$ is one-dimensional. The radical $\Vbf_0$ of $\psi:\Vbf\times\Vbf\ra\Ubf$ is $\{v\in\Vbf:\psi(v,v')=0\forall v'\in\Vbf\}$ is a subrepresentation in $\Vbf$ under $\Gbf$, hence we have a splitting $\Vbf=\Vbf_0\oplus\Vbf_1$ because $\Gbf$ is reductive, and the restriction $\psi:\Vbf_1\times\Vbf_1\ra\Ubf$ is non-degenerate, i.e. a symplectic form. We thus get an embedding $$(\Pbf,Y)=(\Ubf,\Vbf)\rtimes(\Gbf,X)\mono((\Ubf,\Vbf_1)\rtimes(\Gbf,X))\times(\Vbf_0\rtimes(\Gbf,X)).$$ The Kuga case $\Vbf_0\rtimes(\Gbf,X)$ is already treated in (1). As for $(\Ubf,\Vbf_1)\rtimes(\Gbf,X)$, $\Ubf$ is one-dimensional and $\Gbf$ acts on it by scalars, hence $\Gbf$ preserves $\psi$ up to similitude, which gives $(\Gbf,X)\ra(\GSp_{\Vbf_1},\Hscr_{\Vbf_1})$ and a morphism $(\Ubf,\Vbf_1)\rtimes(\Gbf,X)\ra(\Ubf,\Vbf_1)\rtimes(\GSp_{\Vbf_1},\Hscr_{\Vbf_1})$, whose restriction to the unipotent radical $\Wbf_1$ (extension of $\Vbf_1$ by $\Ubf$ via $\psi$) is the identity. Repeat the construction in (1) (i.e. lifting the image of $\Gbf\ra\GSp_{\Vbf_1}$ into a connected normal $\Qbb$-subgroup of $\Gbf$), we get a morphism $$(\Ubf,\Vbf_1)\rtimes(\Gbf,X)\ra(\Gbf_1,X_1)\times((\Ubf,\Vbf_1)\rtimes(\GSp_{\Vbf_1},\Hscr_{\Vbf_1})$$ in which the $\Qbb$-group homomorphism is of finite kernel. Hence the claim.\end{proof}

  \begin{remark}[reduction to subdata of a ``good product'']\label{reduction to subdata of a good product}
  
   When we study the Andr\'e-Oort conjecture for $M_K(\Pbf,Y)$, it suffices to restrict to each connected component of the form $M^+=\Gamma\bsh Y^+$ for $Y^+$ some fixed connected component of $Y$ and $\Gamma$ some suitably defined congruence subgroup of $\Pbf(\Qbb)_+$. In order to use the strategy of \cite{klingler yafaev} and \cite{ullmo yafaev} for $M^+$, it suffices to take the subdatum $(\Pbf',Y')$ where $\Pbf'=\MT(Y^+)\subset\Pbf$ and $Y'=\Pbf'(\Rbb)\Ubf'(\Cbb)Y^+$ using \ref{generating subdata}(3), which is irreducible, hence admits a morphism into $(\Gbf_0,X_0)\times(\Lbf,Y_\Lbf)$ satisfying \ref{reduction lemma}. Moreover the reduction modulo the center $\Zbf_0$ of $\Gbf_0$ gives
   $(\Gbf_0,X_0)\times(\Lbf,Y_\Lbf)\ra(\Gbf_0^\ad,X_0^\ad)\times(\Lbf,Y_\Lbf)$ satisfying \ref{insensitivity of isogeny}, hence we may reduce the Andr\'e-Oort conjecture to mixed Shimura varieties defined by a subdatum of a ``good product'' of the form $(\Gbf_0,X_0)\times(\Lbf,Y_\Lbf)$ with $\Gbf_0$ semi-simple of adjoint type and $(\Lbf,Y_\Lbf)$ a product of finitely many mixed Shimura data of Siegel type.
   
   We will encounter these good products in Section 4 and Section 5, which provide convenient settings for the estimation of degrees of Galois orbits of special subvarieties. \end{remark}

 \section{Measure-theoretic constructions on mixed Shimura varieties}\label{Measure-theoretic constructions on mixed Shimura varieties}
 
 In this section, we introduce some measure-theoretic constructions  associated to connected mixed Shimura varieties. Most of them are analogue to the Kuga case discussed in \cite{chen kuga} Section 2, 2.14-2.18, except that in the general case, we work with the notion of S-spaces. We also introduce the notion of $\TW$-special subdata, which is the analogue of $\Tbf$-special subdata of \cite{ullmo yafaev} 3.1 in the mixed case.
 
 \begin{definition}[lattice spaces and canonical measures]\label{lattice spaces and canonical measures}
 
 (1) A linear $\Qbb$-group $\Pbf$ is said to be \emph{of type $\Hscr$} if it is of the form $\Pbf=\Wbf\rtimes\Hbf$ with $\Wbf$ a unipotent $\Qbb$-group and $\Hbf$ a connected semi-simple $\Qbb$-group without normal $\Qbb$-subgroups $\Hbf'\subset\Hbf$ of dimension $>0$ such that $\Hbf'(\Rbb)$ is compact.
 
 For a mixed Shimura datum $(\Pbf,Y)$, the $\Qbb$-group of commutators  $\Pbf^\der$ is of type $\Hscr$, due to \ref{unipotent radical and Levi decomposition}(4). 
 
 (2) For $\Pbf$ a linear group of type $\Hscr$ and $\Gamma\subset\Pbf(\Rbb)^+$ a congruence subgroup, the quotient $\Omega=\Gamma\bsh \Pbf(\Rbb)^+$ is called the (connected) \emph{lattice space} associated to $(\Pbf,\Gamma)$. Since $\Gamma$ is discrete in $\Pbf(\Rbb)^+$, the space $\Omega$ is a smooth manifold. We also have the \emph{uniformization map} $\wp_\Gamma:\Pbf(\Rbb)^+\ra\Omega,\ a\mapsto\Gamma a$.
 
 (3) Let $\Omega=\Gamma\bsh\Pbf(\Rbb)^+$ be a lattice space as in (2). The left Haar measure $\nu_\Pbf$ on $\Pbf(\Rbb)^+$ passes to a measure $\nu_\Omega$ on $\Omega$: choose a fundamental domain $F\subset\Pbf(\Rbb)^+$ \wrt $\Gamma$, we put $\nu_\Omega(A)=\nu_\Pbf(F\cap\wp_\Gamma^\inv A)$ for $A\subset\Omega$ measurable.
 
 Following \cite{chen kuga} 2.15 (1), $\nu_\Omega$ is of finite volume and is normalized such that $\nu_\Omega(\Omega)=1$. We call it the \emph{canonical measure} on $\Omega$.

 \end{definition}

 \begin{definition}[lattice space and S-space]\label{lattice space and s-space}
 
 Let $(\Pbf,Y;Y^+)=(\Ubf,\Vbf)\rtimes(\Gbf,X;X^+)$ be a connected mixed Shimura datum with pure section $(\Gbf,X;X^+)$. Let $\Gamma$ be a congruence subgroup of $\Pbf(\Rbb)_+$, which gives us the connected mixed Shimura variety $M=\Gamma\bsh Y^+$.
 
 (1) The \emph{lattice space associated to $M$} is $\Omega=\Gamma^\dagger\bsh \Pbf^\der(\Rbb)^+$ where $\Gamma^\dagger=\Gamma\cap\Pbf^\der(\Rbb)^+$. It is equipped with the \emph{canonical measure} $\nu_\Omega$, and we have the \emph{uniformization map} $\wp_\Gamma:\Pbf^\der(\Rbb)^+\ra \Omega$, $g\mapsto \Gamma^\dagger g$.
 
 A \emph{lattice subspace} of $\Omega$ is of the form $\wp_\Gamma(\Hbf(\Rbb)^+)$ for some $\Qbb$-subgroup $\Hbf\subset\Pbf^\der$ of type $\Hscr$. Since $\Gamma^\dagger$ acts on $\Pbf^\der(\Rbb)^+$ by translation, we have $\wp_\Gamma(\Hbf(\Rbb)^+)\isom(\Gamma\cap\Hbf(\Rbb)^+)\bsh \Hbf(\Rbb)^+$ is a lattice space itself, and it is a closed submanifold of $\Omega$. It carries a canonical measure $\nu$ by \ref{lattice spaces and canonical measures}, and we regard $\nu$ as a probability measure on $\Omega$ with support equal to the submanifold $\wp_\Gamma(\Hbf(\Rbb)^+)$.
 \bigskip
 
 (2) We write $\Yscr^+$ for the $\Pbf(\Rbb)_+$-orbit of $X^+$ in $Y$, called the \emph{real part} of $Y^+$. It is a connected real submanifold of $Y^+$. The (connected) \emph{S-space} associated to $M$ is $\Mscr=\Gamma\bsh \Yscr^+$. Since $\Gamma$ contains a neat subgroup of finite index, the quotient $\Mscr$ is a real analytic space with at most singularities of finite group quotient.
 
 We also have the following \emph{orbit map} associated to any point $y\in\Yscr^+$:
 
 $$\kappa_y:\Omega=\Gamma^\dagger\bsh\Pbf^\der(\Rbb)^+\ra \Mscr=\Gamma\bsh \Yscr^+,\ \Gamma^\dagger g\mapsto \Gamma gy.$$ It is surjective because $\Gamma^\dagger\subset\Gamma$ and $\Yscr^+$ is a single $\Pbf^\der(\Rbb)^+$-orbit, as $X^+$ is homogeneous under $\Gbf^\der(\Rbb)^+$. It is a submersion of smooth real analytic spaces when $\Gamma$ is neat.
 
 \end{definition}
 
 \begin{remark}\label{remarks on s-spaces}
 
  The $\Pbf(\Rbb)$-orbit $\Yscr$ of $X$ in $Y$ is independent of the choice of pure section $(\Gbf,X)$ as different pure sections are conjugate to each other under $\Pbf(\Qbb)\subset\Pbf(\Rbb)$. $\Yscr^+$ is simply a connected component of $\Yscr$, as it is the preimage of $X^+$ under the natural projection $\Yscr(\mono Y)\epim X$ whose fibers are connected (isomorphic to $\Wbf(\Rbb))$.
  
  The notion of real part of $Y$ is inspired by the \emph{imaginary part} of $Y$ defined in \cite{pink thesis} 4.14.

 \end{remark}
 
 \begin{lemma}
 
 Let $(\Pbf,Y;Y^+)=(\Ubf,\Vbf)\rtimes(\Gbf,X;X^+)$ be a connected mixed Shimura datum with $\Ubf\neq 0$. Then for any congruence subgroup $\Gamma\subset\Pbf(\Qbb)_+$, the S-space $\Gamma\bsh \Yscr^+$ is dense in $\Gamma\bsh Y^+$ for the Zariski topology.
 
 \end{lemma}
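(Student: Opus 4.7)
The plan is to use the intermediate fibration $\pi_\Ubf\colon M\to N$ of \ref{two step fibration}, where $N$ denotes the connected Kuga component corresponding to the datum $(\Pbf/\Ubf,Y/\Ubf(\Cbb))$. This projection is a torsor under the algebraic torus $T=(\Gamma\cap\Ubf(\Qbb))\backslash\Ubf(\Cbb)$, which, via the exponential map, is isomorphic to $(\mult)^r$ with $r=\dim\Ubf$. The strategy is to show that, inside each fiber of $\pi_\Ubf$, the part contributed by the S-space is a real compact subtorus, and that this subtorus is Zariski dense in the whole fiber. A fiber-wise application will then yield that the Zariski closure of $\Mscr$ is all of $M$.

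First I would verify that $\pi_\Ubf(\Mscr)=N$. Since the Kuga analogue of an S-space equals the Kuga variety itself, the image $\pi_\Ubf(\Yscr^+)\subset Y^+/\Ubf(\Cbb)$ equals the entire Kuga universal cover (the $\Pbf(\Rbb)_+/\Ubf(\Rbb)$-orbit of $X^+$), so after quotienting by $\Gamma$ we obtain all of $N$. Next, for a point $n\in N$ and any lift $y\in\Yscr^+$ of a representative of $n$, the full scheme-theoretic fiber of $\pi_\Ubf$ over $n$ inside $M$ is the complex torus $(\Gamma\cap\Ubf(\Qbb))\backslash\Ubf(\Cbb)y\cong T$, while the points of $\Mscr$ lying over $n$ form the $\Ubf(\Rbb)$-orbit of $y$ modulo $\Gamma\cap\Ubf(\Qbb)$, i.e.\ the compact real subtorus $(\Gamma\cap\Ubf(\Qbb))\backslash\Ubf(\Rbb)y\cong(\Rbb/\Zbb)^r$ embedded in $T\cong(\Cbb^\times)^r$ as the unit subtorus.

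I would then invoke the classical fact that the compact real torus $(\Rbb/\Zbb)^r$ is Zariski dense in the algebraic torus $(\mult)^r$: under the character identification, a regular function on $(\mult)^r$ is a finite Laurent polynomial $\sum_{\chi}a_\chi\chi$ in the characters, and the restrictions of distinct characters to the compact subtorus remain linearly independent (this amounts to Fourier uniqueness on $(\Rbb/\Zbb)^r$), so any such polynomial vanishing on the compact subtorus must be identically zero. Letting $Z\subset M$ denote the Zariski closure of $\Mscr$, the intersection $Z\cap\pi_\Ubf^{-1}(n)$ is Zariski closed in the fiber and contains the dense subtorus, hence equals the whole fiber for every $n\in N$. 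Therefore $Z\supset\pi_\Ubf^{-1}(N)=M$, which gives the conclusion.

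The only step requiring genuine care is the identification of the fibers: one must check that $\Gamma\cap\Ubf(\Qbb)$ is a $\Zbb$-lattice of full rank in $\Ubf(\Qbb)$ (so that the quotients are indeed tori of the expected dimensions) and that this lattice structure is uniform across fibers. The first follows because $\Gamma$ is a congruence subgroup of $\Pbf(\Qbb)_+$ and $\Ubf$ is a $\Qbb$-subgroup, and the second follows because $\Ubf$ is central in $\Wbf$ and normal in $\Pbf$, so conjugation by representatives of $N$ preserves $\Ubf(\Qbb)$ and its intersection with $\Gamma$.
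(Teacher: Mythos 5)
Your proof is correct and follows essentially the same route as the paper: both arguments fiber $M$ over the Kuga quotient via $\pi_\Ubf$ and reduce Zariski density to the density of a real form inside the complex fiber. The only difference is that the paper runs the density argument upstairs on the uniformizing space (where the fiber is $\Ubf(\Cbb)\cong\Cbb^r$ with Zariski-dense real part $\Ubf(\Rbb)$) and declares the passage to the quotient ``clear,'' whereas you work directly in the quotient where the fibers are algebraic tori and the S-space cuts out the compact subtorus $(\Rbb/\Zbb)^r$, whose density you justify by linear independence of characters --- a slightly more complete rendering of the same idea, consistent with the paper's own Remark on compact versus algebraic tori.
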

 
 \begin{proof}
 
 In the Kuga case, we have $\Ubf=0$, hence the real part $\Yscr^+$ equals $Y^+$, and the S-space $\Mscr$ is just the Kuga variety. In the non-Kuga case, the projection $\pi_\Ubf$ gives us the commutative diagram $$\xymatrix{\Yscr^+\ar[d]^{\pi_\Ubf}\ar[r]^\subset & Y^+\ar[d]^{\pi_\Ubf}\\ Y^+/\Ubf(\Cbb)\ar[r]^= & Y^+/\Ubf(\Cbb)},$$ in which the vertical maps are smooth submersions of manifolds. The right one is a $\Ubf(\Cbb)$-torsor,  the left one is a $\Ubf(\Rbb)$-torsor, and $Y^+=\Yscr^+\times^{\Ubf(\Rbb)}\Ubf(\Cbb)$ namely the quotient of $\Yscr^+\times\Ubf(\Cbb)$ by the diagonal action of $\Ubf(\Rbb)$. 
 $\Ubf(\Rbb)\subset\Ubf(\Cbb)$ is Zariski dense when we view $\Ubf(\Cbb)$ as a complex algebraic variety, hence the density of $\Yscr$ in $Y$. The proof for $\Mscr\subset M$ is clear when we restrict to connected components and take quotient by $\Gamma$.
 \end{proof}

 The advantage of S-spaces is that they carry canonical measures of finite volumes. Parallel to the Kuga case studied in \cite{chen kuga} 2.17 and 2.18, we have the following:
 
 \begin{definition-proposition}[canonical measures on S-spaces]\label{canonical measures on s-spaces}
 
 Let $M=\Gamma\bsh Y^+$ be a connected mixed Shimura variety associated to $(\Pbf,Y;Y^+)$, with $\Omega=\Gamma^\dagger\bsh\Pbf^\der(\Rbb)^+$ the corresponding lattice space, and $\Mscr=\Gamma\bsh\Yscr^+$ the S-space. Fix a base point $y\in \Yscr^+\subset Y^+$.
 
 (1) The orbit map $\kappa_y:\Pbf^\der(\Rbb)^+\ra \Yscr^+,\ g\mapsto gy$ is a submersion with compact fibers. The isotropy subgroup $K_y$ of $y$ in $\Pbf^\der(\Rbb)^+$ is a maximal compact subgroup of $\Pbf^\der(\Rbb)^+$.
 
 The left invariant Haar measure $\nu_\Pbf$ on $\Pbf^\der(\Rbb)^+$ passes to a left invariant measure $\mu_\Yscr=\kappa_{y*}\nu_\Pbf$ on $\Yscr^+$, which is independent of the choice of $y$. 
 
 (2) Taking quotients by congruence subgroups, the orbit map $\kappa_y:\Gamma^\dagger\bsh\Pbf^\der(\Rbb)^+\ra \Gamma\bsh \Yscr^+,\ \ \Gamma^\dagger g\mapsto\Gamma gy$ is a submersion with compact fibers. The push-forward $\kappa_{y*}$ sends $\nu_\Omega$ to a probability measure $\mu_{\Mscr}$ on $\Mscr=\Gamma\bsh \Yscr^+$, independent of the choice of $y$. We call it the \emph{canonical measure} on $\Mscr$.
 
 (3) Let $ M'\subset M$ be a special subvariety defined by $(\Pbf',Y';Y'^+)$, and take $y\in Y'^+\subset Y^+$. Then we have the commutative diagram $$\xymatrix{ \Omega'\ar[r]^\subset\ar[d]^{\kappa_y} & \Omega\ar[d]^{\kappa_y}\\ \Mscr'\ar[r]^\subset & \Mscr}$$ with $\Omega'$ the special lattice space associated to $M'$, $\Mscr'$ the corresponding special S-subspace. In particular we have $\kappa_{y*}\nu_{\Omega'}=\mu_{\Mscr'}$, with $\nu_{\Omega'}$ the canonical measure of the lattice subspace $\Omega'$ associated to $\Pbf'^\der(\Rbb)^+$. Note that we identify $\nu_{\Mscr'}$ as a probability measure on $\Mscr$ with support equal to $\Mscr'$.
 
 Similarly, for the fibration over the pure base $\pi:M\ra S=\Gamma_\Gbf\bsh X^+$ with $\Gamma=\Gamma_\Wbf\rtimes\Gamma_\Gbf$, we have the submersions $\pi:\Omega\ra\Omega_\Gbf:=\Gamma_\Gbf^\dagger\bsh\Gbf^\der(\Rbb)^+$, $\pi:\Mscr' \ra S$, together with $\pi_*\nu_\Omega=\nu_{\Omega_\Gbf}$ and $\pi_*\mu_\Mscr=\mu_S$.

 \end{definition-proposition}

 \begin{proof}
   It suffices to replace the $\Vbf$'s etc. in \cite{chen kuga} 2.17 and 2.18 by $\Wbf$'s etc. as the proof there already works for general unipotent $\Vbf$'s.
 \end{proof}

 In the pure case, we have the notion of $\Tbf$-special sub-object, where $\Tbf$ is  the connected center of the $\Qbb$-group defining the subdatum, the special subvarieties, etc. In the mixed case, the connected center is of the form $w\Tbf w^\inv$ following the notations in \ref{structure of subdata}, and we prefer to separate $\Tbf$ and $w$, because $\Tbf$ provides information on the image in the pure base, and $w$ describes how the pure section has been translated from the given one defined by $(\Gbf,X)\subset(\Pbf,Y)$. In Introduction we have seen motivations for this notion on Kuga varieties via the description of special subvarieties as torsion subschemes in some subfamily of abelian varieties.
 
 \begin{definition}[$(\Tbf,w)$-special subdata]\label{tw-special subdata}
 
 Let $(\Pbf,Y)=(\Ubf,\Vbf)\rtimes(\Gbf,X)$ be a mixed Shimura datum, with $\Wbf$ the central extension of $\Vbf$ by $\Ubf$ as the unipotent radical of $\Pbf$. Take $\Tbf$ a $\Qbb$-torus in $\Gbf$ and $w$ an element in $\Wbf(\Qbb)$.
 
 (1) A subdatum $(\Pbf',Y')$ of $(\Pbf,Y)$ is said to be \emph{$(\Tbf,w)$-special} if it is of the form $(\Ubf',\Vbf')\rtimes(w\Gbf'w^\inv,wX')$ presented as in \ref{structure of subdata}, with $\Tbf$ equal to the connected center of $\Gbf'$. In the language of \cite{ullmo yafaev}, $(\Gbf',X')$ is a $\Tbf$-special subdatum of $(\Gbf,X)$, and the element $w\in\Wbf(\Qbb)$ conjugates it to a pure section of $(\Pbf',Y')$, cf. \ref{pure section}.
 
 (2) Similarly, if $M=\Gamma\bsh Y^+$ is a connected mixed Shimura variety  defined by $(\Pbf,Y;Y^+)$, then a special subvariety of $M$ is \emph{$(\Tbf,w)$-special} if it is defined by some (connected) irreducible $(\Tbf,w)$-special subdatum.
 
 We also define notions such as \emph{$(\Tbf,w)$-special lattice subspaces, $(\Tbf,w)$-special S-subspaces}, etc. in the evident way.
 
 (3) When we remove $w\in\Wbf(\Qbb)$ we get the notion of \emph{$\Tbf$-special} sub-objects, similar to the Kuga case studied in \cite{chen kuga} 3.1: a $\Tbf$-special subdatum is an irreducible subdatum $(\Pbf',Y')\subset(\Pbf,Y)$ such that the image of $(\Pbf',Y')$ under the natural projection is $(\Gbf',X')$ a pure irreducible subdatum of $(\Gbf,X)$ with $\Tbf$ equal to the connected center of $\Gbf'$; the notion of $\Tbf$-special subvarieties, etc. is understood in the evident way. If $(\Pbf,Y)$ is pure, i.e. $\Wbf=1$, then being $(\Tbf,1)$-special is the same as $\Tbf$-special pure subdata.

 \end{definition}

 We will also use the following variant to state our main results on the equidistribution of special subvarieties, inspired by the pure case studied in \cite{ullmo yafaev}. Subsets of closed subvarieties of a $\Qac$-variety of finite type are always countable, hence we talk about sequences of special subvarieties indexed by $\Nbb$ instead of ``families'', ``collections'', etc.

 \begin{definition}[bounded sequence]\label{bounded sequence} Let $M=\Gamma\bsh Y^+$ be a connected mixed Shimura variety defined by $(\Pbf,Y;Y^+)=\Wbf\rtimes(\Gbf,X;X^+)$. Fix a finite set $B=\{(\Tbf_1,w_1),\cdots,(\Tbf_r,w_r)\}$ with $\Tbf_i$ a $\Qbb$-torus in $\Gbf$ and $w_i\in\Wbf(\Qbb)$, $i=1,\cdots,r$. We call $B$ a (finite) \emph{bounding set}.

 (1) A special subvariety of $M$ is said to be \emph{bounded by} $B$ (or $B$-\emph{bounded}) if it is $\TW$-special for some $\TW\in B$.  A sequence $(M_n)$ of special subvarieties in $M$ is said to be \emph{bounded by} $B$ if  each $M_n$ is $B$-bounded.

 (2) Similarly,  a sequence of special lattice subspaces resp.   of special S-subspaces is bounded by $B$  if its members are defined by  $\TW$-special subdatum for $\TW\in B$.
 
 (3) For $\Omega$ resp. $\Mscr$ the lattice space resp. the S-space associated to $M$ we write $\Pscr_B(\Omega)$ resp. $\Pscr_B(\Mscr)$ for the set of canonical   measures on $\Omega$ resp. on $\Mscr$ associated to $B$-bounded special subvarieties.

 \end{definition}
 
 Note that when $(\Pbf,Y)=(\Gbf,X)$ is pure, $B$ is simply a finite set of $\Qbb$-tori in $\Gbf$.  
  
 \begin{remark}[conjugation by $\Gamma$]\label{conjugation by gamma}
 
 We consider a connected mixed Shimura variety of the form $M=\Gamma\bsh Y^+$ defined by $(\Pbf,Y;Y^+)=\Wbf\rtimes(\Gbf,X;X^+)$ with $\Gamma=\Gamma_\Wbf\rtimes\Gamma_\Gbf$ and fibred over $S=\Gamma_\Gbf\bsh X^+$. Let $Z$ be a $\TW$-special subvariety, defined by $(\Pbf',Y')=\Wbf'\rtimes(w\Gbf'w^\inv, wX;wX^+)$, with $\Tbf$ the connected center of $\Gbf'$.
 
 The pre-image of $Z$ under the uniformization $\wp_\Gamma: Y^+\ra M$ is the union $\bigcup_{\gamma\in \Gamma}\gamma Y'^+$, hence a subdatum $(\Pbf'',Y'';Y''^+)$ defines the same special subvariety $Z$ as $(\Pbf',Y';Y'^+)$ does \ifof $(\Pbf'',Y'';Y''^+)=(\gamma\Pbf'\gamma^\inv, \gamma Y'; \gamma Y'^+)$ for some $\gamma\in\Gamma$.
 
 In our definition, $\Tbf$ only depends on the image of $Z$ in $S$, and we can conjugate $\Tbf$ by $\Gamma_\Gbf$; in the unipotent radical, since $\Gamma_\Wbf$ acts on $\Ubf(\Cbb)\Wbf(\Rbb)$ by translation, $w$ and $w'$ give rise to the same special subvariety \ifof $w=\gamma w'$ for some $\gamma\in\Gamma_\Wbf$. We thus conclude that the notion of $\TW$-special subvarieties actually only depends on the class $[\Tbf,w]$, by which we mean the $\Gamma_\Gbf$-conjugacy class of $\Tbf$ in $\Gbf$ and the coset $\Gamma_\Wbf w$ in $\Gamma_\Wbf\bsh\Wbf(\Qbb)$.
 
 \end{remark}
 
 \section{Bounded equidistribution of special subvarieties}\label{Bounded equidistribution of special subvarieties}

 We first consider the case when the bound $B$ consists of one single element $\TW$, and we write $\Pscr_\TW(\Sscr)$ in place of $\Pscr_B(\Sscr)$ for $\Sscr\in\{\Omega,\Mscr\}$. This is exactly the analogue of the $\Tbf$-special case for pure Shimura varieties, and the main theorem of this section will rely on the following theorem of S. Mozes and N. Shah:
 
 \begin{theorem}[Mozes-Shah, cf.\cite{mozes-shah}]\label{mozes shah} Let $\Omega=\Gamma\bsh\Hbf(\Rbb)^+$ be the lattice space associated to a $\Qbb$-group of type $\Hscr$ and a congruence subgroup $\Gamma\subset\Hbf(\Rbb)^+$. Write $\Pscr_h(\Omega)$ for the set of canonical measures on $\Omega$ associated to lattice subspaces defined by $\Qbb$-subgroups of type $\Hscr$. Then $\Pscr_h(\Omega)$ is compact for the weak topology as a subset of the set of Radon measures on $\Omega$, and the property of ``support convergence'' holds on it: if $\nu_n$ is a convergent sequence in $\Pscr_h(\Omega)$ of limit $\nu$, then we have $\supp\nu_n\subset \supp\nu$ for $n\geq N$, $N$ being some positive integer, and the union $\bigcup_{n\geq N}\supp\nu_n$ is dense in $\supp\nu$ for the analytic topology.
 
 \end{theorem}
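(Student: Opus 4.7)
The plan is to derive the statement from three well-established pieces of unipotent dynamics on arithmetic homogeneous spaces: the Dani–Margulis non-divergence theorem, Ratner's measure and topological rigidity theorems, and the Mozes–Shah refinement that upgrades Ratner's real-analytic conclusions to statements involving $\Qbb$-subgroups.

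First I would establish tightness, hence relative weak-$*$ compactness, of $\Pscr_h(\Omega)$. Because $\Hbf$ is of type $\Hscr$, every $\Qbb$-subgroup $\Hbf' \subset \Hbf$ of type $\Hscr$ has $\Hbf'(\Rbb)^+$ generated by one-parameter unipotent subgroups, so the canonical measure on the corresponding lattice subspace is invariant under these unipotents. The uniform non-divergence theorem of Dani and Margulis then provides, for every $\epsilon > 0$, a compact $C \subset \Omega$ with $\mu(\Omega \setminus C) < \epsilon$ for all $\mu \in \Pscr_h(\Omega)$, which is exactly the tightness needed to apply Prokhorov's theorem.

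Next I would show that every weak-$*$ limit $\nu$ of a sequence $\nu_n$ of canonical measures on lattice subspaces defined by $\Hbf_n$ still lies in $\Pscr_h(\Omega)$. Passing to a subsequence, one extracts a nontrivial one-parameter unipotent flow sitting inside each $\Hbf_n(\Rbb)^+$ for all large $n$, so $\nu$ is invariant under this flow. Ratner's measure classification then expresses each ergodic component of $\nu$ as the canonical probability measure on a closed orbit of a connected subgroup $\Lbf(\Rbb)^+$ generated by unipotents. The decisive arithmetic input of Mozes–Shah asserts that such a closed orbit comes from a $\Qbb$-subgroup $\Lbf \subset \Hbf$ of type $\Hscr$, and moreover that for all sufficiently large $n$ one has a containment $\Hbf_n(\Rbb)^+ \subset g_n\Lbf(\Rbb)^+ g_n^{-1}$ with $g_n \to 1$ in $\Hbf(\Rbb)^+$; this forces $\nu$ itself to be ergodic, so it coincides with the canonical measure attached to $\Lbf$ and belongs to $\Pscr_h(\Omega)$.

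Finally, the support convergence is essentially a by-product of the containment obtained above: translating $\Hbf_n$ inside $\Lbf$ by $g_n \to 1$ shows $\supp\nu_n \subset \supp\nu$ for all $n \geq N$, and the union $\bigcup_{n \geq N}\supp\nu_n$ must be dense in $\supp\nu$ for the analytic topology, for otherwise its closure would be a proper closed $\Lbf(\Rbb)^+$-invariant subset carrying the full mass of $\nu$, contradicting the $\Lbf(\Rbb)^+$-ergodicity of $\nu$. The principal obstacle is the arithmetic upgrade in the second step, where one must pass from Ratner's purely real-analytic output (orbit closures of unipotent flows) to subgroups defined over $\Qbb$ of type $\Hscr$; this is precisely the content of the Mozes–Shah theorem, and I would invoke it as a black box rather than attempt to reprove it here.
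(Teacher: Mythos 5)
The paper offers no proof of this statement: it is quoted verbatim as a theorem of Mozes and Shah (combined with the observation, standard since Clozel--Ullmo, that canonical measures of lattice subspaces of type $\Hscr$ are ergodic invariant measures of unipotent flows), so there is nothing internal to compare your argument against. Your sketch is a correct outline of how the cited result is actually proved: Dani--Margulis uniform non-divergence for tightness, Ratner's measure classification for the algebraic description of a limit, and the Mozes--Shah arithmetic refinement (closed orbits of subgroups generated by unipotents come from $\Qbb$-subgroups, together with the eventual containment $\Hbf_n(\Rbb)^+\subset g_n\Lbf(\Rbb)^+g_n^{-1}$ with $g_n\to 1$) for membership of the limit in $\Pscr_h(\Omega)$ and for support convergence.

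Two small points of precision. First, one cannot in general extract a single one-parameter unipotent subgroup lying in every $\Hbf_n(\Rbb)^+$; the actual argument chooses unipotents $u_n\in\Hbf_n(\Rbb)^+$ under which $\nu_n$ is ergodic and shows that a limit of the $u_n$ (after suitable normalization) is a nontrivial unipotent preserving $\nu$ --- this is exactly the compactness mechanism of Mozes--Shah, so your black-box invocation covers it, but the phrasing as stated is not quite what happens. Second, the density of $\bigcup_{n\geq N}\supp\nu_n$ in $\supp\nu$ does not need ergodicity (nor the invariance of the closure, which is not obvious): for the closed set $F=\overline{\bigcup_{n\geq N}\supp\nu_n}$ the portmanteau inequality gives $\nu(F)\geq\limsup_n\nu_n(F)=1$, hence $\supp\nu\subset F$, and the reverse inclusion is the first part of the statement.
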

  
 We begin with the strategy of the proof of the equidistribution of $\TW$-special subspaces and S-subspaces, in comparison with the pure case treated in \cite{clozel ullmo} \cite{ullmo yafaev} and the rigid Kuga case in \cite{chen kuga}:\begin{enumerate}
  
 \item[(i)] For lattice subspaces, it suffices to show that $\Pscr_\TW(\Omega)$ is a closed subset of $\Pscr_h(\Omega)$, namely if $\nu_n$ is a sequence of canonical measures of limit $\nu$, such that each $\nu_n$ is associated to $\Pbf_n^\der$ from some $\TW$-special subdata $(\Pbf_n,Y_n)$, then
 $\nu$ is also associated to $\Pbf_\nu^\der$ from some $\TW$-special subdatum $(\Pbf_\nu,Y_\nu)$. In fact:\begin{enumerate}
 \item[(i-1)] in the pure case,   the $\Qbb$-group  $\Qbf$ generated by the union of semi-simple $\Qbb$-groups $\bigcup_{n>>0}\Pbf_n^\der$ is a   semi-simple $\Qbb$-group of type $\Hscr$, and is centralized by $\Tbf$ the common connected center of the $\Pbf_n$, and $\Tbf\Qbf$ is the $\Qbb$-group of some $\Tbf$-special subdatum;
 
 \item[(i-2)] in the Kuga case treated in \cite{chen kuga}, the $\Qbb$-group  $\Qbf$ generated by the union  $\bigcup_{n>>0}\Pbf^\der_n$ is of the form $\Vbf'\rtimes\Hbf'$ with $\Vbf'$ unipotent and $\Hbf'$ semi-simple of type $\Hscr$, however there might be infinitely many subdata $(\Pbf',Y')$ such that $\Pbf'^\der=\Qbf$, because for any  $v\in\Vbf(\Qbb)$   fixed by $\Gbf'^\der$ and $\Pbf'=\Vbf'\rtimes(v\Gbf'v^\inv)$ we have $\Pbf'^\der=\Vbf'\rtimes\Gbf'^\der$, cf. \cite{chen kuga} 2.19 and 3.6; to exclude this situation we have restricted to the $\rho$-rigid case in \cite{chen kuga}, and results like \cite{chen kuga} 3.5 and 4.5 show that the canonical  measures
 associated to $\rho$-rigid subdata form a closed subset of $\Pscr_h(\Omega)$;
 \item[(i-3)] in the general case under the $\TW$-special assumption, the situation is similar to the pure case (i-1), and we will construct a $\TW$-special subdatum out of the $\Qbb$-group generated by the union $\bigcup_{n>>0}\Pbf_n^\der$; thanks to the specification of the unipotent element $w$, we do not need any $\rho$-rigidity to ensure that the new datum is well defined.
 
 \end{enumerate}
 \item[(ii)] For S-spaces, we follow a similar reduction following the pure case in \cite{clozel ullmo}, namely there exists a compact subset $C$ of $\Yscr^+$ such that any $\TW$-special S-subspace in $\Mscr$ can be defined by some subdatum $(\Pbf',Y';Y'^+)$ satisfying $\Yscr'^+\cap C\neq \emptyset$, and then we may repeat the remaining arguments in \cite{clozel ullmo}.
 \end{enumerate}
 The following lemma will be useful both in the pure case and in the mixed case.
 
 \begin{lemma}[maximal $\TW$-special subdata]\label{maximal TW-special subdata} For $(\Pbf,Y)=\Wbf\rtimes(\Gbf,X)$ a mixed Shimura datum. If $\TW$ a pair as in \ref{tw-special subdata}, then the set of maximal $\TW$-special subdata of $(\Pbf,Y)$ is   finite.
 \end{lemma}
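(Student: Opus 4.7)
The plan is to reduce the finiteness statement to the pure Shimura case, where the analogous result is Lemma 3.7 of \cite{ullmo yafaev}, and then to control how a pure $\Tbf$-special subdatum extends to a mixed $\TW$-special one. Given any $\TW$-special subdatum $(\Pbf',Y')$ of $(\Pbf,Y)$, its image under $\pi_\Wbf:(\Pbf,Y)\to(\Gbf,X)$ is the pure subdatum $(w\Gbf'w^\inv,wX')$; conjugation by $w^\inv$ turns it into a $\Tbf$-special pure subdatum $(\Gbf',X')$ of $(\Gbf,X)$ in the sense of Ullmo-Yafaev. Their result furnishes finitely many maximal $\Tbf$-special pure subdata of $(\Gbf,X)$.

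Next, for each maximal $\Tbf$-special pure subdatum $(\Gbf',X')$ and the fixed $w$, I would show that the $\TW$-special subdata of $(\Pbf,Y)$ projecting to $(w\Gbf'w^\inv,wX')$ admit a unique maximum. By \ref{structure of subdata}, any such subdatum has the form $\Wbf'\rtimes(w\Gbf'w^\inv,wX')$, where $\Wbf'\subset\Wbf$ is a $\Gbf'$-stable $\Qbb$-subgroup—equivalently a pair of $\Gbf'$-stable subspaces $\Ubf'\subset\Ubf$ and $\Vbf'\subset\Vbf$ with $\psi(\Vbf',\Vbf')\subset\Ubf'$—satisfying the mixed Shimura axioms. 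The Hodge-type constraints restrict automatically, the $\psi$-compatibility behaves well under sums of candidate $\Wbf'$'s, and condition (v) of \ref{mixed Shimura data} (that $\Tbf$ act on $\Wbf'$ through a $\Qbb$-torus isogenous to compact-times-split) is preserved under sums because a quotient of a compact-times-split $\Qbb$-torus remains of the same isogeny type, as one sees on character groups. Since $\Wbf$ is finite-dimensional, a unique maximum $\Wbf'_\maxx$ exists, producing a unique maximal $\TW$-special extension over each maximal pure base.

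Finally, I would show that a maximal $\TW$-special subdatum $(\Pbf_1,Y_1)$ necessarily projects to a maximal $\Tbf$-special pure subdatum: if $(\Gbf_2,X_2)\supsetneq(\Gbf_1,X_1)$ were strictly larger $\Tbf$-special and pure, then the $\Gbf_2$-saturation of the unipotent radical of $\Pbf_1$ (closed up under the $\psi$-compatibility by iteratively enlarging the $\Ubf$-part) is contained in $\Wbf'_\maxx$ attached to $(\Gbf_2,X_2)$, so the maximal extension over $(\Gbf_2,X_2)$ strictly contains $(\Pbf_1,Y_1)$, contradicting maximality. Combining this with the pure-case finiteness yields that maximal $\TW$-special subdata of $(\Pbf,Y)$ are in bijection with a subset of the finite set of maximal $\Tbf$-special pure subdata of $(\Gbf,X)$, completing the proof. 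The main technical obstacle is the sum-closure verification for condition (v) in the second step; the contradiction argument in the last step is straightforward once one checks that the $\Gbf_2$-saturation of $\Wbf_1$ is indeed admissible.
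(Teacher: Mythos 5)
Your proposal is correct and follows essentially the same route as the paper: reduce to the finiteness of maximal $\Tbf$-special pure subdata of $(\Gbf,X)$ (the Ullmo--Yafaev pure case), and observe that over each such pure base there is a canonical maximal unipotent extension. The paper shortcuts your second and third steps by noting that the full unipotent radical $\Wbf$ is always admissible, so that the maximal $\TW$-special subdata are exactly the $\Wbf\rtimes(w\Gbf'w^\inv,wX')$ with $(\Gbf',X')$ maximal $\Tbf$-special pure, which dispenses with your sum-closure and saturation verifications.
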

 
 \begin{proof}

 If $(\Gbf',X')$ is a maximal $\Tbf$-special subdatum of $(\Gbf,X)$, then $\Wbf\rtimes(\Gbf',X')=\Wbf\rtimes(w\Gbf'w^\inv,wX')$ is a maximal $\TW$-special subdatum of $(\Pbf,Y)$ using $w\in\Wbf(\Qbb)$. Hence we are reduced to the pure case. 
 
 If $(\Gbf_1,X_1)\subset(\Gbf,X)$ is a $\Tbf$-special subdatum, then $\Gbf_1$ is contained in the neutral component $\Zbf^\circ$ of the centralizer $\Zbf_\Gbf\Tbf$, and $\Zbf^\circ$ admits a decomposition into an almost direct product: $\Zbf^\circ=\Cbf\Hbf'\Hbf''$ with $\Cbf$ a $\Qbb$-torus, $\Hbf'$ a semi-simple $\Qbb$-group without compact $\Qbb$-factors, and $\Hbf''$ a semi-simple $\Qbb$-group without non-compact $\Qbb$-factors. We put $\Gbf'=\Cbf\Hbf'$, then the constructions in \cite{ullmo yafaev} 3.6 gives us a maximal $\Tbf$-special subdatum $(\Gbf',\Gbf'(\Rbb)x)$ with $x\in X_1$ arbitrary, cf. \ref{generating subdata}. Note that the $\Qbb$-group $\Gbf'$ is determined by $\Tbf$ and is independent of $(\Gbf_1,X_1)$, thus maximal $\Tbf$-special subdata are associated to $\Gbf'$. Hence by \cite{ullmo yafaev} 3.7 there are only finitely many maximal $\Tbf$-special subdata. \end{proof}
 \begin{theorem}[equidistribution of $\TW$-special subspaces]\label{equidistribution of TW-special subspaces} Let $\Sscr$ be the lattice space (resp. the S-space) associated to a connected mixed Shimura variety $M=\Gamma\bsh Y^+$ defined by $(\Pbf,Y;Y^+)=\Wbf\rtimes(\Gbf,X;X^+)$. Fix a pair $(\Tbf,w)$ as in \ref{tw-special subdata} and put $B=\{\TW\}$. Then the set $\Pscr_B(\Sscr)$ is compact for the weak topology, and the property of support convergence holds in it in the sense of \ref{mozes shah}.
 
 \end{theorem}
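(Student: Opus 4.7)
The plan is to establish the statement for the lattice space $\Sscr=\Omega$ first, by combining the theorem of Mozes--Shah (\ref{mozes shah}) with the finiteness of maximal $\TW$-special subdata (\ref{maximal TW-special subdata}), and then to transport the result to the S-space $\Sscr=\Mscr$ via the orbit map $\kappa_y$ of \ref{canonical measures on s-spaces}. Throughout, let $(\nu_n)$ be a sequence in $\Pscr_B(\Sscr)$ with each $\nu_n$ the canonical measure associated to a $\TW$-special subdatum $(\Pbf_n,Y_n)=\Wbf_n\rtimes(w\Gbf_n w^\inv,wX_n)$ whose pure part $\Gbf_n$ has connected center $\Tbf$.

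For $\Sscr=\Omega$, I first pass to a subsequence using \ref{maximal TW-special subdata} so that every $(\Pbf_n,Y_n)$ lies in a single maximal $\TW$-special subdatum of $(\Pbf,Y)$, and then extract a further subsequence by \ref{mozes shah} with $\nu_n\to\nu$ weakly, $\nu$ being the canonical measure on a lattice subspace $\wp_\Gamma(\Qbf(\Rbb)^+)$ for some $\Qbb$-subgroup $\Qbf\subset\Pbf^\der$ of type $\Hscr$. The support inclusion $\supp\nu_n\subset\supp\nu$ eventually, combined with the connectedness of $\Pbf_n^\der(\Rbb)^+$ and the fact that $1\in\Pbf_n^\der(\Rbb)^+\cap\Qbf(\Rbb)^+$, forces $\Pbf_n^\der\subset\Qbf$ as $\Qbb$-subgroups. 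The density clause of \ref{mozes shah} together with Noetherianity then forces $\Qbf$ to equal the $\Qbb$-subgroup generated by $\{\Pbf_n^\der:n\geq N\}$: any strictly smaller $\Qbb$-subgroup would define a proper closed submanifold of $\supp\nu$ still containing the dense union $\bigcup_n\supp\nu_n$.

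The crucial step is to realize $\Qbf$ as $\Pbf_\nu^\der$ for some $\TW$-special subdatum $(\Pbf_\nu,Y_\nu)$. Setting $\Hbf:=\langle\Gbf_n^\der:n\geq N\rangle\subset\Gbf^\der$ (semi-simple, and equal to $\pi_\Wbf(\Qbf)$) and $\Wbf_\nu:=\Qbf\cap\Wbf$, the inclusions $w\Gbf_n^\der w^\inv\subset\Pbf_n^\der\subset\Qbf$ give $w\Hbf w^\inv\subset\Qbf$; this subgroup is semi-simple and maps isomorphically onto $\pi_\Wbf(\Qbf)=\Hbf$, hence is a Levi subgroup of $\Qbf$ aligned with $w$, yielding $\Qbf=\Wbf_\nu\rtimes w\Hbf w^\inv$. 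Let $\Gbf_\nu:=\Tbf\cdot\Hbf\subset\Gbf$; since $\Tbf$ is central in every $\Gbf_n$ it commutes with $\Hbf$, so $\Gbf_\nu$ is reductive with connected center exactly $\Tbf$. Then $\Pbf_\nu:=\Wbf_\nu\rtimes w\Gbf_\nu w^\inv$ satisfies $\Pbf_\nu^\der=\Wbf_\nu\rtimes w\Gbf_\nu^\der w^\inv=\Qbf$ by \ref{unipotent radical and Levi decomposition}(4). Applying \ref{generating subdata}(2) with base point $y\in Y_N^+$ (so $y(\Sbb_\Cbb)\subset\Pbf_{N,\Cbb}\subset\Pbf_{\nu,\Cbb}$), and noting that the action of the connected center $w\Tbf w^\inv$ of $w\Gbf_\nu w^\inv$ on $\Wbf_\nu$ is inherited from $(\Pbf,Y)$ and satisfies \ref{mixed Shimura data}(v), upgrades $\Pbf_\nu$ to a $\TW$-special subdatum. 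Thus $\nu\in\Pscr_B(\Omega)$, so $\Pscr_B(\Omega)$ is closed in the compact set $\Pscr_h(\Omega)$, and support convergence inside $\Pscr_B(\Omega)$ is inherited directly from \ref{mozes shah}. For $\Sscr=\Mscr$, one writes $\mu_{\Mscr_n'}=\kappa_{y_n*}\nu_n$ via \ref{canonical measures on s-spaces}(3) with $y_n\in\Yscr_n^+$; following the reduction of \cite{clozel ullmo}, a fundamental domain argument yields a compact set $C\subset\Yscr^+$ such that each $\Yscr_n^+$ meets $C$ after a suitable $\Gamma$-translation. After translating $(\Pbf_n,Y_n)$ accordingly one extracts $y_n\to y_\infty\in\Yscr_\nu^+$ (the condition $y_\infty(\Sbb_\Cbb)\subset\Pbf_{\nu,\Cbb}$ being closed), and continuity of $y\mapsto\kappa_{y*}$ combined with the lattice-space convergence gives $\mu_{\Mscr_n'}\to\kappa_{y_\infty*}\nu=\mu_{\Mscr_\nu}$, with support convergence descending through the surjective submersion $\kappa_{y_\infty}$.

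The main obstacle is the Levi-alignment step: one must guarantee that the semi-simple part of $\Qbf$ is exactly $w\Hbf w^\inv$, rather than a $\Wbf_\nu(\Qbb)$-conjugate of it which would only produce a $(w_1w)$-special subdatum for some nontrivial $w_1$. This rigidification by the prescribed unipotent element $w$ is precisely what lets us avoid the $\rho$-rigidity hypothesis of the Kuga-variety treatment in \cite{chen kuga}; it succeeds here because every $\Pbf_n^\der$ already contains the specific Levi $w\Gbf_n^\der w^\inv$ (not an arbitrary $\Wbf_n(\Qbb)$-conjugate thereof), so the group they jointly generate contains $w\Hbf w^\inv$ on the nose, giving exactly the required Levi of $\Qbf$.
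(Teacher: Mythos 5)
Your overall strategy coincides with the paper's: reduce to showing that $\Pscr_\TW(\Omega)$ is closed in the compact set $\Pscr_h(\Omega)$ furnished by Mozes--Shah, identify the limit group as $\Wbf_\nu\rtimes w\Hbf w^\inv$ with $\Hbf$ generated by the $\Gbf_n^\der$, adjoin $w\Tbf w^\inv$ and invoke \ref{generating subdata}, then transport everything to the S-space by the orbit maps $\kappa_y$ together with a compact subset of $\Yscr^+$ meeting every $\TW$-special real part. Your closing remark on Levi alignment --- that each $\Pbf_n^\der$ contains the \emph{specific} Levi $w\Gbf_n^\der w^\inv$, so the group they generate contains $w\Hbf w^\inv$ on the nose --- is exactly the mechanism the paper uses to dispense with the $\rho$-rigidity hypothesis of the Kuga case.

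There is, however, one assertion made in passing that carries real content and is left unjustified: that $\Hbf=\langle\Gbf_n^\der : n\ge N\rangle$ is semi-simple (equivalently, that $\pi_\Wbf(\Qbf)$ is reductive). A group generated by connected semi-simple subgroups is perfect but need not be reductive, and without this the identification $\Qbf=\Wbf_\nu\rtimes w\Hbf w^\inv$ does not close: a priori $\Qbf\cap\Wbf$ could be strictly smaller than the unipotent radical of $\Qbf$, leaving $\Hbf$ with a nontrivial unipotent radical, in which case $w\Hbf w^\inv$ is not a Levi subgroup of $\Qbf$; moreover $\langle\Pbf_n^\der\rangle$ is then not known to be of type $\Hscr$, which you also need for $\wp_\Gamma\bigl(\langle\Pbf_n^\der\rangle(\Rbb)^+\bigr)$ to be closed in your density/Noetherianity step. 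The paper supplies this by first proving the pure-case statement and projecting: $\pi_*\nu_n\to\pi_*\nu$ in the lattice space of $\Gbf^\der$, and the limit group there is reductive because it contains some $\Gbf_n^\der$, whose centralizer in $\Gbf^\der(\Rbb)$ is compact (it centralizes the Cartan involution $\Int(x_n(\ibf))$), so Lemma 5.1 of Eskin--Mozes--Shah applies. This is precisely where the Hodge-theoretic axioms feed into the ergodic argument; once you insert this step (or the equivalent projection to the pure case), the rest of your argument goes through as written.
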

 
 We start with the case of lattice subspaces with $(\Pbf,Y)=(\Gbf,X)$ pure. In \cite{clozel ullmo} $\Gbf$ was assumed to be of adjoint type, and in \cite{ullmo yafaev} the case of $\Tbf$-special subdata of $(\Gbf,X)$ with $\Gbf$ of adjoint type was considered. Here we adapt some of their arguments for general reductive $\Gbf$.

 \begin{proof}[Proof for lattice subspaces in the pure case] We have $\Omega=\Omega_\Gbf=\Gamma_\Gbf^\dagger\bsh\Gbf^\der(\Rbb)^+$, hence   $\Pscr_\Tbf(\Omega)$ is a subset of $\Pscr_h(\Omega)$ (as there is no unipotent vector $w$ in this case). We proceed to show that $\Pscr_\Tbf(\Omega)$ is closed  in $\Pscr_h(\Omega)$ for the weak topology. By the proof of \ref{maximal TW-special subdata} all the maximal $\Tbf$-subdata of $(\Gbf,X)$ are associated to a common reductive $\Qbb$-subgroup $\Gbf^\Tbf$ of $\Gbf$, and the lattice subspace of $\Tbf$-special subdata are actually lattice subspaces of $$\Omega_{\Gbf^\Tbf}=\wp_{\Gamma_\Gbf}((\Gbf^\Tbf)^\der(\Rbb)^+)\isom\Gamma_{\Gbf^\Tbf}^\dagger\bsh(\Gbf^\Tbf)^\der(\Rbb)^+$$ with $\Gamma_{\Gbf^\Tbf}^\dagger=(\Gbf^\Tbf)^\der(\Rbb)^+\cap\Gamma_\Gbf^\dagger$.  Hence we may identify $\Pscr_\Tbf(\Omega)$ as a subset of $\Pscr_h(\Omega_{\Gbf^\Tbf})$, the latter being a closed subset of $\Pscr_h(\Omega)$. We assume for simplicity that $\Gbf=\Gbf^\Tbf$, namely $\Tbf$ equals the connected center of $\Gbf$.

 Take a sequence $\nu_n$ of canonical measures, which converges in $\Pscr_h(\Omega)$ to some $\nu$. Each $\nu_n$ is associated to a $\Qbb$-group $\Gbf_n$ of $\Gbf$ coming from some $\Tbf$-special subdatum $(\Gbf_n,X_n)$, with $\Omega_n=\supp\nu_n=\wp_{\Gamma_\Gbf}(\Gbf_n^\der(\Rbb)^+)$. The limit $\nu$ is associated to some connected $\Qbb$-group $\Gbf'$ of type $\Hscr$, of support $\Omega_\nu=\wp_{\Gamma_\Gbf}(\Gbf'(\Rbb)^+)$. We may assume for simplicity that $\bigcup_{n\geq0}\Omega_n$ is contained in $\Omega_\nu$ as a dense subset by restricting to a subsequence after dropping finitely many terms in the original sequence. In particular, $\Omega_n$ and $\Omega_\nu$ are smooth submanifolds of $\Omega$ the quotient of $\Gbf^\der(\Rbb)^+$ by translation of the discrete subgroup $\Gamma_\Gbf^\dagger$, and the inclusion $\Omega_n\subset\Omega_\nu$ implies the inclusion $\Lie\Gbf_n^\der\subset\Lie\Gbf'$ by computing the tangent spaces of the common point $\wp_\Gamma(e)$, $e$ being the neutral point of $\Gbf^\der(\Rbb)^+$. This gives $\Gbf_n^\der\subset\Gbf'\subset\Gbf^\der$.
 
 Since $\Tbf$ is  the common connected center of $\Gbf$ and $\Gbf_n$ , we have the equality of centralizers $\Zbf_\Gbf\Gbf_n=\Tbf\Zbf_{\Gbf^\der}\Gbf_n^\der$ for all $n$. Take any $x_n\in X_n$, $\Zbf_{\Gbf}\Gbf_n(\Rbb)$ is compact modulo $\Tbf$ as it centralizes $x_n(\ibf)$. Hence $\Zbf_{\Gbf^\der}\Gbf_n^\der$ is compact, and $\Gbf'$ is reductive by \cite{eskin mozes shah} Lemma 5.1. We thus obtain an inclusion chain of connected semi-simple $\Qbb$-groups $\Gbf_n^\der\subset\Gbf'\subset\Gbf^\der$, which extends to $\Gbf_n\subset\Tbf\Gbf'\subset\Gbf$. Put $\Gbf_\nu=\Tbf\Gbf'$, then \ref{generating subdata} gives further an inclusion chain of $\Tbf$-special subdatum $(\Gbf_n,X_n)\subset(\Gbf_\nu,X_\nu)\subset(\Gbf,X)$ with $X_\nu=\Gbf_\nu(\Rbb)x_n$ for any $x_n\in X_n$. 
 
 Therefore $\Gbf'=\Gbf_\nu^\der$ does come from some $\Tbf$-special subdatum, and $\nu$ is $\Tbf$-special.\end{proof}
 Note that we have constructed subdata of the form $(\Gbf_\nu,\Gbf_\nu(\Rbb)x_n)$ using an arbitrary $x_n\in X_n$ for all $n\in\Nbb$. Only finitely many $\Tbf$-subdata are obtained in this way due to \ref{common Mumford-Tate group}.
 
  From the proof we also obtain:
 \begin{lemma}\label{generating subgroups}
 Using the notations in the proof,   $\Gbf_\nu$ is generated by $\bigcup_n\Gbf_n$. \end{lemma}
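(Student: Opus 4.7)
The plan is to let $\Qbf$ be the $\Qbb$-algebraic subgroup of $\Gbf$ generated by $\bigcup_n\Gbf_n$ and to reduce the desired claim to the equality $\Qbf = \Gbf_\nu$. The inclusion $\Qbf \subset \Gbf_\nu$ is immediate from the chain $\Gbf_n \subset \Gbf_\nu$ already established in the preceding proof, so only the reverse containment requires real work. Since $\Tbf \subset \Gbf_n \subset \Qbf$ for every $n$ and $\Gbf_\nu = \Tbf\cdot\Gbf'$, it will be enough to verify that $\Qbf^\der \supset \Gbf'$.

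My first step is to set $\Hbf := \Qbf^\der$ and check that it is a connected $\Qbb$-subgroup of $\Gbf'$ of type $\Hscr$. It is generated by the type-$\Hscr$ subgroups $\Gbf_n^\der$, so any non-trivial compact almost-direct $\Qbb$-factor of $\Hbf$ would receive only the trivial map from each of the connected type-$\Hscr$ generators, and hence from the entire generated subgroup; this forces every such compact factor to be trivial. Consequently $\wp_{\Gamma_\Gbf}(\Hbf(\Rbb)^+)$ is a lattice subspace of $\Omega$ in the sense of \ref{lattice space and s-space}, closed in $\Omega$ and supporting its own canonical probability measure.

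The heart of the argument is a contradiction drawn under the assumption $\Hbf \subsetneq \Gbf'$. Under this hypothesis, $\wp_{\Gamma_\Gbf}(\Hbf(\Rbb)^+)$ is a proper closed submanifold of $\Omega_\nu$ of strictly smaller real dimension, hence $\nu$-null, where $\nu$ denotes the canonical probability measure on $\Omega_\nu$. Let $A := \Omega_\nu \setminus \wp_{\Gamma_\Gbf}(\Hbf(\Rbb)^+)$; this is open in $\Omega_\nu$ with $\nu$-null boundary, so the Portmanteau theorem applied to the weak convergence $\nu_n \to \nu$ forces $\nu_n(A) \to \nu(A) = 1$. However, by construction every $\supp\nu_n = \Omega_n$ is contained in $\wp_{\Gamma_\Gbf}(\Hbf(\Rbb)^+)$, giving $\nu_n(A) \equiv 0$. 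The contradiction gives $\Hbf = \Gbf'$, whence $\Qbf = \Gbf_\nu$.

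The main obstacle I anticipate is the verification of the type-$\Hscr$ property of $\Hbf$, which is precisely what places $\wp_{\Gamma_\Gbf}(\Hbf(\Rbb)^+)$ inside the Mozes--Shah framework of \ref{mozes shah} and ensures that it is closed in $\Omega$; this requires a short structural argument about subgroups of a semi-simple $\Qbb$-group generated by type-$\Hscr$ subgroups. Once this structural input is secured, the measure-theoretic contradiction above is essentially automatic.
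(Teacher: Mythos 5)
Your argument is correct and is essentially the paper's own: both reduce the claim to showing that the $\Qbb$-subgroup generated by $\bigcup_n\Gbf_n^\der$ (which is indeed $\Qbf^\der$, since $\Tbf$ is central in $\Qbf$) is all of $\Gbf'$, and both derive a contradiction from the fact that otherwise every $\Omega_n$ would lie in the proper closed lattice subspace $\wp_{\Gamma_\Gbf}(\Hbf(\Rbb)^+)$ of $\Omega_\nu$. The paper phrases the contradiction via the density of $\bigcup_n\Omega_n$ in $\Omega_\nu$ furnished by support convergence, while you invoke the Portmanteau theorem directly on the weak convergence $\nu_n\to\nu$; these are the same mechanism, and your explicit verification that $\Hbf$ is of type $\Hscr$ (hence closed in $\Omega$) is a point the paper leaves implicit.
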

 
 \begin{proof} In the proof above we already have $\Gbf_n^\der\subset\Gbf'=\Gbf_\nu^\der$ for all $n$. Thus $\Gbf'$ contains $\Hbf$ the $\Qbb$-subgroup generated by $\bigcup_n\Gbf_n^\der$ in $\Gbf^\der$. If $\Hbf\subsetneq\Gbf'$, then all the lattice subspaces $\Omega_n=\wp_{\Gamma_\Gbf}(\Gbf_n^\der(\Rbb)^+)$ are contained in the subspace $\wp_{\Gamma_\Gbf}(\Hbf(\Rbb)^+)$ which is a proper submanifold of $\Omega_\nu$, contradicting the density of $\bigcup_n\Omega_n$ in $\Omega_\nu$. Hence $\Hbf=\Gbf'$.\end{proof}
  
 Now we pass to the general mixed case:  
 
 \begin{proof}[proof of \ref{equidistribution of TW-special subspaces} for lattice subspaces]
 
 For any mixed Shimura subdatum $(\Pbf',Y')=\Wbf'\rtimes(\Gbf',X')$ of $(\Pbf,Y)$, the equality $\Pbf'^\der=\Wbf'\rtimes\Gbf'^\der$ by \ref{unipotent radical and Levi decomposition}(4) shows that $\Pbf'^\der$ is of type $\Hscr$. Hence for $\Omega=\Gamma^\dagger\bsh \Pbf^\der(\Rbb)^+$, $\Pscr_\TW(\Omega)$ is a subset of $\Pscr_h(\Omega)$, and we need to show that $\Pscr_\TW(\Omega)$ is closed in $\Pscr_h(\Omega)$ for the weak topology, just as the pure case above.
 
 We thus take a convergent sequence $\nu_n$ in $\Pscr_h(\Omega)$ of limit $\nu$, such that $\nu_n\in\Pscr_\TW(\Omega)$, is associated to some $\TW$-special subdatum $(\Pbf_n,Y_n)=\Wbf_n\rtimes(w\Gbf_nw^\inv,wX_n)$, and the support of $\nu_n$ is the $\TW$-special lattice subspace $\Omega_n=\wp_\Gamma(\Pbf_n^\der(\Rbb)^+)$, with $\Pbf_n^\der=\Wbf_n\rtimes w\Gbf_n^\der w^\inv$. The limit $\nu$ is associated to some $\Qbb$-subgroup $\Pbf'$ of type $\Hscr$, with $\supp\nu=\Omega_\nu=\wp_\Gamma(\Pbf'(\Rbb)^+)$. We assume for simplicity that $\Omega_n\subset\Omega_\nu$ for all $n$, hence $\bigcup_n\Omega_n$ is dense in $\Omega_\nu$ for  the analytic topology.
 
 Let $\Gamma_\Gbf$ be the image of $\Gamma$ in $\Gbf(\Rbb)^+$, which is a congruence subgroup of $\Gbf(\Rbb)^+$, and we write $\Gamma_\Gbf^\dagger=\Gamma_\Gbf\cap\Gbf^\der(\Rbb)^+$, together with $\pi:\Omega\ra\Omega_\Gbf=\Gamma_\Gbf^\dagger\bsh\Gbf^\der(\Rbb)^+$ for the projection deduced from $\pi=\pi_\Wbf:\Pbf\ra\Gbf$ the quotient modulo $\Wbf$. Then by \ref{canonical measures on s-spaces},   $\pi_*\nu_n$  is the canonical measure associated to $\pi(\Pbf_n^\der)=\Gbf_n^\der$. We clearly have the convergence $\lim_{n\ra\infty}\pi_*\nu_n=\pi_*\nu$, hence  by the result in the pure case, $\pi_*\nu$ is $\Tbf$-special, i.e. it is associated to some $\Tbf$-special subdatum $(\Gbf_\nu,X_\nu)$.  $\Gbf_\nu$ is generated by $\bigcup_n\Gbf_n$, and the connected semi-simple $\Qbb$-subgroup $\Gbf'=\Gbf_\nu^\der$ is generated by $\bigcup_n\Gbf_n^\der$. On the other hand, it is direct from the construction that the image $\pi(\Pbf')$ of $\Pbf'$ modulo $\Wbf$ in $\Gbf^\der$ is a $\Qbb$-subgroup of type $\Hscr$, and $\wp_{\Gamma_\Gbf}(\pi(\Pbf')(\Rbb)^+)$ equals the support of $\pi_*\nu$. Hence $\pi(\Pbf')=\Gbf'=\Gbf_\nu^\der$. 

 It is also clear that the unipotent $\Qbb$-subgroup $\Wbf':=\Wbf\cap\Pbf'$ is the unipotent radical of $\Pbf'$ because $\Pbf'/\Wbf'=\Gbf'$, and we write $\Vbf'$ for the image of $\Wbf'$ in $\Vbf=\Wbf/\Ubf$, which makes $\Wbf'$ the central extension of $\Vbf'$ by $\Ubf':=\Ubf\cap\Wbf'$ under the restriction of $\psi:\Vbf\times\Vbf\ra\Ubf$. Just as the pure case, we have inclusions of smooth submanifolds $\Omega_n\subset\Omega_\nu$, which gives $\Pbf_n^\der\subset\Pbf'$ for all $n$. We want to show that $\Pbf'$ is generated by $\bigcup_n\Pbf_n^\der$.
 
 Let $\Wbf''$ be the unipotent $\Qbb$-subgroup of $\Wbf$ generated by $\bigcup_n\Wbf_n$. Then reduction modulo $\Ubf$ shows that $\Vbf'':=\Wbf''/\Ubf''$ is generated by $\bigcup_n\Vbf_n$ with $\Vbf_n=\Wbf_n/\Ubf_n$, and similarly  $\Ubf'':=\Ubf\cap\Wbf''$ is generated by $\bigcup_n\Ubf_n=\bigcup_n\Wbf_n\cap\Ubf$.
 The $\Qbb$-groups $\Vbf_n$, $\Ubf_n$, and $\Wbf_n$ are stable under $w\Gbf_n^\der w^\inv$ and $w\Tbf w^\inv$, hence $\Wbf''$ is stabilized by $w\Gbf'w^\inv$, by $w\Tbf w^\inv$, and by $w\Gbf_\nu w^\inv$. Thus $\Wbf''\rtimes w\Gbf'w^\inv$ is already a $\Qbb$-subgroup of type $\Hscr$ containing $\bigcup_n\Pbf_n^\der$. This forces the equality $\Pbf'=\Wbf''\rtimes w\Gbf'w^\inv$ due to the density of $\bigcup_n\Omega_n$ in $\Omega_\nu$. In particular $\Wbf'=\Wbf''$ is a central extension of $\Vbf'=\Vbf''$ by $\Ubf'=\Ubf''$, stable under the actions of $w\Gbf'w^\inv$, of $w\Tbf w^\inv$, and thus of $w\Gbf_\nu w^\inv$. 
 
 We thus put $\Pbf_\nu:=\Pbf'w\Tbf w^\inv=\Wbf'\rtimes w\Gbf_\nu w^\inv$. It contains $\Pbf_n$ for all $n$, and it is clear that $(\Pbf_\nu,\Ubf',\Pbf_\nu(\Rbb)\Ubf'(\Cbb)y_n)$ is a $\TW$-special subdatum by \ref{generating subdata}. The equality $\Pbf_\nu^\der=\Wbf'\rtimes w\Gbf'w^\inv$ shows that $\nu$ is $\TW$-special. \end{proof}

 \begin{proof}[proof of \ref{equidistribution of TW-special subspaces} for S-subspaces] 
 The idea is similar to the pure case in \cite{clozel ullmo} and the Kuga case treated in \cite{chen kuga}(Section 5), and we merely sketch the main arguments.
 
 \begin{itemize}
 
   \item There exists a compact subset $K\TW$ of $\Yscr^+$ such that if $\Mscr'\subset \Mscr$ is a $\TW$-special S-subspace, then $\Mscr'=\wp_\Gamma(\Yscr'^+)$ is given by some connected $\TW$-special subdatum $(\Pbf',Y';Y'^+)$, with real part $\Yscr'^+$ meeting $K\TW$ non-trivially. In the pure case such a compact subset $K(\Tbf)\subset X^+$ is given in \cite{clozel ullmo} 4.5; in the mixed case it suffices to take a compact subset $C$ of $\Wbf(\Rbb)$ containing $w$ and a fundamental domain for the action of $\Gamma_\Wbf$ on $\Wbf(\Rbb)$, and then $K(\TW):=(C\cdot X^+)\cap\pi^\inv(K(\Tbf))$, the proof for which is the same as \cite{chen kuga} 5.4.
 
   \item The set $\Pscr_\TW(\Mscr)$ is compact for the weak topology: if $\mu_n$ is a sequence of $\TW$-special canonical measures on $\Mscr$ defined by $(\Pbf_n,Y_n;Y_n^+)$, given as $\mu_n=\kappa_{y_n*}\nu_n$ for $y_n\in K\TW$ and $\nu_n$ the canonical measure associated to $\Omega_n=\Gamma_n^\dagger\bsh \Pbf_n^\der(\Rbb)^+$, then up to restriction to subsequences, we may assume that $y_n$ converges to some $y\in K\TW$ and $(\nu_n)$ converges to some $\nu$ associated to a $\TW$-special subdatum $(\Pbf',Y;Y'^+)$ with $y\in\Yscr'^+\cap K\TW$. Thus $\mu_n$ converges to $\mu=\kappa_{y*}\nu$.
 
 \end{itemize}
 
 The property of support convergence holds similarly.
 \end{proof}
 
 \begin{corollary}[bounded equidistribution]\label{bounded equidistribution} (1) For $B$ a finite bounding set, $\Sscr\in\{\Omega,\Mscr\}$, we have $\Pscr_B(\Sscr)=\bigcup_{\TW\in B}\Pscr_\TW(\Sscr)$. In particular, $\Pscr_B(\Sscr)$ is compact for the weak topology, in which holds the support convergence.
 
 (2) For $\Sscr=\Omega$ (resp. $\Sscr=\Mscr$), the closure of a sequence of special lattice subspaces (resp. of special S-subspaces) bounded by $B$ for the analytic topology is a finite union of special lattice subspaces (resp. of special S-subspaces) bounded by $B$.
 
 \end{corollary}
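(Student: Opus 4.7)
My plan is to reduce both assertions to Theorem \ref{equidistribution of TW-special subspaces} by exploiting the finiteness of $B$ together with a pigeonhole argument, and to address assertion (2) by an iterative extraction of convergent subsequences whose termination rests on Lemma \ref{maximal TW-special subdata}.

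For (1), the identity $\Pscr_B(\Sscr)=\bigcup_{\TW\in B}\Pscr_\TW(\Sscr)$ is immediate from Definition \ref{bounded sequence}, since a $B$-bounded special subvariety is $\TW$-special for some $\TW\in B$ and its canonical measure lies in the corresponding $\Pscr_\TW(\Sscr)$. Compactness then follows because each $\Pscr_\TW(\Sscr)$ is compact by Theorem \ref{equidistribution of TW-special subspaces}, and a finite union of compact subsets of the Hausdorff space of Radon measures on $\Sscr$ (equipped with the weak topology) remains compact. For support convergence, suppose $\nu_n\to\nu$ in $\Pscr_B(\Sscr)$. Partition $\Nbb$ according to which $\Pscr_\TW(\Sscr)$ contains $\nu_n$; since $B$ is finite, the partition is finite. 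For each class appearing infinitely often, pass to the corresponding subsequence: by compactness of $\Pscr_\TW(\Sscr)$, the limit $\nu$ lies in that $\Pscr_\TW(\Sscr)$, and Theorem \ref{equidistribution of TW-special subspaces} yields $\supp\nu_{n_k}\subset\supp\nu$ for large $k$ as well as analytic density of the union of supports in $\supp\nu$. Finiteness of $B$ lets us combine these finitely many subsequences to conclude $\supp\nu_n\subset\supp\nu$ for all $n$ sufficiently large.

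For (2), let $(M_n)$ be a $B$-bounded sequence of special subspaces with associated canonical measures $\nu_n\in\Pscr_B(\Sscr)$. I proceed by iterated extraction. By compactness from (1), extract a convergent subsequence $\nu_{n_k}\to\nu^{(1)}\in\Pscr_B(\Sscr)$; by the support convergence from (1), the support $M^{(1)}:=\supp\nu^{(1)}$ is a $B$-bounded special subspace, $M_{n_k}\subset M^{(1)}$ for $k$ large, and $\bigcup_{k\gg 0}M_{n_k}$ is dense in $M^{(1)}$ for the analytic topology. If only finitely many indices remain outside the subsequence, the process stops and the analytic closure of $\bigcup_n M_n$ is $M^{(1)}$ together with finitely many residual $M_n$, each $B$-bounded special. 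Otherwise, apply the same extraction to the residual index set to obtain $M^{(2)}$, and continue.

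The main obstacle is the termination of this iteration. My strategy is to combine Lemma \ref{maximal TW-special subdata}, which asserts that for each $\TW\in B$ the maximal $\TW$-special subdata of $(\Pbf,Y)$ are finite in number, with the finiteness of $B$, so that the maximal $B$-bounded special subspaces of $\Sscr$ form a finite set $\{M^*_1,\ldots,M^*_r\}$. At each stage I would choose $M^{(s)}$ with support maximal (for inclusion, equivalently for dimension) among the supports of all possible limits of subsequences of the current residual. If the procedure did not terminate, the sequence $(M^{(s)})_s$ would itself be $B$-bounded and its canonical measures $(\nu^{(s)})$ would, by compactness of $\Pscr_B(\Sscr)$, admit a convergent subsequence; the support convergence of (1) applied to this subsequence would furnish a $B$-bounded special subspace strictly containing cofinally many $M^{(s)}$, and since each $M^{(s)}$ absorbs infinitely many $M_n$, tracing back to the first stage $s_0$ at which this larger subspace appears as a limit support of the residual produces a contradiction with the maximality choice made at stage $s_0$. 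Termination then yields the desired finite decomposition, simultaneously in both the lattice and S-space settings.
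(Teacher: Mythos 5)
Part (1) of your argument is correct and coincides with the paper's: $\Pscr_B(\Sscr)$ is a finite union of the compact sets $\Pscr_\TW(\Sscr)$, and your partition-by-$\TW$ argument for support convergence is actually cleaner than the paper's own phrasing (the paper asserts that no infinite subsequence can lie outside the single $\Pscr_\TW(\Sscr)$ containing the limit, which need not hold when the limit lies in several $\Pscr_\TW(\Sscr)$; your version, which only needs that each of the finitely many infinite classes eventually has its supports inside $\supp\nu$, is the right way to say it).

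For part (2) the paper offers no argument beyond ``this is clear'', so your iterative extraction is genuinely filling in the details, and it is the standard route; but two steps in your termination argument need to be made precise. First, ``a $B$-bounded special subspace strictly containing cofinally many $M^{(s)}$'': support convergence only gives containment, not strict containment. To get strictness, observe that the maximal dimension $d_s$ of a limit support of the stage-$s$ residual is non-increasing in $s$ (the residuals are nested), hence eventually constant equal to some $d$; the $M^{(s)}$ are pairwise non-nested (each absorbs some $M_n$ not contained in the earlier ones), and a connected special subspace cannot properly contain another connected closed one of the same dimension, so a limit $Z^*$ containing infinitely many distinct $M^{(s)}$ of dimension $d$ with dense union must have $\dim Z^*>d$. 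Second, ``tracing back to the first stage at which this larger subspace appears as a limit support of the residual'': what you actually need is that $\nu^*$ is itself a limit point of $(\nu_n)_{n\in R_s}$ for every stage $s$, which holds because each $\nu^{(s_k)}$ is a limit point of the nested residual sequences and the set of limit points of a sequence in a compact metrizable space is closed; then $\dim Z^*>d$ contradicts the maximality of $d$ at the stage where the dimensions have stabilized. Finally, Lemma \ref{maximal TW-special subdata} plays no real role here: the limit supports you produce need not be maximal $\TW$-special, and the finiteness of maximal subdata is neither needed nor sufficient for termination; the dimension count above is what does the work.
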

 
 \begin{proof}
 
   (1) This is clear because $\Pscr_B(\Sscr)$ is a finite union of compact subsets of the set of Radon measures on  $\Sscr$. The property of support convergence holds because if a sequence $(\mu_n)$ converges to $\mu$, then it contains a subsequence that converges into $\Pscr_\TW(\Sscr)$  for some $\TW\in B$. Hence the $\nu\in\Pscr_\TW(\Sscr)$. All the convergent subsequence of $(\mu_n)$ are of the same limit, so it is not possible to have an infinite subsequence lying outside $\Pscr_\TW(\Sscr)$. Hence the sequence itself is in $\Pscr_\TW(\Sscr)$.
 
   (2) This is clear using the convergence of measures and the property of support convergence.
 \end{proof}
 
 \begin{corollary}[bounded Andr\'e-Oort]\label{bounded Andr\'e-Oort} Let $M$ be a connected Shimura variety defined by $(\Pbf,Y;Y^+)=(\Ubf,\Vbf)\rtimes(\Gbf,X;X^+)$, with $B$ a finite bounding set. Let $(M_n)$ be a sequence of special subvarieties bounded by $B$. Then the Zariski closure of $\bigcup_nM_n$ is a finite union of special subvarieties bounded by $B$.
 
 \end{corollary}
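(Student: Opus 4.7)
The plan is to reduce the Zariski-closure statement about special subvarieties to the analytic-closure statement about special S-subspaces already established in \ref{bounded equidistribution}(2). The bridge is the Zariski density of each special S-subspace in its ambient special subvariety, i.e.\ the lemma stated after \ref{lattice space and s-space}, applied not to the whole $M$ but to each defining subdatum of the $M_n$.

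Concretely, I would first attach to each special subvariety $M_n$, via a connected subdatum $(\Pbf_n,Y_n;Y_n^+)$ which witnesses its $\TW$-special character for some $\TW \in B$, the associated special S-subspace $\Mscr_n \subset M_n$. Then $(\Mscr_n)$ is a $B$-bounded sequence of special S-subspaces in the sense of \ref{bounded sequence}(2), and by \ref{bounded equidistribution}(2) its analytic closure
\[
C \;:=\; \overline{\textstyle\bigcup_n \Mscr_n}^{\,\mathrm{an}}
\]
is a finite union $C = \Mscr_\infty^{(1)} \cup \cdots \cup \Mscr_\infty^{(r)}$ of $B$-bounded special S-subspaces, each $\Mscr_\infty^{(i)}$ being the support of the limit of some convergent subsequence of canonical measures $\mu_{\Mscr_{n_k}} \to \mu_{\Mscr_\infty^{(i)}}$ in $\Pscr_B(\Mscr)$. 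Let $M_\infty^{(i)}$ denote the $B$-bounded special subvariety corresponding to $\Mscr_\infty^{(i)}$.

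Next, I would pass from analytic to Zariski closures. The Zariski density lemma applied subdatum-by-subdatum gives $\overline{\Mscr_n}^{\,\mathrm{Zar}} = M_n$ and $\overline{\Mscr_\infty^{(i)}}^{\,\mathrm{Zar}} = M_\infty^{(i)}$; moreover, for any subset of an algebraic variety over $\Cbb$, Zariski closure factors through analytic closure. Combining these observations one gets
\[
\overline{\textstyle\bigcup_n M_n}^{\,\mathrm{Zar}}
\;=\;\overline{\textstyle\bigcup_n \Mscr_n}^{\,\mathrm{Zar}}
\;=\;\overline{C}^{\,\mathrm{Zar}}
\;=\;\bigcup_{i=1}^r \overline{\Mscr_\infty^{(i)}}^{\,\mathrm{Zar}}
\;=\;\bigcup_{i=1}^r M_\infty^{(i)},
\]
which is the asserted finite union of $B$-bounded special subvarieties.

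The only mildly non-trivial point in carrying this out is keeping the bookkeeping of the $\TW$-data compatible under the limit process, but this is already fully handled by the compactness and support-convergence statements of \ref{equidistribution of TW-special subspaces} and \ref{bounded equidistribution}. The main classical ingredient, namely the Zariski density of $\Mscr'$ in $M'$ for an arbitrary special subvariety $M'$, reduces (in the non-Kuga case where $\Ubf' \neq 0$) to the Zariski density of $\Ubf'(\Rbb)$ in $\Ubf'(\Cbb)$, and is tautological in the Kuga case; so no serious new obstacle arises beyond what has been built up in \ref{Bounded equidistribution of special subvarieties}.
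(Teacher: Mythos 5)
Your proposal is correct and follows essentially the same route as the paper: the paper's proof is exactly the one-line observation that analytic closure is finer than Zariski closure together with the Zariski density of S-subspaces in their special subvarieties, applied to the conclusion of \ref{bounded equidistribution}(2). Your chain of equalities is just a more explicit writeup of that argument.
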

 
 \begin{proof}
   This is clear because analytic closure is finer than Zariski closure, and   S-subspaces are Zariski dense in the corresponding special subvarieties.
 \end{proof}

 \begin{remark}[compact tori vs. algebraic tori]\label{compact tori vs. algebraic tori}
 When $\Vbf=0$ and $\Gamma=\Gamma_\Ubf\rtimes\Gamma_\Gbf$ for some lattice $\Gamma_\Ubf$ in $\Ubf(\Qbb)$ stabilized by $\Gamma_\Gbf$ a congruence subgroup of $\Gbf(\Qbb)_+$, the fibration $M=\Gamma\bsh Y^+\ra S=\Gamma_\Gbf\bsh X^+$ is a torus group scheme over $S$, whose fibers are complex tori isomorphic to $\Gamma_\Ubf\bsh\Ubf(\Cbb)\isom (\Cbb/\Zbb)^d$, $d$ being the dimension of $\Ubf$. Thus the S-space $\Mscr=\Gamma\bsh \Yscr^+$ is a real analytic subgroup of $M$ relative to the base $S$, whose fibers are compact tori isomorphic to $\Gamma_\Ubf\bsh \Ubf(\Rbb)\isom(\Rbb/\Zbb)^d$ in the split complex tori $\Gamma_\Ubf\bsh \Ubf(\Cbb)$, hence Zariski dense.
 
 Using harmonic analysis on $\Gamma_\Ubf\bsh \Ubf(\Rbb)$ one can prove that the analytic closure of a sequence of connected closed Lie subtori in it is still a connected closed Lie subtorus, which implies that the Zariski closure of a sequence of connected algebraic subtori in $\Gamma_\Ubf\bsh\Ubf(\Cbb)$ is an algebraic subtorus, cf. \cite{ratazzi ullmo} Section 4.1. This can be viewed as a motivation for our notion of S-spaces.
 
 \end{remark}

 \section{Lower bound of the Galois orbit of a pure special subvariety}\label{Lower bound of the Galois orbit of a pure special subvariety}
 The results from this section on rely heavily on \cite{ullmo yafaev}, especially the estimation on Galois orbits of $\Tbf$-special subvarieties in pure Shimura varieties. Hence we assume that all the mixed Shimura (sub)data we encounter are irreducible in the sense of \ref{mixed Shimura data}(3). This actually forces  the pure part to be irreducible, due to the following lemma, 
 
 In \cite{ullmo yafaev} Lemma 2.1, a special subvariety $S'$ of a pure Shimura variety $M_K(\Gbf,X)$ is realized as the image of a connected component of some morphism $M_{K'}(\Gbf',X')\ra M_K(\Gbf,X)$, with $\Gbf'$ the generic Mumford-Tate $\Qbb$-group of $S'$, and $K'=K\cap\Gbf'(\adele)$. All the estimations concerning the $\Tbf$-special subvarieties requires $\Tbf$ to be the connected center of the generic Mumford-Tate $\Qbb$-group $\Gbf'$. The mixed case is similar: by \ref{insensitivity of levels} and the discussion in \ref{reduction to subdata of a good product}, it suffices to treat special subvarieties in $M^+=\Gamma\bsh Y^+$, where $Y^+$ comes from some irreducible mixed Shimura datum $(\Pbf,Y)$ and $\Gamma=\Pbf(\Qbb)_+\cap K$ for any fixed \cosg $K\subset\Pbf(\adele)$; in $M^+$ special subvarieties are obtained, using the discussion in \ref{special subvarieties}(1), as the image of $Y'^+\times K$ with $Y'^+\subset Y^+$ coming from some subdatum $(\Pbf',Y')$, and we may assume that $\Pbf'=\MT(Y'^+)$ because this does not change $Y'^+$.




 We start with some preliminaries on the reciprocity map describing the Galois action on the set of connected components of pure Shimura varieties. 
 
 \begin{definition}[connected components]\label{connected components}
 
 Let $\Gbf$ be a connected reductive $\Qbb$-group. We write $\pibar(\Gbf)$ for the set $\pi_0(\Gbf(\Abb)/\Gbf(\Qbb)^-\rho(\Hbf(\adele)))$ where $\Gbf(\Qbb)^-$ stands for the closure of $\Gbf(\Qbb)$ in $\Gbf(\Abb)$, and $\rho:\Hbf\ra\Gbf^\der$ is the simply-connected covering of $\Gbf^\der$. Clearly $\pibar(\Gbf)=\pi_0(\Gbf(\Abb)/\Gbf(\Qbb)^-\Gbf(\Rbb)^+\rho(\Hbf(\adele))$. 
 
 Since $\Gbf(\adele)$ is totally disconnected,   the natural action of $\Gbf(\adele)$ on $\Gbf(\Abb)$ by left translation gives an action on $\pibar(\Gbf)$. For $K\subset\Gbf(\adele)$, we denote the quotient by $K$ of $\pibar(\Gbf)$ as $\pibar(\Gbf)/K$. We also have the natural action of $\pi_0(\Gbf(\Rbb))$ on $\pibar(\Gbf)$.
 
 From the finiteness of class numbers of linear algebraic group over global fields (\cite{platonov rapinchuk} 8.1), we know that for each $K\subset\Gbf(\adele)$, the quotient $\pibar(\Gbf)/K$ is a finite abelian group. Hence $\pibar(\Gbf)=\limproj_K\pibar(\Gbf)/K$ is a pro-finite abelian group.
 
 In particular, if $F$ is a number field, we have the $\Qbb$-torus $\Gbb_\mrm^F=\Res_{F/\Qbb}\mult_F$, and  class field theory gives us the reciprocity isomorphism $\rec^F:\Gal(F^\ab/F)\isom\pibar(\Gbb_\mrm^F)$.
 
 For a pure Shimura variety  $S=M_K(\Gbf,X)$, the set of its geometrically connected components is $\pi_0(S)=\pibar(\Gbf)/\Gbf(\Rbb)_+K$, with $\Gbf(\Rbb)_+$ acts through $\pi_0(\Gbf(\Rbb)_+)\subset\pi_0(\Gbf(\Rbb))$. 
 
 \end{definition}
 
 \begin{definition-proposition}[reflex fields and reciprocity maps, cf. \cite{deligne pspm}, \cite{pink thesis}, \cite{ullmo yafaev}]\label{reflex fields and reciprocity maps}
 
 (1) Let $(\Pbf,Y)$ be a mixed Shimura datum. The \emph{reflex field} $E(\Pbf,Y)$ is the smallest subfield $E$ of $\Cbb$ such that $\Aut(\Cbb/E)$ fixes the $\Pbf(\Cbb)$-conjugacy class of $\mu_y:\mult_\Cbb\ra\Pbf_\Cbb$, with $\mu_y$ the restriction of $y:\Sbb_\Cbb\ra\Pbf_\Cbb$ to $\mult\times\{1\}$ via $\Sbb_\Cbb\isom\mult\times\mult$.
 
 $E(\Pbf,Y)$ is a number field embedded in $\Cbb$. Whenever there is a morphism of mixed Shimura data $(\Pbf,Y)\ra(\Pbf',Y')$ we have $E(\Pbf,Y)\supset E(\Pbf',Y')$. In particular, using the natural projection and the pure section of $(\Pbf,Y)=\Wbf\rtimes(\Gbf,X)$, we have $E(\Pbf,Y)=E(\Gbf,X)$.
 
 (2) If $(\Tbf,x)$ is a pure Shimura datum with $\Tbf$ a $\Qbb$-torus, then $E=E(\Tbf,x)$ is the field of definition of $\mu_x:\mult_\Cbb\ra\Tbf_\Cbb$, and the reciprocity map of $(\Tbf,x)$ is the composition $$\rec_x:\Gal(E^\ab/E)\isom\pibar(\Gbb_\mrm^E)\ot{\mu_x}\lra\pibar(\Tbf^E)\ot{\Nm_{E/\Qbb}}\lra\pibar(\Tbf)$$ with $\Tbf^E=\Res_{E/\Qbb}T_E$ and $\Nm_{E/\Qbb}$ induced by the norm $E^\times\ra\Qbb^\times$.
 
 For a general pure Shimura datum $(\Gbf,X)$ of reflex field $E=E(\Gbf,X)$, we still have a continuous homomorphism, referred to as the \emph{reciprocity map}: $$\rec_X:\Gal(E^\ab/E)\isom\pibar(\Gbb_\mrm^E)\ot{\mu_X}\lra\pibar(\Gbf)$$ and the action of $\Gal(\Qac/E)$ on $\pi_0(M_K(\Gbf,X))$ is through   translation by the homomorphism $$\Gal(\Qac/E)\ra\Gal(E^\ab/E)\ra\pibar(\Gbf)\ra\pibar(\Gbf)/\Gbf(\Rbb)_+K.$$ Each connected component of $M_K(\Gbf,X)$ is defined over a finite abelian extension of $E$. 
 
 The reciprocity maps are functorial \wrt morphisms between Shimura data and between Shimura varieties.
 \end{definition-proposition}

 \begin{assumption}\label{assumption}
 In our study of special subvarieties, we will be mainly concerned with the following situation: $M=M_K(\Pbf,Y)$ is a mixed Shimura variety defined by $(\Pbf,Y)=\Wbf\rtimes(\Gbf,X)$ at some level $K=K_\Wbf\rtimes K_\Gbf$ of fine product type. We fix $Y^+=\Ubf(\Cbb)\Wbf(\Rbb)X^+$ a connected component of $Y$ lying over a connected component $X^+$ in $X$, and we have a fixed connected mixed Shimura variety $M^+=\Gamma\bsh Y^+$, with $\Gamma=\Pbf(\Qbb)_+\cap K$. 
 
 We study special subvarieties in $M^+$ that are of the form $\wp_\Gamma(Y'^+)$, coming from connected subdatum $(\Pbf',Y';Y'^+)$ of $(\Pbf,Y;Y^+)$. Similar to the case of Kuga varieties, cf. \cite{chen kuga} 2.12 and 2.13,  all special subvarieties are obtained this way, as long as one passes to different connected components of $M$ using Hecke translates, cf. \ref{morphisms of mixed Shimura varieties and Hecke translates}.  In particular, the special subvariety $\wp_\Gamma(Y'^+)$ is a connected component of the image $M_{K'}(\Pbf',Y')\ra M_K(\Pbf,Y)$ where $K'$ is some \cosg of $\Pbf'(\adele)$ contained in $K$.
 
 The fine product condition on $K$ shows that the natural projection $M_K(\Pbf,Y)\ra S:= M_{K_\Gbf}(\Gbf,X)$ has a section given by  $(\Gbf,X)\mono(\Pbf,Y)$, so $S$ is identified as a closed subscheme of $M$. Similarly, $M^+$ is fibred over $S^+=\Gamma_\Gbf\bsh X^+$ with $\Gamma_\Gbf=\Gbf(\Qbb)_+\cap K_\Gbf$ and $S^+$ is a closed subscheme of $M^+$ by the pure section.
 
 We write $E$ for the reflex field of $(\Pbf,Y)$, and we study the Galois orbits of the form $\Gal(\Qac/E)\cdot S'$ for $S'$ a special subvariety in $S^+$, as well as its mixed analogue. Note that the orbit may exceed $S^+$. We can nevertheless restrict to orbits under $\Gal(\Qac/E^+)$ with $E^+$ the field of definition of $S^+$ corresponding to the kernel of $\Gal(E^\ab/E)\ra\pibar(\Gbf)/\Gbf(\Rbb)_+K_\Gbf$. The difference is bounded by $\#\pi_0(M_{K_\Gbf}(\Gbf,X))$ which is constant when we fix $K=K_\Wbf\rtimes K_\Gbf$. 
 
 Finally, when we mention irreducible mixed Shimura (sub)data $(\Pbf,Y)$, we require that for some connected component $Y^+$ of $Y$ we have $\Pbf=\MT(Y^+)$. 
 \end{assumption}
  
 The original estimation in \cite{ullmo yafaev} 2.19 requires $\Gbf$ to be of adjoint type. We prefer the following version for general reductive $\Gbf$:

 \begin{theorem}[lower bound involving splitting fields]\label{lower bound involving splitting fields} 
 Let $S=M_K(\Gbf,X)$ be a pure Shimura variety with reflex field $E$ at some level $K\subset\Gbf(\adele)$ of fine product type. Write $\Lscr=\Lscr_K$ for the automorphic line bundle on $S$, namely the ample line bundle of top degree automorphic forms on $S$, such that a fixed positive power of $\Lscr$ defines the Baily-Borel compactification of $S$. We also fix an integer $N>0$.
 
 Let $\Tbf$ be a non-trivial $\Qbb$-torus in $\Gbf$, with splitting field $F_\Tbf$, arising as the connected center of $\Gbf'$ for some irreducible pure subdatum $(\Gbf',X')$ of $(\Gbf,X)$. Assume that the GRH holds for $F_\Tbf$. Then the following inequality holds for any $\Tbf$-special subvariety $S'\subset S$ defined by $(\Gbf',X')$: $$\deg_\Lscr(\Gal(\Qac/E)\cdot S')\geq c_ND_N(\Tbf)\cdot\prod_{p\in\Delta(\Tbf,K)}\maxx\{1,I(\Tbf,K_p)\}$$ where \begin{itemize}
 \item $D_N(\Tbf)=(\log{D(\Tbf)})^N$ with $D(\Tbf)$  the absolute discriminant of the splitting field of $F_\Tbf$;
 
 \item $\Delta(\Tbf,K)$ is the set of rational primes $p$ such that $K_{\Tbf,p}\subsetneq K_{\Tbf,p}^\maxx$, with \begin{itemize}
 \item $K_{\Tbf,p}=\Tbf(\Qbbp)\cap K_p$ 
 \item $K_{\Tbf,p}^\maxx$   the maximal \cosg of $\Tbf(\Qbbp)$
 \end{itemize}
 and $\Delta(\Tbf,K)$ is finite, i.e. $K_{\Tbf,p}=K_{\Tbf,p}^\maxx$ for all but finitely many $p$;
 
 \item $I(\Tbf,K_p)=b[K_{\Tbf,p}^\maxx:K_{\Tbf,p}]$;
 \item $c_N,b\in\Rbb_{>0}$ are constants independent of $K,\Tbf$; moreover $b$ is independent of $N$.
 \end{itemize}
 
 \end{theorem}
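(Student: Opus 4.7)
The plan is to reduce to the adjoint case treated in \cite{ullmo yafaev} Theorem 2.19, by passing through the adjoint quotient $(\Gbf,X) \to (\Gbf^\ad, X^\ad) = (\Gbf,X)/\Zbf$ with $\Zbf$ the center of $\Gbf$. Let $K^\ad \subset \Gbf^\ad(\adele)$ be the image of $K$, which remains of fine product type up to shrinking by a bounded finite index. This yields a finite morphism $\pi: S \to S^\ad := M_{K^\ad}(\Gbf^\ad, X^\ad)$ sending the $\Tbf$-special subvariety $S'$ defined by $(\Gbf',X')$ to the $\Tbf^\ad$-special subvariety $S'^\ad$ defined by $(\Gbf'^\ad, X'^\ad)$, where $\Tbf^\ad$ is the image of $\Tbf$. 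Since $\Tbf \to \Tbf^\ad$ is a $\Qbb$-isogeny with kernel contained in the finite $\Qbb$-group $\Zbf \cap \Gbf'$, the two tori split over the same Galois extension of $\Qbb$, namely $F_\Tbf = F_{\Tbf^\ad}$, so in particular $D(\Tbf) = D(\Tbf^\ad)$.

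Next I would compare the remaining invariants. The finite sets $\Delta(\Tbf, K)$ and $\Delta(\Tbf^\ad, K^\ad)$ agree outside the set $\Sigma$ of rational primes dividing $|\Zbf \cap \Gbf^\der|$, a fixed finite integer bounded in terms of $(\Gbf, X)$ only; at primes in $\Sigma$ the local indices $I(\Tbf, K_p)$ and $I(\Tbf^\ad, K^\ad_p)$ differ by a factor that depends only on $|\Zbf \cap \Gbf^\der|$. A positive power of the automorphic line bundle $\Lscr$ on $S$ is the pullback via $\pi$ of the corresponding power of $\Lscr^\ad$, since the canonical bundle on the Hermitian symmetric domain underlying $X$ factors through $X^\ad$. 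Finally, since $\pi$ is finite of bounded degree and the reflex fields satisfy $E(\Gbf^\ad, X^\ad) \subseteq E$ with index bounded in terms of $(\Gbf, X)$, the degree $\deg_\Lscr(\Gal(\Qac/E)\cdot S')$ can be compared to $\deg_{\Lscr^\ad}(\Gal(\Qac/E(\Gbf^\ad,X^\ad))\cdot S'^\ad)$ up to universal multiplicative factors.

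Applying \cite{ullmo yafaev} Theorem 2.19 to $(\Gbf^\ad, X^\ad)$, the torus $\Tbf^\ad$, and the level $K^\ad$ then yields
\[
\deg_{\Lscr^\ad}\!\left(\Gal(\Qac/E(\Gbf^\ad,X^\ad))\cdot S'^\ad\right) \geq c_N^\ad \, D_N(\Tbf^\ad) \prod_{p \in \Delta(\Tbf^\ad, K^\ad)} \maxx\{1, I(\Tbf^\ad, K^\ad_p)\},
\]
and transporting this inequality back to $S$ via the comparisons above gives the claimed bound, with all bounded multiplicative factors absorbed into new constants $c_N$ and $b$.

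The main obstacle I expect is verifying that every implicit constant in the comparison is genuinely uniform in both $K$ and $\Tbf$. This requires controlling the exceptional primes in $\Sigma$ by a $\Tbf$-independent integer (namely $|\Zbf \cap \Gbf^\der|$, which is finite because $\Zbf \cap \Gbf^\der$ is a finite $\Qbb$-group), and carefully unpacking the uniformity in $K$ already present in the adjoint statement of \cite{ullmo yafaev}. The subtle point is that $b$ must be chosen independently of $N$, so the contribution of the primes in $\Sigma$ cannot be allowed to interact with the exponent $N$ in $D_N(\Tbf)$; this forces one to isolate the local-index correction and the discriminant correction as separate factors when descending from $S^\ad$ to $S$.
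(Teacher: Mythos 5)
Your reduction to the adjoint case breaks down at the very first step: the map $\Tbf\to\Tbf^\ad$ is \emph{not} an isogeny in general. Its kernel is $\Tbf\cap\Zbf$, which is a positive-dimensional torus whenever $\Tbf$ meets the connected center of $\Gbf$ nontrivially --- and by Lemma \ref{generic injectivity}(1) this is the generic situation: for irreducible data one has $\Tbf\supset\Tbf_\Gbf$, so e.g.\ for $\Gbf=\GSp_\Vbf$ every relevant $\Tbf$ contains the central $\mult$. (The finite group $\Zbf\cap\Gbf^\der$ you invoke is not the kernel of $\Tbf\to\Tbf^\ad$.) This has two consequences. First, $F_\Tbf$ and $F_{\Tbf^\ad}$ need not coincide, so $D(\Tbf^\ad)$ can be strictly smaller than $D(\Tbf)$. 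Second, and fatally, the local index $[K^\maxx_{\Tbf,p}:K_{\Tbf,p}]$ contains a contribution from the central direction $\Tbf\cap\Zbf$ that is entirely invisible on the adjoint side: if $K_p$ is chosen with $K_{\Zbf,p}$ deep inside the maximal compact of $\Zbf(\Qbbp)$, then $\Delta(\Tbf,K)$ acquires primes absent from $\Delta(\Tbf^\ad,K^\ad)$ and the corresponding indices $I(\Tbf,K_p)$ are unbounded as $K$ varies, so no constants $c_N,b$ independent of $K$ can absorb the discrepancy. Note that this lost central contribution is precisely what the theorem is used for later: in \ref{torsion order in the product case} and \ref{torsion order in the embedded case} the bound $I(\Tbf,K_\Gbf(w)_p)\geq c\cdot\ord_p(w,K_\Wbf)$ is extracted from the split central torus $\Cbf\subset\Tbf$, which the adjoint quotient kills.

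The paper therefore does not pass to $\Gbf^\ad$. Instead it (Step 1) reruns the Ullmo--Yafaev argument directly for the reductive group $\Gbf'$, i.e.\ applies the reciprocity map of the full torus $\Tbf$ (central part included) to the components of $M_{K'}(\Gbf',X')$ and to the covering $M_{K'}(\Gbf',X')\to M_{K'^m}(\Gbf',X')$ obtained by saturating the level at $\Tbf$; the only modification needed relative to the adjoint case is that the Hodge structures on the auxiliary faithful representation $\Lambda$ are no longer of weight zero, which affects only absolute constants. Then (Step 2) it compares $\deg_{\Lscr_{K'}}S'$ with $\deg_\Lscr f(S')$ inside the ambient variety via the generic injectivity of $f:M_{K'}(\Gbf',X')\to M_K(\Gbf,X)$ and the nefness of $f^*\Lscr\otimes\Lscr'^\inv$ (Lemma \ref{generic injectivity}), using $K'=K\cap\Gbf'(\adele)$ so that $\Delta(\Tbf,K')=\Delta(\Tbf,K)$ and the indices match on the nose. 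If you want to salvage a reduction argument, you would have to reduce to the irreducible (not adjoint) case and still prove the reductive version of the Ullmo--Yafaev estimate; the adjoint quotient cannot be the vehicle.
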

 
 The proof makes use of a few useful uniform bounds , among which we single out the following  (cf. \cite{ullmo yafaev} 2.4, 2.5):
 
 \begin{lemma}[uniform bounds]\label{uniform bounds} Let $(\Gbf,X)$ be a pure Shimura datum, and write $d_\Gbf$ for the dimension of $\Gbf$. Then:

 (1) If $\Tbf$ is a $\Qbb$-torus in $\Gbf$, then the degree of the splitting field $F_\Tbf$ of $\Tbf$ is uniformly bounded in terms of $d_\Gbf$.
 
 (2) If $(\Gbf',X')$ is a pure subdatum of $(\Gbf,X)$, then the degree of the reflex field $E'=E(\Gbf',X')$ is uniformly bounded in terms of $d_\Gbf$. In fact we can find a $\Qbb$-torus $\Hbf$ in $\Gbf$ whose splitting field contains $E(\Gbf',X')$.
 
 \end{lemma}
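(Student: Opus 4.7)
My plan is to prove the two parts separately, with (1) being a direct application of Minkowski's theorem on finite subgroups of $\GL_n(\Zbb)$, and (2) reducing to (1) via a choice of maximal $\Qbb$-torus in $\Gbf'$.

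For part (1), I would start from the observation that the character lattice $\car(\Tbf)$ is a free $\Zbb$-module of rank $r=\dim\Tbf$, and the natural action of the absolute Galois group $\Gal(\Qac/\Qbb)$ on $\car(\Tbf)$ factors through the finite quotient $\Gal(F_\Tbf/\Qbb)$, giving an injection $\Gal(F_\Tbf/\Qbb)\mono\Aut(\car(\Tbf))\isom\GL_r(\Zbb)$. Minkowski's classical theorem gives a bound $M(r)$ on the order of any finite subgroup of $\GL_r(\Zbb)$ depending only on $r$; hence $[F_\Tbf:\Qbb]\leq M(r)\leq M(d_\Gbf)$, since $r=\dim\Tbf\leq\dim\Gbf=d_\Gbf$.

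For part (2), I would fix any maximal $\Qbb$-torus $\Tbf'$ of $\Gbf'$ (these exist in characteristic zero) and put $\Hbf=\Tbf'\subset\Gbf'\subset\Gbf$. Given $x\in X'$, consider the cocharacter $\mu_x:\mult_\Cbb\ra\Gbf'_\Cbb$; since every maximal $\Cbb$-torus of $\Gbf'_\Cbb$ is $\Gbf'(\Cbb)$-conjugate to $\Tbf'_\Cbb$, after replacing $\mu_x$ by a conjugate we may assume $\mu_x$ factors through $\Tbf'_\Cbb$. Over the splitting field $F_{\Tbf'}$ the torus $\Tbf'$ becomes split, so every cocharacter of $\Tbf'_{F_{\Tbf'}}$ is automatically defined over $F_{\Tbf'}$, because the cocharacter group of a split torus coincides with its geometric cocharacter group. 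In particular $\mu_x$ is defined over $F_{\Tbf'}$, hence its $\Gbf'(\Cbb)$-conjugacy class is fixed by $\Aut(\Cbb/F_{\Tbf'})$, giving the inclusion $E(\Gbf',X')\subset F_{\Tbf'}=F_\Hbf$. Combining with (1) applied to $\Hbf$, which has rank at most $\dim\Gbf'\leq d_\Gbf$, we obtain $[E(\Gbf',X'):\Qbb]\leq[F_\Hbf:\Qbb]\leq M(d_\Gbf)$.

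The argument is quite direct and I do not expect a serious obstacle. The only mildly delicate point is the passage from $\mu_x$ being defined over $F_{\Tbf'}$ to the conjugacy class of $\mu_x$ in $\Gbf'_\Cbb$ being $\Aut(\Cbb/F_{\Tbf'})$-stable, but this is automatic because the Galois conjugate of a conjugacy class is the conjugacy class of the Galois conjugate, and an element already defined over $F_{\Tbf'}$ is fixed under $\Aut(\Cbb/F_{\Tbf'})$. A secondary subtlety is verifying that a maximal $\Qbb$-torus of $\Gbf'$ exists and becomes maximal after base change to $\Cbb$, but both facts are standard for connected reductive groups over a field of characteristic zero.
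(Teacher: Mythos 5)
Your proof is correct, and the paper itself gives no proof of this lemma — it simply cites \cite{ullmo yafaev} 2.4 and 2.5, where the argument is essentially the one you give: Minkowski's bound on finite subgroups of $\GL_r(\Zbb)$ applied to the Galois action on the character lattice for (1), and factoring $\mu_x$ through a maximal $\Qbb$-torus of $\Gbf'$ (which splits over its splitting field, so the conjugacy class of $\mu_x$ is fixed by $\Aut(\Cbb/F_{\Tbf'})$) for (2).
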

 
 
 Of course by being uniformly bounded in terms of $d_\Gbf$ we mean being less than some positive constant that only depends on $d_\Gbf$.

 \begin{proof}[Sketch of the proof of \ref{lower bound involving splitting fields}] We adapt the strategy in \cite{ullmo yafaev} Subsection 2.2 (from Definition 2.10 to Theorem 2.19) into two steps: \begin{enumerate}
 \item[Step 1.] For $(\Gbf',X')$ a general pure Shimura datum and $S'$ a connected component of $M_{K'}(\Gbf',X')$, with reflex field $E'=E(\Gbf',X')$, $\Lscr_{K'}$ the automorphic line bundle, $\Tbf$ the connected center of $\Gbf'$, and $F_\Tbf$ the splitting field of $\Tbf$, we have $$\deg_{\Lscr_{K'}}(\Gal(\Qac/E')\cdot S')\geq c'_ND_N(\Tbf)\cdot\prod_{p\in\Delta(\Tbf,K')}\maxx\{1, I(\Tbf,K'_p) \}$$ under the GRH for $F_\Tbf$,  where  $D_N(\Tbf)$ is the $N$-th power of the logarithm of the absolute discriminant of $F_\Tbf$, $\Delta(\Tbf,K')$ and $I(\Tbf,K'_p)$ are defined in the same way as in the statement of \ref{lower bound involving splitting fields} with $K$ replaced by $K'$. The definition of $I(\Tbf,K'_p)=b'[K'^\maxx_{\Tbf,p}:K'_{\Tbf,p}]$ involves some absolute constant $b'$ that only depends on the dimension $d_{\Gbf'}$ of $\Gbf'$ and some fixed faithful representation $\rho':\Gbf'\mono\GL_{\Lambda,\Qbb}$ with $\Lambda$ some free $\Zbb$-module of finite rank, and the constant $c'_N$ only depends on $d_{\Gbf'}$, $\rho'$ and $N$.
 
 Note that when $\Gbf$ is of adjoint type this is Theorem 2.19 of \cite{ullmo yafaev}. The proof for general $\Gbf$ is almost the same, and one needs to modify some estimation of absolute constants: when $\Gbf$ is of adjoint type, the Hodge structures defined by $x\in X$ on algebraic representations of $\Gbf$ (such as $\Lambda$) are of weight zero; for general $\Gbf$ the weight is not necessarily zero, but only one such representation $\Lambda$ is involved, and only finitely many Hodge types arise when $x$ runs through $X$ (and is actually independent of the choice of $x$), hence one obtains estimations similar to \cite{ullmo yafaev} 2.13 and \cite{yafaev duke} 2.13.

 \item[Step 2.] When we consider $\Tbf$-special subvarieties $f(S')$ realized as  the image of some connected component $S'$ of $M_{K'}(\Gbf',X')$ along $f:M_{K'}(\Gbf',X')\ra M_K(\Gbf,X)$ with $K'=K\cap\Gbf'(\adele)$, we have $$\deg_{\Lscr}f(S')\geq\deg_{\Lscr_{K'}}S'$$ which is adapted from \cite{klingler yafaev} 5.3.10, cf. \ref{generic injectivity} below. We use a fixed faithful representation $\rho:\Gbf\mono\GL_{\Lambda,\Qbb}$ whose restriction to $\Gbf'$ gives the representation $\rho'$ in Step 1. Note that $K'=K\cap\Gbf'(\adele)$ hence $\Delta(\Tbf,K')=\Delta(\Tbf,K)$ and $[K'^\maxx_{\Tbf,p}:K'_{\Tbf,p}]=[K^\maxx_{\Tbf,p}:K_{\Tbf,p}]$ for $p\in\Delta(\Tbf,K)$. From this the desired estimation on the degree of $\Gal(\Qac/E)\cdot f(S')$ is obtained, after replacing $c'_N$ and $b'$ by some new constants that only depend on $d_\Gbf$, $\rho$, and $N$. In particular, the constants $c_N$ and $b$ do not depend on $K$ and $\Tbf$.
 
 \end{enumerate}
 
 It should be mentioned that the estimation in Step 1 involves the morphism $\pi':M_{K'}(\Gbf',X')\ra M_{K'^m}(\Gbf',X')$ where $K'^m=K'^m_3\times\prod_{p\neq 3}K'^\maxx_p$, where \begin{itemize}
 \item $K'^\maxx_p:=K'_pK^\maxx_{\Tbf,p}$ only enlarges $K'_p$ using the maximal \cosg $K^\maxx_{\Tbf,p}$ of $\Tbf(\Qbbp)$;
 
 \item for $p=3$, $K'^m_3=K'^\maxx_3\cap K^\Lambda_3$ where $K^\Lambda_3$ is a fixed neat \cosg of $\GL_\Lambda(\Zbb_3)$, and $[K'^\maxx_3:K'^m_3]$ is bounded by some constant that only depends on the rank of $\rho'$.
 \end{itemize}
 $\pi'$ is finite \'etale of degree $[K'^m:K']$ by \cite{ullmo yafaev} 2.11, and by \cite{ullmo yafaev} 2.12 the degree of $\Gal(\Qac/F_\Tbf)\cdot S'$ is at least the degree of $\Gal(\Qac/F_\Tbf)\cdot S'\cap \pi'^\inv\pi'(S')$ times the cardinality of the $\Gal(\Qac/F_\Tbf)$-orbit of $\pi'(S')$. Under the GRH for $F_\Tbf$, this latter cardinality is at least $D_N(\Tbf)$ up to some absolute constant, following the proof of \cite{ullmo yafaev} 2.13, and the former degree is at least $I(\Tbf,K')$ up to some absolute constant, following \cite{ullmo yafaev} 2.16, 2.17, and 2.18.\end{proof}

 \begin{lemma}[generic injectivity]\label{generic injectivity} Let $(\Gbf',X')\subset(\Gbf,X)$ be an inclusion of  pure Shimura data. Assume that $(\Gbf',X')$ and $(\Gbf,X)$ are irreducible, and let $K\subset\Gbf(\adele)$ be a neat \cosg. Put $K'=K\cap\Gbf'(\adele)$, then 
 
 (1) we have $\Tbf_{\Gbf'}\supset\Tbf_\Gbf$, where $\Tbf_{\Gbf'}$ resp. $\Tbf_\Gbf$ is the connected center of $\Gbf'$ resp. of $\Gbf$;
 
 (2) the morphism $f:M_{K'}(\Gbf',X')\ra M_K(\Gbf,X)$ is generically injective;
 
 (3) $\deg_{\Lscr}f(S')\geq \deg_{\Lscr'}S'$ for $\Lscr$ resp. $\Lscr'$ the automorphic line bundle on $M_K(\Gbf,X)$ resp. on $M_{K'}(\Gbf',X')$, and $S'$ any connected component of $M_{K'}(\Gbf',X')$. 
 \end{lemma}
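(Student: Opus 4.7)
The plan is to address the three parts in order: part (1) is direct, part (2) uses the irreducibility and neatness hypotheses in an essential way, and part (3) follows from (2) by combining the projection formula with a comparison of automorphic line bundles.

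For (1), I would observe that $\Tbf_\Gbf$ lies in the centre of $\Gbf$ and hence commutes with every element of $\Gbf'$. Therefore $\Tbf_\Gbf \subset Z(\Gbf')$, and being connected it lies in the identity component $Z(\Gbf')^\circ = \Tbf_{\Gbf'}$.

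For (2), I would analyse the fibres of $f$ via the adelic double-quotient description. If $[x_1, a_1 K']$ and $[x_2, a_2 K']$ in $M_{K'}(\Gbf', X')$ map to the same point of $M_K(\Gbf, X)$, then there exist $\gamma \in \Gbf(\Qbb)$ and $k \in K$ with $x_2 = \gamma x_1$ and $a_2 = \gamma a_1 k$. For a Mumford--Tate generic point $x_1 \in X'^+$, the irreducibility of $(\Gbf', X')$ gives $\MT(x_1) = \Gbf'$, so the inclusion $\gamma x_1 \in X'$ forces $\gamma \Gbf' \gamma^\inv \subset \Gbf'$, whence $\gamma \in N_\Gbf(\Gbf')(\Qbb)$. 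Part (1) intervenes here to rule out an additional connected central contribution in $N_\Gbf(\Gbf')/\Gbf'$, reducing this quotient to a finite group. Combined with the adelic constraint $\gamma \in \Gbf'(\adele) K \Gbf'(\adele)$ and the neatness of $K$ (hence of $K'$, ensuring torsion-freeness of the relevant intersections), one deduces $\gamma \in \Gbf'(\Qbb)$ for generic $x_1$, so that $[x_1, a_1 K'] = [x_2, a_2 K']$ already holds in $M_{K'}(\Gbf', X')$.

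For (3), once $f \colon S' \to f(S')$ is known to be birational by (2), the projection formula gives $\deg_\Lscr f(S') = \deg_{f^\ast\Lscr} S'$. It remains to compare $f^\ast\Lscr$ and $\Lscr'$. At the level of the Hermitian symmetric domains $X'^+ \subset X^+$, the automorphic line bundles arise as determinants of holomorphic cotangent bundles, and the normal bundle sequence $0 \to T^{1,0} X'^+ \to T^{1,0} X^+|_{X'^+} \to N^{1,0}_{X'^+/X^+} \to 0$ yields a relation of the form $f^\ast\Lscr \isom \Lscr' \otimes \Nbf$ on $M_{K'}(\Gbf', X')$ for a certain $\Gbf'$-equivariant line bundle $\Nbf$ coming from the normal direction. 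Positivity of the Bergman-type metric on the ambient domain, when restricted to the subdomain and compared to the intrinsic metric on $X'^+$, gives that $\Nbf$ has non-negative degree on any subvariety of $M_{K'}(\Gbf', X')$, yielding the desired inequality $\deg_{f^\ast\Lscr} S' \geq \deg_{\Lscr'} S'$.

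The principal obstacle is the argument in (2), namely rigorously extracting $\gamma \in \Gbf'(\Qbb)$ from the joint constraints $\gamma \in N_\Gbf(\Gbf')(\Qbb) \cap \Gbf'(\adele) K \Gbf'(\adele)$. This is precisely where (1), the irreducibility of both data, and the neatness of $K$ have to be deployed together in order to eliminate both the connected-component ambiguity and the finite-quotient ambiguity attached to the normaliser $N_\Gbf(\Gbf')/\Gbf'$.
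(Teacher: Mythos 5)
Your part (1) contains the essential gap. From the centrality of $\Tbf_\Gbf$ in $\Gbf$ you may only conclude that $\Tbf_\Gbf$ lies in the \emph{centralizer} of $\Gbf'$ in $\Gbf$; the \emph{center} $Z(\Gbf')$ is by definition a subgroup of $\Gbf'$, so the step ``therefore $\Tbf_\Gbf\subset Z(\Gbf')$'' presupposes $\Tbf_\Gbf\subset\Gbf'$, which is precisely the content of the assertion. That containment genuinely requires the irreducibility of $(\Gbf,X)$, which your argument never invokes: for $\Gbf=\mult\times\GL_2$ acting through the second factor, the subdatum carried by $1\times\GL_2$ has connected center $1\times\mult$, which does not contain the connected center $\mult\times\mult$ of $\Gbf$, even though the latter centralizes $\Gbf'$. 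The paper instead first proves $\Gbf=\Gbf'\Gbf^\der$: for $x\in X'$ one has $x(\Sbb)\subset\Gbf'_\Rbb$, conjugation by any $g=th$ with $t$ central and $h\in\Gbf^\der(\Rbb)$ keeps the image of $x$ inside $\Gbf'\Gbf^\der$, so $(\Gbf'\Gbf^\der,X)$ is a subdatum by \ref{generating subdata} and irreducibility forces equality; the inclusion $\Tbf_\Gbf\subset\Tbf_{\Gbf'}$ is then extracted from $\Gbf'^\der\subset\Gbf^\der$ and the almost-direct product decompositions $\Gbf'=\Tbf_{\Gbf'}\Gbf'^\der$, $\Gbf=\Tbf_\Gbf\Gbf^\der$. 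Some argument of this kind must be supplied.

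The skeleton of your part (2) — reduce to $\gamma$ normalizing $\Gbf'$ at a Mumford--Tate generic point, then combine the adelic constraint with neatness — is the paper's, but the assertion that $N_\Gbf(\Gbf')/\Gbf'$ is a \emph{finite} group is false in general: the identity component of the normalizer is generated by $\Gbf'$ and the identity component of its centralizer, which need not lie in $\Gbf'$. What the paper proves, and what suffices, is that $N_\Gbf(\Gbf')/\Gbf'$ is \emph{compact}; this uses part (1) to pass to the quotient by $\Tbf_\Gbf$ and then the fact that the centralizer of $\Gbf'_\ad$ commutes with the Cartan involution $\Int(x(\ibf))$. This compactness is exactly the step you leave as ``part (1) intervenes here to rule out \dots'', and without it neatness alone does not yield $\gamma\in\Gbf'(\Qbb)$. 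Your part (3) is in substance the paper's argument: the nefness of $f^*\Lscr\otimes\Lscr'^\inv$ is quoted from Klingler--Yafaev 5.3.5 (whose proof is the curvature comparison you sketch), and one concludes as in their 5.3.10 using the generic injectivity from (2).
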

 
 \begin{proof}
 
 (1) Note that $\Gbf'$ is a $\Qbb$-subgroup of $\Gbf$, and it normalizes $\Gbf^\der$. Hence $\Gbf'\Gbf^\der$ is already a $\Qbb$-subgroup of $\Gbf$. Take an arbitrary $x\in X'$, we have $x(\Sbb)\subset\Gbf'_\Rbb$. Conjugate $x$ by an arbitrary $g=th\in\Gbf(\Rbb)$ with $t\in\Tbf_\Gbf(\Rbb)$ and $h\in\Gbf^\der(\Rbb)$, we have $t(x)=\Int(t)\circ x=x$ because $t$ is central in $\Gbf(\Rbb)$ and thus $g(x)=h(x)=\Int(h)\circ x$ has image in $\Gbf'\Gbf^\der$. Since $X=\Gbf(\Rbb)x$, by \ref{generating subdata} we get a subdatum $(\Gbf'\Gbf^\der,X=\Gbf'(\Rbb)\Gbf^\der(\Rbb)x)$, and the irreducibility of $(\Gbf,X)$ gives $\Gbf=\Gbf'\Gbf^\der$. Clearly we have $\Gbf'^\der\subset\Gbf^\der$, and $\Gbf'=\Tbf_{\Gbf'}\Gbf'^\der$ and $\Gbf=\Tbf_\Gbf\Gbf^\der$, hence the inclusion $\Tbf_{\Gbf}\subset\Tbf_{\Gbf'}$.

 (2) Let $\Nbf:=\Nbf_\Gbf\Gbf'$ be the normalizer of $\Gbf'$ in $\Gbf$. By the arguments in \cite{ullmo yafaev} Lemma 2.2, $\Nbf$ is reductive. 
 
 We claim that the quotient $\Nbf/\Gbf'$ is compact. Since $\Nbf\supset\Gbf'\supset\Tbf_\Gbf$, we have $\Nbf/\Gbf'=\Nbf_\ad/\Gbf'_\ad$, where $\Gbf'_\ad$ is the image of $\Gbf'$ in $\Gbf_\ad:=\Gbf/\Tbf_\Gbf$, and $\Nbf_\ad$ is the normalizer of $\Gbf'_\ad$ in $\Gbf_\ad$, equal to the reduction modulo $\Tbf_\Gbf$ of $\Nbf$, and thus the equality $\Nbf/\Gbf'=\Nbf_\ad/\Gbf'_\ad$. Since $(\Gbf'_\ad,X'_\ad)=(\Gbf',X')/\Tbf_\Gbf$ is a subdatum of $(\Gbf_\ad,X_\ad):=(\Gbf,X)/\Tbf_\Gbf$ with $\Gbf_\ad$ semi-simple, the centralizer of $\Gbf'_\ad$ in $\Gbf_\ad$ is compact because it commutes with $x(\ibf)$ for any $x\in X'_\ad$ in $\Gbf_\ad(\Rbb)$. The neutral component $\Nbf_\ad^\circ$ admits an almost direct product of the form $\Gbf'_\ad\Lbf'$, with $\Lbf$ commuting with $\Gbf'_\ad$, hence $\Lbf$ is compact, from which we get the compactness of $\Nbf/\Gbf'=\Nbf_\ad/\Gbf'_\ad$.
 
 Since $K$ is neat, we have $\Nbf(\Qbb)\cap K=\Gbf'(\Qbb)\cap K$, and repeat the arguments in \cite{ullmo yafaev} gives the generic injectivity of $f:M_{K'}(\Gbf',X')\ra M_{K}(\Gbf,X)$.

 (3) The sheaf $\Lambda_{K,K'}=f^*\Lscr\otimes\Lscr'^\inv$ is nef by \cite{klingler yafaev} 5.3.5. It remains to argue as in \cite{klingler yafaev} 5.3.10, using the generic injectivity proved in (2), because the original proof of 5.3.10 requires $\Gbf$ to be of adjoint type only for the generic injectivity of $f$ using \cite{ullmo yafaev} Lemma 2.2. \end{proof}

 The reader might be left with the impression that in order to adapt the strategy in \cite{klingler yafaev} and \cite{ullmo yafaev}  for general mixed Shimura varieties one might need to assume the GRH for all the splitting fields $F_\Tbf$ for the $\Tbf$-special subvarieties involved. Actually we have:
 
 \begin{lemma}[reduction to CM fields]\label{reduction to the CM case}  
 In order to prove the Andr\'e-Oort conjecture, it suffices work with a connected mixed Shimura variety $M=\Gamma\bsh Y^+$ defined by a connected mixed Shimura datum $(\Pbf,Y;Y^+)=\Wbf\rtimes(\Gbf,X;X^+)$ such that if $(\Pbf_1,Y_1;Y_1^+)$ is an irreducible $\TW$-special subdatum of $(\Pbf,Y;Y^+)$, then the splitting field $F_\Tbf$ of $\Tbf$ is a CM field.
 
 \end{lemma}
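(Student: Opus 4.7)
The plan is to combine the reduction to ``good products'' of \ref{reduction to subdata of a good product} with the classical fact that connected centers of Mumford--Tate groups arising from Shimura subdata of either adjoint pure data or Siegel-type mixed data are CM tori, and to conclude by the elementary observation that a $\Qbb$-subtorus of a product of CM tori has CM splitting field.

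First I would invoke \ref{reduction to subdata of a good product}, which (combined with \ref{insensitivity of levels} and \ref{insensitivity of isogeny}) reduces the Andr\'e--Oort conjecture to the case where the ambient $(\Pbf,Y;Y^+)$ is an irreducible connected subdatum of a good product $(\Gbf_0,X_0)\times(\Lbf,Y_\Lbf)$, with $\Gbf_0$ semi-simple of adjoint type and $(\Lbf,Y_\Lbf)=\prod_i(\Pbf_{\Vbf_i},\Uscr_{\Vbf_i})$ a finite product of Siegel-type mixed Shimura data as in \ref{Siegel data}; by \ref{reduction lemma} the underlying $\Qbb$-group homomorphism $\Pbf\to\Gbf_0\times\prod_i\Pbf_{\Vbf_i}$ has finite kernel. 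Let $\pi_0$ and $\pi_i$ denote the corresponding factor projections.

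Next, let $(\Pbf_1,Y_1;Y_1^+)$ be an irreducible $\TW$-special subdatum with pure part $(\Gbf_1',X_1')$; by \ref{pure irreducibility} this pure part is irreducible, so $\Gbf_1'=\MT(X_1'^+)$ and $\Tbf$ is its connected center. Since $\Tbf$ is central in $\Gbf_1'$, its image under each $\pi_j$ lies in the connected center of $\pi_j(\Gbf_1')$, and this latter group is itself the Mumford--Tate group of the projected Shimura datum. For each Siegel factor, the connected center of such a Mumford--Tate group acts by characters on $\Vbf_i$ compatibly with the symplectic polarization, and is therefore a CM torus by Mumford's theorem on polarizable rational Hodge structures of weight one (equivalently, by the Albert classification of endomorphism algebras of polarized abelian varieties). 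For the adjoint factor, the analogous statement for pure subdata of an adjoint Shimura datum follows from the CM-cocharacter argument employed in \cite{ullmo yafaev} Section 2: the Hodge cocharacter $\mu_x$ for $x\in X_1'^+$, together with its complex conjugate, generates a CM $\Qbb$-subtorus that exhausts the connected center of the projected Mumford--Tate group.

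Finally, the finite-kernel map $\Gbf_1'\to\Gbf_0\times\prod_i\GSp_{\Vbf_i}$ inherited from \ref{reduction lemma} induces a finite-kernel map of $\Tbf$ into $\pi_0(\Tbf)\times\prod_i\pi_i(\Tbf)$, a product of CM tori. A $\Qbb$-subtorus of a product of CM tori is itself CM, because its splitting field is contained in the compositum of the factor splitting fields and finite compositums of CM fields remain CM; and isogenies of $\Qbb$-tori preserve splitting fields. Therefore $F_\Tbf$ is a CM field, as required. The principal obstacle will be pinning down rigorously the CM property for the adjoint factor, since there the natural polarizable Hodge structure (the adjoint representation) has weight zero and Mumford's theorem does not apply directly; instead one appeals to the CM-type structure underlying the Hodge cocharacter $\mu_x$ following the analysis in \cite{ullmo yafaev}.
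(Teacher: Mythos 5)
Your overall strategy coincides with the paper's: reduce via \ref{insensitivity of isogeny}, \ref{reduction lemma} and \ref{reduction to subdata of a good product} to subdata of a good product $(\Gbf_0,X_0)\times(\Lbf,Y_\Lbf)$, and then deduce the CM property of $F_\Tbf$ from the known result for adjoint pure data (\cite{yafaev duke} 2.3). The divergence, and the gap, is in how you treat the Siegel factors. The connected center of the generic Mumford--Tate group of an irreducible subdatum of $(\GSp_{\Vbf_i},\Hscr_{\Vbf_i})$ is \emph{not} a CM torus: by \ref{generic injectivity}(1) it always contains the split similitude center $\Gbb_\mrm$ of $\GSp_{\Vbf_i}$, so it is at best an almost-direct product of a split torus with a torus that is compact at infinity. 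Moreover the Albert classification by itself only places the center of the endomorphism algebra inside a product of totally real and CM fields, so it bounds the splitting field by a compositum of totally real and CM fields; to exclude a nontrivial totally real contribution one still needs the compactness-modulo-weight condition (axiom (iv)/(v) of \ref{mixed Shimura data}), i.e.\ exactly the cocharacter/Cartan-involution argument that you correctly identify as necessary for the adjoint factor $\Gbf_0$. So ``is therefore a CM torus by Mumford's theorem'' is false as stated, and the justification offered does not close the Siegel case.

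The paper's proof repairs precisely this point: it maps the pure part $(\Gbf',X')$ into $(\Gbf_0,X_0)\times(\Hbf^\ad,X_\Hbf^\ad)$, where the target group is adjoint, so that \cite{yafaev duke} 2.3 applies directly to the image $\Tbf''$ of $\Tbf$; the kernel of $\Tbf\ra\Tbf''$ is of multiplicative type isogenous to a split torus (it sits in the split connected center of $\Hbf$), and a short diagram chase on character groups tensored with $\Qbb$ shows that $\Gal(\Qac/\Qbb)$ has the same kernel on $\Xbf(\Tbf)$ and on $\Xbf(\Tbf'')$, whence $F_\Tbf=F_{\Tbf''}$ is CM. Your factor-by-factor projection would work if, in each Siegel factor, you likewise quotiented by the split central $\Gbb_\mrm$ before invoking the adjoint-case result and then checked that killing a split kernel does not change the splitting field; as written, the proposal substitutes for this step an appeal to Mumford/Albert that does not deliver the conclusion.
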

 
 \begin{proof}
 
 By \ref{insensitivity of isogeny}, \ref{reduction lemma}, and the discussion in \ref{reduction to subdata of a good product}, the Andr\'e-Oort conjecture is already reduced to mixed Shimura varieties defined by subdata of $(\Gbf_0,X_0)\times(\Lbf,Y_\Lbf)$, where $\Gbf_0$ is semi-simple of adjoint type, and $(\Lbf,Y_\Lbf)=\Nbf\rtimes(\Hbf,X_\Hbf)$ is a product of finitely many mixed Shimura data of Siegel type. If $(\Pbf',Y')$ is an irreducible $\TW$-special subdatum of $(\Gbf_0,X_0)\times(\Lbf,Y_\Lbf)$, then $\Tbf$ is the connected center of $\Gbf'$ coming from some irreducible subdatum $(\Gbf',X')\subset(\Gbf_0,X_0)\times(\Hbf,X_\Hbf)$, with $\Hbf$ a product of finitely many $\GSp_{\Vbf_j}$. In particular, the connected center of $\Hbf$ is a split $\Qbb$-torus. 
 
 Consider the image $(\Gbf'',X'')$ of $(\Gbf',X')$ along $(\Gbf_0,X_0)\times(\Hbf^\ad,X^\ad_\Hbf)$, where $\Hbf^\ad$ is the adjoint quotient of $\Hbf$. Since the kernel of $\Hbf\ra\Hbf^\ad$ is a split $\Qbb$-torus, and the image of $\Tbf$ in $\Gbf''$ is the connected center $\Tbf''$ of $\Gbf''$, we see that the kernel $\Tbf'$ of $\Tbf\ra\Tbf''$ is a $\Qbb$-group of multiplicative type isogeneous to a split $\Qbb$-torus. In particular, take the character group $\Xbf(\Tbf)=\Hom_{\Qac-\Gr}(\Tbf_{\Qac},\mult_{\Qac})$ we get an commutative diagram with exact rows $$\xymatrix{0\ar[r] & \Xbf(\Tbf'')\ar[r]^\subset \ar[d]^\cap & \Xbf(\Tbf)\ar[r] \ar[d]^\cap & \Xbf(\Tbf')\ar[d] &   \\ 0\ar[r] & \Xbf(\Tbf'')\otimes_\Zbb\Qbb\ar[r]^\subset &\Xbf(\Tbf)\otimes_\Zbb\Qbb\ar[r] & \Xbf(\Tbf')\otimes_\Zbb\Qbb \ar[r] & 0}$$ where the morphisms are equivariant \wrt the evident action of $\Gal(\Qac/\Qbb)$. The first two vertical maps are inclusion because $\Tbf$ and $\Tbf''$ are $\Qbb$-torus, and $\Gal(\Qac/\Qbb)$ acts on $\Xbf(\Tbf')\otimes_\Zbb\Qbb$ because $\Tbf'$ is a $\Qbb$-subgroup of the split center of $\Hbf$. Diagram chasing shows that the actions of $\Gal(\Qac/\Qbb)$ on $\Xbf(\Tbf'')$ and on $\Xbf(\Tbf)$ have the same kernel, which means $\Tbf$ and $\Tbf''$ have the same splitting field, which is a CM field by \cite{yafaev duke} 2.3. \end{proof}

 \begin{remark}[irreducible subdata]\label{irreducible subdata}
 Although we do not repeat all the proofs in \cite{ullmo yafaev},  we remark that the condition of $\Gbf$ being of adjoint type could   be relaxed into requiring $(\Gbf,X)$ to be irreducible for the first half of Section 1 of \cite{ullmo yafaev}, using our lemma \ref{generic injectivity}. In \cite{ullmo yafaev} 2.13, one needs $\Gbf$ to be of adjoint type so that the splitting fields of the connected centers of irreducible $\Tbf$-special subdata are CM fields (for $\Tbf$ non-trivial). We cannot achieve this for general $\Gbf$ unless we make use of the reduction \ref{reduction to the CM case} to modify the defining data. 
 \end{remark}

In the rest of this section, only the next theorem requires the GRH for the splitting field $F_\Tbf$ for the estimation of Galois orbits of pure special subvarieties in the $\TW$-special case. The CM version will be needed in the last section, where we work under \ref{CM splitting fields} and all the splitting fields are CM fields.

 \begin{theorem}[orbit of a pure special subvariety]\label{orbit of a pure special subvariety} Let $(\Pbf,Y)=\Wbf\rtimes(\Gbf,X)$ be a mixed Shimura datum, with $E$ its reflex field, and $M=M_K(\Pbf,Y)$ the mixed Shimura variety it defines at some level $K$ of fine product type. Write $\pi:M\ra S=M_{K_\Gbf}(\Gbf,X)$ for the natural projection and $\iota(0):S\mono M$ the pure section. Denote by $\Lscr$ the pull-back $\pi^*\Lscr_S$, with $\Lscr_S$ the canonical line bundle on $S$. We also fix an integer $N>0$.
 
 Let $M'$ be a pure special subvariety of $M$ defined by a subdatum of the form $(w\Gbf'w^\inv, wX';wX'^+)$ for some $\Tbf$-special pure subdatum $(\Gbf',X')\subset(\Gbf,X)$ with $\Tbf$ non-trivial $\Qbb$-torus and $w\in\Wbf(\Qbb)$. Then we have the following lower bound assuming the GRH for the splitting field $F_\Tbf$ of  $\Tbf$, using the same constants $c_N,b$ and the notations as in  \ref{lower bound involving splitting fields}:$$\deg_\Lscr(\Gal(\Qac/E)\cdot M')\geq c_ND_N(\Tbf)\prod_{p\in\Delta(\Tbf,K_\Gbf(w))}\maxx\{1,I(\Tbf,K_\Gbf(w))\}$$ where  
 $D_N(\Tbf)=(\log{D(\Tbf)})^N$, $K_\Gbf(w)=\{g\in K_\Gbf:wgw^\inv g^\inv\in K_\Wbf\}$ and $I(\Tbf,K_\Gbf(w)_p)=b[K_{\Tbf,p}^\maxx:K_\Tbf(w)_p]$ with $K_\Tbf(w)=K_\Gbf(w)\cap\Tbf(\adele)$.

 \end{theorem}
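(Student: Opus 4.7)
The plan is to reduce the lower bound for $M'$ in $M$ to an application of Theorem~\ref{lower bound involving splitting fields} on the auxiliary pure Shimura variety $S^w:=M_{K_\Gbf(w)}(\Gbf,X)$ at the finer level $K_\Gbf(w)$. The key idea is that the unipotent translation $w$ forces $M'$ to ``see'' the level $K_\Gbf(w)$ on the pure part: by the very definition, $K_\Gbf(w)$ consists of those $g\in K_\Gbf$ whose Hecke action is compatible with the $w$-twist, in the sense that $wgw^\inv g^\inv\in K_\Wbf$.

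First I would realize $M'$ as a connected component of the image of the morphism $\psi:M_L(\Gbf',X')\to M_K(\Pbf,Y)$, where $L:=K_\Gbf(w)\cap\Gbf'(\adele)$ and $\psi$ is induced by the morphism of mixed Shimura data $(\Gbf',X')\to(\Pbf,Y)$, $g\mapsto wgw^\inv$, $x\mapsto wx$; the compatibility $\psi(L)\subset K$ is exactly the defining property of $K_\Gbf(w)$. The composition $\pi\circ\psi:M_L(\Gbf',X')\to S$ is the natural map induced by the inclusion of pure data $(\Gbf',X')\hookrightarrow(\Gbf,X)$ at levels $(L,K_\Gbf)$, and factors through $M_{K_\Gbf'}(\Gbf',X')$ with $K_\Gbf':=K_\Gbf\cap\Gbf'(\adele)$, which embeds generically injectively into $S$ by \ref{generic injectivity}. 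Combining this with the mixed analogue of the same injectivity applied to $\psi$, one finds that $\pi|_{M'}$ is generically of degree $[K_\Gbf':L]$, and since $\Lscr=\pi^*\Lscr_S$,
$$\deg_\Lscr(M')=[K_\Gbf':L]\cdot\deg_{\Lscr_S}(\pi(M')).$$

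Next, let $\bar{S}^w\subset S^w$ denote the pure $\Tbf$-special subvariety defined by $(\Gbf',X')$ at level $L$. Applied to the level projection $p:S^w\to S$ (with $p^*\Lscr_S=\Lscr_{S^w}$), the identical computation yields $\deg_{\Lscr_{S^w}}(\bar{S}^w)=\deg_\Lscr(M')$. Granting the Galois orbit inequality
$$|\Gal(\Qac/E)\cdot M'|\geq|\Gal(\Qac/E)\cdot\bar{S}^w|,$$
one then obtains $\deg_\Lscr(\Gal\cdot M')\geq\deg_{\Lscr_{S^w}}(\Gal\cdot\bar{S}^w)$, and Theorem~\ref{lower bound involving splitting fields} applied to $\bar{S}^w$ in $S^w$ at level $K_\Gbf(w)$ (using the GRH for $F_\Tbf$) completes the estimate.

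The main obstacle is the Galois orbit inequality. A priori the Galois action on $\bar{S}^w$ factors through the finer quotient $\pibar(\Gbf)/\Gbf(\Rbb)_+K_\Gbf(w)$, whereas the action on $M'$ only factors through the coarser $\pibar(\Gbf)/\Gbf(\Rbb)_+K_\Gbf$, so a naive orbit-counting could go the wrong way. The saving comes from the presence of $w$: for $g\in K_\Gbf$ with nontrivial class in $K_\Gbf/K_\Gbf(w)$, the element $gwg^\inv w^\inv\in\Wbf(\adele)\smallsetminus K_\Wbf$ produces a Hecke translate of $M'$ in $M$ genuinely distinct from $M'$, mirroring the extra orbit of $\bar{S}^w$ in $S^w$. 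Making this rigorous requires combining functoriality of the reciprocity map for the pure section $(\Gbf,X)\hookrightarrow(\Pbf,Y)$ with a direct computation of Hecke translates using the commutator formula of \ref{group law}; alternatively one may pass to the intermediate mixed Shimura variety $M_{K^w}(\Pbf,Y)$ with $K^w:=K_\Wbf\rtimes K_\Gbf(w)$, in which both $M'$ and the pure-section image of $\bar{S}^w$ admit natural lifts, and compare Galois orbits there.
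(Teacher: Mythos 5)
Your overall strategy --- transport the problem to the pure Shimura variety $M_{K_\Gbf(w)}(\Gbf,X)$ at the finer level and invoke \ref{lower bound involving splitting fields} there --- is the right one, and it is in fact the paper's. But your write-up stops exactly where the real work is. The inequality $|\Gal(\Qac/E)\cdot M'|\geq|\Gal(\Qac/E)\cdot\bar{S}^w|$ is not a technical verification that can be deferred: it is the entire content of the theorem, and you only ``grant'' it and then list two ways one \emph{might} establish it. Neither sketch is carried out, and the first one (functoriality of reciprocity plus a commutator computation) does not obviously suffice, because $M'$ is not a subvariety of $S$ and the reciprocity formalism of \ref{reflex fields and reciprocity maps} controls the Galois action on $\pi_0$ of pure Shimura varieties, not the action on a unipotently translated pure section sitting inside $M$.

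The paper closes this gap with one clean observation (Lemma \ref{finite index}): the commutator identity $(w,1)(1,g)(w,1)^\inv=(wgw^\inv g^\inv,g)$ shows that $K^w_\Gbf:=w\Gbf(\adele)w^\inv\cap K$ equals $wK_\Gbf(w)w^\inv$, so the Hecke translation $\tau_w$ of \ref{morphisms of mixed Shimura varieties and Hecke translates}(2) --- an isomorphism of mixed Shimura varieties defined over $E$ --- restricts to an isomorphism $\lambda$ from the maximal pure section $M(w)=M_{K^w_\Gbf}(w\Gbf w^\inv,wX)\subset M$ containing $M'$ onto $S(w)=M_{K_\Gbf(w)}(\Gbf,X)$. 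Being defined over $E$, $\lambda$ identifies the two Galois actions on the nose (an equality of orbits, not merely an inequality), and it identifies $\iota(w)^*\pi^*\Lscr_S$ with $\Lscr_{S(w)}$, so the degree bookkeeping is immediate. This is essentially your second ``alternative,'' which you name but do not execute. It also makes your detour through $\pi(M')$ and the factor $[K'_\Gbf:L]$ unnecessary; that step carries its own unproved claim (that $\pi|_{M'}$ is generically finite of exactly that degree), and one must also check that $K_\Gbf(w)$ is again of fine product type before \ref{lower bound involving splitting fields} can be applied at that level, which the paper does in the lemma preceding the theorem. Without an actual proof of the Galois-equivariant identification, your argument is incomplete at its central point.
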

 Before entering the proof, we first justify some notations in the statement:
 
 \begin{lemma}
 In \ref{orbit of a pure special subvariety}, $K_\Gbf(w)_p=\Gbf(\Qbbp)\cap K_\Gbf(w)$ is a \cosg contained in $K_{\Gbf,p}$, and it is equal to $K_{\Gbf,p}$ for all but finitely many $p$'s. In particular, $K_\Gbf(w)=\prod_pK_\Gbf(w)_p$ is of fine product type.
 \end{lemma}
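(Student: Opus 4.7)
The plan is to work through the statement in three natural pieces: first identify $K_\Gbf(w)$ concretely, second compute its local parts, and third glue them back together.

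First I would describe $K_\Gbf(w)$ as the preimage of $K_\Wbf$ under the commutator map $c_w\colon K_\Gbf \to \Wbf(\adele)$, $g\mapsto [w,g]=wgw^\inv g^\inv$. By the formula in \ref{group law}, $[w,g]$ equals $w\cdot g(w^\inv)\in \Wbf$, so the map is well defined and continuous. The standard identity $[w,gg']=[w,g]\cdot g[w,g']g^\inv$, combined with the requirement that $K_\Wbf$ be stable under $K_\Gbf$-conjugation (part of the product type assumption \ref{levels of product type}), shows that $K_\Gbf(w)=c_w^\inv(K_\Wbf)\cap K_\Gbf$ is a subgroup; it is open because $K_\Wbf$ is open and $c_w$ is continuous, hence a \cosg.

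Next I would analyze the local parts. For $g\in\Gbf(\Qbbp)$ embedded componentwise into $\Gbf(\adele)$ (with $1$ at all primes $\neq p$), the $q$-component of $[w,g]$ is $[w_q,1]=1\in K_{\Wbf,q}$ for $q\neq p$, so the membership $[w,g]\in K_\Wbf$ reduces to $[w_p,g]\in K_{\Wbf,p}$. Therefore
\[
K_\Gbf(w)_p=\{g\in K_{\Gbf,p}: [w_p,g]\in K_{\Wbf,p}\},
\]
which is a \cosg of $K_{\Gbf,p}$ by the same commutator identity applied locally. For all but finitely many $p$ we have $w_p\in K_{\Wbf,p}$: indeed $w\in\Wbf(\Qbb)$ and, by condition (2-c) of \ref{levels of product type}, $K_\Wbf$ is the profinite completion of a $\Zbb$-lattice, which absorbs any given rational point at cofinitely many primes. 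For such a prime, the formula $[w,g]=w\cdot g(w^\inv)$ together with the $K_{\Gbf,p}$-stability of $K_{\Wbf,p}$ immediately yields $[w,g]\in K_{\Wbf,p}$ for every $g\in K_{\Gbf,p}$, so $K_\Gbf(w)_p=K_{\Gbf,p}$.

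Finally I would reassemble. An element $g=(g_p)_p\in K_\Gbf=\prod_p K_{\Gbf,p}$ satisfies $[w,g]\in K_\Wbf=\prod_p K_{\Wbf,p}$ if and only if $[w_p,g_p]\in K_{\Wbf,p}$ for every $p$, which is exactly the condition $g_p\in K_\Gbf(w)_p$ for each $p$; the previous paragraph guarantees this is a restricted product with $K_\Gbf(w)_p=K_{\Gbf,p}$ almost everywhere, giving $K_\Gbf(w)=\prod_p K_\Gbf(w)_p$. The neatness at the distinguished prime $\wp$ is inherited from $K_{\Gbf,\wp}$ via inclusion, and the $\Gbf^\der$-$\Cbf$ decomposition (2-b) transfers from $K_\Gbf$ to $K_\Gbf(w)$ by the same commutator recipe applied to the factors $K_{\Gbf^\der}$ and $K_\Cbf$ (both of which are stable under conjugation by the other, and whose commutator contributions in $\Wbf$ combine multiplicatively). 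I expect the only minor obstacle to be bookkeeping in this last step, namely verifying that the intersections $K_\Gbf(w)\cap\Gbf^\der(\adele)$ and $K_\Gbf(w)\cap\Cbf(\adele)$ inherit fine product type in the pure sense; this is routine once one observes that conjugation by $\Cbf$ on $\Wbf$ factors through a $\Qbb$-torus that preserves the lattices $\Gamma_\Ubf$ and $\Gamma_\Vbf$ at almost all primes.
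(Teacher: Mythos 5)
Your proof is correct and takes essentially the same route as the paper's: equality $K_\Gbf(w)_p=K_{\Gbf,p}$ at almost all $p$ follows from $w\in K_{\Wbf,p}$ together with the $K_\Gbf$-stability of $K_\Wbf$, and at the finitely many remaining primes one only needs openness. The one cosmetic difference is how openness is obtained — you use continuity of $g\mapsto w\cdot g(w^\inv)$ plus the commutator identity, while the paper enlarges $K_\Wbf$ to a \cosg $K_\Wbf[w]$ containing $w$ in which $K_\Wbf$ is cofinite and invokes the finiteness of its stabilizer; your version is cleaner. One caveat: your closing claim that condition (2-b) of \ref{levels of product type} transfers, i.e.\ that $K_\Gbf(w)$ equals the product of its intersections with $\Gbf^\der(\adele)$ and $\Cbf(\adele)$ (only the inclusion $\supset$ of that product is immediate), is asserted rather than proved — but the paper's own proof is equally silent on this point.
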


 \begin{proof}
 For all but finitely many $p$'s, we have $w\in\Wbf(\Qbb)\cap K_{\Wbf,p}$ and $wgw^\inv g^\inv\in K_{\Wbf,p}$ for all $g\in K_{\Gbf,p}$ as $K_\Gbf$ stabilizes $K_\Wbf$.
 
 When $w\notin K_{\Wbf,p}$, write $w=(u,v)$ for some $u\in\Ubf(\Qbb)$ and $v\in\Vbf(\Qbb)$. We have seen in \ref{group law} that $w^n=(nu,nv)$, hence $w^n\in K_\Wbf$ for $n$ a multiple of some constant integer $N>0$. In particular, the subgroup $K_\Vbf[v]$ generated by $v$ and $K_\Vbf$ in $\Vbf(\adele)$ is compact, the subgroup generated by $\psi(K_\Vbf[v],K_\Vbf[v])$, $u$ and $K_\Ubf$ is compact, and thus they generate a \cosg $K_\Wbf[w]$ of $\Wbf(\adele)$ in which $K_\Wbf$ is cofinite, and its stabilizer in $K_\Gbf$ is cofinite, hence the claim on $K_\Gbf(w)$.
 \end{proof}
 
 The proof of \ref{orbit of a pure special subvariety} is easily reduced to the following:
 
 \begin{lemma}[finite index]\label{finite index}
 The action of $\Gal(\Qac/E)$ on $M_{K^w_\Gbf}(w\Gbf w^\inv,wX)$ is identified with its action on $M_{K_\Gbf(w)}(\Gbf,X)$ where $K_\Gbf^w=w\Gbf(\adele)w^\inv\cap K_\Wbf\rtimes K_\Gbf$.
 \end{lemma}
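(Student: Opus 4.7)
The plan is to exhibit a $\Qbb$-isomorphism of pure Shimura data $(\Gbf,X) \cong (w\Gbf w^\inv, wX)$ via conjugation by $w$, and then verify that this isomorphism identifies the level $K_\Gbf(w)$ with $K_\Gbf^w$, so that the two Shimura varieties (together with their Galois actions over the common reflex field $E$) become canonically identified.

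First I will observe that since $w \in \Wbf(\Qbb) \subset \Pbf(\Qbb)$, the inner automorphism $\Int(w)$ of $\Pbf$ is defined over $\Qbb$ and restricts to a $\Qbb$-group isomorphism $\Int(w): \Gbf \to w\Gbf w^\inv$ between two Levi $\Qbb$-subgroups of $\Pbf$. Its push-forward on $\Yfrak(\Pbf)$ carries $X$ bijectively onto $wX$, producing an isomorphism of pure Shimura data $(\Gbf,X) \cong (w\Gbf w^\inv, wX)$ defined over $\Qbb$. In particular, by \ref{reflex fields and reciprocity maps}, the two data share the common reflex field $E = E(\Gbf,X) = E(\Pbf,Y)$.

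Next I will verify that on the adelic level $\Int(w)$ carries $K_\Gbf(w)$ bijectively onto $K_\Gbf^w$. Given $g \in \Gbf(\adele)$, the normality of $\Wbf$ in $\Pbf$ yields the canonical $\Wbf(\adele)\rtimes\Gbf(\adele)$-decomposition
\[
 wgw^\inv \;=\; (wgw^\inv g^\inv)\cdot g,
\]
with $wgw^\inv g^\inv \in \Wbf(\adele)$ and $g \in \Gbf(\adele)$. Hence $wgw^\inv \in K_\Wbf \rtimes K_\Gbf$ \ifof $g \in K_\Gbf$ and $wgw^\inv g^\inv \in K_\Wbf$, which is precisely the defining condition of $K_\Gbf(w)$. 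This gives the equality $\Int(w)\bigl(K_\Gbf(w)\bigr) = K_\Gbf^w$ of \cosgs.

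By functoriality of canonical models of (mixed) Shimura varieties (cf. \cite{pink thesis}, Chapter 11), the isomorphism of pure Shimura data together with the matching levels descends to an $E$-isomorphism of pure Shimura varieties $M_{K_\Gbf(w)}(\Gbf,X) \cong M_{K_\Gbf^w}(w\Gbf w^\inv, wX)$, equivariant \wrt the natural $\Gal(\Qac/E)$-actions on both sides. There is no serious obstacle in this plan: the crucial point is simply the semidirect-product decomposition displayed above, which exploits that $w$ lies in the unipotent radical $\Wbf(\Qbb)$; all remaining content is the standard functoriality of Pink's canonical models applied to a $\Qbb$-rational isomorphism of pure Shimura data.
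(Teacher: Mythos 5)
Your proposal is correct and follows essentially the same route as the paper: the heart of both arguments is the identity $wgw^\inv=(wgw^\inv g^\inv)\cdot g$ in $\Wbf(\adele)\rtimes\Gbf(\adele)$, which shows that conjugation by $w$ carries $K_\Gbf(w)$ onto $K_\Gbf^w$ and hence yields an isomorphism of pure Shimura varieties $M_{K_\Gbf(w)}(\Gbf,X)\isom M_{K_\Gbf^w}(w\Gbf w^\inv,wX)$. The only (immaterial) difference is how Galois-equivariance is justified: you invoke functoriality of canonical models for the $\Qbb$-rational isomorphism $\Int(w):(\Gbf,X)\ra(w\Gbf w^\inv,wX)$ directly, whereas the paper realizes the same map as the restriction of the Hecke translation $\tau_w$ on the ambient mixed Shimura variety, which is defined over $E$ by \ref{morphisms of mixed Shimura varieties and Hecke translates}(2); both are standard consequences of Pink's theory of canonical models.
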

 
 \begin{proof}
 
 We notice that $K_\Gbf^w=w\Gbf(\adele)w^\inv\cap K_\Wbf\rtimes K_\Gbf$ is equal to $wK_\Gbf(w)w^\inv$, because $(w,1)(1,g)(w^\inv,1)=(wg(w^\inv),g)=(wgw^\inv g^\inv,g)$ for $w\in\Wbf$ and $g\in\Gbf$. From the isomorphism $w\Gbf w^\inv\isom \Gbf$ we get an isomorphism of pure Shimura data $(w\Gbf w^\inv,wX)\isom(\Gbf,X)$, and thus an isomorphism of pure Shimura varieties $\lambda:M(w):=M_{K_\Gbf^w}(w\Gbf w^\inv,wX)\isom S(w):=M_{K_\Gbf(w)}(\Gbf,X)$. From the equality $w(K_\Wbf\rtimes K_{\Gbf}(w))w^\inv=wK_\Wbf w^\inv\rtimes K_\Gbf^w$ as \cosgs in $\Pbf(\adele)$, we deduce that the Hecke translation $\tau_w$ (defined in  \ref{morphisms of mixed Shimura varieties and Hecke translates}(2)), namely the isomorphism $$\tau_w:M_{wK_\Wbf w^\inv\rtimes K_\Gbf^w}(\Pbf,Y)=M_{w(K_\Wbf\rtimes K_\Gbf(w))w^\inv}(\Pbf,Y)\ra M_{K_\Wbf\rtimes K_\Gbf(w)}(\Pbf,Y)$$ restricts to the isomorphism $\lambda:M(w)\isom S(w)$ above, where $M(w)$ resp. $S(w)$ is regarded as a pure section of $M_{wK_\Wbf w^\inv\rtimes K_\Gbf^w}(\Pbf,Y)$ resp. of $M_{K_\Wbf\rtimes K_\Gbf(w)}(\Pbf,Y)$ using the equality $(\Pbf,Y)=\Wbf\rtimes(w\Gbf w^\inv,wX)$ resp. $(\Pbf,Y)=\Wbf\rtimes(\Gbf,X)$. Since the Hecke translation $\tau_w$ is an isomorphism defined over the common reflex field $E=E(\Gbf,X)$,  it transports the action of $\Gal(\Qac/E)$ on $S(w)$ to that on $M(w)$, hence the claim. \end{proof}
  
 \begin{proof}[Proof of \ref{orbit of a pure special subvariety}]
 
 We have the commutative diagram:$$\xymatrix{M(w)\ar[r]^{\iota(w)} \ar[d]^\lambda & M\ar[d]^\pi\\ S(w)\ar[r]^f & S=M(0)}$$ with $\iota(w)$ the inclusion of the pure special subvariety $M(w)\mono M$. Since $\lambda$ is an isomorphism, the degree of a closed subvariety $Z$ in $M(w)$ against $\iota(w)^*\pi^*\Lscr_S$ is the same as the degree of $\lambda(Z)\subset S(w)$ against $f^*\Lscr_S=\Lscr_{S(w)}$, with the last equality by \cite{klingler yafaev} 5.3.2(1). Taking $Z$ to be the pure special subvariety defined by $(w\Gbf'w^\inv,wX';WX'^+)$, then its image under $\lambda$ is the special subvariety in $S(w)$ defined by $(\Gbf',X';X'^+)$. Using \ref{lower bound involving splitting fields} with $K_\Gbf$ replaced by $K_\Gbf(w)$ we get $$\deg_{\Lscr}(\Gal(\Qac/E)\cdot M')=\deg_{\Lscr_{S(w)}}(\Gal(\Qac/E)\cdot\lambda(M'))\geq c_ND_N(\Tbf)\prod_{p\in\Delta(\Tbf,K_\Gbf(w))}\maxx\{1,I(\Tbf,K_\Gbf(w)) \}.$$ \end{proof}

 \begin{remark}[unipotent Hecke translation]\label{unipotent Hecke translation}
 

  We have seen in the proof of \ref{finite index} that the Hecke translation $\tau_w$ (namely conjugation by $w^\inv$ cf. \ref{morphisms of mixed Shimura varieties and Hecke translates}(2)) gives $w(K_\Wbf\rtimes K_\Gbf)w^\inv\isom (wK_\Wbf w^\inv)\rtimes K_\Gbf^w$ under the assumption $K_\Gbf=K_\Gbf(w)$, hence the isomorphism $M_{wKw^\inv}(\Pbf,Y)\ra M_K(\Pbf,Y)$ sending the pure section given by $(w\Gbf w^\inv, wX)$ to the one given by $(\Gbf,X)$. 
 
 In particular, if $\psi=0$, then $\Wbf$ is commutative, which gives us equalities $wK_\Wbf w^\inv=K_\Wbf$ and $w(K_\Wbf\rtimes K_\Gbf)w^\inv\isom wK_\Wbf w^\inv\rtimes K_\Gbf^w$. In this case, if we have $K_\Gbf=K_\Gbf(w)$, then   conjugation by $w^\inv$ defines an automorphism of $M_K(\Pbf,Y)$ translating the pure section $M_{wK_\Gbf w^\inv}(w\Gbf w^\inv, wX)$ to $M_{K_\Gbf}(\Gbf,X)$. 

 If we are in the Kuga case $\Ubf=0$, then we are again led to the picture of torsion sections of abelian schemes. The natural projection $\pi:M=M_{K_\Vbf\rtimes K_\Gbf}(\Vbf\rtimes\Gbf,\Vbf(\Rbb)X)\ra S=M_{K_\Gbf}(\Gbf,X)$ is an abelian $S$-scheme. If $v\in\Vbf(\Qbb)$ is of order $n$ in $\Vbf(\adele)/K_\Vbf$, then the maximal pure Shimura subvariety defined by $(v\Gbf v^\inv, vX)$ is contained in $M[n]$ the $n$-torsion part of $\pi:M\ra S$. It is a section to $\pi$ \ifof $K_\Gbf=K_\Gbf(v)$. Similarly, if $\Vbf=0\neq \Ubf$, then $\pi:M=M_{K_\Ubf\rtimes K_\Gbf}(\Ubf\rtimes\Gbf,\Ubf(\Cbb)X)\ra S=M_{K_\Gbf}(\Gbf,X)$ is an $S$-torus, and in this case an element $u\in\Ubf(\Qbb)$ satisfying $K_\Gbf=K_\Gbf(u)$ gives a torsion section $M(u)$ using formulas similar to the Kuga case. This also allows us to talk about the case $\psi=0$ where $\Wbf\isom\Ubf\oplus\Vbf$ is a commutative unipotent $\Qbb$-group, and the results are parallel.
 
 \end{remark}

 We want to take a closer look at the term $I(\Tbf,K_\Gbf(w))$. In \cite{ullmo yafaev} 3.15, the faithful representation $\rho:\Gbf\ra\GL_{\Lambda,\Qbb}$ leads to an inequality $I(\Tbf,K_p)\geq c\cdot p$ for all prime $p$ such that $p$ is unramified in the splitting field of $\Tbf$ and that $K_p=\GL_{\Lambda}(\Zbbp)\cap\Gbf(\Qbbp)$. In our case, besides the representation $\rho$ we have further $\rho_\Ubf$ and $\rho_\Vbf$ in the definition of mixed Shimura data, and we want to show that for $p\in\Delta(\Tbf,K_\Gbf(w))$ we have further $I(\Tbf,K_\Gbf(w)_p)\geq c\cdot {\ord_p(w,K_\Wbf)}$, using the following definition:

 \begin{definition}[torsion order] 
 
 Let $\Wbf$ be the unipotent radical of $\Pbf$ from some mixed Shimura datum $(\Pbf,Y)=\Wbf\rtimes(\Gbf,X)$, which is a central extension of $\Vbf$ by $\Ubf$ via $\psi$. The \emph{order} of $w$ \wrt a \cosg $K_\Wbf\subset\Wbf(\adele)$ is the smallest integer $n>0$ such that $w^N\in K_\Wbf$ for all $N\in n\Zbb$.
 
 If $K_\Wbf$ is the subgroup of $\Wbf(\adele)$ generated by \cosgs $K_\Ubf\subset\Ubf(\adele)$ and $K_\Vbf\subset\Vbf(\adele)$ via $\psi$ (satisfying $\psi(K_\Vbf\times K_\Vbf)\subset K_\Ubf$), then by writing $w=(u,v)$ for $u\in\Ubf(\Qbb)$ and $v\in\Vbf(\Qbb)$ we see that $\ord(w,K_\Wbf)$ is the least common multiple of $\ord(u,K_\Ubf)$ and $\ord(v,K_\Vbf)$, where $\ord(u,K_\Ubf)$ is the order of the class $[u]$ in the torsion abelian group $\Ubf(\adele)/K_\Ubf$, and similarly for $\ord(v,K_\Vbf)$. The order for $\Wbf(\adele)/K_\Wbf$ is thus well-defined, although the quotient is only a pointed set with an action of $\Zbb$, rather than a group. 
 
 If the $K_\Ubf$ and $K_\Vbf$ above are of fine product type, then we have $\ord(w,K_\Wbf)=\prod_p\ord_p(w,K_\Wbf)$, where $\ord_p(w,K_\Wbf)$ is the order of $w$ in $\Wbf(\Qbbp)/K_{\Wbf,p}$. The product makes sense because $w\in K_{\Wbf,p}$ for all but finitely many $p$'s, i.e. $\ord_p(w,K_\Wbf)=1$ for almost every $p$. We also have $\ord_p(w,K_\Wbf)=\maxx\{\ord_p(u,K_\Ubf),\ord_p(v,K_\Vbf)\}$ when we write $w=(u,v)$. Note that  the abelian groups $\Ubf(\Qbbp)/K_{\Ubf,p}$ and $\Vbf(\Qbbp)/K_{\Vbf,p}$ are $p$-torsion groups, the torsion orders $\ord_p(u,K_\Ubf)$ and $\ord_p(v,K_\Vbf)$ are $p$-powers, hence so it is with $\ord_p(w,K_\Wbf)$.
 
 \end{definition}

 \begin{example}[torsion order in the Siegel case]\label{torsion order in the siegel case}
 
 We first consider the torsion order in the Siegel case, using the mixed Shimura datum defined in \ref{Siegel data}. We have a symplectic form $\psi:\Vbf\rtimes\Vbf\ra\Ubf$ with $\Ubf=\Gaa$, the extension $\Wbf$ of $\Vbf$ by $\Ubf$, and we have the pure Shimura datum $(\Gbf,X)=(\GSp_\Vbf,\Hscr_\Vbf)$, as well as the mixed Shimura datum $(\Pbf,Y)=\Wbf\rtimes(\Gbf,X)$, the reflex field of which is $\Qbb$. Taking a \cosg $K=K_\Wbf\rtimes K_\Gbf$ of fine product type, we have the mixed Shimura datum $M=M_K(\Pbf,Y)$ fibred over $S=M_{K_\Gbf}(\Gbf,X)$.
 
 Note that $(\GSp_\Vbf,\Hscr_\Vbf)$ is irreducible. In fact $\Sp_\Vbf$ is the minimal $\Qbb$-subgroup of $\GSp_\Vbf$ whose base change to $\Rbb$ contains all $x(\Sbb^1)$ for $x\in\Hscr_\Vbf$ and $\Sbb^1=\Ker(\Nm_{\Cbb/\Rbb}:\Sbb\ra\Gbb_{\mrm,\Rbb})$, otherwise the Hermitian symmetric domain $\Hscr_\Vbf^+$ could be produced from $\Hbf(\Rbb)^+$ by some smaller reductive $\Qbb$-subgroup $\Hbf\subsetneq\Sp_\Vbf$ using \ref{generating subdata}, contradicting the classification of simple Hermitian symmetric spaces; and for any $x\in\Hscr_\Vbf$, the image of $\Gbb_{\mrm,\Rbb}$ in $\Sbb$ (corresponding to $\Rbb^\times\subset\Cbb^\times$) along $x$ is a central $\Rbb$-torus of $\GSp_{\Vbf,\Rbb}$, and coincides with the center of $\GSp_{\Vbf,\Rbb}$, hence $\GSp_\Vbf$ is already the generic Mumford-Tate group of $\Hscr_\Vbf$.  Let $(\Gbf',X')$ be an irreducible pure subdatum of $(\Gbf,X)$, with $\Tbf$ the connected center of $\Gbf'$. Since $(\GSp_\Vbf,\Hscr_\Vbf)$ is already irreducible, by \ref{generic injectivity}(1) we see that  $\Tbf$ contains $\Gbb_\mrm$ the center of $\GSp_\Vbf$.

 The center   $\mult$ of $\Gbf=\GSp_\Vbf$   acts on $\Vbf$ by central scaling, and it acts on $\Ubf$ by the square of central scaling. Taking a \cosg $K_\Gbf$ of $\Gbf(\adele)$ of the form $K_\mult K_{\Gbf^\der}$ and $K_\Ubf$, $K_\Vbf$, $K_\Wbf$ as in \ref{levels of product type}, we see that $K_\Tbf=\Tbf(\adele)\cap K_\Gbf\supset K_\mult$. 
 
 In particular, for any $w\in\Wbf(\Qbb)$ and $p\in\Delta(\Tbf,K_\Gbf(w))$ we have $$[K^\maxx_{\Tbf,p}:K_\Tbf(w)_p]\geq [K_{\mult,p}^\maxx:K_\mult(w)_p].$$ Write $w=(u,v)$ for $u\in\Ubf(\Qbb)$ and $v\in\Vbf(\Qbb)$. $K^\maxx_{\mult,p}\isom\Zbbptimes$ acts on $\Ubf(\Qbbp)/K_{\Ubf,p}$ by automorphism $t(\ubar)=\overline{t^2u}$ where $t\in\Zbbptimes$ and the bar stands for the class modulo $K_{\Ubf,p}$. The action preserves the order $\ord_p(u,K_\Ubf)$, and it stabilizes the image of $\Zbbptimes u$ modulo $K_\Ubf$, which is isomorphic to $\Zbbp/p^m$ with $p^m=\ord_p(u,K_\Ubf)$. Hence the cardinality of the quotient $[K^\maxx_{\mult,p}:K_{\mult,p}]$ is equal to $$\#\{t^2:t\in(\Zbbp/p^m)^\times\}=\frac{1}{2}\#(\Zbbp/p^m)=\frac{p-1}{2p}\ord_p(u,K_\Ubf).$$

 The case of $\Vbf$ is similar: $\mult$ is the common center of $\GL_\Vbf$ and  $\GSp_\Vbf$, hence $$[K_{\mult,p}^\maxx:K_\mult(w)_p]\geq\frac{p-1}{p}\ord_p(v,K_\Vbf)$$ and combining it with the case of $\Ubf$ we get $$[K^\maxx_{\Tbf,p}:K_\Tbf(w)_p]\geq\frac{1}{4}\ord_p(w,K_\Wbf).$$
 \end{example}
 
 \begin{proposition}[torsion order in the product case]\label{torsion order in the product case} Let $(\Pbf,Y)=\Wbf\rtimes(\Gbf,X)$ be an irreducible mixed Shimura subdatum of a product datum of finitely many mixed Shimura data of Siegel type (including Kuga data) $(\Lbf,Y_\Lbf)=\Nbf\rtimes(\Hbf,X_\Hbf)=\prod_j(\Pbf_j,\Uscr_j)\times(\Qbf,\Vscr)$, with $(\Gbf,X)\subset(\Hbf,X_\Hbf)$. Let $K=K_\Nbf\rtimes K_\Hbf$ be a \cosg of fine product type restricting to $K_\Pbf=K_\Wbf\rtimes K_\Gbf$.
 
 Let $(\Pbf',Y'):=\Wbf'\rtimes(w\Gbf'w^\inv,wX')$ be an irreducible $\TW$-special subdatum of $(\Pbf,Y)$, with $w\in\Wbf(\Qbb)$ non-trivial. Keeping the notations as in \ref{orbit of a pure special subvariety}, we have:
 
 (1) for $p\in\Delta(\Tbf,K_\Gbf(w))$ we have $I(\Tbf,K_\Gbf(w)_p)\geq c\cdot\ord_p(w,K_\Wbf)$, $c$ being some constant independent of $K$ and $\Tbf$;
 
 (2) there is a constant integer $N>0$ such that $w^N\in K_{\Wbf,p}$ for $p\notin\Delta(\Tbf,K_\Gbf(w))$, and $N$ is independent of $(\Gbf',X')$, $\Tbf$, and $K$.
 
 \end{proposition}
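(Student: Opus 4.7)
The plan is to mimic the orbit-counting from \ref{torsion order in the siegel case} by locating a diagonal split $\Qbb$-torus, central in the ambient product group $\Hbf$, that sits inside every $\Tbf$ arising as the connected center of an irreducible pure subdatum of $(\Gbf,X)$ and that acts with controlled weights on the unipotent radical. Once this is done, (1) follows by transporting the Siegel-case orbit count across the inclusion $\mult\subset\Tbf$, and (2) is extracted by inverting the inequality.

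The first step is to locate the central torus. The product datum $(\Hbf,X_\Hbf)=\prod_j(\GSp_{\Vbf_j},\Hscr_{\Vbf_j})\times(\GSp_{\Vbf_0},\Hscr_{\Vbf_0})$ contains a split $\Qbb$-torus $\mult\subset Z(\Hbf)$ acting diagonally by the standard scalar on each $\Vbf_j$ and on $\Vbf_0$, hence by its square on each weight $-2$ piece $\Ubf_j=\Qbb(-1)$; this $\mult$ is precisely the image of the weight cocharacter $x\circ(\mult_\Rbb\hookrightarrow\Sbb):\mult_\Rbb\to\Hbf_\Rbb$ for any $x\in X_\Hbf$, which is central by \ref{mixed Shimura data}(ii). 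Since $(\Pbf,Y)$ is irreducible, so is $(\Gbf,X)$ by \ref{pure irreducibility}, so $\Gbf$ contains $x(\Sbb)$ for each $x\in X$ and therefore contains $\mult$; centrality of $\mult$ in $\Hbf$ gives $\mult\subset\Tbf_\Gbf$. Applying \ref{generic injectivity}(1) to the irreducible inclusion $(\Gbf',X')\subset(\Gbf,X)$ then yields $\Tbf=\Tbf_{\Gbf'}\supset\Tbf_\Gbf\supset\mult$.

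Next I establish (1). Writing $w=(u,v)$ with $u\in\Ubf(\Qbb)$ and $v\in\Vbf(\Qbb)$, the commutator formulas of \ref{group law} give $twt^\inv w^\inv=((t^2-1)u,(t-1)v)$ for $t\in\mult$, using that $\psi$ is alternating so $\psi(v,v)=0$. Hence $K_\mult(w)_p$ is the joint stabilizer inside $K^\maxx_{\mult,p}=\Zbbptimes$ of $\ubar\in\Ubf(\Qbbp)/K_{\Ubf,p}$ under $t\mapsto t^2\ubar$ and of $\vbar\in\Vbf(\Qbbp)/K_{\Vbf,p}$ under $t\mapsto t\vbar$. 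Repeating the orbit count of \ref{torsion order in the siegel case} on each factor and using that the joint orbit surjects onto each individual orbit gives $[K^\maxx_{\mult,p}:K_\mult(w)_p]\geq c_0\ord_p(w,K_\Wbf)$ for a universal constant $c_0>0$. The inclusion $\mult\subset\Tbf$ induces an injection of finite quotients $K^\maxx_{\mult,p}/K_\mult(w)_p\hookrightarrow K^\maxx_{\Tbf,p}/K_\Tbf(w)_p$, using functoriality of maximal compact subgroups of $\Qbbp$-tori under subtorus inclusion together with the identity $K_\mult(w)_p=\mult(\Qbbp)\cap K_\Tbf(w)_p$, so $[K^\maxx_{\Tbf,p}:K_\Tbf(w)_p]\geq c_0\ord_p(w,K_\Wbf)$. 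Setting $c=bc_0$ with $b$ the constant of \ref{lower bound involving splitting fields} proves (1).

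Finally I deduce (2). For $p\notin\Delta(\Tbf,K_\Gbf(w))$ we have $K_\Tbf(w)_p=K^\maxx_{\Tbf,p}$, so the chain above forces $c_0\ord_p(w,K_\Wbf)\leq 1$, i.e.\ $\ord_p(w,K_\Wbf)\leq 1/c_0$. Since $\ord_p(w,K_\Wbf)$ is a power of $p$, this restricts the pair $(p,\ord_p(w,K_\Wbf))$ to a finite set independent of $(\Gbf',X')$, $\Tbf$, $K$, and $w$; taking $N$ to be the least common multiple of the corresponding $p^m$ yields a universal constant with $w^N\in K_{\Wbf,p}$ for every $p\notin\Delta(\Tbf,K_\Gbf(w))$. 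The conceptual crux is the first step, where the weight cocharacter plants the same diagonal $\mult$ into every $\Tbf$ that can arise, making the orbit count transfer uniformly; the only delicate point is the case $p=2$ in the orbit count, which requires marginally finer bookkeeping but produces a universal bound of the same shape.
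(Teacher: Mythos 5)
Your proof is correct and rests on the same two pillars as the paper's: the inclusion of the diagonal split torus $\Cbf\isom\mult$ into $\Tbf$, followed by the orbit count of \ref{torsion order in the siegel case} transported along $\mult\subset\Tbf$. You reach $\Cbf\subset\Tbf$ by a slightly different route: you note that $\Cbf_\Rbb$ is the image of the weight cocharacter $x\circ w$, so $\Cbf$ already sits in the connected center of $\Gbf$, and then apply \ref{generic injectivity}(1) to $(\Gbf',X')\subset(\Gbf,X)$; the paper instead passes to the strictly irreducible subdatum $(\Gbf'',X'')$ and applies \ref{generic injectivity}(1) to its inclusion into $(\Hbf',X'_\Hbf)$, with $\Hbf'$ generated by $\Hbf^\der$ and the diagonal $\mult$. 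Your weight-cocharacter mechanism is in fact exactly what the paper deploys in the subsequent Corollary \ref{torsion order in the embedded case}, so nothing is lost. The second deviation is in part (2): you extract it formally from the orbit count, getting $\ord_p(w,K_\Wbf)\leq 1/c_0$ for $p\notin\Delta(\Tbf,K_\Gbf(w))$ and taking $N$ to be the lcm of the finitely many admissible prime powers, whereas the paper plugs in explicit units $t$ with $1-t$ and $1-t^2$ of controlled valuation to land on a small explicit $N$. Both yield a universal $N$, which is all the statement asks for; yours is larger but the derivation is cleaner since it reuses (1). Two small points to keep honest: the injection $K^\maxx_{\mult,p}/K_\mult(w)_p\hookrightarrow K^\maxx_{\Tbf,p}/K_\Tbf(w)_p$ needs the identity $K^\maxx_{\mult,p}\cap K_\Tbf(w)_p=K_\mult(w)_p$, which follows from $K_\mult(w)_p=\mult(\Qbbp)\cap K_\Gbf(w)_p$ as you indicate; and the constant $c_0$ must be chosen uniformly in $p$ (the squares have index $4$ rather than $2$ in $(\Zbbp/p^m)^\times$ when $p=2$), which you flag and which costs only a factor of $2$.
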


 \begin{proof}
 
 Since $(\Pbf',Y')=\Wbf'\rtimes(w\Gbf'w^\inv,wX')$ is irreducible, $(\Gbf',X')$ is irreducible by \ref{pure irreducibility}. Take any connected component $X'^+$ of $X'$, we have $\Gbf'':=\MT(X'^+)$ with $\Gbf''^\der=\Gbf'^\der$, and strictly irreducible data $(\Gbf'',X'')$ with $X''=\Gbf''(\Rbb)X'^+$ and  $(\Pbf'',Y'')=\Wbf'\rtimes(w\Gbf''w^\inv,wX'')$. $(\Gbf'',X'')$ being a strictly irreducible pure Shimura subdatum of $(\Hbf,X_\Hbf)=(\GSp_{\Vbf_1},\Hscr_{\Vbf_1})\times\cdots\times(\GSp_{\Vbf_n},\Hscr_{\Vbf_n})$, we have $X''^+=X'^+\subset X_\Hbf^+$ for some connected component $X_\Hbf^+$ of $X_\Hbf$, and this gives an inclusion of strictly irreducible pure Shimura data $(\Gbf'',X'')\subset(\Hbf',X'_\Hbf=\Hbf'(\Rbb)X^+_\Hbf)$, where $\Hbf'$ is the $\Qbb$-subgroup generated by $\Hbf^\der=\prod\Sp_{\Vbf_j}$ and a central split $\Qbb$-torus $\mult_m$ which acts on each $\Vbf_j$ by the central scaling. From \ref{generic injectivity}(1) we get $\mult\subset\Tbf'$ with $\Tbf'$ the connected center of $\Gbf'$. Since $\Gbf'\subset\Gbf$ and $\Gbf'^\der=\Gbf^\der$, we get $\mult\subset\Tbf'\subset\Tbf$.
 
 We write $\Cbf$ for this split $\Qbb$-torus so as to avoid ambiguities with other $\Gbb_\mrm$, like those arising as the connected centers of the $\GSp_{\Vbf_j}$. 
 
 (1) The inclusion $\Cbf\subset\Tbf$ gives $K_\Cbf(w)\subset K_\Tbf(w)\subset K_\Gbf(w)$, and for any prime $p$ we have $K_\Cbf(w)_p\subset K_\Tbf(w)_p\subset K_\Gbf(w)_p$ and $K^\maxx_{\Cbf,p}\subset K^\maxx_{\Tbf,p}$. The cardinality $K^\maxx_{\Cbf,p}/K_\Cbf(w)_p$ resp. $K^\maxx_{\Tbf,p}/K_\Tbf(w)_p$ equals the cardinality of the $K^\maxx_{\Cbf,p}$-orbit resp. the $K^\maxx_{\Tbf,p}$-orbit of the class of $w$ in the quotient set $K_{\Wbf,p}\bsh \Wbf(\Qbbp)$, hence for $p\in\Delta(\Tbf,K_\Gbf(w))$ i.e. $K^\maxx_{\Tbf,p}\supsetneq K_\Tbf(w)_p$ we have $$[K^\maxx_{\Tbf,p}:K_\Tbf(w)_p]\geq [K^\maxx_{\Cbf,p}:K_{\Cbf}(w)_p]\geq \frac{1}{4}\ord_p(w,K_\Wbf)$$ where the last inequality follows from \ref{torsion order in the siegel case}: although $w$ comes from $\Wbf(\Qbb)$ with $\Wbf$ an extension of $\Vbf$ by $\Ubf$, what matters is that $K^\maxx_{\Cbf,p}=\Zbbptimes$ acts on $\Vbf(\Qbbp)$ by the central scaling and on $\Ubf(\Qbbp)$ by the square of central scaling, and the estimation in \ref{torsion order in the siegel case} applies. It remains to put $c=b/4$ using $I(\Tbf,K_\Gbf(w)_p)=b[K^\maxx_{\Tbf,p}:K_\Tbf(w)_p]$.
 
 (2) For $p\notin\Delta(\Tbf,K_\Gbf(w))$ we have $K^\maxx_{\Tbf,p}=K_{\Tbf,p}=K_{\Tbf}(w)_p$, and in this case $K^\maxx_{\Cbf,p}=K_{\Cbf,p}\isom\Zbbptimes$: if $K_{\Cbf,p}=K_{\Tbf,p}\cap\Cbf(\Qbbp)$ is not maximal, then we get $K^\maxx_{\Cbf,p}K_{\Tbf,p}\supsetneq K^\maxx_{\Tbf,p}$ which is absurd. 
 For any $g\in K_{\Tbf,p}$, we have $wgw^\inv g^\inv\in K_{\Wbf,p}$. Take $g=t\in K_{\Cbf,p}=\Zbbptimes$ we have $$wgw^\inv g^\inv=(u-t^2u-\psi(v,tv),v-tv,1)=(u-t^2u,v-tv,1)\in K_{\Wbf,p}$$ where we use $\psi(v,tv)=t\psi(v,v)=0$.

 Concerning the term $v-tv=(1-t)v$: for $p\geq 3$, we may take $t=2\in\Zbb_p^\times$, and $v-tv\in K_\Vbf$ implies $v\in K_\Vbf$; for $p=2$, we still have $3\in\Zbb_p^\times$, which gives $2v\in K_\Vbf$.
 
 Concerning the term $u-t^2u=(1-t^2)u$: for $p\geq 5$, the subgroup $\{t^2:t\in\Zbb_p^\times\}$ is a subgroup of index 2 in $\Zbb_p$, which contains $1+p\Zbb_p$ as a proper subset. In particular we can find $t\in\Zbb_p^\times$ such that $1-t^2$ is a unit in $\Zbb_p$, hence $u\in K_\Ubf$ in this case. For $p\leq 3$,  we can still find $t\in\Zbb_p^\times$ such that $1-t^2$ divides 12. 
 
 Hence it suffices to take $N=12$.
  \end{proof}
  
 \begin{corollary}[torsion order in the embedded case]\label{torsion order in the embedded case} Let $(\Pbf,Y)=\Wbf\rtimes(\Gbf,X)=(\Ubf,\Vbf)\rtimes(\Gbf,X)$ be a subdatum of a product of the form $(\Gbf_0,X_0)\times(\Lbf,Y_\Lbf)$ with 
 $\Gbf_0$ of adjoint type, $(\Lbf,Y_\Lbf)=\Nbf\rtimes(\Hbf,X_\Hbf)=(\Ubf_\Nbf,\Vbf_\Nbf)\rtimes(\Hbf,X_\Hbf)$ a finite product of mixed Shimura data of Siegel type and $(\Gbf,X)\subset(\Gbf_0,X_0)\times(\Hbf,X_\Hbf)$. We write $(\Hbf,X_\Hbf)=(\GSp_{\Vbf_1},\Hscr_{\Vbf_1})\times\cdots\times(\GSp_{\Vbf_n},\Hscr_{\Vbf_n})$.  Let $K=K_\Nbf\rtimes(K_{\Gbf_0}\times K_\Hbf)$ be a \cosg of fine product type, which restricts to $K_\Pbf=K_\Wbf\rtimes K_\Gbf$.
 
 If $(\Pbf',Y')=\Wbf'\rtimes(w\Gbf'w^\inv,wX')$ is a strictly irreducible $\TW$-subdatum of $(\Pbf,Y)$ for some $w\in\Wbf(\Qbb)$, then using the notations in \ref{orbit of a pure special subvariety} we have:
 
 (1)$I(\Tbf,K_\Gbf(w)_p)\geq c\cdot \ord_p(w,K_\Wbf)$, $c$ being some constant independent of $(\Gbf',X')$, $K$, and $\Tbf$;
 
 (2) there is a constant integer $N>0$ such that $w^N\in K_{\Wbf,p}$ for $p\notin\Delta(\Tbf,K_\Gbf(w))$, and $N$ is independent of $(\Gbf',X')$, $K$, and $\Tbf$.
 
 \end{corollary}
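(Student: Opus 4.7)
The plan is to reduce the statement to Proposition \ref{torsion order in the product case} by projecting the ambient product onto its Siegel factor and then transferring the resulting estimates back to $\Tbf$.

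First I would consider the projection $\pi:(\Gbf_0,X_0)\times(\Lbf,Y_\Lbf)\ra(\Lbf,Y_\Lbf)$ onto the second factor. Since $\Gbf_0$ is semi-simple of adjoint type (hence reductive), every unipotent $\Qbb$-subgroup of $\Gbf_0\times\Lbf$ is contained in $\{1\}\times\Nbf$; in particular $\Wbf$ is naturally a $\Qbb$-subgroup of $\Nbf$, and the fine product conditions force $K_\Wbf=K_\Nbf\cap\Wbf(\adele)$, so $\ord_p(w,K_\Wbf)=\ord_p(w,K_\Nbf)$ for every prime $p$. Applying $\pi$ to $(\Pbf,Y)$ produces a mixed Shimura subdatum $(\Pbf_\Lbf,Y_\Lbf^0)$ of $(\Lbf,Y_\Lbf)$; by \ref{strict irreducibility}(1) strict irreducibility is preserved under surjective morphisms, so the image $\pi(\Pbf',Y')=\Wbf'\rtimes(w\Gbf'_\Hbf w^\inv,wX'_\Hbf)$ is a strictly irreducible $(\Tbf_\Hbf,w)$-special subdatum of $(\Pbf_\Lbf,Y_\Lbf^0)$, where $\Tbf_\Hbf=p_\Hbf(\Tbf)$ is the connected center of $\Gbf'_\Hbf$.

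Next I would invoke Proposition \ref{torsion order in the product case} applied to the image inside the product of Siegel-type data, together with the observation that strict irreducibility forces a distinguished split $\Qbb$-torus $\mult$ to be contained in $\Tbf_\Hbf$ (exhibited via the subdatum $(\Hbf',X'_\Hbf)$ with $\Hbf'=\Hbf^\der\cdot\mult$ and \ref{generic injectivity}(1)). The key point for the transfer back to the original setting is that $p_\Hbf(\Tbf)\supset\mult$, so there exists a one-dimensional $\Qbb$-sub-torus $\Cbf\subset\Tbf$ such that $p_\Hbf:\Cbf\ra\mult$ is an isogeny; since any one-dimensional $\Qbb$-torus isogenous to $\mult$ is itself split, $\Cbf\isom\mult$. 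Because $\Gbf_0$ acts trivially on $\Nbf\supset\Wbf$, the action of $\Cbf$ on $\Wbf$ coincides with the action of its image $\mult$ on $\Nbf$ restricted to $\Wbf$, namely central scaling on $\Vbf$ and its square on $\Ubf$.

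Armed with $\Cbf\subset\Tbf$, I would repeat verbatim the local computation of \ref{torsion order in the siegel case} and the proof of \ref{torsion order in the product case}. For (1), using that the $K^\maxx_{\Cbf,p}$-orbit of $w$ in $K_{\Wbf,p}\bsh\Wbf(\Qbbp)$ is contained in the $K^\maxx_{\Tbf,p}$-orbit, one obtains
\[
[K^\maxx_{\Tbf,p}:K_\Tbf(w)_p]\geq[K^\maxx_{\Cbf,p}:K_\Cbf(w)_p]\geq\tfrac{1}{4}\ord_p(w,K_\Wbf),
\]
and the constant $c=b/4$ depends only on the fixed faithful representation underlying $b$ in \ref{lower bound involving splitting fields}, hence is independent of $(\Gbf',X')$, $K$, and $\Tbf$. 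For (2), if $p\notin\Delta(\Tbf,K_\Gbf(w))$ then $K^\maxx_{\Tbf,p}=K_\Tbf(w)_p$, so in particular $K^\maxx_{\Cbf,p}=\Zbbptimes$ fixes $w$ modulo $K_{\Wbf,p}$; solving the resulting relations $(1-t)v\in K_{\Vbf,p}$ and $(1-t^2)u\in K_{\Ubf,p}$ for suitable $t\in\Zbbptimes$ gives $12w\in K_{\Wbf,p}$, hence $N=12$ works uniformly. The main obstacle, and the only step that goes beyond the product case, is the construction of $\Cbf\subset\Tbf$ with the correct action on $\Wbf$; everything else is a direct rerun of the product-case proof.
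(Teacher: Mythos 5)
Your overall plan (reduce to the product case by exhibiting a one\mbox{-}dimensional split torus inside $\Tbf$ that acts on $\Vbf_\Nbf$ by central scaling and on $\Ubf_\Nbf$ by its square) is the right one, but the step that actually produces this torus has a genuine gap. From $p_\Hbf(\Tbf)\supset\mult$ you only construct a one-dimensional split $\Cbf\subset\Tbf$ such that $p_\Hbf:\Cbf\ra\mult$ is an \emph{isogeny}, say of degree $N$. Since $\Cbf$ acts on $\Nbf$ only through its image in $\Hbf$, under the identification $\Cbf\isom\mult$ its action on $\Vbf_\Nbf$ is the $N$-th power of the central scaling, not the central scaling itself; your sentence ``the action of $\Cbf$ on $\Wbf$ coincides with the action of its image $\mult$'' silently replaces $t$ by $t^N$. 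Consequently the orbit of $w$ under $K^\maxx_{\Cbf,p}$ is the orbit under the subgroup of $N$-th powers of $\Zbbptimes$, and both the constant in (1) and the uniform integer in (2) degrade by factors depending on $N$, while nothing in your argument bounds $N$ independently of $\Tbf$ and $(\Gbf',X')$. This is exactly the obstruction the paper isolates in Remark \ref{torsion order in general}(1) as the reason the statement is \emph{not} proved for arbitrary ambient data.

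The paper's proof closes this gap by showing the degree is in fact $1$: for $x=(x_0,x_1)\in X'\subset X_0\times X_\Hbf$, condition (ii) of \ref{mixed Shimura data} forces $x_0(\mult_\Rbb)$ to land in the center of $\Gbf_{0,\Rbb}$, which is trivial because $\Gbf_0$ is adjoint; hence $x(\mult_\Rbb)=\{1\}\times x_1(\mult_\Rbb)=\Cbf_\Rbb$ where $\Cbf$ is the diagonal $\mult$ in $\Hbf$ sitting inside $\{1\}\times\Hbf$. Therefore $\Cbf\subset\MT(X'^+)=\Gbf'$, and being central in $\Gbf_0\times\Hbf$ it lies in $\Tbf$; this $\Cbf$ genuinely acts by central scaling and the product-case computation applies verbatim. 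Your proof never uses adjointness of $\Gbf_0$ in this way, and it is precisely what is needed. (A minor further point: your claim that every unipotent $\Qbb$-subgroup of $\Gbf_0\times\Lbf$ lies in $\{1\}\times\Nbf$ is false as stated, since semi-simple groups contain unipotent subgroups; the correct assertion $\Wbf\subset\Nbf$ follows from the weight filtration as in \ref{structure of subdata}.)
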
 
 
 \begin{proof}
 
 Let $x=(x_0,x_1)$ be a point in $X_0\times X_\Hbf$ from the datum $(\Gbf_0,X_0)\times(\Hbf,X_\Hbf)$, and let $\mult_\Rbb\subset\Sbb$ be the split $\Rbb$-torus corresponding to $\Rbb^\times\subset\Cbb^\times$. Then $x_1(\mult_\Rbb)$ acts on $\Vbf_{\Nbf,\Rbb}$ via the central scaling and on $\Ubf_{\Nbf,\Rbb}$ via the square of the central scaling, while $x_0(\mult_\Rbb)$ is trivial, because by \ref{mixed Shimura data}(i) $x_0$ sends $\mult_\Rbb$ into the center of $\Gbf_0$ which is trivial. Let $\Cbf$ be the split $\Qbb$-torus isomorphic to $\mult$ in $\Hbf=\GSp_{\Vbf_1}\times\cdots\times\GSp_{\Vbf_n}$ which is the diagonal of $\mult\times\cdots\times\mult$, as we have seen in \ref{Siegel data}(3) (we write $\Cbf$ instead of $\mult$ so as to avoid ambiguities with the connected centers of $\GSp_{\Vbf_j}$). $\Cbf$ is one-dimensional, and we have shown that for any given $x\in X_0\times X_\Hbf$,  $\Cbf$ is the minimal $\Qbb$-subtorus in $\Gbf_0\times\Hbf$ whose base change to $\Rbb$ contains $x(\mult_\Rbb)$, and in fact $\Cbf_\Rbb=x(\mult_\Rbb)$.
 
 When $x$ runs through $X'$, the generic Mumford-Tate group $\Gbf'=\MT(X'^+)$ for some connected component $X'^+$ of $X'$ necessarily contains $\Cbf$ because $\Gbf'_\Rbb\supset x(\mult_\Rbb)=\Cbf_\Rbb$. Clearly $\Cbf$ is central in $\Gbf_0\times\Hbf$, hence it is central in $\Gbf'$, and we get $\Cbf\subset\Tbf$. It remains to argue as in \ref{torsion order in the product case}.
 
 Note that the constants $c$ and $N$ are the same as in \ref{torsion order in the product case}.\end{proof}

 \begin{remark}[torsion order in general]\label{torsion order in general}
 
 (1) The corollary \ref{torsion order in the embedded case} above is sufficient for the study of Andr\'e-Oort conjecture for mixed Shimura varieties using the reductions \ref{insensitivity of isogeny} and \ref{reduction lemma}. For a general mixed Shimura datum $(\Pbf,Y)=\Wbf\rtimes(\Gbf,X)$ mapped into $(\Gbf_0,X_0)\times(\Lbf,Y_\Lbf)$ with finite kernel, where $(\Lbf,Y_\Lbf)=\Nbf\rtimes(\Hbf,X_\Hbf)$ is a product of mixed Shimura data of Siegel type with $(\Gbf,X)$ mapped into $(\Hbf,X_\Hbf)$, the image of a strictly irreducible $\TW$-special subdatum $(\Pbf',Y')=\Wbf'\rtimes(w\Gbf'w^\inv,wX')$ in $(\Lbf,Y_\Lbf)$ is a strictly irreducible subdatum $(\Pbf'',Y'')=\Wbf''\rtimes(w''\Gbf''w''^\inv,w''X'')$, and the connected center $\Tbf''$ of $\Gbf''$, which is the image of $\Tbf$, contains $\Cbf$ the split central $\Qbb$-torus constructed in the product case. The pre-image of $\Cbf$ in $\Tbf$ must contain a split $\Qbb$-torus $\Cbf'$ which is mapped onto $\Cbf$. We do have $[K^\maxx_{\Tbf,p}:K_\Tbf(w)_p]\geq [K^\maxx_{\Cbf',p}:K_{\Cbf'}(w)_p]$, but $[K^\maxx_{\Cbf',p}:K_{\Cbf'}(w)_p]\geq c'[K^\maxx_{\Cbf,p}:K_\Cbf(w)_p]$ is not evident.   We  have an exact sequence $$1\ra\Cbf''\ra\Cbf'\ra\Cbf\ra 1$$ with $\Cbf''$ a finite $\Qbb$-group of degree $d$ (i.e. associated to a commutative  Hopf $\Qbb$-algebra of dimension $d$) and we have the exactness of $$1\ra\Cbf''(\Qbbp)\ra\Cbf'(\Qbbp)\ra\Cbf(\Qbbp)\ra H^1(\Gal(\Qac_p/\Qbbp),\Cbf''(\Qac_p).$$ Here the Galois cohomology $H^1(\Qbbp,\Cbf'')$ is of $d$-torsion, which implies that $d$-powers of elements in $K^\maxx_{\Cbf,p}=\Zbbptimes$ fall in the image of $K^\maxx_{\Cbf',p}$. Using the exponential map \cite{neukirch number theory} II.5.5, we can find an open subgroup $U'_p$ in $U_p=1+2p\Zbbp\subset\Zbbptimes$ with index $[U_p:U'_p]$ uniformly bounded independent of $p$. However $[\Zbbptimes:U_p]=p-1$ for $p\geq 3$, and using these arguments we can only arrive at an estimation of the form $[K^\maxx_{\Tbf,p}:K_\Tbf(w)_p]\geq \frac{c}{p}\ord_p(w,K_\Wbf)$ for some absolute constant $c$, and this is not sufficient for some results in the last section concerning bounded equidistribution. It is for this reason that we do not proceed further for an estimation of torsion order in general.

 (2) Some of the arguments in   \ref{finite index} and  in \ref{torsion order in the siegel case} have appeared in \cite{chen thesis} 4.2. GAO Ziyang has informed us of his work \cite{gao mixed}, where he obtained similar results independently. His work concentrates on Galois orbits of special points, and directly leads to the Andr\'e-Oort conjecture without repeating the arguments for special subvarieties, due to the powerful machinery of o-minimality. He has restricted to the case that the pure part of the mixed Shimura varieties in question are given by subdata of Siegel type. Using our arguments it is natural to expect similar results  valid for general mixed Shimura data that are subdata in some $(\Gbf_0,X_0)\times(\Lbf,Y_\Lbf)$ as in the corollary above.
 \end{remark}

 \section{Special subvarieties with bounded test invariants}
 
 In this section we prove the equivalence between bounded sequences of special subvarieties and sequences with bounded test invariants, where the notion of test invariants is introduced as a substitute to the estimation of degrees of lower bounds for Galois orbits. We will also draw some conclusions under the GRH for CM fields, when the following assumption is satisfied:
 
 \begin{assumption}[CM splitting fields]\label{CM splitting fields}
 The ambient mixed Shimura variety $M_K(\Pbf,Y)$ is defined by a mixed Shimura datum $(\Pbf,Y)=\Wbf\rtimes(\Gbf,X)$ of some datum of the form $(\Gbf_0,X_0)\times(\Lbf,Y_\Lbf)$ where $\Gbf_0$ is semi-simple of adjoint type and $(\Lbf,Y_\Lbf)=\Nbf\rtimes(\Hbf,\Nbf)$ is a finite product of mixed Shimura data of Siegel type (including Kuga ones). It is also required that $(\Gbf,X)\mono(\Gbf_0,X_0)\times(\Hbf,X_\Hbf)$.
 
 \end{assumption}
 
 When this assumption holds, for any irreducible $\TW$-special subdatum $(\Pbf',Y')$ of $(\Pbf,Y)$, the splitting field $F_\Tbf$ of $\Tbf$ is a CM field as we have seen in \ref{reduction to the CM case}, and the Andr\'e-Oort conjecture can be reduced to this case.
   
 We first recall the following criterion for the ergodic-Galois alternative in the pure case:
 \begin{theorem}[bounded Galois orbits, cf. \cite{ullmo yafaev} 3.10]\label{bounded galois orbits} Let $S=M_K(\Gbf,X)$ be a pure Shimura variety, with $E$ its reflex field, and $(S_n)$ a sequence of special subvarieties, defined by $(\Gbf_n,X_n)$. Write $\Tbf_n$ for the connected center of $\Gbf_n$, and $D(\Tbf)_n$ the absolute discriminant of its splitting field over $\Qbb$. If there exists some constant $C>0$ such that $$\log D(\Tbf_n)\cdot\prod_{p\in\Delta(\Tbf_n,K)}\maxx\{1,I(\Tbf_n,K_p)\}\leq C, \forall n\in\Nbb$$ then there exists finitely many $\Qbb$-tori $\{\Cbf_1,\cdots,\Cbf_N\}$ in $\Gbf$ such that each $S_n$ is $\Cbf_i$-special for some $i$.

 In particular, assume that \ref{CM splitting fields} is satisfied, which means, in the pure case, that $(\Gbf,X)$ is a subdatum of a product $(\Gbf_0,X_0)\times(\Hbf,X_\Hbf)$ with $(\Hbf,X_\Hbf)$ a product of finitely many pure Shimura data of Siegel type, and assume the GRH for CM fields, if $\deg_{\pi^*\Lscr_S}\Gal(\Qac/E)\cdot S_n\leq C, \forall n$ for some constant $C>0$, $\Lscr_S$ being the automorphic line bundle on $S$, then the sequence $(S_n)$ is bounded in the sense of \ref{bounded sequence}.
 \end{theorem}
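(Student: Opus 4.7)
The plan is to prove the theorem in two stages. First, reduce the second (GRH) assertion to the first by invoking the lower bound \ref{lower bound involving splitting fields}. Fix any integer $N \geq 1$; after passing to the generic Mumford--Tate data of each $S_n$ (cf.\ \ref{assumption}) so that the hypotheses of \ref{lower bound involving splitting fields} are met, and under \ref{CM splitting fields} together with the GRH for the CM splitting fields $F_{\Tbf_n}$, one obtains
\[
c_N(\log D(\Tbf_n))^N \cdot \prod_{p\in\Delta(\Tbf_n,K)}\max\{1,I(\Tbf_n,K_p)\} \;\leq\; \deg_{\Lscr_S}(\Gal(\Qac/E)\cdot S_n) \;\leq\; C.
\]
Since both factors on the left are at least $1$, this simultaneously forces $\log D(\Tbf_n) \leq (C/c_N)^{1/N}$ and a uniform bound on the local product, which is precisely the hypothesis of the first assertion.

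For the first assertion I would proceed in three steps. Step one invokes the Hermite--Minkowski theorem: a uniform upper bound on $D(\Tbf_n)$ confines the splitting fields $F_{\Tbf_n}$ to a finite list $\{F_1,\ldots,F_r\}$. Step two, for each fixed splitting field $F_j$, bounds the number of $\Gbf(\Qbb)$-conjugacy classes of $\Qbb$-tori $\Tbf\subset\Gbf$ splitting over $F_j$. Fixing a maximal $\Qbb$-torus $\Tbf_0\subset\Gbf$, every such $\Tbf$ is $\Gbf(\Qac)$-conjugate to a subtorus of $\Tbf_{0,\Qac}$, and the descent from $\Qac$ to $\Qbb$ is governed by a cocycle class in $H^1(\Gal(F_j/\Qbb), N_\Gbf(\Tbf_0)(F_j)/\Tbf_0(F_j))$. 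Since the Weyl group $N_\Gbf(\Tbf_0)/\Tbf_0$ is finite, so is this cohomology set, producing only finitely many $\Qbb$-forms and hence finitely many $\Gbf(\Qbb)$-conjugacy classes; this is essentially the finiteness input of \cite{ullmo yafaev} 3.7. Step three uses the uniform bound on $\prod_p\max\{1,I(\Tbf_n,K_p)\}$ together with the Siegel-style local lower bound $I(\Tbf_n,K_p) \geq c\cdot p$ available at primes $p \in \Delta(\Tbf_n, K)$ (compare the estimates in \ref{torsion order in the siegel case} and \ref{torsion order in the embedded case}) to force $\Delta(\Tbf_n,K)$ to consist of finitely many bounded primes with bounded local indices, yielding only finitely many possibilities for the adelic data $(K_{\Tbf_n,p})_p$.

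Combining the three steps yields a finite list $\{\Cbf_1,\ldots,\Cbf_N\}$ of $\Qbb$-tori in $\Gbf$ such that each $\Tbf_n$ is $\Gbf(\Qbb)$-conjugate to one of them. By \ref{conjugation by gamma} (specialized to the pure case where no unipotent $w$ is involved), this says exactly that each $S_n$ is $\Cbf_i$-special for some $i$, i.e.\ the sequence $(S_n)$ is $B$-bounded in the sense of \ref{bounded sequence} for $B=\{\Cbf_1,\ldots,\Cbf_N\}$. The main obstacle lies in step two, the cohomological finiteness of $\Qbb$-forms of tori with prescribed splitting field: the tori must be classified inside the ambient $\Gbf$ (not merely as abstract tori), so the argument must track the Galois-equivariant structure on character lattices rather than relying on formal abstract classification, and this is where the finer analysis of \cite{ullmo yafaev} 3.10 enters; the other two steps are essentially uniform counting arguments once step two is in place.
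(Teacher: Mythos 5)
Your reduction of the second (GRH) assertion to the first via \ref{lower bound involving splitting fields} is exactly what the paper does, and is fine (under \ref{CM splitting fields} the splitting fields are CM, so $\log D(\Tbf_n)\geq\log 3$ and the sandwich argument works). For the first assertion, however, your Step 2 and the final combination step contain a genuine gap. Finiteness of the set of \emph{$\Gbf(\Qbb)$-conjugacy classes} of $\Qbb$-tori in $\Gbf$ with prescribed splitting field is far too weak a conclusion: a single $\Gbf(\Qbb)$-conjugacy class contains infinitely many distinct subtori, pairwise non-conjugate under $\Gamma_\Gbf=\Gbf(\Qbb)_+\cap K_\Gbf$, and these define infinitely many distinct special subvarieties. (Concretely, for $\Gbf=\GL_2$ and $S$ a modular curve, the Mumford--Tate tori of all CM points with CM by a fixed imaginary quadratic field $F$ are $\GL_2(\Qbb)$-conjugate by Skolem--Noether, yet as the conductor grows one obtains infinitely many tori and infinitely many special points.) Since being $\Cbf_i$-special requires $\Tbf_n$ to equal $\Cbf_i$ up to $\Gamma_\Gbf$-conjugacy (\ref{conjugation by gamma}), your last paragraph, which passes from $\Gbf(\Qbb)$-conjugacy to ``$\Cbf_i$-special,'' does not follow.

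The missing content is precisely where the hypothesis $\prod_{p}\maxx\{1,I(\Tbf_n,K_p)\}\leq C$ must do its real work: within a fixed conjugacy class, the quantity $[K^{\maxx}_{\Tbf,p}:K_{\Tbf,p}]$ measures the failure of $\Tbf\cap K$ to be maximal (a ``conductor''), and bounding it at all $p$ forces the tori into finitely many $K$-conjugacy classes by an integral-model and class-number-finiteness argument; this is the substance of \cite{ullmo yafaev} 3.13--3.21, which your Step 3 does not reproduce (you use the product bound only to confine the set $\Delta(\Tbf_n,K)$ and the local indices, which does not pin down the subtorus). A smaller point: the local estimate $I(\Tbf,K_p)\geq c\cdot p$ holds only at primes $p$ unramified in $F_\Tbf$ at which $K_p$ is the maximal compact induced by the fixed lattice, though this is harmless once Hermite--Minkowski has confined the splitting fields to a finite list. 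For what it is worth, the paper itself does not reprove the first assertion: it cites \cite{ullmo yafaev} 3.13--3.21 verbatim (noting these arguments do not use the GRH) and only explains, via \ref{generic injectivity} and \ref{reduction to the CM case}, how to drop the hypothesis that $\Gbf$ be of adjoint type; so the correct course here is either to invoke that reference for the full strength of the finiteness statement, or to supply the bounded-conductor/class-number argument that your sketch omits.
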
 
 
 The original statement was made assuming $\Gbf$ being of adjoint type, and from \ref{lower bound involving splitting fields} and \ref{reduction to the CM case} the modified form above is immediate. Note that the arguments in \cite{ullmo yafaev} 3.13 - 3.21 do not rely on the GRH for CM fields, and the GRH is not involved for sequences of pure special subvarieties with bounded test invariants. It is used when we pass to  Galois orbits of bounded degrees.
 
 For a sequence of pure special subvarieties in a mixed Shimura variety we immediately get:
 \begin{corollary}[pure special subvarieties of bounded Galois orbits]\label{pure special subvarieties of bounded Galois orbits} Let $M=M_K(\Pbf,Y)$ be a mixed Shimura variety satisfying \ref{CM splitting fields}, which is defined over the reflex field $E=E(\Pbf,Y)$ as in \ref{assumption} with $\Lscr_S$ the automorphic line bundle on $S$, and let $(M_n)$ be a sequence of pure special subvarieties, defined by irreducible $(\Tbf_n,w_n)$-special pure subdata $(\Pbf_n,Y_n)=\Wbf_n\rtimes(w_n\Gbf_nw_n^\inv,w_nX_n)$. If there exists some constant $C>0$ such that  $$\log{}D(\Tbf_n)\prod_{p\in\Delta(\Tbf_n,K_\Gbf(w_n))}\maxx\{1,I(\Tbf_n,K_\Gbf(w_n)_p)\}\leq C,\ \forall n\in\Nbb$$ then the sequence is $B$-bounded for some $B$ in the sense of \ref{bounded sequence}. The analytic closure of $\bigcup_nM_n$ is a finite union of pure special subvarieties bounded by $B$.
 
 In particular, under   the GRH for CM fields, a sequence of pure special subvarieties $(M_n)$ such that   $$\deg_{\pi*\Lscr_S}\Gal(\Qac/E)\cdot M_n\leq C,\ \forall n$$  for some constant $C>0$ is bounded by some $B$, and the analytic closure of $\bigcup_nM_n$ is a finite union of $B$-bounded pure special subvarieties.  
 \end{corollary}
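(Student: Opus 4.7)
The plan is to reduce the statement to an application of \ref{bounded equidistribution}(2) and \ref{bounded Andr\'e-Oort} by extracting a finite bounding set $B \subset \{[\Tbf,w]\}$ in the sense of \ref{conjugation by gamma} from the arithmetic bound on the test quantity. Recall that $\TW$-specialness only depends on the $\Gamma_\Gbf$-conjugacy class of $\Tbf$ and on the coset $\Gamma_\Wbf w \in \Gamma_\Wbf \bsh \Wbf(\Qbb)$, so it suffices to show that the pairs $(\Tbf_n, w_n)$ fall into finitely many such classes.

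For the tori, observe that $K_\Gbf(w_n) \subset K_\Gbf$, which gives $\Delta(\Tbf_n,K_\Gbf) \subset \Delta(\Tbf_n,K_\Gbf(w_n))$ and the termwise domination $I(\Tbf_n,K_{\Gbf,p}) \leq I(\Tbf_n,K_\Gbf(w_n)_p)$ for $p\in\Delta(\Tbf_n,K_\Gbf)$. Hence the hypothesis forces
$$\log D(\Tbf_n)\cdot\prod_{p\in\Delta(\Tbf_n,K_\Gbf)}\maxx\{1,I(\Tbf_n,K_{\Gbf,p})\}\leq C.$$
Applying \ref{bounded galois orbits} to the sequence of pure special subvarieties $\pi(M_n) \subset S$, we obtain a finite list $\{\Cbf_1,\ldots,\Cbf_N\}$ of $\Qbb$-tori in $\Gbf$ such that each $\Tbf_n$ agrees with some $\Cbf_i$. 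Passing to a subsequence we may fix $\Tbf_n = \Tbf$, and since the $\Gamma_\Gbf$-conjugacy classes of $\Tbf$ within $\Gbf$ are finite (the normalizer has finite index in the ambient stabilizer under the action of $\Gamma_\Gbf$, by standard arithmeticity), we reduce to finitely many classes $[\Tbf]$.

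For the unipotent part, \ref{torsion order in the embedded case}(1) gives $I(\Tbf_n,K_\Gbf(w_n)_p)\geq c\cdot \ord_p(w_n,K_\Wbf)$ for every $p\in\Delta(\Tbf_n,K_\Gbf(w_n))$. The product bound in the hypothesis therefore uniformly controls $\prod_{p\in\Delta}\ord_p(w_n,K_\Wbf)$: since each nontrivial factor in the product is bounded below by a fixed constant exceeding $1$, the cardinality $|\Delta(\Tbf_n,K_\Gbf(w_n))|$ is uniformly bounded, and then each $\ord_p(w_n,K_\Wbf)$ with $p\in\Delta$ is uniformly bounded as well. On the complement, \ref{torsion order in the embedded case}(2) supplies a universal exponent $N$ with $w_n^N \in K_{\Wbf,p}$ for $p\notin\Delta(\Tbf_n,K_\Gbf(w_n))$, so the total order $\ord(w_n,K_\Wbf)=\prod_p\ord_p(w_n,K_\Wbf)$ is uniformly bounded. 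The $N_0$-torsion of $\Wbf(\adele)/K_\Wbf$ is finite (each local quotient $\Wbf(\Qbbp)/K_{\Wbf,p}$ is $p$-primary torsion, so only the finitely many primes dividing $N_0$ contribute, and at each such prime the $N_0$-torsion subset is finite). Since $\Wbf(\Qbb)/\Gamma_\Wbf$ injects into $\Wbf(\adele)/K_\Wbf$, the cosets $\Gamma_\Wbf w_n$ lie in a finite subset, giving finitely many classes $[w]$.

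Combining the two reductions produces a finite bounding set $B=\{(\Tbf_i,w_j)\}$ of pairs such that every $M_n$ is $\TW$-special for some $\TW\in B$, i.e. the sequence is $B$-bounded in the sense of \ref{bounded sequence}. Then \ref{bounded equidistribution}(2) yields that the analytic closure of $\bigcup_n M_n$ is a finite union of $B$-bounded special S-subspaces, hence (since each pure special S-subspace equals the corresponding pure special subvariety) a finite union of $B$-bounded pure special subvarieties; this statement is also an instance of \ref{bounded Andr\'e-Oort}. For the second assertion, under \ref{CM splitting fields} the splitting field $F_{\Tbf_n}$ is CM, so the GRH hypothesis applies and \ref{orbit of a pure special subvariety} provides the inequality
$$\deg_{\pi^*\Lscr_S}\bigl(\Gal(\Qac/E)\cdot M_n\bigr)\geq c_N D_N(\Tbf_n)\prod_{p\in\Delta(\Tbf_n,K_\Gbf(w_n))}\maxx\{1,I(\Tbf_n,K_\Gbf(w_n)_p)\}.$$
A uniform upper bound on the Galois orbit degrees therefore yields the hypothesis of the first part, and the conclusion follows. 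The main technical hurdle in this argument is the passage from the multiplicative bound on the test quantity to a uniform bound on $\ord(w_n,K_\Wbf)$; this requires controlling $|\Delta(\Tbf_n,K_\Gbf(w_n))|$ simultaneously with the individual local orders, for which the separation of primes in and outside $\Delta$ via \ref{torsion order in the embedded case} is essential.
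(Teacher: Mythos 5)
Your proposal is correct and follows essentially the same route as the paper: project to the pure base and invoke \ref{bounded galois orbits} to confine the tori to a finite set, then use \ref{torsion order in the embedded case}(1)--(2) to bound $\ord_p(w_n,K_\Wbf)$ at every prime and hence confine the classes $\Gamma_\Wbf w_n$ to a finite set, and conclude by \ref{bounded equidistribution}/\ref{bounded Andr\'e-Oort} and, for the GRH statement, by \ref{orbit of a pure special subvariety}. The only soft spot is the intermediate claim that every factor indexed by $\Delta(\Tbf_n,K_\Gbf(w_n))$ exceeds a fixed constant $>1$ (this could fail if the constant $b$ is small), but it is not needed: since $\ord_p(w_n,K_\Wbf)$ is a $p$-power, \ref{torsion order in the embedded case}(1) already shows that the primes with nontrivial local order are uniformly bounded and each local order is uniformly bounded, which is all your argument uses.
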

 
 \begin{proof}
 Write $\pi:M\ra S=M_{K_\Gbf}(\Gbf,X)$ for the natural projection. Then the images $S_n=\pi(M_n)\subset S$ is a sequence of pure special subvarieties, with $S_n$ being $\Tbf_n$-special. It is clear that $\Delta(\Tbf_n,K_\Gbf)\subset\Delta(\Tbf_n,K_\Gbf(w_n))$, and thus $$\log D(\Tbf_n)\prod_{p\in\Delta(\Tbf_n,K_\Gbf)}\maxx\{1,I(\Tbf_n,K_\Gbf)\}\leq C$$ which implies that the sequence $(S_n)$ is bounded. In particular, we can choose the defining subdatum for $(M_n)$ to be $(w_n\Gbf_nw_n^\inv,w_nX_n)$ such that the connected centers of the $\Gbf_n$ come from a fixed finite set $\{\Tbf_\alpha:\alpha\in A\}$ ($A$ finite), and   the absolute discriminants $D(\Tbf_\alpha)$ assume only finitely many values.
 
 Therefore the sequence $I(\Tbf_n,K_\Gbf(w_n))$ is also bounded by some constant $C'$, and $\ord_p(w_n,K_\Wbf)\leq C'/c$ for $p\in\Delta(\Tbf_n,K_\Gbf(w_n))$ where $c$ is the constant in \ref{torsion order in the embedded case}. We see that  
 
 \begin{itemize}
 \item for $p\in\Delta(\Tbf_n,K_\Gbf(w_n))$, $\ord_p(w,K_\Wbf)\leq C_1$, with $C_1$ some constant independent of $w_n,\Tbf_n,K$; in particular, the union $\bigcup_n\Delta(\Tbf_n,K_\Gbf(w_n))$ is finite;
 
 \item for $p\notin\Delta(\Tbf_n,K_\Gbf(w_n))$ and $p$ not dividing $N$ (the constant in \ref{torsion order in the embedded case}(2)), we have $w\in K_{\Wbf,p}$;
 
 \item for $p\notin\Delta(\Tbf,K_\Gbf(w_n))$ and $p$ dividing $N$, we have $w_n^N\in K_{\Wbf,p}$ and $\ord_p(w_n,K_\Wbf)\leq p^{v_p(N)}$ ($v_p$ is the $p$-adic valuation such that $v_p(p)=1$); more precisely we have $N=12$ and for $p=2$ or $p=3$ we have $\ord_p(w_n,K_\Wbf)\leq 4$.
 \end{itemize}
 
  Hence there are only finitely many choices for the classes of $w_n=(u_n,v_n)$ modulo $K_\Wbf$ and  we may take $w_n$'s from a fixed finite subset of $\Wbf(\Qbb)$, which means that the sequence $(M_n)$ is bounded.
 
 The claim on sequences whose Galois orbits are of bounded degrees is clear.\end{proof}
 
 Note that the conclusion of the corollary above is very restrictive: we have started with  a sequence of pure special subvarieties and ended up with finitely many pure special subvarieties. The finitely many choices of the $w_n$'s modulo $K_\Wbf$ have forced the sequence to lie in the union of finitely many maximal pure special subvarieties of $M$.\bigskip
 
 In the mixed case, we propose the following substitute for the estimation of Galois orbits.

 \begin{definition}[test invariants]\label{test invariants} Let $M=M_K(\Pbf,Y)$ be a mixed Shimura variety satisfying \ref{CM splitting fields}, defined at some level $K$ of fine product type with $M^+$ a connected component of $M$ given by $Y^+\subset Y$. For $M'$ a special subvariety defined by a strictly irreducible $\TW$-special subdatum $(\Pbf',Y';Y'^+)=\Wbf'\rtimes(w\Gbf'w^\inv,wX';wX'^+)$, we define the \emph{test invariant} of $M'$ to be $$\tau_M(M'):=\log D(\Tbf)\min_{w'\in\Wbf'(\Qbb)w}\prod_{p\in\Delta(\Tbf,K_\Gbf(w'))}\maxx\{1,I(\Tbf,K_\Gbf(w'))\}$$ where $\Tbf$ is the connected center of $\Gbf'$, $D(\Tbf)$ is the  absolute discriminant of its splitting field. The subdata $\Wbf'\rtimes(w'\Gbf'w'^\inv,w'X';w'X'^+)$ define the same special subvariety $M'$ by \ref{structure of subdata}, and the minimum over $\Wbf'(\Qbb)w$ is justified by \ref{torsion order in the embedded case}.
 
 The definition is independent of the choice of the subdata defining $M'$, as one may verify using \ref{conjugation by gamma}
 \end{definition}

 \begin{theorem}[special subvarieties of bounded test invariants]\label{special subvarieties of bounded test invariants}Let $M=M_K(\Pbf,Y)$ be a mixed Shimura variety satisfying \ref{CM splitting fields}, with $K=K_\Wbf\rtimes K_\Gbf$ and reflex field $E$ be as in \ref{assumption}, and fix $M^+$ the component of $M$ given by $Y^+$ and $\Gamma=\Pbf(\Qbb)_+\cap K$. Let $(M_n)$ be a sequence of special subvarieties in $M^+$ defined by strictly irreducible $(\Tbf_n,w_n)$-special connected subdata $(\Pbf_n,Y_n;Y_n^+)=\Wbf_n\rtimes(w_n\Gbf_nw_n^\inv,w_nX_n;w_nX_n^+) $ such that for some constant $C>0$ we have $$\tau_M(M_n)\leq C,\forall n.$$ Then 
 
 (1) the sequence is $B$-bounded for some finite $B$. 
 
 (2) Assume further the GRH for CM fields.  Then we can find a \cosg $K'\subset K$ of fine product type $K'=K'_\Wbf\rtimes K'_\Gbf$ such that for $M'^+$ the connected component corresponding to $Y^+$ and $\Pbf(\Qbb)_+\cap K'$ in  $M'=M_{K'}(\Pbf,Y)$ we have the estimation of Galois orbits 
  $$\deg_{\pi^*\Lscr_{S'}}\Gal(\Qac/E)\cdot S_n'\geq c_N D_N(\Tbf_n)\min_{w'\in\Wbf_n(\Qbb)w_n}\prod_{p\in\Delta(\Tbf_n,K'_\Gbf(w'))}\maxx\{1,I(\Tbf_n,K'_\Gbf(w')_p)\}$$ where 
 \begin{itemize}
 \item $K'_\Gbf(w)=\{g\in K'_\Gbf:wgw^\inv g^\inv\in K'_\Wbf\}$;
 \item $\Lscr_{S'}$ is the automorphic line bundle on $S'=M_{K_\Gbf'}(\Gbf,X)$;
 \item $S_n'$ is any maximal pure special subvariety in $M_n'$ the special subvariety defined by $(\Pbf_n,Y_n;Y_n^+)$ in  $M'^+$.
 \end{itemize}  
 \end{theorem}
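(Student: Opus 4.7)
The plan is to decompose the bound $\tau_M(M_n) \leq C$ into a constraint on the tori $\Tbf_n$ and a constraint on the unipotent translation classes, handle each separately, and then combine them to obtain both (1) and (2).

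For the tori, I observe that the bound on $\tau_M(M_n)$ implies the analogous bound on the pure images $\pi(M_n) \subset S = M_{K_\Gbf}(\Gbf,X)$: since $K_\Gbf(w') \subset K_\Gbf$ for any $w' \in \Wbf_n(\Qbb)w_n$, we have $\Delta(\Tbf_n, K_\Gbf) \subset \Delta(\Tbf_n, K_\Gbf(w'))$ and $I(\Tbf_n, K_{\Gbf,p}) \leq I(\Tbf_n, K_\Gbf(w')_p)$, so
\[
\log D(\Tbf_n) \cdot \prod_{p \in \Delta(\Tbf_n, K_\Gbf)} \maxx\{1, I(\Tbf_n, K_{\Gbf,p})\} \leq \tau_M(M_n) \leq C.
\]
Applying \ref{bounded galois orbits} to $(\pi(M_n))$ produces a finite list $\{\Tbf_1, \ldots, \Tbf_r\}$ of $\Qbb$-tori in $\Gbf$ such that each $\Tbf_n$ is conjugate to some $\Tbf_i$ by an element of $\Gamma_\Gbf$; by \ref{conjugation by gamma} this conjugation does not change $M_n$, so I may assume each $\Tbf_n$ belongs to the finite list.

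Next, for each $n$ I pick $w_n' \in \Wbf_n(\Qbb) w_n$ realising the minimum in the definition of $\tau_M(M_n)$, so that $\prod_{p \in \Delta(\Tbf_n, K_\Gbf(w_n'))} \maxx\{1, I(\Tbf_n, K_\Gbf(w_n')_p)\}$ is uniformly bounded by some $C'$. By \ref{torsion order in the embedded case}(1) this forces $\ord_p(w_n', K_\Wbf) \leq C'/c$ for each $p \in \Delta(\Tbf_n, K_\Gbf(w_n'))$, while \ref{torsion order in the embedded case}(2) yields $w_n'^N \in K_{\Wbf,p}$ for all remaining primes, with $N$ independent of $n$. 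Hence the order of the class of $w_n'$ in the pointed $\Zbb$-set $\Wbf(\adele)/K_\Wbf$ is uniformly bounded by some integer $N_0$. Since $\Wbf(\adele)/K_\Wbf$ is built from the torsion groups $\Ubf(\adele)/K_\Ubf$ and $\Vbf(\adele)/K_\Vbf$, in each of which the $N_0$-torsion is finite, and $\Wbf(\Qbb)/\Gamma_\Wbf$ embeds into $\Wbf(\adele)/K_\Wbf$, only finitely many cosets $\Gamma_\Wbf w_n' \in \Gamma_\Wbf \bsh \Wbf(\Qbb)$ can appear. By \ref{conjugation by gamma} again, translating $w_n'$ by $\Gamma_\Wbf$ leaves $M_n$ unchanged, so the pair $(\Tbf_n, w_n')$ may be chosen from a finite bounding set $B$, proving (1).

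For part (2), I replace $K$ by a \cosg $K' = K'_\Wbf \rtimes K'_\Gbf$ of fine product type, shrunk at the finitely many primes appearing in $\bigcup_{(\Tbf,w) \in B} \Delta(\Tbf, K_\Gbf(w))$, so that $K'_\Gbf = K'_\Gbf(w)$ for every representative pair $(\Tbf, w) \in B$; by \ref{insensitivity of levels} this refinement is harmless for the Andr\'e--Oort content. For each $n$, any maximal pure special subvariety $S_n' \subset M_n'$ is defined by a pure subdatum of the form $(w' \Gbf_n w'^\inv, w' X_n)$ with $w' \in \Wbf_n(\Qbb) w_n$, by \ref{pure section}. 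Applying \ref{orbit of a pure special subvariety} at level $K'$, with $F_{\Tbf_n}$ a CM field by \ref{CM splitting fields} and \ref{reduction to the CM case} and the GRH for $F_{\Tbf_n}$ in force, yields the claimed lower bound for each admissible $w'$; minimising over $w' \in \Wbf_n(\Qbb) w_n$ produces the stated formula. The main obstacle is the joint control of the two data types: the pure analysis is insensitive to unipotent translations, whereas the torsion-class analysis requires $\Tbf_n$ already pinned to trigger the sharp torsion-order bound of \ref{torsion order in the embedded case}. The CM-splitting hypothesis \ref{CM splitting fields} is essential, since it forces the split central $\Qbb$-torus $\Cbf$ of \ref{Siegel data}(3) to lie in $\Tbf_n$ and makes the torsion-order estimate linear rather than logarithmic in $\ord_p(w_n', K_\Wbf)$; the weaker version discussed in \ref{torsion order in general} would not suffice to conclude finiteness of the $w_n'$ classes.
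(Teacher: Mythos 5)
Your proof is correct and follows essentially the same route as the paper's: part (1) by passing to the pure images, invoking \ref{bounded galois orbits} to pin down the tori, and then using \ref{torsion order in the embedded case} on a minimising representative $w_n'$ to bound the torsion classes modulo $K_\Wbf$; part (2) by shrinking the level and reducing to the pure lower bound via unipotent Hecke translation. The only cosmetic difference is that in (2) you cite \ref{orbit of a pure special subvariety} directly (which already packages the Hecke-translation identification of \ref{finite index}), whereas the paper re-runs that translation argument explicitly at the shrunken level $K'$ to exhibit the bijection of Galois orbits.
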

 
 In other words, the test invariants are potentially the ``correct'' numerical bounds for Galois orbits in the mixed case, as long as we restrict our attention to bounded sequences.
 
 \begin{proof}
 
 (1) Write $\pi:M\ra S=M_{K_\Gbf}(\Gbf,X)$ for the natural projection, and $S_n=\pi(M_n)$. Then similar to \ref{pure special subvarieties of bounded Galois orbits}, $S_n$ is $\Tbf_n$-special, with bounded test invariants $$\log D(\Tbf_n)\prod_{p\in\Delta(\Tbf_nK_\Gbf)}\maxx\{1,I(\Tbf_n,K_{\Gbf,p})\}\leq C.$$ Therefore $(S_n)$ is bounded, and the sequence $\log D(\Tbf_n)$ only takes finitely many values. We may replace  $w_n$ by $w_n'\in\Wbf_n(\Qbb)w_n$ which minimizes the following set of values $$\{\prod_{p\in\Delta(\Tbf_n,K_\Gbf(w))}\maxx\{1,I(\Tbf_n,K_\Gbf(w)_p)\}:w\in\Wbf_n(\Qbb)w_n\}$$ without changing the special subvarieties. Then $\prod_{p\in \Delta(\Tbf_n,K_\Gbf(w'_n))}\maxx\{1,I(\Tbf_n,K_\Gbf(w'_n)_p)\}\leq C',\ \forall n$ for some constant $C'>0$. We deduce that the classes of   $w'_n$ modulo $K_\Wbf$ is finite, using \ref{torsion order in the embedded case}. Since we may always translate $w'_n$ by elements in $\Gamma_\Wbf=\Wbf(\Qbb)\cap K_\Wbf$, we may take the $w'_n$'s from a fixed finite subset of $\Wbf(\Qbb)$, hence the claim.
 
 (2) Let $B$  be a finite bounding set for $(M_n)$. 
 
 Write $w=(u,v)$, then $wK_\Wbf w^\inv=\{(u'+2\psi(v,v'),v'):u'\in K_\Ubf, v'\in K_\Vbf\}$. We may shrink $K_\Vbf$ to a  \cosg $K_\Vbf'$ such that $2\psi(v,v')\in K_\Ubf$ for all $v'\in K_\Vbf'$. One may simply take $K_\Vbf'=NK_\Vbf$ for some integer $N>0$, and thus $K'_\Vbf$ is itself stabilized by $K_\Gbf$. Take $K'_\Gbf:=\bigcap_{\TW\in B} K_\Gbf(w)$, and take $K_\Wbf'$ the subgroup of $K_\Wbf$ generated by $K_\Ubf$ and $K'_\Vbf$. Then $K_\Gbf'$ stabilizes $K_\Wbf'$, and we take $K'=K'_\Wbf\rtimes K'_\Gbf$.
 
 Thus $wK' w^\inv=K'$ when $\TW\in B$, and the Hecke translation by $w^\inv$ gives an automorphism of $M'=M_{K'}(\Pbf,Y)$, sending the pure Shimura subvariety $S'(w)=M_{wK'_\Gbf w}(w\Gbf w^\inv, wX)$ to $S'(0)=M_{K'_\Gbf}(\Gbf,X)$, both of which are sections to the natural projection $\pi:M'\ra S'=M_{K'_\Gbf}(\Gbf,X)$.  Let $M_1$ be $\TW$-special subvariety in $M'^+$ is defined by a strictly irreducible subdatum $(\Pbf',Y';Y'^+)=\Wbf'\rtimes(w\Gbf'w^\inv, wX';wX'^+)$. Then $S_1(w):=M_1\cap S'(w)$ is isomorphic to $S_1:=\pi(M_1)$ using the pure section, and we get the bijection between $\Gal(\Qac/E)\cdot S_1$, $\Gal(\Qac/E)\cdot S_1(w)$, and $\Gal(\Qac/E)\cdot M_1$, from which the estimation follows.
 \end{proof}
 
 \begin{remark}[lower bound of Tsimerman]\label{lower bound of Tsimerman}
 
 The factor $c_ND_N(\Tbf)$ in \ref{lower bound involving splitting fields} is  as in \cite{ullmo yafaev}, which comes from a lower bound of the image of the reciprocity map of the form $\rec_\Tbf:\Gal(\Qac/F)\ra\pibar(\Gbb_\mrm^F)\ra\pibar(\Tbf)/K_\Tbf^\maxx$ with $F$ the splitting field of the $\Qbb$-torus $\Tbf$, proved in \cite{yafaev duke}. In \cite{tsimerman bound} Tsimerman proved a  lower bound for a special point $x$ in a Seigel modular variety  in the form $C_g (D(Z_x))^{\delta_g}$, where $Z_x$ is the center of the endomorphism algebra of the CM abelian variety parameterized by $x$, and $D(Z_x)$ is the absolute discriminant of $Z_x$, and it is polynomially equivalent to $D(\Tbf_x)[K^\maxx_{\Tbf_x}:K_{\Tbf_x}]$ with $\Tbf_x$ the Mumford-Tate group of $x$ which is a $\Qbb$-torus. The estimation is established unconditionally for CM abelian varieties of dimension at most 6, and it holds in arbitrary dimension under the GRH for CM fields. In our discussion we had preferred a uniform treatment for all Shimura varieties, including pure Shimura varieties that do not admit finite coverings embedded in Siegel modular varieties. Hence we have followed the formulation and the strategy in \cite{ullmo yafaev}. 
 \end{remark}

   \end{document}